\documentclass[11pt, reqno]{amsart}
\usepackage[margin=3.6cm]{geometry}
\usepackage{array,booktabs,tabularx}
\usepackage{graphicx}
\usepackage{amssymb}
\usepackage{enumerate}
\usepackage{color}
\usepackage{hyperref}
\usepackage{esint}
\usepackage{mathtools}
\usepackage{dsfont}
\usepackage{ esint }
\usepackage[show]{ed}
\usepackage{comment}
\usepackage{enumitem}
\usepackage{soul}
\usepackage{needspace}

\newtheorem{theorem}{Theorem}
\newtheorem{corollary}[theorem]{Corollary}

\newtheorem{lemma}[theorem]{Lemma}
\newtheorem{proposition}[theorem]{Proposition}

\newtheorem*{conjecture}{Conjecture}

\theoremstyle{definition}

\newtheorem{remark}{Remark}

\newcommand{\R}{{\mathbb R}}

\newcommand{\ep}{\varepsilon}

\newcommand{\cs}{$\clubsuit$}

\newcommand{\mc}[1]{\mathcal{#1}}
\newcommand{\re}{\mathbb{R}}
\newcommand{\Span}{\operatorname{span}}

\newcommand{\red}[1]{{\color{red}{#1}}}

\newcommand{\SNH}{S\!N^*\!H}
\newcommand{\sn}{\SNH}
\newcommand{\sig}{\sigma_{_{\!\!S\!N^*\!H}}}

\newcommand{\snx}{S\!N^*_xH}

\newcommand{\SM}{S^*\!M}
\newcommand{\Sig}{\sigma_{_{\!\Sigma_{H\!,p}}}}
\newcommand{\SigH}{\Sigma_{_{\!H\!,p}}}
\newcommand{\comp}{\operatorname{comp}}
\newcommand{\muH}{\mu_{_{\! H}}}
\newcommand{\LambdaH}{\Lambda_{_{\!H\!,T}}}
\newcommand{\LambdaHtwo}{\Lambda_{_{\!H\!,2T}}}
\newcommand{\we}{\iota_{w,\e}}
\newcommand{\twe}{\tilde{\iota}_{w,\e}}

\def\XXint#1#2#3{{\setbox0=\hbox{$#1{#2#3}{\int}$} \vcenter{\hbox{$#2#3$}}\kern-.5\wd0}}

\DeclareMathOperator{\vol}{vol}

\DeclareMathOperator{\supp}{supp}

\newcommand{\dbox}{\dim_{\operatorname{box}}}
\newcommand{\e}{\varepsilon}

%\newtheorem{note}{Note}

%%%%%%%%%%%%%%%%%%%%%%%%%%%%%%%%%%%%%%%%%%%%%%%%%%%%%%%%%%%%%%%%%%%%%%%%%%%%%%%%

\title[Eigenfunction averages]{
On the growth of eigenfunction averages: microlocalization and geometry
}

\author{Yaiza Canzani}
\address{Department of Mathematics, University of North Carolina, Chapel Hill, NC, USA}
\email{canzani@email.unc.edu}
\author{Jeffrey Galkowski}
\address{Department of Mathematics, Stanford University, Stanford, CA, USA}
\email{jeffrey.galkowski@stanford.edu }

\begin{document}

\begin{abstract}
Let $(M,g)$ be a smooth, compact Riemannian manifold and $\{\phi_h\}$ an $L^2$-normalized sequence of Laplace eigenfunctions, {$-h^2\Delta_g\phi_h=\phi_h$. Given  a smooth submanifold $H \subset M$  of codimension $k\geq 1$, we find conditions on the pair $(\{\phi_h\},H)$ for which
$$
\Big|\int_H\phi_hd\sigma_H\Big|=o(h^{\frac{1-k}{2}}),\qquad h\to 0^+.
$$
One such condition is that the set of conormal directions to $H$ that are recurrent {has} measure $0$. In particular, we show that the upper bound holds for any $H$ if $(M,g)$ is surface with Anosov geodesic flow or a manifold of constant negative curvature. The results are obtained by {characterizing} the behavior of the defect measures {of eigenfunctions with maximal averages.}
% associated to a given sequence of eigenfunctions, when {it is} restricted to the set of conormal directions to $H$.}
}
\end{abstract}

\maketitle
%%%%%%%%%%%%%%%%%%%%%%%%%%%%%%%%%%%%%%%%%%%%%%%%%%%%%%%%%%%%%%%%%%%%%%%%%%%%%%%%
%%%%%%%%%%%%%%%%%%%%%%%%%%%%%%%%%%%%%%%%%%%%%%%%%%%%%%%%%%%%%%%%%%%%%%%%%%%%%%%%
%%%%%%%%%%%%%%%%%%%%%%%%%%%%%%%%%%%%%%%%%%%%%%%%%%%%%%%%%%%%%%%%%%%%%%%%%%%%%%%%
%%%%%%%%%%%%%%%%%%%%%%%%%%%%%%%%%%%%%%%%%%%%%%%%%%%%%%%%%%%%%%%%%%%%%%%%%%%%%%%%
\section{Introduction}
%%%%%%%%%%%%%%%%%%%%%%%%%%%%%%%%%%%%%%%%%%%%%%%%%%%%%%%%%%%%%%%%%%%%%%%%%%%%%%%%
%%%%%%%%%%%%%%%%%%%%%%%%%%%%%%%%%%%%%%%%%%%%%%%%%%%%%%%%%%%%%%%%%%%%%%%%%%%%%%%%
%%%%%%%%%%%%%%%%%%%%%%%%%%%%%%%%%%%%%%%%%%%%%%%%%%%%%%%%%%%%%%%%%%%%%%%%%%%%%%%%
%%%%%%%%%%%%%%%%%%%%%%%%%%%%%%%%%%%%%%%%%%%%%%%%%%%%%%%%%%%%%%%%%%%%%%%%%%%%%%%%
On a compact Riemannian manifold $(M,g)$ of dimension $n$ we consider sequences of Laplace eigenfunctions $\{\phi_h\}$ solving 
\[
(-h^2\Delta_g-1)\phi_h=0,\qquad{\|\phi_h\|_{L^2(M)}=1.}
\]
In this article, we study the average oscillatory behavior of $\phi_h$ when restricted to a submanifold $H\subset M$. In particular, we seek to understand conditions on the pair $(\{\phi_h\},H)$ under which
\begin{equation}
\label{e:average}
\int_H\phi_hd\sigma_H =o\big(h^{\frac{1-k}{2}}\big), 
\end{equation}
as $h \to 0^+$, where $\sigma_H$ is the volume measure on $H$ induced by the Riemannian metric, and $k$ is the codimension of $H$. 

We note that the bound 
\begin{equation}
\label{e:StdBound}
\Big| \int_H\phi_hd\sigma_H\Big| = O(h^{\frac{1-k}{2}})
\end{equation}
holds for any pair $(\{\phi_h\},H)$~\cite[Corollary 3.3]{Zel}, and is sharp in general. Therefore, we seek to give conditions under which the average is sub-maximal. Integrals of the form~\eqref{e:average}, where $H$ is a curve,  have a long history of study. Good~\cite{Good} and Hejhal~\cite{Hej} study the case in which $H$ is a periodic geodesic in a compact hyperbolic manifold, and prove the bound~\eqref{e:StdBound} in that case. The work of Zelditch~\cite{Zel} in fact shows that~\eqref{e:average} holds for a density one subsequence of eigenvalues. Moreover, one can give explicit polynomial improvements on the error term in~\eqref{e:StdBound} for a density one subsequence of eigenfunctions~\cite{JZ}.

These estimates, however, are not {generally} satisfied for the full sequence of eigenfunctions and the question of when all eigenfunctions satisfy~\eqref{e:average} has been studied recently for the case of curves in surfaces~\cite{CS,SXZ, Wym, Wym2} and for submanifolds~\cite{Wym3}. Finally, given a hypersurface, the question of which eigenfunctions satisfy~\eqref{e:average} was studied in~\cite{CGT}. In this article, we address both of these questions, strengthening the results concerning which eigenfunctions can have maximal averages on a given submanifold $H$, and giving weaker conditions on the submanifold $H$ that guarantee that~\eqref{e:average} holds for all eigenfunctions.

This article improves and extends nearly all existing results regarding averages of eigenfunctions over submanifolds. We recover all conditions guaranteeing that the improved bound \eqref{e:average} holds found in~\cite{CS,SXZ,Wym,Wym2,Wym3, GT17,Gdefect,CGT,Berard77,SZ16I,SZ16II}. As far as the authors are aware, these papers contain all previously known conditions ensuring improved averages. Moreover, we give strictly weaker conditions guaranteeing \eqref{e:average} when $k<n$; we replace the condition that the set of loop directions has measure zero from~\cite{Wym3} with the condition that the set of recurrent directions has measure zero. This allows us to prove that under conditions on $(M,g)$ including those studied in~\cite{Good, Hej, CS,SXZ}, the improved bound~\eqref{e:average} holds unconditionally with respect to the submanifold $H$. These improvements are possible because the main estimate, Theorem~\ref{t:local2}, gives explicit bounds on averages over submanifolds $H$ which depend only on the microlocalization of a sequence of eigenfunctions in the conormal directions to $H$.  This gives a new proof of~\eqref{e:StdBound} from~\cite{Zel} with explicit control over the constant $C$ for high energies. In fact, we characterize those defect measures which may support maximal averages. The estimate requires no assumptions on the geometry of $H$ or $M$ and is purely local. It is only with this bound in place that we use dynamical arguments to draw conclusions about the pairs $((M,g),H)$ supporting eigenfunctions with maximal averages. We note, however, that this paper does not obtain logarithmically improved averages as in~\cite{Berard77,SXZ, Wym2}.  

Recall that all compact, negatively curved Riemannian surfaces have Anosov geodesic flow~\cite{Anosov}.  {One} consequence of the results in this paper is the following. 
 %%%%%%%%%%%%%%%%%%%%%%%%%%%%%%%%%%%%%%%%%%%%%%%%%%%%%%%%%%%%%%%%%%%%%%%%%%%%%%%%
 \begin{theorem}
\label{thm:2d}
Suppose $(M,g)$ is a compact, Riemannian surface with Anosov geodesic flow and {$\gamma:[a,b]\to M$} is a smooth curve segment {with $|\gamma'|>0$}. Then
\[
{\int_a^b \phi_h(\gamma(s))  ds=o(1) \qquad \text{and}\qquad \int_a^b h\partial_\nu \phi_h(\gamma(s))  ds=o(1)}
\]
as $h \to 0^+$ for every sequence $\{\phi_h\}$ of Laplace eigenfunctions. Here $\partial_\nu$ denotes the derivative in the normal direction to the curve.
\end{theorem}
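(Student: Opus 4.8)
The plan is to deduce Theorem~\ref{thm:2d} from the main local estimate, Theorem~\ref{t:local2}, together with a dynamical lemma. Since $\gamma$ is a curve in a surface we are in codimension $k=1$, so the target exponent is $h^{\frac{1-k}{2}}=1$; Theorem~\ref{t:local2} bounds $\int_a^b\phi_h(\gamma(s))\,ds$ --- and, in its form for the semiclassical Neumann data, also $\int_a^b h\partial_\nu\phi_h(\gamma(s))\,ds$ --- purely in terms of how a defect measure of $\{\phi_h\}$ sits relative to the conormal sphere bundle $SN^*\gamma$ and its geodesic flowout, producing a gain over the universal bound \eqref{e:StdBound} as soon as the \emph{recurrent} conormal directions are negligible. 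Passing to a subsequence so that the boundary data of $\{\phi_h\}$ along $\gamma$ carries a defect measure, and using that the estimate of Theorem~\ref{t:local2} is uniform, the theorem is reduced to a purely dynamical statement, where the Anosov hypothesis enters: \emph{if the geodesic flow of the compact surface $(M,g)$ is Anosov, then for every curve segment $\gamma$ with $|\gamma'|>0$ the set $\mathcal R\subset SN^*\gamma$ of recurrent conormal directions is null for the natural smooth measure $\sigma_{SN^*\gamma}$ on $SN^*\gamma$.} As compact negatively curved surfaces have Anosov geodesic flow, this includes the constant-curvature case considered by Good and Hejhal.

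To prove this I would study the \emph{conormal first-return map} $P\colon SN^*\gamma\dashrightarrow SN^*\gamma$, $P(\rho)=G^{t_0(\rho)}\rho$, where $\{G^t\}$ is the geodesic flow on $S^*M$ and $t_0(\rho)$ is the least $t>0$ with $G^t\rho\in SN^*\gamma$. Because the geodesic direction at a conormal point $\rho=(x,\xi)$ is conormal to $\gamma$, while $T_\rho SN^*\gamma$ projects onto $T_x\gamma$, the submanifold $SN^*\gamma$ is transverse to the flow; hence $t_0$ is bounded below by some $t_{\min}>0$ and $P$ is smooth wherever defined (though its domain may be small). A recurrent conormal direction returns to $SN^*\gamma$ at arbitrarily large positive and negative times, so it lies in the maximal two-sided invariant set $\Omega=\bigcap_{n\in\Z}\mathrm{dom}(P^n)$, on which $P$ is a bijection; it thus suffices to show $\sigma_{SN^*\gamma}(\Omega)=0$. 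Let $E^s\oplus E^0\oplus E^u$ be the Anosov splitting. For $\rho\in\Omega$ the line $T_\rho SN^*\gamma$ is not $E^0_\rho$, so it meets $E^u$ or meets $E^s$ nontrivially; accordingly write $\Omega=(\Omega\setminus\mathcal T_s)\sqcup(\Omega\cap\mathcal T_s)$ with $\mathcal T_s=\{\rho:T_\rho SN^*\gamma\subset E^s_\rho\oplus E^0_\rho\}$ the tangency locus to the weak stable foliation, which is $P$-invariant and disjoint from the weak-unstable tangency locus. On $\Omega\setminus\mathcal T_s$ the $E^u$-component of $T_\rho SN^*\gamma$ is nonzero, so by uniform hyperbolicity $|DP^n(\rho)|\to\infty$ (exponentially in the $n$-th return time $T_n\ge nt_{\min}$); on $\Omega\cap\mathcal T_s$ the $E^s$-component is nonzero and the same holds with $P$ replaced by $P^{-1}$. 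In either case one has a piecewise-smooth bijection of a measurable subset $A\subset SN^*\gamma$ with $|DP^{n}|\to\infty$ (resp.\ $|DP^{-n}|\to\infty$) on $A$; since $SN^*\gamma$ has finite length, the change-of-variables identity $\sigma_{SN^*\gamma}(A)=\int_A|DP^{n}|\,d\sigma_{SN^*\gamma}$ (resp.\ with $P^{-n}$) and Fatou's lemma force $\sigma_{SN^*\gamma}(A)=0$. Hence $\sigma_{SN^*\gamma}(\mathcal R)\le\sigma_{SN^*\gamma}(\Omega)=0$.

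The point that would normally be the main obstacle --- the fixed smooth curve $SN^*\gamma$ could be tangent to the merely H\"older weak stable/unstable foliations of the Anosov flow along a large set --- is defused by the forward/backward dichotomy: one never needs those tangency loci to be small, only that a line transverse to the flow direction cannot be tangent to both weak foliations at once. What genuinely requires care is the bookkeeping: showing $t_0$ is bounded below, that $P$ is piecewise smooth with a $\sigma_{SN^*\gamma}$-null discontinuity set (a countable union of lower-dimensional pieces, which can be discarded), that recurrence in the precise sense needed by Theorem~\ref{t:local2} does land one inside $\Omega$, and invoking from Theorem~\ref{t:local2} that the $\sigma_{SN^*\gamma}$-negligibility of $\mathcal R$ upgrades \eqref{e:StdBound} to the stated $o(1)$ bounds along the full sequence. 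As Theorem~\ref{t:local2} treats the Dirichlet data $\phi_h|_\gamma$ and the Neumann data $h\partial_\nu\phi_h|_\gamma$ on equal footing, both displayed limits follow at once.
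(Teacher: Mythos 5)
Your reduction to the dynamical statement $\sigma_{\!\scriptscriptstyle SN^*\!\gamma}(\mathcal R_\gamma)=0$ is the same as the paper's (there it goes through Theorems~\ref{thm:recur} and~\ref{thm:normal}, and then part~\ref{a4} of Theorem~\ref{T:applications}), but your proof of the dynamical statement itself is genuinely different and, in the surface--Anosov case, appreciably simpler. The paper deduces $\sigma(\mathcal R_H)=0$ from Theorem~\ref{T:tangentSpace}, whose Anosov case runs through two separate mechanisms: an implicit-function-theorem argument (Proposition~\ref{P:1} together with Lemma~\ref{l:implicit}) for the ``non-split'' locus $\SNH\setminus\mathcal S_H$, and a volume-decay/Borel--Cantelli argument (Lemma~\ref{l:noStable} via Lemma~\ref{P:integral}) for the ``purely stable or purely unstable'' locus $\mathcal S_H\setminus\mathcal M_H$; the leftover $\mathcal A_H=\mathcal S_H\cap\mathcal M_H$ is then empty because $\dim T_\rho\SNH=1$ and $E_+\cap E_-=\{0\}$. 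Your argument replaces this three-way decomposition and the IFT lemma with a single forward/backward dichotomy on the conormal first-return map $P$: the tangent line is transverse to $H_p$ (Proposition~\ref{p:aardvark}), so it either has a nonzero unstable component --- in which case $|DP^n|\to\infty$ and a change-of-variables plus Fatou argument (morally the same computation as claim~\eqref{E:volume} in the paper) kills the measure --- or it is tangent to the weak stable bundle, and then the same argument applies to $P^{-1}$. This absorbs the ``mixed'' directions (which the paper has to handle by the implicit function theorem) into the same Fatou argument used for the split directions, and you correctly identify that the only global fact needed is that a line transverse to the flow cannot be simultaneously tangent to both weak foliations, not any smallness of the tangency loci themselves. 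The trade-off is that the paper's heavier machinery (Propositions~\ref{P:1} and~\ref{P:focal}, Lemma~\ref{l:implicit}) is what lets them treat higher-codimension $H$, higher dimension, and the no-focal-points case; your determinant argument relies crucially on $\dim T_\rho SN^*\gamma=1$, since in higher dimensions $|\det DP^n|$ need not diverge when the tangent space has both expanding and contracting directions.

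Two small points worth flagging. The displayed change-of-variables identity should read $\sigma_{SN^*\gamma}(P^n(A))=\int_A|DP^n|\,d\sigma_{SN^*\gamma}$ rather than $\sigma_{SN^*\gamma}(A)=\cdots$; the Fatou step you use it for is of course unaffected, since what you want is $\int_A|DP^n|\,d\sigma\le\sigma_{SN^*\gamma}(SN^*\gamma)<\infty$. Second, the identification of ``tangent to the weak stable bundle'' with ``tangent to the strong stable bundle'' is automatic here but deserves a sentence: since the geodesic flow is contact and the Anosov splitting lies in $\ker\alpha=(\R H_p)^\perp$ for the Sasaki metric, Proposition~\ref{p:aardvark} forces $T_\rho SN^*\gamma\subset E^s\oplus E^u$, so $T_\rho SN^*\gamma\subset E^s\oplus E^0$ if and only if $T_\rho SN^*\gamma\subset E^s$. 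The remaining bookkeeping you list (lower bound on return times via transversality, $P$-invariance of $\mathcal T_s$, null discontinuity set for $P$, and that the defect measure concentrates on $\mathcal R_H$ so that Theorem~\ref{t:local2} applies) is real but routine, and the paper handles the corresponding points inside the proofs of Lemma~\ref{P:integral} and Lemma~\ref{P:suppRecur}.
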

%%%%%%%%%%%%%%%%%%%%%%%%%%%%%%%%%%%%%%%%%%%%%%%%%%%%%%%%%%%%%%%%%%%%%%%%%%%%%%%%

In order to state our {more general} results we introduce some geometric notation. Let $H\subset M$ be a closed smooth submanifold of codimension $k$. We denote by $N^*\!H$ the conormal bundle to $H$ and we write $\sn$ for the unit conormal bundle of $H$, where the metric is induced from that in $N^*\!H \subset T^*M$. We write $\sig$ for the measure on $\sn$ induced by the Sasaki metric on $T^*M$ {(see e.g. \cite{Eberlein73})}. In particular, if $(x',x'')$ are Fermi coordinates in a tubular neighborhood of $H$, where $H$ is identified with $\{(x',x''):\,x''=0\}$, we have
\[\sig(x',\xi'')= \sigma_H(x') d{\textup{Vol}_{S^{k-1}}}(\xi''), \]
where $x=(x',0)\in H$, $\xi'' \in \snx$, and $S^{k-1}$ is the $k-1$ dimensional sphere.

 Let $T_H:\sn\to \re\cup \{\infty\}$ with  
 $$
 T_H(\rho):=\inf\{t>0:\; G^t(\rho)\in \sn\},
 $$  
be the first return time. Define the loop set
\[
\mc{L}_H:=\{\rho\in \sn:\; T_H(\rho)<\infty\}
\]
and first return map $\eta:\mc{L}_H\to \sn$ by $\eta(\rho)=G^{T_H(\rho)}(\rho).$
Next, consider the infinite loop sets 
\[
\mc{L}_H^{+\infty}:=\bigcap_{k\geq 0}\eta^{- k}(\mc{L}_H) \qquad \text{and}\qquad \mc{L}_H^{-\infty}:=\bigcap_{k\geq 0}\eta^{ k}(\mc{L}_H),
\]
and the recurrent set
\[
\mc{R}_H=\mc{R}_H^+\cap \mc{R}_H^-
\]
where
\[
\mc{R}_H^{\pm}:=\left\{\rho\in \mc{L}_H^{\pm\infty}:\;  \rho\in \bigcap_{N>0}\overline{\bigcup_{k\geq N}\eta^{\pm k}(\rho)}\right\}.
\]

In what follows we write $\pi_H:\sn \to H$ for the canonical projection map onto $H$, and $\dbox(B)$ for the Minkowski box dimension of a set $B$. 

%%%%%%%%%%%%%%%%%%%%%%%%%%%%%%%%%%%%%%%%%%%%%%%%%%%%%%%%%%%%%%%%%%%%%%%%%%%%%%%%
\begin{theorem}\label{thm:recur}
Let $(M,g)$ be a smooth, compact Riemannian manifold of dimension $n$. Let $H\subset M$ be a closed embedded submanifold of codimension $k$, and $A\subset H$ be a subset with boundary $\partial A$ satisfying $\dbox(\partial A)<n-k-\frac{1}{2}. $
Suppose 
\[
\sig(\mc{R}_H\cap \pi_H^{-1}(A))=0.
\] \medskip
 Then
\[
\int_A\phi_hd\sigma_H=o(h^{\frac{1-k}{2}})
\]
as $h \to 0^+$ for every sequence $\{\phi_h\}$ of Laplace eigenfunctions.
 \end{theorem}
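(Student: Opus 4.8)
The plan is to reduce Theorem~\ref{thm:recur} to the local microlocal estimate of Theorem~\ref{t:local2} (the "main estimate" alluded to in the introduction), which bounds $\int_A \phi_h\, d\sigma_H$ in terms of the mass that the defect measure of $\{\phi_h\}$ places on neighborhoods of $\sn$ above $A$. Concretely, I would first pass to a subsequence: suppose for contradiction that $\limsup_{h\to 0} h^{\frac{k-1}{2}}\big|\int_A \phi_h\, d\sigma_H\big| = c > 0$, and extract a subsequence $h_j \to 0$ realizing this $\limsup$ and along which $|\phi_{h_j}|^2\, dx$ (together with its microlocal lift) converges to a defect measure $\mu$ on $S^*M$. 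The strategy is to show that $\mu$ must be carried, in a quantitative sense, by the conormal set above the recurrent directions $\mc{R}_H \cap \pi_H^{-1}(A)$, contradicting the hypothesis $\sig(\mc{R}_H \cap \pi_H^{-1}(A)) = 0$.

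The key steps, in order. (1) Invoke Theorem~\ref{t:local2} to get that the lower bound $c>0$ forces $\mu$ to have positive mass on $\sn \cap \pi_H^{-1}(A)$ — more precisely, that the conormal part of $\mu$, pushed to $\sn$, is nonzero over $A$; here the hypothesis $\dbox(\partial A) < n-k-\tfrac12$ is exactly what is needed for the boundary of $A$ to be microlocally negligible, so that averaging over $A$ rather than all of $H$ loses nothing. (2) Use invariance of $\mu$ under the geodesic flow $G^t$ to propagate this conormal mass: the portion of $\mu|_{\sn}$ lying over $\mc{L}_H$ returns, under the first-return map $\eta$, to another piece of $\mu|_{\sn}$, because $\mu$ is flow-invariant and $G^{T_H}$ maps $\mc{L}_H$ into $\sn$. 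Iterating, the mass concentrated over $\sn$ that does not eventually escape the loop set must live over $\mc{L}_H^{+\infty} \cap \mc{L}_H^{-\infty}$. (3) Show that the part of $\mu|_{\sn}$ over points that are in $\mc{L}_H^{\pm\infty}$ but \emph{not} recurrent contributes nothing to the average: a non-recurrent direction has its $\eta$-orbit wandering off, so by a Poincaré-recurrence / conservation-of-mass argument the flow-invariant measure cannot charge it. What remains is that the relevant mass sits over $\mc{R}_H$; combined with the projection to $A$, over $\mc{R}_H \cap \pi_H^{-1}(A)$, which has $\sig$-measure zero. Since the conormal component of $\mu$ is absolutely continuous with respect to $\sig$ on the relevant fibers (this is part of the structural description of defect measures supporting maximal averages promised in the introduction), its mass there is $0$ — contradiction.

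I expect the main obstacle to be Step~(2)–(3): making precise the bookkeeping that relates "$\mu$ has conormal mass over $A$" to "that mass is $\eta$-invariant and hence concentrated over $\mc{R}_H$." The subtlety is that the defect measure lives on $S^*M$ and is invariant under the \emph{full} geodesic flow, whereas $\eta$ and $\mc{R}_H$ are defined on the \emph{cross-section} $\sn$; one must build the correct conditional/transverse measure on $\sn$ induced by $\mu$ and verify that it is $\eta$-invariant where $\eta$ is defined, handling the set where $T_H = \infty$ (directions that leave $\sn$ forever) separately — these contribute to the average only through a single "first visit," which Theorem~\ref{t:local2} should already control. A secondary technical point is ensuring the box-dimension hypothesis on $\partial A$ is genuinely used only to discard an $o(h^{\frac{1-k}{2}})$ error coming from a shrinking tube around $\partial A$ of width a suitable power of $h$; this is a direct volume estimate once the exponent $n-k-\tfrac12$ is traced through the scaling in Theorem~\ref{t:local2}.
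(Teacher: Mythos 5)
Your overall strategy is the same as the paper's: argue by contradiction, use Theorem~\ref{t:local2} (via Theorem~\ref{t:local}) to reduce to a statement about the defect measure, prove a Poincar\'e-recurrence lemma showing that the conormal pushforward $\muH$ is concentrated on $\mc{R}_H$, and combine with the hypothesis $\sig(\mc{R}_H\cap\pi_H^{-1}(A))=0$. This is exactly the paper's Lemma~\ref{P:suppRecur} plus the short argument in Section~\ref{S:recur}.

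Two points of comparison. First, your route to the recurrence statement iterates the first-return map $\eta$ on $\sn$ and tries to build an $\eta$-invariant transverse measure, which you correctly flag as delicate (handling $T_H=\infty$, the cross-section bookkeeping, etc.). The paper bypasses all of this by applying Poincar\'e recurrence to the \emph{ambient} measure-preserving system $(S^*M,\mu,G^t)$, then transferring to $\sn$ via the flowout identity $\mu|_{B_{2\delta}}=\muH\,dt$ from \cite[Lemma~6]{CGT}, so one never needs $\eta$-invariance of $\muH$. This is materially simpler and avoids the very subtleties you worried about.

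Second, your closing step asserts that ``the conormal component of $\mu$ is absolutely continuous with respect to $\sig$'' as a structural fact about defect measures with maximal averages, and deduces the contradiction from $\sig(\mc{R}_H\cap\pi_H^{-1}(A))=0$. That assertion is not true in general: $\muH$ can have a singular part. What Theorem~\ref{t:local2} actually says is that only the absolutely continuous part enters the upper bound. The correct move, as in the paper, is to take the Lebesgue decomposition $\muH=f\,\sig+\lambda_{H}$ with $\lambda_{H}\perp\sig$; Theorem~\ref{t:local} then requires showing $\int_{\pi_H^{-1}(A)}\sqrt{f}\,d\sig=0$, which follows because $\sig(\mc{R}_H\cap\pi_H^{-1}(A))=0$ kills the integral over $\mc{R}_H$, while $\muH(\mc{R}_H^c)=0$ (the recurrence lemma) forces $f=0$ $\sig$-a.e. on $\mc{R}_H^c$. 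Without this decomposition your final sentence as written would be a gap, though it is a local fix that does not change the underlying plan.
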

 %%%%%%%%%%%%%%%%%%%%%%%%%%%%%%%%%%%%%%%%%%%%%%%%%%%%%%%%%%%%%%%%%%%%%%%%%%%%%%%%

Theorem~\ref{thm:recur} improves on the work of Wyman~\cite{Wym3}, replacing the measure of the loop set $\mc{L}_H$, by that of the recurrent set $\mc{R}_H$. Taking $H$ to be a single point (i.e. $k=n$) also recovers the results of \cite{SoggeTothZelditch};  see Remark \ref{r:uniformity}.  

When $H$ is a hypersurface, i.e. $k=1$, we can also study the oscillatory behavior of  the normal derivative $h\partial_{\nu} \phi_h$ along $H$.
%%%%%%%%%%%%%%%%%%%%%%%%%%%%%%%%%%%%%%%%%%%%%%%%%%%%%%%%%%%%%%%%%%%%%%%%%%%%%%%%
\begin{theorem}\label{thm:normal}
{Suppose $(M,g,H,A)$ satisfy the assumptions of Theorem~\ref{thm:recur} with $k=1$. Then for every sequence $\{\phi_h\}$ of Laplace eigenfunctions
\[
\left|\int_A\phi_hd\sigma_H\right| + \left|\int_A h\partial_{\nu} \phi_hd\sigma_H\right|=o(1)
\]
as $h \to 0^+$.} 
 \end{theorem}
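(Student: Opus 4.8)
The plan is to treat the two estimates separately. The first, $\Big|\int_A\phi_h\,d\sigma_H\Big|=o(1)$, is just Theorem~\ref{thm:recur} with $k=1$, so all the work goes into the normal-derivative term, and the idea is to show that, after microlocalizing to the conormal directions of $H$, differentiation in $\nu$ amounts to multiplication by $\pm1$ --- reducing the problem to one already handled by the proof of Theorem~\ref{thm:recur}. First I would pass to Fermi coordinates $(x',x_n)$ with $H=\{x_n=0\}$, so that $\partial_\nu=\partial_{x_n}$ on $H$ and $-h^2\Delta_g=(hD_{x_n})^2+R(x,hD')+hD_{x_n}L(x,hD')$ with $R$ tangential, $\sigma(R)(x',0,\xi')=|\xi'|_{g(x',0)}^2$, and $L$ of order $0$. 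Writing $\int_A h\partial_\nu\phi_h\,d\sigma_H=\langle h\partial_\nu\phi_h,\,a\delta_H\rangle$ with $a$ a regularization of $\mathbf 1_A$, the wavefront set of $a\delta_H$ lies in $N^*H$ over the interior of $A$ together with a piece over $\partial A$ that is controlled --- exactly as in Theorem~\ref{thm:recur} --- by the hypothesis $\dbox(\partial A)<n-\tfrac32$. Consequently only the part $\phi_h^{\mathrm{con}}$ of $\phi_h$ microlocalized in a small conic neighborhood of $N^*H|_A$ enters at order $h^{\frac{1-k}{2}}=h^0$: the elliptic region $|\xi'|_g>1$ is $O(h^\infty)$, and in the hyperbolic region with $|\xi'|_g$ bounded away from $0$ the top-order stationary point (at $\xi'=0$) of the relevant oscillatory integral is absent, so that region contributes $o(1)$ to $\int_A\phi_h\,d\sigma_H$ and hence, $hD_{x_n}$ acting there with the bounded symbol $\pm\sqrt{1-|\xi'|_g^2}$, also $o(1)$ to $\int_A h\partial_\nu\phi_h\,d\sigma_H$.

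Next I would split $\phi_h^{\mathrm{con}}=\phi_h^++\phi_h^-$ according to the sign of $\xi_n$, with $\phi_h^\pm$ microlocalized near the conormal covectors $\pm dx_n$ over $A$. From $(hD_{x_n})^2\phi_h=(1-R(x,hD')+O(h))\phi_h$ one obtains $hD_{x_n}\phi_h^\pm=\pm\phi_h^\pm+\Oph(e_\pm)\phi_h+O_{L^2}(h)$, where $e_\pm$ is supported near $N^*H$ and vanishes on $N^*H$ (morally $\tfrac12|\xi'|_g^2$ times a cutoff). Since the average of a conormally microlocalized function concentrates at $\xi'=0$ to leading order, $\int_A\Oph(e_\pm)\phi_h\,d\sigma_H=o(1)$, and the $O(h)$ term is $o(1)$ as well, which would give
\[
\int_A h\partial_\nu\phi_h\,d\sigma_H=i\Big(\int_A\phi_h^+\,d\sigma_H-\int_A\phi_h^-\,d\sigma_H\Big)+o(1).
\]
For real $\phi_h$ one has $\phi_h^-=\overline{\phi_h^+}$, so the right-hand side is $-2\,\mathrm{Im}\int_A\phi_h^+\,d\sigma_H+o(1)$; as Theorem~\ref{thm:recur} only controls $\mathrm{Re}\int_A\phi_h^+\,d\sigma_H=\tfrac12\int_A\phi_h\,d\sigma_H$, a genuinely new estimate on the two halves is needed.

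That estimate I would draw from the machinery behind Theorem~\ref{thm:recur}: writing $\int_A\phi_h^\pm\,d\sigma_H=\langle\phi_h,\Oph(\chi^\pm)^*(a\delta_H)\rangle$ with $\chi^\pm$ a cutoff to one point of the zero-dimensional conormal sphere $\snx$ (here $k=1$), this pairing is governed by precisely the FIO/oscillatory-integral estimate of Theorem~\ref{t:local2} with the harmless extra cutoff $\chi^\pm$. That estimate bounds the average by the microlocal mass of $\phi_h$ in the conormal directions over $A$, and the dynamical argument then forces it to $o(1)$ once $\sig(\mc{R}_H\cap\pi_H^{-1}(A))=0$; since this hypothesis is symmetric in the two halves of $\sn$, it applies to $\phi_h^+$ and $\phi_h^-$ alike. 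Combined with the displayed identity and the triangle inequality, this yields $\Big|\int_A h\partial_\nu\phi_h\,d\sigma_H\Big|=o(1)$, which is the assertion.

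The hard part is the reduction carried out in the first two paragraphs --- replacing $h\partial_\nu$ by $\pm\Id$ modulo $o(1)$ --- and I expect two points to require care. First, the hyperbolic region away from $N^*H$ must contribute $o(1)$ to the \emph{differentiated} average; since $\sqrt{1-|\xi'|_g^2}$ is merely bounded there, this comes down to showing the \emph{un}differentiated average $\int_A\phi_h\,d\sigma_H$ already draws its $h^0$ contribution from an arbitrarily small neighborhood of $\xi'=0$. Second, the correction $\Oph(e_\pm)\phi_h$ must be absorbed; because $e_\pm$ vanishes only to second order at $\xi'=0$, this needs a dyadic decomposition in $|\xi'|$ together with the observation that a symbol supported in $\{|\xi'|\le\delta\}$ carrying an extra factor $|\xi'|^2$ gains over the trivial bound --- and one must verify this gain survives the pairing against $a\delta_H$, which is exactly where the box-dimension control of the $\partial A$ contribution comes in. Once those two points are settled, everything reduces to Theorem~\ref{thm:recur} and Theorem~\ref{t:local2}.
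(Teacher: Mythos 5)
Your overall strategy---microlocally splitting $\phi_h$ into pieces $\phi_h^\pm$ concentrated near the two points of $\snx$ and replacing $hD_\nu$ by $\pm\operatorname{Id}$ on each---is genuinely different from the paper's, and the place where it is incomplete is exactly the spot you flag at the end: absorbing the error $Op_h(e_\pm)\phi_h$. The paper avoids this entirely. Its proof is a two-line observation: since $\phi_h$ is a quasimode for $P$, one has $PhD_\nu\phi_h+[hD_\nu,P]\phi_h=o(h)$; choosing $\chi\in S^0$ with $\chi\equiv 1$ near $N^*\!H$ and supported in $\{|\langle\nu(x),\xi\rangle|>|\xi|/2\}$ (where $hD_\nu$ is elliptic), one can factor $Op_h(\chi)[hD_\nu,P]=hE\,hD_\nu$ and conclude that $hD_\nu\phi_h$ is itself a compactly microlocalized quasimode for $Op_h(\chi)P+hE$, whose principal symbol $\chi p$ still has $H$ conormally transverse. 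Theorem~\ref{t:local2} then applies directly to $hD_\nu\phi_h$: its defect measure is $|\langle\nu(x),\xi\rangle|^2\mu$, so the induced measure on $\SigH$ is $|\langle\nu(x),\xi\rangle|^2\mu_{_{\!H,p}}$, and since $|\langle\nu,\xi\rangle|$ is bounded on the compact set $\SigH$ the same bound as in Theorem~\ref{t:local2} holds. The recurrence hypothesis then closes the argument exactly as in Theorem~\ref{thm:recur}, with no decomposition and no error term.

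By contrast, your plan requires showing $\int_A Op_h(e_\pm)\phi_h\,d\sigma_H=o(1)$, where $e_\pm$ vanishes only to second order in $|\xi'|$ on the characteristic variety. This is not a corollary of anything already established: $Op_h(e_\pm)\phi_h$ is not a quasimode in the sense needed (its error under $P$ is $O(h)$, not $o(h)$), so Theorem~\ref{t:local2} does not apply to it; Proposition~\ref{P:sloth} requires the cutoff to be $H_p$-invariant near $\LambdaH$, which $e_\pm$ is not; and the naive operator-norm bound from $\|e_\pm\|_{C^0}\lesssim\delta^2$ on $\{|\xi'|\lesssim\delta\}$ does not survive quantization because the Calder\'on--Vaillancourt constant involves $\delta^{-N}$. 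You correctly identify that a dyadic decomposition in $|\xi'|$ is the natural tool and that the interaction with the irregular set $A$ (controlled only through $\dbox(\partial A)<n-\tfrac32$) is the dangerous point, but neither step is carried out, and it is not evident a priori that the quadratic gain is enough to beat the $O(1)$ trivial bound uniformly over dyadic scales once the $\partial A$ contribution is included. So as written the proposal has a genuine gap at precisely the place you anticipate; the paper's route sidesteps it by working with $hD_\nu\phi_h$ itself rather than an approximation of it.

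Two smaller points. First, your argument via $\phi_h^-=\overline{\phi_h^+}$ assumes $\phi_h$ real; you ultimately discard it in favor of estimating $\int_A\phi_h^\pm$ separately via the triangle inequality, which is the right move since the theorem is stated for arbitrary $L^2$-normalized eigenfunctions. Second, estimating $\int_A\phi_h^\pm$ separately does go beyond the literal statement of Theorem~\ref{t:local2}; it requires going back to Proposition~\ref{P:sloth} with a cutoff $\chi^\pm$ localized to one sheet of $\snx$, which is permissible but should be spelled out rather than cited as ``Theorem~\ref{t:local2} with a harmless extra cutoff.''
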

 %%%%%%%%%%%%%%%%%%%%%%%%%%%%%%%%%%%%%%%%%%%%%%%%%%%%%%%%%%%%%%%%%%%%%%%%%%%%%%%%
 
Theorem~\ref{thm:recur} allows us to derive substantial conclusions about the geometry of submanifolds supporting eigenfunctions with maximal averages. Indeed, if there exists $c>0$ and  a sequence of eigenfunctions $\{\phi_h\}$ for which
\[
\left|\int_A\phi_hd\sigma_H\right|>c\,h^{\frac{1-k}{2}},
\]
then,
\[
\sig(\mc{R}_H\cap \pi_H^{-1}(A))>0.
\]

{Next, we  present different {geometric conditions on $(M,g)$ which imply} $\sig(\mc{R}_H)=0$. We recall that strictly negative sectional curvature implies Anosov geodesic flow.  Also, both Anosov geodesic flow and non-negative sectional curvature imply that $(M,g)$ has no conjugate points.}
%%%%%%%%%%%%%%%%%%%%%%%%%%%%%%%%%%%%%%%%%%%%%%%%%%%%%%%%%%%%%%%%%%%%%%%%%%%%%%%%
\begin{theorem}\label{T:applications}
 Let $(M,g)$ be a smooth, compact Riemannian manifold of dimension $n$. Let $H\subset M$ be a closed embedded submanifold of codimension $k$. Suppose one of the following assumptions holds{:}
\begin{enumerate}[label=\textbf{\Alph*.},ref=\Alph*]
%\item[{\bf A.}] $(M,g)$ has no focal points and $H$ has  codimension $k>1$.\smallskip %B
\item  \label{a1} $(M,g)$ has no conjugate points and $H$ has codimension $k>\frac{n +1}{2}$. \smallskip %C
\item  \label{a2}$(M,g)$ has no conjugate points and $H$ is a geodesic sphere.\smallskip %F
\item  \label{a3} $(M,g)$ has  constant negative curvature.\smallskip %D
\item  \label{a4}$(M,g)$ is a surface with Anosov geodesic flow. \smallskip %A
\item  \label{a5}$(M,g)$ has Anosov geodesic flow and {non-positive curvature}, and $H$ is totally geodesic.  \smallskip %E
\item  \label{a6} $(M,g)$ has {Anosov geodesic flow} and $H$ is a subset $M$ that lifts to a  horosphere. %G
\end{enumerate}
Then
\[\sig(\mc{R}_H)=0.\]
In addition, condition~\ref{a1} implies that $\sig(\mc{L}_H)=0.$
\end{theorem}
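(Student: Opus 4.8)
The plan is to treat each of the six cases separately, in each case showing that the relevant portion of $\sn$ carrying recurrent conormal geodesics has measure zero. The unifying principle is that recurrence in the first-return dynamics $\eta$ forces the underlying geodesic to return to $H$ infinitely often while its conormal lift accumulates on itself; under each geometric hypothesis this creates enough rigidity (via the absence of conjugate points, the convexity/contraction estimates of Anosov flows, or the explicit structure of hyperbolic space and horospheres) to confine such directions to a set that is either empty or a countable union of lower-dimensional submanifolds, hence $\sig$-null. I would first dispose of the easy structural reductions: since $\mc{R}_H\subset\mc{L}_H^{+\infty}\subset\mc{L}_H$, it suffices in case~\ref{a1} to bound $\sig(\mc{L}_H)$, and for this the classical argument (as in \cite{SoggeTothZelditch,Wym3}) shows that under no conjugate points the loop set $\mc{L}_H$, viewed via the exponential map, is contained in the critical set of a submersion-type map whose fibers have the right dimension precisely when $k>\tfrac{n+1}{2}$; a Sard/covering argument then gives $\sig(\mc{L}_H)=0$, which is stronger than what is claimed. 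Case~\ref{a2} is similar in spirit: for geodesic spheres the loop directions correspond to geodesic chords returning to a fixed sphere, and no conjugate points forces these to lie in a measure-zero set by a direct Jacobian computation on the sphere bundle (this is essentially the content of \cite{SoggeTothZelditch} in the $k=n$ degeneration and its submanifold analogue).

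For the Anosov cases~\ref{a3}--\ref{a6} the key input is that the geodesic flow is hyperbolic, so I would set up the first-return map $\eta$ on $\mc{L}_H$ and study its recurrent set using the contraction/expansion of the stable and unstable distributions. The main step is to show that a point $\rho\in\mc{R}_H$ whose forward and backward $\eta$-orbits accumulate on $\rho$ itself must have its conormal lift lie in the stable manifold of a point on its unstable manifold (a homoclinic-type condition), and that the set of conormal directions satisfying this is contained in a countable union of submanifolds transverse to $\sn$, hence $\sig$-null — here the transversality of $N^*H$ to the weak-stable/weak-unstable foliations is what must be established, and for this one uses that $H$ has no "glancing" behavior relative to the horocycle foliations. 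In the constant negative curvature case~\ref{a3} and the horosphere case~\ref{a6} I would instead argue explicitly in the universal cover $\mathbb{H}^n$ (or the relevant symmetric space): conormal geodesics to $H$ lift to geodesics meeting a fixed lifted copy $\tilde H$, a recurrent such geodesic would have to return arbitrarily close to a lift of the starting conormal vector infinitely often, and the discreteness of the deck group together with the explicit hyperbolic geometry (convexity of distance to a totally geodesic subspace, or to a horosphere) shows this can only happen on a measure-zero set of directions. For~\ref{a4} (surfaces, $k=1$) I would combine the Anosov homoclinic argument with the fact that in dimension two the conormal bundle $\sn$ to a curve is a pair of circles' worth of directions and the first-return map is a circle-type map whose recurrent set, by the Anosov property plus no conjugate points (automatic for Anosov surfaces), is at most countable. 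Case~\ref{a5} (totally geodesic $H$ in an Anosov manifold of non-positive curvature) follows by noting that conormal geodesics to a totally geodesic submanifold that return to $H$ do so in a way controlled by the second fundamental form (which vanishes), reducing to a rigidity statement about geodesics leaving and returning orthogonally, which non-positive curvature plus the Anosov property kills off-measure-zero.

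The main obstacle I expect is precisely the transversality/homoclinic step in the general Anosov cases: one must rule out that $N^*H$ is tangent to the (weak-)stable or unstable foliation along a positive-measure set of $\sn$, since such tangency is exactly what would allow recurrence to persist. Establishing this requires a careful analysis of how the Sasaki-measure $\sig$ on $\sn$ disintegrates against the stable/unstable measures of the flow, and an argument that the "bad" set where the conormal fibre is tangent to a leaf is a closed set of lower dimension — this is where the hypothesis that $H$ is totally geodesic (in~\ref{a5}), a horosphere (in~\ref{a6}), or that $n=2$ (in~\ref{a4}) is doing real work, each special structure making the tangency locus explicitly computable. Once that transversality is in hand, the measure-zero conclusion follows from a standard argument: the recurrent set of a hyperbolic first-return map that is transverse to the contracting foliation is contained in a countable union of local stable manifolds, which meet $\sn$ in a $\sig$-null set. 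I would organize the write-up so that the transversality lemma is isolated and proved once in a form general enough to cover \ref{a3}--\ref{a6}, then verified case by case.
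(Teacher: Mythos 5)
Your proposal is on the right track for case~\ref{a1}: the implicit function theorem (or rank) argument showing that $\mc{L}_H$ is covered by lower-dimensional submanifolds when $k>\frac{n+1}{2}$ is exactly what the paper does (it does not invoke Sard's theorem, but the spirit of your argument is the same). However, for case~\ref{a2} you miss a cleaner reduction: the paper observes that a geodesic sphere is of the form $H_t=\pi G^t(\SNH)$ for $H=\{p\}$ a single point (so $k=n$), and transfers the measure-zero conclusion from the point via a lemma (Lemma~\ref{l:flowRecur}) that relates $\mc{R}_{H_t}$ to $\mc{R}_H$ through the flow. Your ``direct Jacobian computation on the sphere bundle'' would need to be worked out and is harder than this reduction.

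For the Anosov/no-focal cases~\ref{a3}--\ref{a6} your route is genuinely different from the paper's, and there are two gaps that matter. First, the paper's crucial unifying mechanism is a \emph{tangent-space decomposition} of $T_\rho \SNH$ against the hyperbolic splitting $E_+\oplus E_-$: it defines $N_\pm(\rho)=T_\rho\SNH\cap E_\pm(\rho)$ and partitions $\SNH$ according to whether $T_\rho\SNH$ is spanned by $N_\pm$ (the ``split'' set $\mc{S}_H$) and whether both $N_\pm$ are nontrivial (the ``mixed'' set $\mc{M}_H$). Theorem~\ref{T:tangentSpace} shows that $\mc{R}_H$ is $\sig$-a.e.~contained in $\mc{A}_H=\mc{S}_H\cap\mc{M}_H$; this is proved with an implicit-function-theorem argument (Lemma~\ref{l:implicit}, Proposition~\ref{P:1}) whenever some tangent direction escapes $T\SNH\oplus\R H_p$ under the flow, which is related in spirit to your transversality step. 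But there is a second, independent mechanism you do not have: a \emph{Borel--Cantelli volume-contraction argument} (Lemmas~\ref{P:integral} and~\ref{l:noStable}) showing that whenever $T_\rho\SNH\subset E_-(\rho)$ (or $\subset E_+(\rho)$) on a set $A$, the exponential contraction of $\vol(G^t(A))$ forces $\sig(\mc{R}_H\cap A)=0$. This is what handles both the purely stable/unstable tangent spaces (which your proposal flags as the potential obstruction) and the horosphere case~\ref{a6} outright. So your ``main obstacle'' — tangency of $N^*H$ to the stable/unstable foliation — is in fact \emph{not} an obstruction in the paper: tangency to a single one of the foliations is favorable, and the genuinely delicate case is when $T_\rho\SNH$ is a nontrivial \emph{sum} of stable and unstable parts, which is precisely $\mc{A}_H$.

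Second, the case-by-case work is different and a few of your claims are not quite right. For surfaces~\ref{a4}, the paper does not show the recurrent set is ``at most countable''; it shows $\mc{M}_H=\emptyset$, because $\dim\SNH=1$ together with $E_+\cap E_-=\{0\}$ makes it impossible for both $N_+$ and $N_-$ to be nonzero. For totally geodesic $H$ in non-positive curvature~\ref{a5}, the paper shows $\mc{S}_H=\emptyset$ by noting that horizontal lifts of $T_xH$ lie in $T_\rho\SNH$, while the absence of parallel Jacobi fields forces $E_\pm$ to be nowhere horizontal — simpler than the ``rigidity of orthogonal returns'' argument you sketch. For constant negative curvature~\ref{a3}, the paper does work in $\mathbb{H}^n$ as you suggest, but the core of the argument is an explicit Taylor-expansion computation on the second fundamental form of $H$ (Lemma~\ref{P:hyperbolic}) showing that $T_\rho\SNH=N_+(\rho)\oplus N_-(\rho)$ with both factors nontrivial leads to incompatible constraints on a positive-measure set; your proposed ``convexity of distance'' argument would have to be fleshed out to conclude measure-zero and is a genuinely different — and unverified — route.
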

%%%%%%%%%%%%%%%%%%%%%%%%%%%%%%%%%%%%%%%%%%%%%%%%%%%%%%%%%%%%%%%%%%%%%%%%%%%%%%%%

Combining Theorems \ref{thm:recur} and \ref{T:applications} gives the following result on the oscillatory behavior of eigenfunctions when restricted to $H$.

\begin{corollary}\label{C:applications}
Let $(M,g)$ be a manifold {of dimension $n$} and let $H\subset M$ be a closed embedded submanifold {of codimension $k$} satisfying one of the assumptions {\ref{a1}}-{\ref{a6}} in Theorem \ref{T:applications}. Suppose that $A\subset H$ satisfies $\dbox(\partial A)<n-k-\frac{1}{2}$. Then
\[
\int_A\phi_hd\sigma_H=o(h^{\frac{1-k}{2}})
\]
as $h \to 0^+$ for every sequence $\{\phi_h\}$ of Laplace eigenfunctions.
\end{corollary}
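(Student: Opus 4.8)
The proof of Corollary~\ref{C:applications} is a direct combination of the two preceding results, so the plan is essentially bookkeeping: verify that the hypotheses of Theorem~\ref{thm:recur} are met for each of the cases~\ref{a1}--\ref{a6}, then invoke its conclusion. Concretely, fix $(M,g)$ and $H$ satisfying one of \ref{a1}--\ref{a6}, and let $A\subset H$ be a subset whose boundary $\partial A$ has $\dbox(\partial A)<n-k-\tfrac12$. By Theorem~\ref{T:applications}, whichever of \ref{a1}--\ref{a6} holds, we have $\sig(\mc{R}_H)=0$; since $\mc{R}_H\cap\pi_H^{-1}(A)\subseteq\mc{R}_H$ and $\sig$ is a (non-negative) measure, monotonicity gives $\sig(\mc{R}_H\cap\pi_H^{-1}(A))=0$. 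Thus the pair $(M,g,H,A)$ satisfies all hypotheses of Theorem~\ref{thm:recur}.

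Applying Theorem~\ref{thm:recur} then yields
\[
\int_A\phi_h\,d\sigma_H=o\big(h^{\frac{1-k}{2}}\big),\qquad h\to 0^+,
\]
for every sequence $\{\phi_h\}$ of Laplace eigenfunctions, which is exactly the claimed bound. This completes the proof.

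There is essentially no obstacle here: the only thing to be slightly careful about is that the box-dimension hypothesis on $\partial A$ in the corollary is stated verbatim as in Theorem~\ref{thm:recur}, so no additional geometric input about $H$ (e.g.\ its own regularity, or whether $A$ is open) is needed beyond what Theorem~\ref{thm:recur} already assumes. One could also remark, as the paper does after Theorem~\ref{thm:recur}, on the contrapositive: if some sequence $\{\phi_h\}$ has $|\int_A\phi_h\,d\sigma_H|>c\,h^{(1-k)/2}$ for a fixed $c>0$, then necessarily $\sig(\mc{R}_H\cap\pi_H^{-1}(A))>0$, which under \ref{a1}--\ref{a6} is impossible; but this adds nothing beyond the direct argument. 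The substantive content of the corollary lives entirely in Theorems~\ref{thm:recur} and~\ref{T:applications}.
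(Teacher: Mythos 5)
Your proof is correct and matches the paper's own reasoning exactly: the paper simply notes that Corollary~\ref{C:applications} follows by combining Theorems~\ref{thm:recur} and~\ref{T:applications}, and your observation that $\sig(\mc{R}_H)=0$ implies $\sig(\mc{R}_H\cap\pi_H^{-1}(A))=0$ by monotonicity is the only (trivial) verification needed. Nothing further is required.
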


{We conjecture that the conclusions of Theorem~\ref{T:applications}, and hence also Corollary~\ref{C:applications}, hold in the case that $(M,g)$ is a manifold with Anosov geodesic flow of any dimension.}

{
%%%%%%%%%%%%%%%%%%%%%%%%%%%%%%%%%%%%%%%%%%%%%%%%%%%%%%%%%%%%%%%%%%%%%%%%%%%%%%%%
\begin{conjecture}
Let $(M,g)$ be a manifold of dimension $n$ with Anosov geodesic flow and let $H\subset M$ be a submanifold of codimension $k$. Then
$$
\sig(\mc{R}_H)=0.
$$
\end{conjecture}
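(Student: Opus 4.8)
\emph{A possible approach.} The assertion concerns only the geodesic flow and the submanifold $H$, so the argument should be dynamical; the plan is to recast it as a statement about the first return map $\eta:\mc{L}_H\to\sn$. First I would record the symplectic structure at play. Let $\alpha$ be the restriction to $S^*M$ of the tautological one-form; its Reeb field generates $G^t$, the contact hyperplane field $\ker\alpha$ has dimension $2n-2$ and is $G^t$-invariant, and the Anosov splitting satisfies $E^s\oplus E^u\subset\ker\alpha$ with each $E^s_\rho,E^u_\rho$ a Lagrangian subspace of $(\ker\alpha_\rho,d\alpha_\rho)$. Since $N^*H$ is Lagrangian in $T^*M$ and the tautological one-form vanishes on $N^*H$, each $T_\rho\sn$ lies in $\ker\alpha_\rho$ and is Lagrangian there. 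Differentiating $\eta(\rho)=G^{T_H(\rho)}(\rho)$ and pairing with $\alpha$ shows, moreover, that $dT_H$ vanishes on $T\mc{L}_H$ wherever $\mc{L}_H$ is a smooth manifold; so on its regular locus $\eta$ is locally a time-$c$ map $G^c$ with $c$ locally constant, and $D\eta_\rho$ is the restriction to the Lagrangian $T_\rho\sn$ of the hyperbolic linear symplectomorphism $D(G^c)_\rho$ on $\ker\alpha_\rho$.

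I would then argue by contradiction. If $\sig(\mc{R}_H)>0$, then since $\mc{R}_H$ is $\eta$- and $\eta^{-1}$-invariant and consists of points recurrent for $\eta$ in both time directions, a Krylov--Bogolyubov argument yields an $\eta$-invariant Borel probability $\nu$ with $\supp\nu\subseteq\overline{\mc{R}_H}$, and suspending $(\eta,\nu)$ over the return time and mapping into $S^*M$ by $G^t$ produces a $G^t$-invariant probability measure whose orbits meet $\sn$ at a relatively dense set of times. The dynamics of a hyperbolic symplectomorphism on Lagrangian subspaces is rigid: a Lagrangian transverse to $E^s$ is attracted to the unstable Lagrangian $E^u$ under forward iteration, and symmetrically backward. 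Recurrence of $\eta$ therefore forbids $T_\rho\sn$ from being transverse to $E^s_\rho$ or to $E^u_\rho$ on a positive-$\nu$-measure set; in the borderline situation $T_\rho\sn$ must coincide with $E^s_\rho$ (resp.\ $E^u_\rho$) along $\nu$-a.e.\ orbit. Since a finite $\eta$-invariant measure cannot coexist with uniform $\sig$-expansion or $\sig$-contraction on a measurable subbundle of $T\sn$, I would conclude that $\sig$-a.e.\ point of $\mc{R}_H$ lies in the degenerate locus where $\sn$ fails to be transverse to the weak--stable or weak--unstable foliation.

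The remaining step, which I expect to be the main obstacle, is to show that this degenerate locus supports no positive-$\sig$-measure recurrent set. This is exactly where the cases already settled in Theorem~\ref{T:applications} live: under \ref{a1} the loop set is $\sig$-null by a dimension count; in constant negative curvature the ``returns conormal to $H$'' condition quantizes, so $\mc{L}_H$ is countable; for totally geodesic $H$ or for a horosphere one uses convexity of the distance to $H$ in the universal cover to force monotone escape and reduces returns to a counting problem over the deck group. But for $H$ a horosphere one has $T\sn\equiv E^s$ (or $E^u$) identically, so the degenerate locus is \emph{all} of $\sn$; any proof of the general statement must therefore detect that such rigid configurations are the only way the degenerate locus can be large. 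Because $E^s$ and $E^u$ are in general merely H\"older continuous, no soft transversality or Sard-type mechanism is available, and what seems to be needed is either a classification of $G^t$-invariant measures that are ``subordinate to an isotropic submanifold'' in the above sense --- forcing them onto closed orbits or a single weak--(un)stable leaf and thus back into the rigid regime --- or a quantitative equidistribution input, such as exponential mixing of $G^t$, strong enough to contradict a positive-measure recurrent return set. The lack of such an input is, in my view, precisely why this remains a conjecture.
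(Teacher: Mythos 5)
The statement you are asked to prove is labelled a \emph{conjecture} in the paper: the authors explicitly state that they believe $\sig(\mc{R}_H)=0$ for all Anosov $(M,g)$ and all $H$, but they only prove it under the additional hypotheses \ref{a1}--\ref{a6} of Theorem~\ref{T:applications}. There is therefore no ``paper's own proof'' to compare against, and your submission is candid about being a discussion of the obstruction rather than a proof, so the honest assessment is that neither you nor the paper settles the question.

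That said, your reduction-to-the-degenerate-locus step does track the paper's Theorem~\ref{T:tangentSpace}, which proves that for Anosov flows $\sig(\mc{R}_H)=\sig(\mc{R}_H\cap\mc{A}_H)$, where $\mc{A}_H$ is the set where $T_\rho\sn$ splits into nontrivial stable and unstable parts. But the mechanism is different, and the difference matters. The paper argues pointwise: Proposition~\ref{P:1} locates, for each $\rho$ outside the degenerate locus and each large return time $t_0$, a vector $\mathbf{w}\in T_\rho\sn$ with $dG^{t_0}\mathbf{w}\notin T_{G^{t_0}\rho}\sn\oplus\R H_p$, and Lemma~\ref{l:implicit} then invokes the implicit function theorem (with rank $\geq 2$) to show the local return set has $\sig$-measure zero; the purely stable or unstable case is killed separately by exponential volume decay and Borel--Cantelli (Lemma~\ref{P:integral}, Lemma~\ref{l:noStable}). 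Your route via Krylov--Bogolyubov has a genuine gap at exactly the step you flag as a conclusion: the $\eta$-invariant measure $\nu$ it produces need not be absolutely continuous with respect to, or even non-singular to, $\sig|_{\mc{R}_H}$, so ruling out $\nu$-positive transversality does not control the $\sig$-measure of the transversal set. The paper's implicit-function-theorem argument avoids this by never invoking an invariant measure and instead working with the Riemannian/Sasaki measure $\sig$ directly. One smaller mismatch: you place the horosphere case ($T\sn\equiv E^s$ or $E^u$) inside the hard degenerate locus, but the paper actually disposes of exactly that configuration by the exponential contraction argument (part~\ref{a6} plus Lemma~\ref{l:noStable}); the genuinely unresolved locus is the mixed-and-split set $\mc{A}_H$ where $T_\rho\sn$ has nontrivial components in both $E_+$ and $E_-$, and it is there, in dimension $\geq 3$ and outside the rigid cases of Theorem~\ref{T:applications}, that no argument is currently available. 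You correctly identify that either a classification of the relevant invariant structures or a quantitative mixing input would be needed; this diagnosis agrees with the paper's framing of the problem.
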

%%%%%%%%%%%%%%%%%%%%%%%%%%%%%%%%%%%%%%%%%%%%%%%%%%%%%%%%%%%%%%%%%%%%%%%%%%%%%%%%
}

%%%%%%%%%%%%%%%%%%%%%%%%%%%%%%%%%%%%%%%%%%%%%%%%%%%%%%%%%%%%%%%%%%%%%%%%%%%%%%%%
%%%%%%%%%%%%%%%%%%%%%%%%%%%%%%%%%%%%%%%%%%%%%%%%%%%%%%%%%%%%%%%%%%%%%%%%%%%%%%%%
\subsection{Semiclassical operators and a quantitative estimate}
%%%%%%%%%%%%%%%%%%%%%%%%%%%%%%%%%%%%%%%%%%%%%%%%%%%%%%%%%%%%%%%%%%%%%%%%%%%%%%%%
%%%%%%%%%%%%%%%%%%%%%%%%%%%%%%%%%%%%%%%%%%%%%%%%%%%%%%%%%%%%%%%%%%%%%%%%%%%%%%%%

This section contains the key analytic theorem for controlling submanifold averages (Theorem~\ref{t:local2}) which, in particular, has Theorem{s~\ref{thm:recur} and~\ref{thm:normal} as {corollaries}}. We control the oscillatory behavior of quasimodes of semiclassical pseudodifferential operators {using} a quantitative estimate relating averages of quasimodes to the behavior of the {associated} defect measure. As a consequence, we characterize defect measures {for which} the corresponding quasimodes {may} have maximal averages.
 
  We say that a sequence of functions $\{\phi_h\}$ is \emph{compactly microlocalized} if there exists $\chi\in C_c^\infty(T^*M)$ so that 
\[
(1-Op_h(\chi))\phi_h=O_{C^\infty}(h^\infty{\|\phi_h\|_{L^2(M)}}).
\]
Also, we say that $\{\phi_h\}$ is a \emph{quasimode for $P\in \Psi^\infty_h(M)$} if
\[
P\phi_h =o_{L^2}(h),\qquad \|\phi_h\|_{L^2}=1.
\]
In addition, for $p\in S^\infty(T^*M;\R)$, we say that a submanifold $H\subset M$ of codimension $k$ is \emph{conormally transverse for $p$} if given $f_1,\dots f_{k}\in C_c^\infty(M;\R)$ {such that}
\[
H= \bigcap_{i=1}^k\{f_i=0\},\qquad\quad \{df_i\}\text{ linearly independent on }H,
\] 
we have
\begin{equation}
\label{e:transverse} N^*\!H\; \subset \; \{p\neq0\} \cup \bigcup_{i=1}^k\{ H_pf_i \neq 0\},
\end{equation}
where $H_p$ is the Hamiltonian vector field associated to $p$. 

Let 
\[
{\Sigma_p}:=\{p=0\},\qquad  \qquad {\SigH}=\Sigma_p \cap N^*\!H,
\] 
and {consider the Hamiltonian flow}
\[
\varphi_t:=\exp(tH_p).
\]
{We fix $t_0>0$ and define} for a {Borel} measure $\mu$ on $\Sigma_p$, the measure $\mu_{_{\!H,p}}$ on $\SigH$ by setting
\[
\mu_{_{\!H,p}}(A):=\frac{1}{2t_0}\mu\Big( \bigcup_{|t|\leq t_0}\varphi_t(A)\Big),\qquad A\subset \SigH\text{ {Borel}}. 
\]

Remark 2  in \cite{CGT} shows that if $\mu$ is a defect measure associated to {a quasimode} $\{\phi_h\}$ {and $H$ is conormally transverse for $p$}, then $\mu_{_{\!H,p}}(A)$ is independent of the choice of $t_0$. It is then natural to replace the fixed  choice of $t_0$ with $\lim_{t_0\to 0}$. In particular, for $\mu$ a defect measure associated to $\{\phi_h\}$,
\begin{equation} \label{e:muH2}
\mu_{_{\!H,p}}(A)= \lim_{t_0\to 0}\frac{1}{2t_0}\, \mu\Big( \bigcup_{|t|\leq t_0}\varphi_t(A)\Big),
\end{equation}
for all $A \subset \SigH$ Borel.

Next, let $r_H:M\to \re$ be the geodesic distance to $H$. That is, $r_H(x)=d(x,H)$. Then, define $|H_pr_H|:\SigH\to \re$ by 
$$
|H_pr_H|(\rho):=\lim_{t\to 0}|H_pr_H(\varphi_t(\rho))|.
$$
Finally, {we write $\mu\perp \lambda$ when $\mu$ and $\lambda$ are mutually singular measures and} let $\Sig$ be the volume measure induced on $\SigH$ by the Sasaki metric.

%%%%%%%%%%%%%%%%%%%%%%%%%%%%%%%%%%%%%%%%%%%%%%%%%%%%%%%%%%%%%%%%%%%%%%%%%%%%%%%%
\begin{theorem}
\label{t:local2}
Let $(M,g)$ be a smooth, {compact} Riemannian manifold of dimension $n$ and $P\in \Psi^\infty(M)$ have real valued {principal} symbol $p(x,\xi)$. Suppose that $H\subset M$ is a {closed embedded} submanifold of codimension $k$ conormally transverse for $p$, and that $\{\phi_h\}$ is a compactly microlocalized quasimode for $P$ with defect measure $\mu$. Let  $f\in L^1(H,\Sig)$ and $\lambda_{_{\! H}}\perp \Sig$ be so that
\[
\mu_{_{\!H,p}}=f d\Sig+\lambda_{_{\! H}}.
\]
Let $w\in C^\infty(H)$ and $A \subset H$ with $\dbox(\partial A)<n-k-\frac{1}{2}$.
Then there exists $C(n,k)=C_{n,k}>0$, depending only on {$n$ and $k$}, so that
\begin{equation}
\label{e:conclusion}
\limsup_{h\to 0^+}h^{\frac{k-1}{2}}\left|\int_A w \phi_hd\sigma_H\right|\leq C_{n,k}\int_{\pi_H^{-1}(A)} |w| \sqrt{f{|H_pr_H|^{-1}}} d\Sig.
\end{equation}
\end{theorem}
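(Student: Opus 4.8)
\emph{Proof plan.} The plan is to pass from the linear average $\int_A w\phi_h\,d\sigma_H$ to a quadratic expression in $\phi_h$ controlled by $\mu$, keeping track of constants. First I would reduce to a local statement: cover $H$ by finitely many Fermi coordinate charts $(x',x'')$ with $H=\{x''=0\}$, insert a partition of unity on $H$, and use linearity. The rough set $A$ is replaced by a smooth cutoff whose difference is supported in a $\delta$-neighborhood $N_\delta(\partial A)\subset H$; bounding $\int_{N_\delta(\partial A)}w\phi_h\,d\sigma_H$ by Cauchy--Schwarz together with a sharp \emph{local} $L^2$-restriction estimate on each of the $\sim\delta^{-\dbox(\partial A)}$ balls of radius $\delta$ covering $\partial A$, and optimizing $\delta=\delta(h)\to 0$, the hypothesis $\dbox(\partial A)<n-k-\tfrac12$ is exactly what makes this error $o(h^{\frac{1-k}{2}})$. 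So it suffices to bound $\int_H w\phi_h\,d\sigma_H$ with $w\in C_c^\infty$ supported in one chart.

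Next, using conormal transversality, I would fix a microlocal cutoff $\Oph(\chi)$ to a small conic neighborhood of $\SigH$ and show $\int_H w(I-\Oph(\chi))\phi_h\,d\sigma_H=o(h^{\frac{1-k}{2}})$: the part of $\phi_h$ with $p\neq0$ is negligible by the quasimode hypothesis and compact microlocalization, while the conormal directions with $p\neq0$ contribute nothing to the $\xi''$-integral below since they live away from the sphere $\{p=0\}$, and the non-conormal directions do not contribute to the (zero $x'$-frequency) average. Then I would write $\int_H w\,\Oph(\chi)\phi_h\,d\sigma_H$ via the partial semiclassical Fourier transform $\mc{F}^h_{x''}$ in the normal variables, pass to polar coordinates $\xi''=r\omega$ in the conormal fibre, and apply a one-dimensional stationary phase in $r$ (legitimate because $\partial_{\xi''}p\neq0$ on $\SigH$, again by conormal transversality), which localizes $r$ to the sphere $\SigH$ and produces the Hessian weight $|H_pr_H|^{-1/2}$. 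This leaves an expression of the form $h^{\frac{1-k}{2}}\int_{\pi_H^{-1}(A)}w(x')\,c_h(x',\omega)\sqrt{|H_pr_H|^{-1}}\,d\Sig(x',\omega)$, where $c_h$ is the (rescaled) conormal symbol of $\phi_h$ along $\SigH$.

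To bound this, I would exploit that $\phi_h$ is nearly invariant under the Hamilton flow $\varphi_t=\exp(tH_p)$ for $|t|\le t_0$: from $P\phi_h=o_{L^2}(h)$ one gets $\|(e^{itP/h}-I)\phi_h\|_{L^2}=o(t)$, and — after the microlocalization above has already turned the relevant object into an honest $L^2$ function — a restriction estimate lets one take $t_0=t_0(h)\to0$ slowly while the error stays $o(h^{\frac{1-k}{2}})$. Averaging over $|t|\le t_0$ and performing the Cauchy--Schwarz \emph{pointwise in the conormal direction} $\rho\in\SigH$ (this is the step that keeps the weight under the integral, yielding $\int|w|\sqrt{f|H_pr_H|^{-1}}$ rather than a weaker product of $L^2$ norms), one dominates the expression by
\[
C_{n,k}\, h^{\frac{1-k}{2}}\int_{\pi_H^{-1}(A)}|w(\rho)|\,\Big(\tfrac{1}{2t_0}\big\langle \Oph(\chi_\rho)\phi_h,\Oph(\chi_\rho)\phi_h\big\rangle\Big)^{1/2}\sqrt{|H_pr_H|^{-1}(\rho)}\;d\Sig(\rho),
\]
with $\chi_\rho$ supported near $\rho$. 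Now $\langle\Oph(\chi_\rho)^*\Oph(\chi_\rho)\phi_h,\phi_h\rangle\to\int\chi_\rho^2\,d\mu$, and by the flow-averaging together with \eqref{e:muH2} and the definition of $\mu_{_{\!H,p}}$, the quantity $\tfrac{1}{2t_0}\int\chi_\rho^2\,d\mu$ converges, after $\chi_\rho$ shrinks to $\rho$ and $t_0\to0$, to the $\mu_{_{\!H,p}}$-density at $\rho$. Writing $\mu_{_{\!H,p}}=f\,d\Sig+\lambda_{_{\!H}}$: since the cutoffs $\chi_\rho^2$ are uniformly bounded and $\lambda_{_{\!H}}\perp\Sig$, a Besicovitch covering argument shows that the singular part $\lambda_{_{\!H}}$ contributes $0$ in the limit, so only $f(\rho)$ survives. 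Taking a Riemann-sum limit over a shrinking partition of $\SigH$ then gives \eqref{e:conclusion}, with $C_{n,k}$ collecting the fixed Fourier and stationary-phase normalizations and the Sasaki-measure Jacobians.

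The main obstacle is the third step: making the flow-averaging rigorous despite $w\,\delta_H\notin L^2$ forces the order ``microlocalize, then propagate'' and a delicate balance between $t_0=t_0(h)\to0$ and the restriction estimate; and this must be done \emph{simultaneously} with a Cauchy--Schwarz fine enough in the conormal direction $\rho$ that the weight $\sqrt{f|H_pr_H|^{-1}}$ remains inside the integral while the singular part $\lambda_{_{\!H}}$ is shown to drop out. Obtaining the constant $C_{n,k}$ in \eqref{e:conclusion} depending only on $n,k$ — uniformly in $P$, $H$, and $w$ — requires careful normalization of the stationary phase and bookkeeping of the induced measures $\Sig$ and $\sig$.
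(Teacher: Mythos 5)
Your high-level strategy — microlocalize to a neighborhood of $\SigH$, exploit near-invariance of $\phi_h$ under $\exp(tH_p)$ to replace the restriction by an $L^2$ inner product, pass to the defect measure, use Lebesgue differentiation to upgrade from an $L^2$ to an $L^1$ (weighted $\sqrt{f}$) bound, and handle the rough set $A$ by a $\delta$-neighborhood covering of $\partial A$ — is genuinely the same skeleton as the paper's proof, and your treatment of $\partial A$ via the box-dimension hypothesis matches Section~\ref{S:A2} closely. The mutual-singularity/Besicovitch step for eliminating $\lambda_{_{\!H}}$ also mirrors the paper's use of $K_\delta\subset U_\delta$.

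However, the central mechanism that produces the factor $h^{\frac{k-1}{2}}$ together with the weight $|H_pr_H|^{-1/2}$ is where your proposal has a genuine gap. You propose to Fourier-transform in the normal variables $x''$, pass to polar coordinates $\xi''=r\omega$, and apply ``one-dimensional stationary phase in $r$'' to localize to $\SigH$. Stationary phase requires an explicit oscillatory representation of the integrand, but $\phi_h$ here is an abstract compactly-microlocalized quasimode, not a WKB/Lagrangian state; there is no phase function to expand around its critical point, so this step does not go through without first constructing a parametrix — a substantial additional input the hypotheses do not supply. The paper deliberately avoids any parametrix. Instead, the $|H_pr_H|^{-1/2}$ factor and the $h^{\frac{k-1}{2}}$ normalization come from a purely $L^2$/energy-estimate route: Lemma~\ref{l:squirrel} integrates the first-order equation $(hD_{x_1}-\Oph(a))w=f$ along bicharacteristics (a Duhamel argument, producing the $T^{-1/2}|\partial_{\xi_1}p|^{-1/2}$ factor), and the codimension-dependent gain $R^{k-1}$ is extracted in Lemma~\ref{L:optimized} by microlocalizing to tubes $\mc{T}_T(\Xi,R)$ around a family of bicharacteristics through $\SigH$ and using a Sobolev embedding in the remaining normal variables $\bar{x}$ with the operators $(hD_{x_i}-a_i)^\ell$. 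Your plan contains no analogue of this tube-localization/Sobolev-embedding step, which is precisely what makes the result work for all codimensions $k$ rather than only $k=1$.

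A secondary concern is your suggestion to let $t_0=t_0(h)\to 0$ while $h\to0$: the defect-measure identification $\langle\Oph(\chi_\rho)^*\Oph(\chi_\rho)\phi_h,\phi_h\rangle\to\int\chi_\rho^2\,d\mu$ requires a \emph{fixed} observable, so the observable cannot shrink with $h$. The paper sidesteps this entirely by observing (via~\eqref{e:muH2} and the flow-invariance built into the cutoffs via Lemma~\ref{L:ant}) that $\mu_{_{\!H,p}}$ is independent of the choice of fixed small $t_0$, so one can work with a fixed flow time $T$ throughout and take limits in the order $h\to0$, then $\e\to 0$, then $r\to0$ for the balls $B_j$, without ever sending $t_0\to0$ with $h$. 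You would need to incorporate this observation (or establish it independently) to make your averaging step rigorous.
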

%%%%%%%%%%%%%%%%%%%%%%%%%%%%%%%%%%%%%%%%%%%%%%%%%%%%%%%%%%%%%%%%%%%%%%%%%%%%%%%%

\noindent In addition to relating the $L^2$ microlocalization of quasimodes to averages on submanifolds, Theorem~\ref{t:local2} gives a quantitative version of the bound \eqref{e:StdBound} proved in \cite[Corollary 3.3]{Zel} and generalizes the work of the second author \cite[Theorem 2]{Gdefect} to manifolds of any codimension. {{Note also that the estimate~\eqref{e:conclusion} is saturated for every $0<k\leq n$ on the round sphere $S^{n}$.}

{\begin{remark}
\label{r:uniformity}
It is not hard to see that we can replace~\eqref{e:conclusion} with
$$
\limsup_{h\to 0^+}h^{\frac{k-1}{2}}\sup_{(\tilde{A},\tilde{H})\in \mathcal{A}(A,H,h)}\left|\int_{\tilde{A}} \phi_hd\sigma_{\tilde{H}}\right|\leq C_{n,k}\int_{\pi_H^{-1}(A)} \sqrt{f{|H_pr_H|^{-1}}} d\Sig 
$$
where
\begin{multline*}
\mc{A}(A,H,h)=\\
\{(\tilde{A},\tilde{H})\mid \tilde{A}\subset \tilde{H},\,\dbox(\partial\tilde{A})<n-k-\frac{1}{2},\, d(A,\tilde{A})=o(1),\, d_s(\SigH,\Sigma_{\tilde{H}})=o(1)\}
\end{multline*}
and $d_s$ is the distance induced by the Sasaki metric. That is, our estimate is locally uniform in $o_{C^1}(1)$ neighborhoods of $H$  (see Remark~\ref{r:explained} for an explanation). This also implies that all of our other estimates are uniform in $o_{C^1}(1)$ neighborhoods.  
\end{remark}}

A direct consequence of Theorem~\ref{t:local2} is the following.

%%%%%%%%%%%%%%%%%%%%%%%%%%%%%%%%%%%%%%%%%%%%%%%%%%%%%%%%%%%%%%%%%%%%%%%%%%%%%%%%
\begin{theorem}\label{t:local}
Let $(M,g)$ be a smooth, compact Riemannian manifold of dimension $n$. Let $H\subset M$ be a closed embedded submanifold of codimension $k$, and let $A\subset H$ be a subset with boundary $\partial A$ satisfying $\dbox(\partial A)<n-k-\frac{1}{2}.$
If $\{\phi_h\}$ is a sequence of eigenfunctions with defect measure $\mu$ so that $\muH\,\perp \,1_A\, \sig$, then 
\[
\int_A \phi_hd\sigma_H=o(h^{\frac{1-k}{2}}).
\]
\end{theorem}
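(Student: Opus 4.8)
\textbf{Proof plan for Theorem~\ref{t:local}.}
The plan is to deduce Theorem~\ref{t:local} directly from the quantitative estimate in Theorem~\ref{t:local2} by specializing to the geometric setting of the Laplacian. First I would take $P = -h^2\Delta_g - 1 \in \Psi^2(M)$, which has real principal symbol $p(x,\xi) = |\xi|_g^2 - 1$, so that a sequence of $L^2$-normalized eigenfunctions $\{\phi_h\}$ satisfying $(-h^2\Delta_g - 1)\phi_h = 0$ is in particular a quasimode for $P$ in the sense required. One has to address compact microlocalization: eigenfunctions of the semiclassical Laplacian are microlocalized on $\Sigma_p = \{|\xi|_g = 1\} = S^*M$, which is compact, so $\{\phi_h\}$ is compactly microlocalized (one can multiply $P$ by a compactly supported elliptic factor, or invoke the standard elliptic estimate to cut off to a neighborhood of $S^*M$ without changing anything modulo $O(h^\infty)$). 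I would also note that any submanifold $H$ is conormally transverse for this $p$: on $N^*H$ the conormal covectors are (up to scaling) the differentials of the defining functions $f_i$ whose level sets cut out $H$, so $H_p f_i = \{|\xi|_g^2 - 1, f_i\}$ does not vanish identically — indeed on the unit conormal bundle $\SNH = \Sigma_p \cap N^*H$ the flow $\varphi_t$ moves transversally off $H$, which is exactly the content of $|H_p r_H| > 0$ there. This last point, that $|H_p r_H|$ is bounded below (in fact equals a fixed positive constant, essentially $2$, on $\SNH$ for the metric Hamiltonian), is what makes the right-hand side of \eqref{e:conclusion} controllable.

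Next I would reconcile the measure-theoretic hypotheses. Theorem~\ref{t:local2} is stated in terms of $\mu_{_{\!H,p}}$ on $\SigH = \Sigma_p \cap N^*H$ and the Sasaki volume $\Sig$ there, while Theorem~\ref{t:local} is phrased using $\muH$ on $\SNH$ and the measure $\sig$. Since $\Sigma_p = S^*M$ for the metric Hamiltonian, we have $\SigH = \SNH$, and $\mu_{_{\!H,p}} = \muH$, $\Sig = \sig$ under this identification; this is just bookkeeping. The hypothesis $\muH \perp 1_A \sig$ means that in the Radon–Nikodym decomposition $\mu_{_{\!H,p}} = f\,d\Sig + \lambda_{_{\!H}}$ with $\lambda_{_{\!H}} \perp \Sig$, the density $f$ vanishes $\sig$-a.e. on $A$, equivalently $\int_{\pi_H^{-1}(A)} \sqrt{f |H_p r_H|^{-1}}\, d\Sig = 0$ because the integrand is supported where $f > 0$, which meets $\pi_H^{-1}(A)$ in a $\Sig$-null set, and $|H_p r_H|^{-1}$ is bounded there. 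Then applying Theorem~\ref{t:local2} with $w \equiv 1$ gives
\[
\limsup_{h\to 0^+} h^{\frac{k-1}{2}}\left|\int_A \phi_h\, d\sigma_H\right| \leq C_{n,k}\int_{\pi_H^{-1}(A)} \sqrt{f |H_p r_H|^{-1}}\, d\Sig = 0,
\]
which is precisely the claimed $o(h^{\frac{1-k}{2}})$ bound.

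The subtlety — and the only place any real care is needed — is checking that the hypothesis $\dbox(\partial A) < n - k - \tfrac12$ matches between the two statements (it does, verbatim), and that the defect measure $\mu$ of $\{\phi_h\}$ on $M$ restricts/lifts correctly so that the object denoted $\muH$ in the statement of Theorem~\ref{t:local} agrees with $\mu_{_{\!H,p}}$ built from $p = |\xi|_g^2 - 1$ in Theorem~\ref{t:local2}; I expect this is handled by the conventions set up in the paragraphs defining $\mu_{_{\!H,p}}$ via \eqref{e:muH2}, using that for the metric Hamiltonian the flow $\varphi_t$ is (a reparametrization of) the geodesic flow and the conormal-transversality of $H$ guarantees $\mu_{_{\!H,p}}$ is well defined and $t_0$-independent. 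Beyond this identification, there is no hard analysis here: Theorem~\ref{t:local} is a clean corollary, with all the genuine work — the stationary-phase / $TT^*$ argument producing the constant $C_{n,k}$ and the box-dimension threshold — already carried out in the proof of Theorem~\ref{t:local2}.
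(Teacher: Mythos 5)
Your proposal is correct and follows essentially the same route as the paper: take $P=-h^2\Delta_g-1$, verify compact microlocalization and conormal transversality, identify $\SigH=\SNH$, $\mu_{_{\!H,p}}=\muH$, $\Sig=\sig$, observe that $\muH\perp 1_A\sig$ forces $f=0$ $\sig$-a.e.\ on $\pi_H^{-1}(A)$, and apply Theorem~\ref{t:local2} with $w\equiv 1$. The only small slip is writing that ``$f$ vanishes $\sig$-a.e.\ on $A$'' when you mean on $\pi_H^{-1}(A)\subset\SNH$ (since $f$ lives on $\SNH$, not $H$), but you use the correct set a sentence later, so this is purely notational.
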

%%%%%%%%%%%%%%%%%%%%%%%%%%%%%%%%%%%%%%%%%%%%%%%%%%%%%%%%%%%%%%%%%%%%%%%%%%%%%%%%
 
 Theorem~\ref{t:local} strengthens the results of \cite{CGT}. In particular, in \cite{CGT}, the measure $\mu$ is said to be conormally diffuse if $\muH(\sn)=0$, which of course  implies $\muH\perp \sig.$

We note that Theorem~\ref{t:local} is an immediate consequence of Theorem~\ref{t:local2}. To see this, first observe that if we take $P=-h^2\Delta_g-1$, set $p(x, \xi)=|\xi|_{g(x)}^2-1=\sigma(P)$, and  let $\{\phi_h\}$ satisfy 
$
P\phi_h=0,
$
then
\[
(1-Op_h(\chi))\phi_h=O_{C^\infty}(h^\infty),
\]
for any  $\chi\in C_c^\infty(T^*M)$ with $\chi\equiv 1$ on $|\xi|_g\leq 2$.
Next, note that in this setting we have $\Sig=\sig$. 
Hence, if 
\[
\int_{\pi_H^{-1}(A)} \sqrt{f} d\Sig=0,
\]
then by Theorem~\ref{t:local2}, 
\[
\int_A\phi_hd\sigma_H=o(h^{\frac{1-k}{2}}).
\]
{To see that any $H\subset M$ is conormally transverse, observe} that if $H=\cap_{i=1}^k f_i$, then $N^*H=\text{span}\{df_i:\; i=1, \dots, k\}$. In particular, given $(x,\xi)\in N^*H$ there exists $i\in \{1, \dots, k\}$ for which $H_pf_i(x,\xi)=2\langle df_i(x), \xi\rangle \neq 0.$

{
%%%%%%%%%%%%%%%%%%%%%%%%%%%%%%%%%%%%%%%%%%%%%%%%%%%%%%%%%%%%%%%%%%%%%%%%%%%%%%%%
%%%%%%%%%%%%%%%%%%%%%%%%%%%%%%%%%%%%%%%%%%%%%%%%%%%%%%%%%%%%%%%%%%%%%%%%%%%%%%%%
\subsection{Relation with $L^\infty$ bounds}
%%%%%%%%%%%%%%%%%%%%%%%%%%%%%%%%%%%%%%%%%%%%%%%%%%%%%%%%%%%%%%%%%%%%%%%%%%%%%%%%
%%%%%%%%%%%%%%%%%%%%%%%%%%%%%%%%%%%%%%%%%%%%%%%%%%%%%%%%%%%%%%%%%%%%%%%%%%%%%%%%

Observe that taking $k=n$ in~\eqref{e:StdBound}, and $H=\{x\}$ for some $x\in M$ the estimate reads,
\begin{equation}
\label{e:L infinity}
|u_h(x)|\leq Ch^{\frac{1-n}{2}}.
\end{equation}
By Remark~\ref{r:uniformity} the constant $C$ can be chosen independent of $x$ (and indeed, for small $h$, depending only on the injectivity radius of $(M,g)$ and dimension of $M$ \cite{Gdefect}). 
Estimates of this form are well known, first appearing in \cite{Ava,Lev,Ho68} (see also \cite[Chapter 7]{EZB}), and situations which produce sharp examples for~\eqref{e:L infinity} are extensively studied. Many works~\cite{Berard77,I-s,TZ02,SoggeZelditch,SoggeTothZelditch,SZ16I,SZ16II} have studied connections between growth of $L^\infty$ norms of eigenfunctions and the global geometry of the manifold $M$. More recently~\cite{GT17,Gdefect} examine the relation between defect measures and $L^\infty$ norms. 

This article continues in the spirit of~\cite{GT17,Gdefect} and, in particular, taking $k=n$ in Theorem~\ref{t:local2} (together with Remark~\ref{r:uniformity}) recovers \cite[Theorem 2]{Gdefect}. Hence this article also generalizes many of the results of~\cite{SoggeZelditch, SoggeTothZelditch, SZ16I, SZ16II} to manifolds of lower codimension. For example taking $k=n$ in Theorem~\ref{thm:recur} gives the main results of~\cite{SoggeTothZelditch} (see also \cite[Corollary 1.2]{Gdefect}).
}

%%%%%%%%%%%%%%%%%%%%%%%%%%%%%%%%%%%%%%%%%%%%%%%%%%%%%%%%%%%%%%%%%%%%%%%%%%%%%%%%
%%%%%%%%%%%%%%%%%%%%%%%%%%%%%%%%%%%%%%%%%%%%%%%%%%%%%%%%%%%%%%%%%%%%%%%%%%%%%%%%
\subsection{Manifolds with {no focal points} or Anosov geodesic flow}
%%%%%%%%%%%%%%%%%%%%%%%%%%%%%%%%%%%%%%%%%%%%%%%%%%%%%%%%%%%%%%%%%%%%%%%%%%%%%%%%
%%%%%%%%%%%%%%%%%%%%%%%%%%%%%%%%%%%%%%%%%%%%%%%%%%%%%%%%%%%%%%%%%%%%%%%%%%%%%%%%
In order to prove parts \ref{a3}, \ref{a4}, \ref{a5} and \ref{a6} of Theorem \ref{T:applications}, we need to use that the underlying manifold has no focal points or Anosov {geodesic} flow. We show that these structures allow us to restrict to working on the set of points $\mc{A}_H$ in $\SNH$ at which the tangent space to $\SNH$ splits into a sum of  bounded  and unbounded directions. To make this sentence precise we introduce some notation.

If $(M,g)$ has no conjugate points, then for any $\rho \in \SM$, there exist stable and unstable subspaces $E_{\pm}(\rho)\subset T_\rho S^*M$ so that 
\[
dG^t :E_\pm(\rho) \to E_\pm(G^t(\rho))
\]
and 
\[
 |dG^t(v)|\leq C|v| \;\;  \text{for}\;  v\in E_\pm\;\; \text{and}\; t\to  \pm\infty.
\]
Moreover, {if $(M,g)$ has no focal points then} $E_\pm(\rho)$ vary continuously with $\rho$. (See for example \cite[Proposition 2.13]{Eberlein73}.)

In what follows we write  
\[
N_{\pm}(\rho):=T_{\rho}(\SNH)\cap E_\pm(\rho).
\] 
We define {the \emph{mixed} and \emph{split} subsets of $\SNH$ respectively by}
{
\begin{align*}
\mc{M}_H&:=\Big\{\rho \in \SNH:\:\, N_-(\rho)\neq \{0\}\text{ and }N_+(\rho)\neq \{0\}\Big\},\\
\mc{S}_H&:= \Big\{\rho\in \SNH:\;\, T_\rho (\SNH)=N_-(\rho)+N_+(\rho)\Big\}.
\end{align*}
}
Then we write
\begin{equation}\label{E:F}
\mc{A}_H:={\mc{M}_H\cap \mc{S}_H}%\Big\{\rho\in \SNH:\; \, T_{\rho}(\SNH)= N_-(\rho)+N_+(\rho)\Big\}
,\qquad
\mc{N}_H:={\mc{M}_H\cup\mc{S}_H},%\Big\{\rho\in \SNH:\; \,N_{+}(\rho)\neq\{0\}, N_{-}(\rho)\neq\{0\}\Big\}.
\end{equation}
 where we will use $\mc{A}_H$ when considering manifolds with Anosov geodesic flow and $\mc{N}_H$ when considering those with no focal points. 
 
Next, we recall that any manifold with no focal points in which every geodesic encounters a point of negative curvature has Anosov geodesic flow \cite[Corollary 3.4]{Eberlein73}. In particular, the class of manifolds with Anosov geodesic flows includes those with negative curvature. We also recall that a manifold with Anosov geodesic flow does not have conjugate points and for all $\rho \in \SM$
\[
T_\rho(\SM)=E_+(\rho)\oplus E_-(\rho)\oplus \re H_p.
\]
where $E_+,E_-$  are the stable and unstable directions as before. (For other characterizations of manifolds with Anosov geodesic flow, see~\cite[Theorem 3.2]{Eberlein73},~\cite{Eberlein73b}.) Moreover, there exists $C>0$ so that for all $\rho\in \SM$,
\[
 |dG^t(v)|\leq Ce^{\mp \frac{t}{C}}|v|,\qquad{ v\in E_\pm(\rho),\quad t\to  \pm\infty,}
\]
 and the spaces $E_\pm(\rho)$ are H\"older continuous in $\rho$~\cite{Anosov}.

%%%%%%%%%%%%%%%%%%%%%%%%%%%%%%%%%%%%%%%%%%%%%%%%%%%%%%%%%%%%%%%%%%%%%%%%%%%%%%%%
\begin{theorem}\label{T:tangentSpace}
{Let $H\subset M$ be a closed embedded submanifold.\\
If $(M,g)$ has {no focal points}, then
\[
{\sig( \mc{R}_{H} \cap \mc{N}_H)=\sig(\mc{R}_{H} ).}
\]
If  $(M,g)$ has Anosov geodesic flow, then 
\[
\sig( \mc{R}_{H} \cap \mc{A}_H)=\sig(\mc{R}_{H} ).
\]}
\end{theorem}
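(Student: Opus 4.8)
The plan is to show that the complement of $\mc{N}_H$ (resp.\ $\mc{A}_H$) inside $\SNH$ meets the recurrent set $\mc{R}_H$ in a $\sig$-null set. I would first analyze the structure of $T_\rho(\SNH)$ relative to the splitting $E_+(\rho)\oplus E_-(\rho)$ (and the extra line $\re H_p$ in the Anosov case). Since $\SNH$ has dimension $n-1$ (codimension $k$ submanifold, unit conormal sphere of dimension $k-1$, plus $n-k$ from $H$) and sits inside $S^*M$ which carries the invariant splitting, I would stratify $\SNH$ according to the pair of integers $(\dim N_+(\rho),\dim N_-(\rho))$, together with a third label recording whether $H_p(\rho)\in T_\rho(\SNH)$. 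By semicontinuity of dimension, each ``low-rank'' stratum—those $\rho$ where $N_+(\rho)=\{0\}$, or $N_-(\rho)=\{0\}$, or $T_\rho(\SNH)\supsetneq N_-(\rho)+N_+(\rho)$ (plus possibly $\re H_p$)—is contained in a closed set; the key claim is that this closed ``bad'' set $\mc{B}_H:=\SNH\setminus\mc{N}_H$ (resp.\ $\SNH\setminus\mc{A}_H$) intersected with $\mc{R}_H$ is $\sig$-null. Here I would use that the first return map $\eta$ is (where defined) conjugate via the geodesic flow to a map preserving the symplectic structure on a suitable reduced space, so that $\eta$ acts on the tangent bundle of $\SNH$ compatibly with $dG^t$, and hence the integers $\dim N_\pm$ and the ``$H_p$-transversality'' label are constant along $\eta$-orbits within $\mc{L}_H^{\pm\infty}$.

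Granting that the stratification is $\eta$-invariant on the infinite loop set, the next step is a recurrence/measure argument. A point $\rho\in\mc{R}_H$ has its forward and backward $\eta$-orbit closure containing $\rho$ itself. If $\rho$ lay in the bad set $\mc{B}_H$, then by $\eta$-invariance of the strata its whole orbit lies in $\mc{B}_H\cap\mc{L}_H^{\pm\infty}$. The plan is to derive a contradiction—or rather a null-set conclusion—from the fact that on $\mc{B}_H$ at least one of $N_+,N_-$ fails to be ``large enough'', which forces a degeneracy of the return map's differential that is incompatible with recurrence on a set of positive measure. Concretely, for $\rho\in\mc{S}_H\setminus\mc{M}_H$ type points (where say $N_+(\rho)=\{0\}$), the differential $d\eta^k$ at $\rho$ restricted to $T_\rho(\SNH)$ is eventually expanding in every direction (everything lives in the unstable part up to the flow direction), so no subsequence $\eta^{k_j}(\rho)$ can return to $\rho$ unless one leaves $\mc{L}_H$; combined with Poincaré recurrence applied to $\eta$ on $(\mc{B}_H\cap\mc{L}_H^{\infty},\text{push-forward of }\sig)$—or, since $\sig$ need not be $\eta$-invariant, a covering/volume-contraction argument à la the proof that a hyperbolic set carries no recurrent points off the center—one concludes $\sig(\mc{B}_H\cap\mc{R}_H)=0$. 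In the no-focal-points case the analogous statement is softer: one only needs that in the (mere) boundedness regime the directions missing from $N_-(\rho)+N_+(\rho)$ grow, which again obstructs recurrence; the continuity of $E_\pm$ for no focal points is what lets one pass from pointwise to a closed-set and then measure statement.

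I expect the main obstacle to be the measure-theoretic step rather than the linear-algebraic stratification: since $\sig$ is not invariant under $\eta$ (the return map distorts the Sasaki volume by the return-time Jacobian and by $|H_pr_H|$-type factors), one cannot directly invoke Poincaré recurrence. The fix I have in mind is to work with the flow-invariant Liouville measure on $S^*M$, use the coarea/disintegration relating it to $\sig$ on cross-sections (this is exactly the kind of reduction appearing implicitly in the definition of $\mu_{H,p}$ and the factor $|H_pr_H|$), and transfer the recurrence statement to the flow, where invariance is automatic; then a set of positive $\sig$-measure in $\mc{B}_H\cap\mc{R}_H$ would saturate to a positive-Liouville-measure flow-invariant set on which the hyperbolic splitting degenerates in the sense that $T(\SNH)$ fails to capture both $E_+$ and $E_-$—contradicting that Liouville-a.e.\ geodesic equidistributes (hence its conormal returns to $H$ are ``generic'' with respect to the splitting). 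Making ``generic with respect to the splitting'' precise—i.e.\ that for Liouville-a.e.\ $\rho$ and a.e.\ return, $T_\rho(\SNH)$ is transverse in the required way to $E_\pm$—is the technical heart, and I would handle it via a transversality/Sard-type argument for the family of subspaces $\{T_\rho(\SNH)\}$ against the continuous (Anosov: Hölder) distributions $E_\pm$, exploiting that $E_+\oplus E_-\oplus\re H_p$ is the whole tangent space so that a fixed $(n-1)$-plane generically has the expected-dimensional intersections with each summand.
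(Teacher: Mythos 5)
Your stratification by the dimensions of $N_\pm(\rho)=T_\rho(\SNH)\cap E_\pm(\rho)$ is a reasonable starting point, and the role you give to the ``purely stable/unstable'' part (and the expanding-Jacobian/Borel--Cantelli idea for it) does align with the paper's Lemma~\ref{l:noStable}. However, the hinge of your argument—that the integers $\dim N_\pm(\rho)$, together with your $H_p$-transversality label, are \emph{constant along $\eta$-orbits}—is false, and the paper's proof exploits exactly the opposite phenomenon. The issue is that while $dG^t$ maps $E_\pm(\rho)$ to $E_\pm(G^t\rho)$, it does \emph{not} map $T_\rho(\SNH)$ to $T_{G^t\rho}(\SNH)$; the return map's differential $d\eta_\rho$ differs from $dG^{T_H(\rho)}$ by a time-reparametrization term $(dT_H)_\rho(v)\,H_p$, and this term can (and generically does) move vectors out of $E_\pm$. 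Since $N_\pm$ is precisely the intersection $T(\SNH)\cap E_\pm$, $\dim N_\pm$ is in general not $\eta$-invariant, so your ``orbit stays in the bad stratum'' step has no support, and the Poincar\'e-recurrence contradiction built on it does not get off the ground.

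The paper's route is structurally different: it makes a virtue of the non-invariance of $T_\rho(\SNH)$. Propositions~\ref{P:1} and~\ref{P:focal} show that for $\rho_0$ in a compact subset of the complement of $\mc{S}_H$ (resp.\ $\mc{S}_H\cup\mc{M}_H^\pm$), and any sufficiently late return to a small ball, one can produce a vector $\mathbf{w}\in T_{\rho_0}(\SNH)$ whose image $dG^{t_0}\mathbf{w}$ leaves $T_{G^{t_0}\rho_0}(\SNH)\oplus\re H_p$. Lemma~\ref{l:implicit} converts this single transverse direction into a rank-$\geq 2$ statement for the map $(t,\rho)\mapsto G^t(\rho)$ against defining functions for $\SNH$, so the implicit function theorem makes the local loop set a codimension-$\geq 1$ submanifold of $\SNH$, hence $\sig$-null; compactness and a countable union finish it. The residual piece $\mc{S}_H\setminus\mc{M}_H$ (Anosov case) is killed by the volume-integrability criterion of Lemma~\ref{P:integral}. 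Your proposal does not contain the transversality/implicit-function mechanism, and your fallback of pushing recurrence to the Liouville measure does not cure the problem, because the stratification you want to be invariant lives on a cross-section and is built from $T(\SNH)$, which is not a flow-invariant object. Without a replacement for the $\eta$-invariance claim, the argument has a genuine gap.
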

%%%%%%%%%%%%%%%%%%%%%%%%%%%%%%%%%%%%%%%%%%%%%%%%%%%%%%%%%%%%%%%%%%%%%%%%%%%%%%%%

Theorem \ref{T:tangentSpace} combined with Theorem \ref{thm:recur} give the following result.

%%%%%%%%%%%%%%%%%%%%%%%%%%%%%%%%%%%%%%%%%%%%%%%%%%%%%%%%%%%%%%%%%%%%%%%%%%%%%%%%
\begin{corollary}\label{C:tangent}
{Let $H\subset M$ be a closed embedded submanifold of codimension $k$, and let $A\subset H$ satisfy $\dbox(\partial A)<n-k-\frac{1}{2}$. Then if $(M,g)$ has no focal points and 
\[
\sig(  \mc{N}_H \cap \pi_H^{-1}(A))=0
\]
we have
\begin{equation}
\label{e:teaLeaves}
\int_A\phi_hd\sigma_H=o(h^{\frac{1-k}{2}})
\end{equation}
as $h \to 0^+$ for every sequence $\{\phi_h\}$ of Laplace eigenfunctions. If instead $(M,g)$ has Ansov geodesic flow then~\eqref{e:teaLeaves} holds when
\[
\sig(\mc{A}_H\cap \pi_H^{-1}(A))=0.
\]}
\end{corollary}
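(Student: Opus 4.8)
The plan is to prove Corollary~\ref{C:tangent} by combining Theorem~\ref{thm:recur} with Theorem~\ref{T:tangentSpace}, which is essentially a two-line deduction once those are in hand. First I would treat the no focal points case. By hypothesis $\sig(\mc{N}_H\cap\pi_H^{-1}(A))=0$. Since $\mc{R}_H\cap\pi_H^{-1}(A)\subset\mc{N}_H\cap\pi_H^{-1}(A)$ would be false in general, the right move is to intersect with $\mc{N}_H$: by Theorem~\ref{T:tangentSpace}, $\sig(\mc{R}_H\cap\mc{N}_H)=\sig(\mc{R}_H)$, and the same argument localized over $A$ (using that $\pi_H^{-1}(A)$ is a $G^t$-flow-independent ``cylinder'' in the relevant sense, or simply applying Theorem~\ref{T:tangentSpace} to the submanifold pair appropriately) gives
\[
\sig(\mc{R}_H\cap\pi_H^{-1}(A))=\sig\big(\mc{R}_H\cap\mc{N}_H\cap\pi_H^{-1}(A)\big)\le \sig\big(\mc{N}_H\cap\pi_H^{-1}(A)\big)=0.
\]
Hence $\sig(\mc{R}_H\cap\pi_H^{-1}(A))=0$, and Theorem~\ref{thm:recur} (whose remaining hypothesis $\dbox(\partial A)<n-k-\tfrac12$ is assumed) yields $\int_A\phi_h\,d\sigma_H=o(h^{(1-k)/2})$.

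For the Anosov case the argument is identical, replacing $\mc{N}_H$ by $\mc{A}_H$: the hypothesis $\sig(\mc{A}_H\cap\pi_H^{-1}(A))=0$ together with the Anosov half of Theorem~\ref{T:tangentSpace}, $\sig(\mc{R}_H\cap\mc{A}_H)=\sig(\mc{R}_H)$, forces $\sig(\mc{R}_H\cap\pi_H^{-1}(A))=0$, and Theorem~\ref{thm:recur} again applies to give~\eqref{e:teaLeaves}.

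The one point requiring a small amount of care — and the place I expect the only genuine (if minor) obstacle — is the localization of Theorem~\ref{T:tangentSpace} to $\pi_H^{-1}(A)$, since that theorem is stated for all of $\SNH$ rather than for the piece lying over $A$. The cleanest way around this is to observe that the proof of Theorem~\ref{T:tangentSpace} actually shows the stronger pointwise/almost-everywhere statement that $\sig$-a.e.\ point of $\mc{R}_H$ lies in $\mc{N}_H$ (resp.\ $\mc{A}_H$): i.e.\ $\sig(\mc{R}_H\setminus\mc{N}_H)=0$. Granting that, intersecting with the fixed set $\pi_H^{-1}(A)$ is immediate and no flow-invariance of $A$ is needed. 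If one prefers to use Theorem~\ref{T:tangentSpace} strictly as a black box, one can instead note that $\mc{R}_H$, $\mc{N}_H$, $\mc{A}_H$ and $\pi_H^{-1}(A)$ are all Borel and apply the identity $\sig(\mc R_H\cap \mc N_H)=\sig(\mc R_H)$ to conclude $\sig(\mc R_H\setminus\mc N_H)=0$ directly (since $\mc R_H\cap\mc N_H\subset\mc R_H$ and $\sig(\mc R_H)<\infty$ as $H$ is compact), and then restrict. Either way the deduction is routine; the substance of the corollary is entirely carried by Theorems~\ref{thm:recur} and~\ref{T:tangentSpace}.
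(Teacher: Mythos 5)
Your proposal is correct and follows exactly the route the paper intends: combine Theorem~\ref{T:tangentSpace} with Theorem~\ref{thm:recur}, which the paper itself says (without detail) is the derivation. Your observation that $\sig(\mc{R}_H\cap\mc{N}_H)=\sig(\mc{R}_H)$ together with finiteness of $\sig$ on the compact $\SNH$ yields $\sig(\mc{R}_H\setminus\mc{N}_H)=0$, and hence $\sig(\mc{R}_H\cap\pi_H^{-1}(A))\le\sig(\mc{N}_H\cap\pi_H^{-1}(A))+\sig(\mc{R}_H\setminus\mc{N}_H)=0$, is precisely the clean way to handle the localization to $\pi_H^{-1}(A)$ that the paper leaves implicit.
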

%%%%%%%%%%%%%%%%%%%%%%%%%%%%%%%%%%%%%%%%%%%%%%%%%%%%%%%%%%%%%%%%%%%%%%%%%%%%%%%%

\noindent{Note that if $\dim M=2$, then $\mc{N}_H=\mc{A}_H$ since $\dim T_{\rho}(\SNH)=1$. {Indeed}, it is not possible to have both $N_{+}(\rho)\neq\{0\}$ and $N_{-}(\rho)\neq\{0\}$ {unless $N_+(\rho)=N_-(\rho)=T_\rho(\SNH)$} and hence $\mc{M}_H\subset \mc{S}_H$. In~\cite{Wym,Wym2} the author works with $(M,g)$ non-positively curved (and hence having no focal points), $\dim M=2$ and $H=\gamma$ a curve.  He then imposes the condition that for all time $t$ the curvature of $\gamma$, $\kappa_\gamma(t)$, avoids two special values determined by the tangent vector to $\gamma$, $\mathbf{k}_\pm(\gamma'(t))$. He shows that under this condition
\[
\int_\gamma \phi_hd\sigma_\gamma=o(1).
\] 
If $\kappa_\gamma(t)=\mathbf{k}_{\pm}(\gamma'(t))$, then the lift of $\gamma$ to the universal cover of $M$ is tangent to a stable or unstable horosphere at $\gamma(t)$ and $\kappa_\gamma(t)$ is equal to the curvature of that horosphere. Since this implies that $T_{(\gamma(t),\gamma'(t)}SN^*\gamma$ is stable or unstable, the condition there is that $\mc{N}_\gamma=\emptyset.$ Thus, the condition $\sig( \mc{N}_H \cap \pi_H^{-1}(A))=0$ is the generalization to higher codimensions of that in~\cite{Wym,Wym2}. {We note that~\cite{Wym2} obtains the improved upper bound $O(|\log h|^{-\frac{1}{2}})$}.

%%%%%%%%%%%%%%%%%%%%%%%%%%%%%%%%%%%%%%%%%%%%%%%%%%%%%%%%%%%%%%%%%%%%%%%%%%%%%%%%
%%%%%%%%%%%%%%%%%%%%%%%%%%%%%%%%%%%%%%%%%%%%%%%%%%%%%%%%%%%%%%%%%%%%%%%%%%%%%%%%
\subsection{Organization of the paper}
%%%%%%%%%%%%%%%%%%%%%%%%%%%%%%%%%%%%%%%%%%%%%%%%%%%%%%%%%%%%%%%%%%%%%%%%%%%%%%%%
%%%%%%%%%%%%%%%%%%%%%%%%%%%%%%%%%%%%%%%%%%%%%%%%%%%%%%%%%%%%%%%%%%%%%%%%%%%%%%%%

{We divide the paper into two major parts. The first part of the paper contains all of the analysis of solutions to $Pu=o(h)$. The sections in this part,  Section~\ref{S:estimate} and  Section~\ref{S:normal}, contain the proofs of Theorem~\ref{t:local2} and Theorem~\ref{thm:normal} respectively. The second part of our paper, consists of an analysis of the geodesic flow and in particular a study of the recurrent set of $\SNH$. Theorem~\ref{thm:recur} is proved in Section~\ref{S:recur}, and Theorems~\ref{T:applications} and~\ref{T:tangentSpace} are proved in Section~\ref{S:applications}. }
%Finally, in Section~\ref{s:examples} we recall the defect measures for a few examples where the bound~\eqref{e:StdBound} is saturated.}

 Note that as already explained, Corollary \ref{C:applications} is an immediate consequence of combining Theorems \ref{thm:recur} and \ref{T:applications}.  
 Also,  Theorem \ref{t:local} is a direct consequence of Theorem~\ref{t:local2}  and Corollary \ref{C:tangent} is a consequence of Theorem \ref{thm:recur} and Theorem \ref{T:tangentSpace}.  
 Finally, Theorem \ref{thm:2d} is exactly part \ref{a4} of Theorem \ref{T:applications}.
\ \\

\noindent {\sc Acknowledgements.} Thanks to Semyon Dyatlov, Patrick Eberlein,  Colin Guillarmou, and Gabriel Paternain for several discussions on hyperbolic dynamics.
J.G. is grateful to the National Science Foundation for support under the Mathematical Sciences Postdoctoral Research Fellowship  DMS-1502661.

%%%%%%%%%%%%%%%%%%%%%%%%%%%%%%%%%%%%%%%%%%%%%%%%%%%%%%%%%%%%%%%%%%%%%%%%%%%%%%%%
%%%%%%%%%%%%%%%%%%%%%%%%%%%%%%%%%%%%%%%%%%%%%%%%%%%%%%%%%%%%%%%%%%%%%%%%%%%%%%%%
%%%%%%%%%%%%%%%%%%%%%%%%%%%%%%%%%%%%%%%%%%%%%%%%%%%%%%%%%%%%%%%%%%%%%%%%%%%%%%%%
%%%%%%%%%%%%%%%%%%%%%%%%%%%%%%%%%%%%%%%%%%%%%%%%%%%%%%%%%%%%%%%%%%%%%%%%%%%%%%%%
\section{Quantitative estimate: Proof of Theorem~\ref{t:local2}}\label{S:estimate}

%%%%%%%%%%%%%%%%%%%%%%%%%%%%%%%%%%%%%%%%%%%%%%%%%%%%%%%%%%%%%%%%%%%%%%%%%%%%%%%%
%%%%%%%%%%%%%%%%%%%%%%%%%%%%%%%%%%%%%%%%%%%%%%%%%%%%%%%%%%%%%%%%%%%%%%%%%%%%%%%%
%%%%%%%%%%%%%%%%%%%%%%%%%%%%%%%%%%%%%%%%%%%%%%%%%%%%%%%%%%%%%%%%%%%%%%%%%%%%%%%%
%%%%%%%%%%%%%%%%%%%%%%%%%%%%%%%%%%%%%%%%%%%%%%%%%%%%%%%%%%%%%%%%%%%%%%%%%%%%%%%%
{In Section \ref{S:A0} we present the ground work needed for the proof of Theorem~\ref{t:local2}. In particular, we state the main technical result, Proposition \ref{P:sloth}, on which the proof of Theorem~\ref{t:local2} hinges.
We then divide the proof of Theorem~\ref{t:local2}  in two parts. 
Assuming the main technical proposition, we first prove the theorem for the case $A=H$ {and $w\in C_c^\infty(H^o)$} in Section~\ref{S:A},  and then generalize it to any subset $A \subset H$ in Section~\ref{S:A2}. Finally, Section~\ref{S:key} is dedicated to the proof of Proposition \ref{P:sloth}.}

Throughout this section we assume that $P$ has {principal} symbol $p$ and $H$ is conormally transverse for $p$ as defined in~\eqref{e:transverse}. 
We also assume throughout this section that $\{\phi_h\}$ is a compactly microlocalized quasimode for $P$. 

\needspace{2cm}
%%%%%%%%%%%%%%%%%%%%%%%%%%%%%%%%%%%%%%%%%%%%%%%%%%%%%%%%%%%%%%%%%%%%%%%%%%%%%%%%
%%%%%%%%%%%%%%%%%%%%%%%%%%%%%%%%%%%%%%%%%%%%%%%%%%%%%%%%%%%%%%%%%%%%%%%%%%%%%%%%
\subsection{Preliminaries.}\label{S:A0}
%%%%%%%%%%%%%%%%%%%%%%%%%%%%%%%%%%%%%%%%%%%%%%%%%%%%%%%%%%%%%%%%%%%%%%%%%%%%%%%%
%%%%%%%%%%%%%%%%%%%%%%%%%%%%%%%%%%%%%%%%%%%%%%%%%%%%%%%%%%%%%%%%%%%%%%%%%%%%%%%%

Let $H \subset M$ be a smooth closed submanifold  and let  $U_H$ be an open neighborhood of $H$ described in local coordinates as $U_H  = \{{(x'',x')}\, {: x\in V\subset \re^d}\},$ where these coordinates are chosen so that $H{\cap U_H}=\{(0,x')\,{: (0,x')\in V}  \}$. 
The coordinates $(x'',x') \in U_H$ induce coordinates $(x'',x', \xi'', \xi')$ on $\Sigma_{U_H}^*M=\{(x,\xi)\in \Sigma_p: \; x \in {U}_H \}$ with $(\xi'',\xi') \in \Sigma_p\cap T^*_{(x'',x')}M$, {and where we continue to write $\Sigma_p=\{p=0\}$.  }
In these coordinates, $\xi'$ is cotangent to $H$ while ${\xi''}$ is conormal to $H$. 
{Since $H$ is conormally transverse for $p$, we may assume, without loss of generality, that    $x''=(x_1,\bar{x})$ with dual coordinates $\xi''=(\xi_1,\bar{\xi}),$ }where  
\[
\partial_{\xi_1}p(x,\xi) \neq 0\text{ on } \{p=0\}\cap N^*\!H.
\]

Consider the cut-off function $\chi_\alpha \in C_c^\infty(\R, [0,1])$ with 

\begin{equation}\label{E:chi}
\chi_\alpha(t) =\begin{cases} 
0 & |t| \geq \alpha \\
1 & |t| \leq \frac{\alpha}{2},
\end{cases}
\end{equation}
with  $|\chi_\alpha'(t)| \leq 3/\alpha$ for all $t \in \R$. For ${\e}>0$ consider the symbol

 \begin{equation}\label{E: beta}
 \beta_\ep(x',\xi')=\chi_\ep( |\xi'|_{g_H(x')}) \in C_c^\infty(T^*H),
 \end{equation} 
 where $g_H$ is the Riemannian metric on $H$ induced by $g$.
Let $w \in C_c^\infty({H^o})$, { where $H^o$ denotes the interior of $H$}. We start splitting the period integral as
\[
\int_H  w \phi_h \, d\sigma_H=\int_H Op_h(\beta_\e)[w \phi_h] \,d\sigma_H+\int_H Op_h(1-\beta_\e)[w \phi_h] \, d\sigma_H.
\]
The same proof as \cite[Lemma 8]{CGT} yields that for all $u \in L_{\comp}^2({H^o})$

\begin{equation*}\label{E:1-beta}
{\left|\int_H Op_h(1-\beta_\e)u\, d\sigma_H\right|
= O_\e(h^\infty)\, \|u\|_{L^2(H)}.}
\end{equation*}
{(see also Lemma~\ref{l:intByParts}).}

Choosing $u=w \phi_h$, and using the restriction bound $\|\phi_h\|_{L^2(H)}=O(h^{-\frac{2k-1}{4}})$ {from} \cite{BGT}, we obtain that 

\begin{equation}\label{E:split}
{\int_H w \phi_h \, d\sigma_H=\int_H Op_h(\beta_\e)[w \phi_h] \,d\sigma_H+O_\e(h^\infty)}.
\end{equation}

We control the integral of $Op_h(\beta_\e )w\phi_h$  using the following lemma. Recall that we write $H_p$ for the Hamiltonian vector field corresponding to $p(x,\xi)$ {and $\varphi_t$ for the associated Hamiltonian flow}. To shorten notation, we write 
\[
\LambdaH:= \bigcup_{|t|\leq T}\varphi_t(\SigH).
\]

%%%%%%%%%%%%%%%%%%%%%%%%%%%%%%%%%%%%%%%%%%%%%%%%%%%%%%%%%%%%%%%%%%%%%%%%%%%%%%%%
\begin{proposition}\label{P:sloth}
Let $\chi\in C_c^\infty(T^*M)$ so that $H_p \chi\equiv 0$ on $\LambdaH$ for some $T>0$. 
Let $w \in C_c^\infty(H)$. There exists {$C_{n,k}=C(n,k)>0$ depending only on $n$ and $k$} so that 
\begin{multline*}
{\lim_{\e\to 0}}\limsup_{h\to 0^+}h^{{k-1}}\left|\int_H Op_h(\beta_\e w)\big [Op_h(\chi) \phi_h\big]d\sigma_H\right|^2
\leq\\
 \! C_{n,k} \, \sig\!\big(\supp(\chi{1_{\SigH}})\big) \int_{\SigH} \!\!w^2\chi^2{|H_pr_H|^{-1}} d\muH.
\end{multline*}
\end{proposition}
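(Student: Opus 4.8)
The plan is to reduce the integral over $H$ to an oscillatory integral in the single conormal variable $\xi_1$ picked out by the condition $\partial_{\xi_1}p\neq 0$ on $\Sigma_p\cap N^*H$, apply stationary phase (or a direct Plancherel/Schur estimate) in that variable, and then identify the resulting quadratic expression as an integral against the defect measure $\mu_{_{\!H}}$ with the weight $|H_pr_H|^{-1}$ coming from the change of variables from $\xi_1$ to the distance function.

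First I would work in the Fermi coordinates $(x_1,\bar x,x')$ of Section~\ref{S:A0}, so that $H=\{x_1=0,\bar x=0\}$ and $r_H$ is (to leading order) $|x''|=|(x_1,\bar x)|$. Writing $\Phi_h:=Op_h(\chi)\phi_h$, the restriction $Op_h(\beta_\e w)[\Phi_h]\big|_H$ is, up to $O(h^\infty)$, an $h$-pseudodifferential operator applied to $\Phi_h$ and then restricted; using the compact microlocalization and the conormal–transversality reduction, one can arrange that the symbol is supported where $\partial_{\xi_1}p\neq0$, so that on $\supp\chi\cap N^*H$ the variable $\xi_1$ is a smooth function of $(x,\bar\xi,\xi')$ along $\{p=0\}$. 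I would then expand
\[
\int_H Op_h(\beta_\e w)[\Phi_h]\,d\sigma_H
=\frac{1}{(2\pi h)^{?}}\int e^{\frac{i}{h}\langle x'',\xi''\rangle}(\cdots)\,\widehat{\Phi_h}\,dx''\,d\xi''\Big|_{x''=0}\,d\sigma_H(x'),
\]
and take the modulus squared. The square is a double integral; after a standard almost-orthogonality/$TT^*$ manipulation the off-diagonal terms are negligible by non-stationary phase (the phase $\langle x''-y'',\xi''\rangle$ has no critical point unless the conormal frequencies nearly coincide), and the diagonal contributes
\[
h^{1-k}\int \big|Op_h(a_\e)\Phi_h\big|^2\,(\text{Jacobian})\,,
\]
where $a_\e$ is a symbol essentially equal to $\beta_\e w\,\chi$ localized near $N^*H$, and the Jacobian is exactly the factor converting $d\xi_1$ into $dr_H$, i.e. $|H_pr_H|^{-1}$ up to the $\sigma$–densities. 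This is where the power $h^{1-k}$ and the constant $C_{n,k}$ (coming from the $k-1$ transverse integrations plus a dimensional sphere-volume factor) appear.

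Next, I would pass to the limit. By the definition of the defect measure, $h^{?}\|Op_h(a_\e)\Phi_h\|_{L^2}^2\to\int |a_\e|^2\,d\mu$ along the subsequence; the extra $t_0$–averaging in the definition of $\mu_{_{\!H,p}}$ (equation~\eqref{e:muH2}) is produced by integrating the conormal localization over the flow direction — precisely here I use $H_p\chi\equiv0$ on $\LambdaH$, which guarantees that $\chi$ is constant along the flow on a full $\varphi_t$-tube over $\SigH$, so that the $L^2$ mass of $Op_h(a_\e)\Phi_h$ can be rewritten (by Egorov / propagation of the measure) as a flow-average of the mass concentrated near $\SigH$. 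Taking $\e\to0$ collapses $\beta_\e$ to the indicator of $\SigH$ and turns $\mu$ into $\mu_{_{\!H,p}}=\muH$ on $\SigH$; the support restriction $\sig(\supp(\chi 1_{\SigH}))$ enters because Cauchy–Schwarz in the transverse variables is applied only over the support of the localizer. Assembling these pieces yields exactly
\[
\lim_{\e\to0}\limsup_{h\to0^+}h^{k-1}\Big|\int_H Op_h(\beta_\e w)[Op_h(\chi)\phi_h]\,d\sigma_H\Big|^2
\le C_{n,k}\,\sig\big(\supp(\chi 1_{\SigH})\big)\int_{\SigH}w^2\chi^2|H_pr_H|^{-1}\,d\muH.
\]

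The main obstacle I expect is the careful bookkeeping in the stationary-phase/almost-orthogonality step: making precise that after restricting to $H$ one genuinely has a \emph{single} oscillatory variable $\xi_1$ (so the phase is non-degenerate and the error terms are truly $o(1)$ after the $h^{k-1}$ scaling), uniformly as $\e\to0$, and correctly tracking the Jacobian so that the limiting weight is $|H_pr_H|^{-1}$ rather than some other power. The second delicate point is justifying that the flow-average built into $\muH$ emerges from the $\e\to0$ and $h\to0$ limits in the right order — this is exactly the role of the hypothesis $H_p\chi\equiv0$ on $\LambdaH$, and one must check that the $T$-dependence washes out using the $t_0$-independence of $\mu_{_{\!H,p}}$ from Remark~2 of~\cite{CGT}.
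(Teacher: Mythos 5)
Your proposal takes a genuinely different route from the paper, and it has substantive gaps.

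\textbf{Different route.} The paper does not use stationary phase or a $TT^*$/almost-orthogonality argument in the conormal Fourier variables. Instead, the proof of Proposition~\ref{P:sloth} is built on two lemmas plus a covering argument. Lemma~\ref{l:squirrel} factors $P$ microlocally near $\rho_0 \in \SigH$ as an evolution equation $(hD_{x_1}-Op_h(a))w = f$, solves it by Duhamel, and averages in the $x_1$ variable with a bump $\Phi$ supported on an interval of length $\delta \sim T|\partial_{\xi_1}p(\rho_0)|$; this produces the factor $T^{-1/2}|\partial_{\xi_1}p(\rho_0)|^{-1/2}$, which is exactly where $|H_pr_H|^{-1}$ enters. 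Lemma~\ref{L:optimized} then applies a Sobolev-type embedding (from~\cite[Lemma~5.1]{Gdefect}) in the remaining $k-1$ conormal spatial variables $\bar{x}$, exploiting that when $\chi$ is supported in a tube $\mc{T}_T(\Xi,R)$ the conjugated derivatives $(hD_{x_i}-a_i)^\ell$ acting on the microlocalized quasimode carry an extra factor $R^\ell$ by~\eqref{E:distance}; optimizing the Sobolev parameter $\alpha=R$ yields the $R^{k-1}$ gain. Finally, Section~\ref{S: sloth proof} covers $\supp(\chi 1_{\SigH})$ by boxes $U_j$ of dimensions $r_j^{n-k}\times R_j^{k-1}$, applies Lemma~\ref{L:optimized} to each, and sums by Cauchy--Schwarz to produce the factor $\sig(\supp(\chi 1_{\SigH}))$.

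\textbf{Gaps.} (1) The almost-orthogonality step is not justified. Once you restrict to $H=\{x''=0\}$ and square, the conormal phase $\langle x''-y'',\xi''\rangle$ degenerates (both factors are evaluated at $x''=0$, $y''=0$); there is no residual phase in $\xi'',\eta''$ to render off-diagonal contributions negligible. You would need a fundamentally different mechanism (e.g.\ the spectral cluster / wave trace structure used in Kuznecov-type arguments), which your sketch does not supply. (2) More seriously, the factor $\sig(\supp(\chi 1_{\SigH}))$ cannot come from ``Cauchy--Schwarz in the transverse variables applied over the support of the localizer'' as you assert. A naive Cauchy--Schwarz over $H$ would give $\sigma_H(\supp w)$ --- a spatial volume on $H$ --- whereas the theorem requires the $\sig$-volume on the conormal sphere bundle $\SigH$. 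Achieving the latter genuinely requires the $R^{k-1}$ gain from the tube-localization; without some analogue of Lemma~\ref{L:optimized} your bound would be off by a factor that does not vanish as $\supp\chi 1_{\SigH}\to\emptyset$, and the subsequent proof of Theorem~\ref{t:local2} (which needs the bound to go to zero when $\chi$ concentrates on a null set for $\Sig$) would fail. (3) You are also vague about how the flow-averaging that turns $\mu$ into $\muH$ arises; in the paper it is concrete --- the $\Phi(t)$-average in Lemma~\ref{l:squirrel} is precisely the $t_0$-average in~\eqref{e:muH2} --- whereas you invoke ``Egorov / propagation of the measure'' without specifying the mechanism.
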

%%%%%%%%%%%%%%%%%%%%%%%%%%%%%%%%%%%%%%%%%%%%%%%%%%%%%%%%%%%%%%%%%%%%%%%%%%%%%%%%

The proof of Proposition \ref{P:sloth} is given in Section \ref{S:key}.  The purpose of this proposition is to allow us to use $\chi$ to localize quasimodes to the support of $\lambda_{_{\! H}}$ and its complement.
Since $\lambda_{_{\! H}}$ and $\Sig$ are mutually singular, it is not difficult to see that Proposition \ref{P:sloth} gives a  bound for $\limsup_{h\to 0^+}h^{\frac{k-1}{2}}\left|\int_H w \phi_hd\sigma_H\right|$ of the form $C\left(\int_{\SigH} w^2 f d\Sig\right)^{1/2}$.
 By further restricting $\chi$ to shrinking balls inside $\SigH$ an application of the Lebesgue differentiation theorem allows us to obtain a bound of the form $C \int_{\SigH} |w| \sqrt{f} d\Sig$ as claimed. {This improvement will be needed when passing to subsets $A\subset H$.} The factor $|H_pr_H|^{-1}$ measures the cost of restricting to a hypersurface containing $H$ which is microlocally transversal to $H_p$. In particular, we choose coordinates so that $H\subset \{x_1=0\}$ and $|H_p r_H|=\partial_{\xi_1}p\neq 0$ at a point $\rho\in \SigH$. This is possible since $H$ is conormally transverse for $p$. 

To apply Proposition \ref{P:sloth} it is key to work with cut-off functions $\chi\in C_c^\infty(T^*M)$ so that $H_p \chi\equiv 0$ on $\LambdaH$ for some $T>0$. 
Therefore, the following lemma is dedicated to extending cut-off functions on $\SigH$ to cut-off functions on $T^*M$ that are invariant under the Hamiltonian flow inside $\LambdaH$. Let $T_{_{\! \SigH}}>0$ be so that 
\[
\varphi:~[-2T, 2T] \times \SigH \to \LambdaHtwo
\]
 is a diffeomorphism for all $0 \leq T \leq T_{_{\!\SigH}}$. {Such a $T_{_{\!\SigH}}$ exists since $H$ is compact and conormally transverse for $p$. Moreover, for $T<T_{_{\!\SigH}}$, $\LambdaHtwo$ is a closed embedded submanifold in $T^*M$.}
 
 %%%%%%%%%%%%%%%%%%%%%%%%%%%%%%%%%%%%%%%%%%%%%%%%%%%%%%%%%%%%%%%%%%%%%%%%%%%%%%%%
\begin{lemma}\label{L:ant}
For all  $\tilde \chi \in C_c^\infty(\SigH;[0,1])$ and $0\leq T \leq T_{_{\!\SigH}}$ there exists \\$\chi \in C_c^\infty(T^*M;[0,1])$ so that 
\[
\chi (\varphi_t(x,\xi)) =\tilde \chi(x,\xi)
\]
for all $|t| \leq T$ and $(x,\xi) \in \SigH$.  In particular, $H_p \chi\equiv 0$ on $\LambdaH$.
\end{lemma}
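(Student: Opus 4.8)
\textbf{Proof plan for Lemma~\ref{L:ant}.}

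The plan is to build $\chi$ by first defining it on the tube $\LambdaHtwo$ as the pullback of $\tilde\chi$ under the projection along the Hamiltonian flow, and then extending it to all of $T^*M$ using a cutoff in the transverse directions. Since $T<T_{_{\!\SigH}}$, the map $\varphi:[-2T,2T]\times\SigH\to\LambdaHtwo$ is a diffeomorphism and $\LambdaHtwo$ is a closed embedded submanifold of $T^*M$; let $\psi:\LambdaHtwo\to[-2T,2T]\times\SigH$ be its inverse, and write $\psi=(\tau,\pi_{\SigH})$ for its two components, so $\tau(\rho)$ is the (unique, small) time such that $\varphi_{-\tau(\rho)}(\rho)\in\SigH$ and $\pi_{\SigH}(\rho)=\varphi_{-\tau(\rho)}(\rho)$. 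On $\LambdaHtwo$ define $\chi_0:=\tilde\chi\circ\pi_{\SigH}$. By construction $\chi_0$ is smooth, takes values in $[0,1]$, and is constant along Hamiltonian flow lines inside $\LambdaHtwo$ (because $\pi_{\SigH}\circ\varphi_s=\pi_{\SigH}$ whenever both points lie in the tube), so $H_p\chi_0\equiv 0$ there; moreover $\chi_0|_{\SigH}=\tilde\chi$, hence $\chi_0(\varphi_t(x,\xi))=\tilde\chi(x,\xi)$ for all $|t|\le 2T$ and $(x,\xi)\in\SigH$, in particular for $|t|\le T$.

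Next I would promote $\chi_0$ to a function on $T^*M$. Since $\supp\tilde\chi$ is a compact subset of $\SigH$, the set $K:=\bigcup_{|t|\le 2T}\varphi_t(\supp\tilde\chi)$ is a compact subset of $\LambdaHtwo$, and $\chi_0$ is supported in $K$. Choose a tubular-neighborhood-type retraction: because $\LambdaHtwo$ is a closed embedded submanifold, there is an open neighborhood $W$ of $\LambdaHtwo$ in $T^*M$ and a smooth submersion $r:W\to\LambdaHtwo$ with $r|_{\LambdaHtwo}=\operatorname{Id}$ (e.g. the footpoint projection of a tubular neighborhood). Shrinking $W$ if necessary, we may assume $\overline{r^{-1}(K)}$ is compact. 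Pick $\theta\in C_c^\infty(W;[0,1])$ with $\theta\equiv 1$ on a neighborhood of $K$, and define
\[
\chi:=\theta\cdot(\chi_0\circ r)\quad\text{on }W,\qquad \chi:=0\quad\text{on }T^*M\setminus W.
\]
This is well-defined and smooth since $\chi_0\circ r$ is smooth on $W$ and $\theta$ cuts it off compactly inside $W$; it takes values in $[0,1]$; and it has compact support, so $\chi\in C_c^\infty(T^*M;[0,1])$.

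Finally I would check the two claimed properties. For $(x,\xi)\in\SigH$ and $|t|\le T$, the point $\varphi_t(x,\xi)$ lies in $\LambdaHtwo$; if $(x,\xi)\in\supp\tilde\chi$ then $\varphi_t(x,\xi)\in K$ so $\theta\equiv 1$ there and $r$ fixes it, giving $\chi(\varphi_t(x,\xi))=\chi_0(\varphi_t(x,\xi))=\tilde\chi(x,\xi)$; if $(x,\xi)\notin\supp\tilde\chi$ then $\chi_0$ vanishes near $\varphi_t(x,\xi)$ and again $\chi(\varphi_t(x,\xi))=0=\tilde\chi(x,\xi)$. For the flow-invariance on $\LambdaH$: for $\rho\in\LambdaH=\bigcup_{|s|\le T}\varphi_s(\SigH)$, a short flow segment through $\rho$ stays inside $\LambdaHtwo$, on which $\chi$ agrees with $\chi_0$ near $\rho$ (it lies in the interior of the region where $\theta\equiv1$ and $r=\operatorname{Id}$, by the compactness arrangement above), and $H_p\chi_0\equiv0$ there, so $H_p\chi(\rho)=0$. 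This gives $H_p\chi\equiv 0$ on $\LambdaH$, completing the proof.

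\medskip

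The step I expect to be the main obstacle is the clean extension from $\LambdaHtwo$ to $T^*M$ while preserving exact flow-invariance on the smaller tube $\LambdaH$: one must be careful that the cutoff $\theta$ and the retraction $r$ do not destroy $H_p\chi\equiv 0$ near $\LambdaH$. This is handled by the size separation $T<T_{_{\!\SigH}}$ (so $\LambdaH$ sits compactly inside the interior of $\LambdaHtwo$) together with choosing $\theta\equiv1$ and $r=\operatorname{Id}$ on a full neighborhood of the relevant compact piece of $\LambdaHtwo$, which makes $\chi=\chi_0$ identically near every point of $\LambdaH$ that matters. Everything else is routine partition-of-unity and tubular-neighborhood bookkeeping.
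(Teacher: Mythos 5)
Your proposal has a genuine gap in the extension step. You define $\chi_0 := \tilde\chi\circ\pi_{\SigH}$ on all of $\LambdaHtwo$, so $\chi_0$ is constant in the flow parameter throughout the tube and its support $K=\bigcup_{|t|\le2T}\varphi_t(\supp\tilde\chi)$ runs all the way out to the time-boundary faces $\varphi_{\pm2T}(\SigH)$. But $\LambdaHtwo$ is a compact manifold \emph{with boundary}, and the retraction $r:W\to\LambdaHtwo$ you invoke (with $W$ an open neighborhood in $T^*M$ and $r|_{\LambdaHtwo}=\operatorname{Id}$) does not exist as a smooth map: following a curve in $W$ through a boundary point of $\LambdaHtwo$ that crosses transversally in the time direction, the time-component of $r$ must equal the identity on one side and stay pinned $\le 2T$ on the other, which forces a corner (the model case is retracting a plane neighborhood onto a closed line segment, which can only be done Lipschitz, not $C^1$). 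Since you also require $\theta\equiv1$ near $K$, and $K$ reaches that boundary, the composite $\theta\cdot(\chi_0\circ r)$ would fail to be smooth exactly where you need it, so $\chi\notin C_c^\infty(T^*M)$.

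The paper avoids this by tapering in time: fix $\psi\in C_c^\infty(\R;[0,1])$ with $\supp\psi\subset(-2T,2T)$ and $\psi\equiv1$ on $[-T,T]$, and set $\chi(\varphi_t(x,\xi)):=\psi(t)\tilde\chi(x,\xi)$ on $\LambdaHtwo$ using the diffeomorphism $\varphi$. This $\chi$ has compact support strictly in the interior of $\LambdaHtwo$, so extending it to $C_c^\infty(T^*M;[0,1])$ is the routine tubular-neighborhood/cutoff step for a compactly supported smooth function on an embedded submanifold, with no boundary in the way. The two required conclusions survive unchanged because $\psi\equiv1$ on $[-T,T]$: one gets $\chi(\varphi_t(x,\xi))=\tilde\chi(x,\xi)$ for $|t|\le T$ and $(x,\xi)\in\SigH$, and $\chi$ is constant along flow lines throughout $\LambdaH$, hence $H_p\chi\equiv0$ there. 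Everything else in your write-up (the diffeomorphism onto the tube, the projection $\pi_{\SigH}$, the cutoff $\theta$, the final verification on $\LambdaH$) carries over once you insert this taper in $t$ before extending.
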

%%%%%%%%%%%%%%%%%%%%%%%%%%%%%%%%%%%%%%%%%%%%%%%%%%%%%%%%%%%%%%%%%%%%%%%%%%%%%%%%

%%%%%%%%%%%%%%%%%%%%%%%%%%%%%%%%%%%%%%%%%%%%%%%%%%%%%%%%%%%%%%%%%%%%%%%%%%%%%%%%
\begin{proof}
Let ${\psi} \in C^\infty_c(\R; [0,1])$ be a fixed function supported on $(-2T, 2T)$ with ${\psi}\equiv 1$ on $[-T, T]$.
Then, using that $\varphi:~[-2T, 2T] \times \SigH \to \LambdaHtwo$ is a diffeomorphism,  define the smooth cut-off $\chi: \LambdaHtwo \to [0,1]$ by the relation
\[
\chi (\varphi_t(x,\xi))={\psi}(t) \tilde\chi(x,\xi).
\]
 Finally, extend $\chi$ to all of $T^*M$ so that $\chi\in C_c^\infty(T^*M;[0,1])$. We can make such an extension since $\LambdaH$ is a closed embedded submanifold in $T^*M$.
\end{proof}
%%%%%%%%%%%%%%%%%%%%%%%%%%%%%%%%%%%%%%%%%%%%%%%%%%%%%%%%%%%%%%%%%%%%%%%%%%%%%%%%

%%%%%%%%%%%%%%%%%%%%%%%%%%%%%%%%%%%%%%%%%%%%%%%%%%%%%%%%%%%%%%%%%%%%%%%%%%%%%%%%
%%%%%%%%%%%%%%%%%%%%%%%%%%%%%%%%%%%%%%%%%%%%%%%%%%%%%%%%%%%%%%%%%%%%%%%%%%%%%%%%
\subsection{Proof of Theorem~\ref{t:local2} for $A=H$} \label{S:A}
%%%%%%%%%%%%%%%%%%%%%%%%%%%%%%%%%%%%%%%%%%%%%%%%%%%%%%%%%%%%%%%%%%%%%%%%%%%%%%%%
%%%%%%%%%%%%%%%%%%%%%%%%%%%%%%%%%%%%%%%%%%%%%%%%%%%%%%%%%%%%%%%%%%%%%%%%%%%%%%%%

Fix $\delta>0$. Since $\Sig$ and $\lambda_{_{\! H}}$ are two {Radon} measures on $\SigH$ that are mutually singular,  there exist {$K_\delta\subset \SigH$ compact and $U_\delta \subset \SigH$ with
$K_\delta \subset U_\delta$ and so that 
\[
\Sig (U_\delta) \leq \delta \qquad \text{and} \qquad \lambda_{_{\! H}}(\SigH \backslash K_\delta ) \leq \delta.
\]}
{
Indeed, by definition of mutual singularity, there exist $V,W\subset \SigH$ so that $\lambda_{_{\! H}}(W)=\Sig(V)=0$ and $V\cup W=\SigH$. Hence, by outer regularity of $\Sig$, there exists $ U_\delta\supset V$ open with $\Sig(U_\delta)\leq \delta.$ Next, by inner regularity, of $\lambda_{_{\! H}}$, there exists $K_\delta\subset U_\delta$ compact with $\lambda_{_{\! H}}(\Sig\setminus K_\delta)=\lambda_{_{\! H}}(U_\delta\setminus K_\delta)\leq \delta.$ 
}
Let $\tilde \kappa_\delta \in C^\infty_c(\SigH; [0,1])$ be a cut-off function with 
\[
\tilde \kappa_\delta \equiv 1 \;\; \text{on}\; K_\delta \qquad \quad \text{and} \quad \qquad \supp \tilde \kappa_\delta\subset U_\delta.
\]
Let $\kappa_\delta \in C_c^\infty(T^*M;[0,1])$ be the cut-off extension  {of $\tilde \kappa_\delta $} given in Lemma \ref{L:ant} with
\[
H_p\kappa_\delta \equiv 0 \;\; \text{on}\;\LambdaH,
\]  
where we have fixed  $T>0$ so that $2T \leq T_{_{\SigH}}$.
We use \eqref{E:split} and split the period integral as
\begin{multline*}
\int_H w \phi_h \, d\sigma_H=\int_H Op_h(\beta_\e w)[Op_h(\kappa_\delta) \phi_h] \,d\sigma_H\\+\int_H Op_h(\beta_\e w)[Op_h(1-\kappa_\delta) \phi_h] \,d\sigma_H+O_\e(h^\infty).
\end{multline*}
Applying Proposition \ref{P:sloth}  with  $\chi= \kappa_\delta$,  we have that 

\begin{equation}\label{E:rho}
\begin{aligned}
&{\lim_{\e\to 0}}\limsup_{h\to 0}  h^{k-1} \left|\int_H Op_h(\beta_\ep w) [Op_h(\kappa_\delta) \phi_h]\, d\sigma_H\right|^2\\
&\qquad\qquad\qquad\qquad\qquad\leq  C\, \Sig\!\big(\supp \kappa_\delta {1_ {\SigH}}\big) \int_{\SigH} \kappa_\delta^2 w^2 d\mu_{_{\!H,p}} \leq C\, \delta.
\end{aligned}
\end{equation}
Here we have used that $\Sig (U_\delta) \leq \delta$ and that by construction  $\supp \kappa_\delta {1_{ \SigH}} = \supp \tilde{\kappa}_\delta \subset  U_\delta$.

We dedicate the rest of the proof to showing that 

\begin{align}\label{E:rho2}
\lim_{\ep \to 0} \limsup_{h\to 0} h^{\frac{k-1}{2}} \left|\int_H Op_h(\beta_\ep w)[Op_h(1-\kappa_\delta) \phi_h]d\sigma_H\right|
\leq  C_{n,k} \int_{\SigH} |w| \sqrt{{f_1}} \,d\Sig + C\delta^{\frac{1}{2}}.
\end{align}
where ${f_1:=f{|H_pr_H|^{-1}}}.$
Putting \eqref{E:rho} together with \eqref{E:rho2} then concludes the proof.\\

We start by splitting the left hand side in \eqref{E:rho2}  into an integral over small balls.
By the Besicovitch--Federer Covering Lemma \cite[Theorem 1.14, Example (c)]{Hein01}, there exists a constant $c_{n}>0$ depending only on $n$ and $r_0=r_0(H)$ so that for all $0<r<r_0$, there exist open balls $\{B_1,\dots, B_{N(r)}\} \subset \SigH$ of radius $r$ with 
\[
N(r)\leq c_{n} r^{1-n} \qquad \text{and}\qquad \Sig(B_j) \leq c_{n} r^{n-1},
\]
 so that
\[
\SigH\subset \bigcup_{j=1}^{N(r)}B_j
\]
and each point in $\SigH$ lies in at most $c_{n}$ balls. 
Let $\{\tilde \psi_j\}$ with  $\tilde \psi_j\in C_c^\infty(\SigH;[0,1])$ be a partition of unity associated to $\{B_j\}$, and write $\psi_j$ for the extensions  $ \psi_j\in C_c^\infty(T^*M;[0,1])$ given in Lemma \ref{L:ant} so that $\psi_j (\varphi_t(x,\xi)) =\tilde \psi_j(x,\xi)$ for all $|t| \leq 2T$ and $(x,\xi) \in \SigH$.
With this construction, $H_p\psi_j\equiv 0$ on   $\LambdaHtwo$, 
\[
\sum_{j=1}^{N(r)} \psi_j\equiv 1\;  \text{on}\;  \LambdaHtwo,\qquad \text{and} \qquad \supp (\psi_j {1_{ \SigH}}) \subset B_j.
\]

Let $\Psi:= \sum_{j=1}^{N(r)}\psi_j$. Setting $\chi= (1-\Psi)(1-\kappa_\delta)$ we have $H_p \chi=0$ on $\LambdaH$ and $\supp (\chi {1_{ \SigH}})=\emptyset$ (since $1-\Psi \equiv 0$ on $\Lambda_{H, 2T}$). We then apply Lemma~\ref{P:sloth} to $\chi$, to obtain 
\[
\lim_{\ep\to 0}\limsup_{h\to 0}  h^{\frac{k-1}{2}}\left|\int_H Op_h(\beta_\ep w) [Op_h ((1-\Psi)(1-\kappa_\delta)) \phi_h]d\sigma_H\right|=0.
\]

On the other hand, by the triangle inequality we have
\[
\left|\int_H Op_h(\beta_\ep w) [Op_h (\Psi(1-\kappa_\delta)) \phi_h]d\sigma_H\right| 
\leq    \sum_{j=1}^{N(r)} \left|\int_H Op_h(\beta_\ep w)[Op_h( \psi_j (1-\kappa_\delta)) \phi_h]\, d\sigma_H\right|.
\]
By construction we have that  $H_p[\psi_j (1-\kappa_\delta)] \equiv 0$ on $\LambdaH$. We may therefore apply Proposition \ref{P:sloth}  with $\chi=\psi_j (1-\kappa_\delta)$ to  find that there exist $\ep_0,C_{n,k}>0$ so that 
\begin{multline*}
{\lim_{\e\to 0}}\limsup_{h\to 0} h^{k-1}\left|\int_H Op_h(\beta_\ep w)[Op_h( \psi_j (1-\kappa_\delta)) \phi_h]\, d\sigma_H\right|^2\\
\leq C_{n,k}\,r^{n-1} \int_{\SigH}\psi_j^2 w^2 (1-\kappa_\delta)^2 {|H_pr_H|^{-1}}d\mu_{_{\!H,p}}.
\end{multline*}
Here we have used that $\supp (\psi_j {1_{\SigH}}) \subset B_j$ and { for $r_j>0$ small enough} $\Sig(B_j)\leq c_n r^{n-1}$ for all $j=1, \dots, N(r)$, and some ${c_n}>0$ {depending only on $n$}. It follows that there is $C_{n,k}>0$ for which

\begin{align*}
&\lim_{\ep\to 0}\limsup_{h\to 0}  h^{\frac{k-1}{2}} \left|\int_H Op_h(\beta_\ep w) [Op_h (\Psi(1-\kappa_\delta)) \phi_h]d\sigma_H\right| \leq \\
& \hspace{5cm}\leq  C_{n,k}\,r^{\frac{n-1}{2}}  \sum_{j=1}^{N(r)}\left( \int_{\SigH}\psi_j^2 w^2 (1-\kappa_\delta)^2\, {|H_pr_H|^{-1}}d\mu_{_{\!H,p}}\right)^{\frac{1}{2}}.
\end{align*}

Decomposing $\mu_{_{\!H,p}}= f \Sig +\lambda_{_{\! H}}$, and using that  
\[
\supp ((1-\kappa_\delta){1_ {\SigH }})\, \subset\;   \SigH\backslash K_\delta 
\]
 while $\lambda_{_{\! H}}(\SigH \backslash K_\delta)  \leq \delta$, we conclude that there exists $C>0$ so that 

\begin{align}\label{E:rho3}
\lim_{\ep\to 0}\limsup_{h\to 0}  h^{\frac{k-1}{2}} \left|\int_H Op_h(\beta_\ep w) [Op_h (1-\kappa_\delta) \phi_h]d\sigma_H\right|
&\leq  C_{n,k}F(r)+ C \delta^{1/2}. 
\end{align}
where
\[
F(r):= r^{\frac{n-1}{2}} \sum_{j=1}^{N(r)}\left( \int_{\SigH}\psi_j^2 w^2 {f_1}\,  d\Sig\right)^{\frac{1}{2}}.
\]

{Indeed, applying the triangle inequality, 
\begin{multline*}
C_{n,k}r^{\frac{n-1}{2}}  \sum_{j=1}^{N(r)}\left( \int_{\SigH}\psi_j^2 w^2 (1-\kappa_\delta)^2{|H_pr_H|^{-1}}\, d\mu_{_{\!H,p}}\right)^{\frac{1}{2}}\leq \\
C_{n,k}F(r)+Cr^{\frac{n-1}{2}}\sum_{j=1}^{N(r)}\left(\int_{\SigH}\psi_j^2w^2(1-\kappa_\delta)^2d\lambda_{_{\! H}}\right)^{\frac{1}{2}}.
\end{multline*}
By Cauchy-Schwarz,
\begin{align*}
r^{\frac{n-1}{2}}\sum_{j=1}^{N(r)}\left(\int_{\SigH}\psi_j^2(1-\kappa_\delta)^2w^2d\lambda_{_{\! H}}\right)^{\frac{1}{2}}&\leq r^{\frac{n-1}{2}}(N(r))^{1/2}\left(\int_{\SigH}\sum_{j=1}^{N(r)}\psi_j^2w^2(1-\kappa_\delta)^2d\lambda_{_{\! H}}\right)^{\frac{1}{2}}\\
&\leq C\lambda_{_{\! H}}(\supp(1-\kappa_\delta)1_{\SigH})^{\frac{1}{2}}\\
&\leq C\delta^{1/2},
\end{align*}
{and this proves \eqref{E:rho3}.}
}

Since for $r$ small enough, {and any $j$}, we have $c_{n}^{-1} r^{n-1}\leq {\Sig(B_j)}\leq c_{n} r^{n-1}$, there exists $C_{n,k}>0$ so that

\begin{align*} 
 F(r)&\leq C_{n,k} \int_{\SigH} \left[\sum_{j=1}^{N(r)} \left(\frac{1}{\Sig(B_j)}\int_{B_j}w^2{f_1}\,d\Sig \right)^{\frac{1}{2}}1_{B_j} \right]d\Sig.
\end{align*}

 The Lebesgue Differentiation Theorem \cite[Theorem 3.21]{Folland} shows that

\begin{equation*}
\limsup_{r\to 0} \sum_{j=1}^{N(r)} \left(\frac{1}{\Sig(B_j)}\int_{B_j}w^2 f\, d\Sig \right)^{\frac{1}{2}}1_{B_j} \leq  {C_{n,k}}|w| \sqrt{{f_1}}\quad \;\;\;\;\Sig\!\!-\!\text{a.e.}
\end{equation*}

Furthermore, the weak type 1-1 boundedness of the Hardy--Littlewood maximal function \cite[Theorem 3.17]{Folland} implies that there exists $C_0$ so that for every $\alpha>0$
\[
\Sig\left((x,\xi)\in \SigH:\; \sup_{r>0}\left(\frac{1}{\Sig(B_j)}\int_{B_j} w^2{f_1}d\Sig\right)^{\frac{1}{2}}\geq \alpha \right)\leq C_0\alpha^{-2}.
\]

Hence, by the Dominated Convergence Theorem, 
\begin{equation}\label{E:rho4}
F(r)\leq C_{n,k}\int_{\SigH} |w|\sqrt{{f_1}}\,d\Sig.
\end{equation}
Feeding \eqref{E:rho4} into \eqref{E:rho3} proves \eqref{E:rho2}. Putting \eqref{E:rho} together with \eqref{E:rho2}  concludes the proof.
\qed \ \\

%%%%%%%%%%%%%%%%%%%%%%%%%%%%%%%%%%%%%%%%%%%%%%%%%%%%%%%%%%%%%%%%%%%%%%%%%%%%%%%%
%%%%%%%%%%%%%%%%%%%%%%%%%%%%%%%%%%%%%%%%%%%%%%%%%%%%%%%%%%%%%%%%%%%%%%%%%%%%%%%%
\subsection{Proof of Theorem~\ref{t:local2} for any $A\subset H$} \label{S:A2}
%%%%%%%%%%%%%%%%%%%%%%%%%%%%%%%%%%%%%%%%%%%%%%%%%%%%%%%%%%%%%%%%%%%%%%%%%%%%%%%%
%%%%%%%%%%%%%%%%%%%%%%%%%%%%%%%%%%%%%%%%%%%%%%%%%%%%%%%%%%%%%%%%%%%%%%%%%%%%%%%%

{In order to pass to $A\subset H$, we break the integral into two pieces. First, near the conormal bundle $N^*\!H$, we approximate $1_A$ by an ($h$-independent) smooth function and apply the theorem on all of $H$. In order to estimate the piece away from $N^*\!H$, we approximate $1_A$ by a smooth function depending badly on $h$. We are then able to perform integration by parts to estimate contributions away from $\partial A$ and a simple volume bound near $\partial A$.  }

 Let $A \subset H$ be a subset with  $\dbox(\partial A)<n-k -\frac{1}{2}$ and indicator function $1_{A}.$  {Extend $H$ to $\tilde H$} another closed, embedded submanifold of codimension $k$ so that $H$ is compactly contained in the interior $\tilde H^o$. We will actually apply Theorem~\ref{t:local2}  to $\tilde H$ and $w\in C_c^\infty(\tilde H^o)$.
 Since $C_{{c}}^{\infty}(\tilde H^o)$ is dense in $L_{\comp}^2(\tilde H^o)$, for any $\delta>0$, we can find a positive function $\psi_A \in C_c^{\infty}(\tilde H^o)$ with
$$
 \| \psi_A - 1_{A} \|_{L^2(\tilde H)} \leq \delta.
 $$
For any $\ep>0$ and $w \in C^\infty_c(\tilde H^o)$,
\[
\Big| \int_{\tilde H} 1_A \, w\, \phi_h d\sigma_{\tilde H}  \Big|\leq  \Big| \int_{\tilde H} 1_AOp_h(\beta_\ep) (w\, \phi_h) d\sigma_{\tilde H} \Big| + \Big| \langle(1-Op_h(\beta_\ep)) (w \phi_h) ,1_A\rangle_{\tilde H} \Big|.
 \]
 
 We claim that if  $A\subset H$ has boundary satisfying $\dbox(\partial A)<n-k-\frac{1}{2}$
 Then, for all $\delta>0$ and $\epsilon>0$,
 
\begin{equation}\label{E:warm}
\|(1-Op_h(\beta_\ep))^* 1_A\|_{L^2(\tilde H)}=O_{\e{,\delta}}(h^{\frac{1}{4}+\delta}).
\end{equation}
We postpone the proof of \eqref{E:warm} until the end. Assuming that \eqref{E:warm}  holds,  the universal upper bound $\|\phi_h\|_{L^2(\tilde H)}\leq Ch^{-\frac{k}{2}+\frac{1}{4}}$ \cite{BGT} together with Cauchy-Schwarz give

 \begin{align}
& h^{\frac{k-1}{2}}\Big| \int_{H} 1_A \, w\, \phi_h d\sigma_H  \Big| \leq \notag\\
& \leq h^{\frac{k-1}{2}}\Big| \int_{\tilde H} 1_AOp_h(\beta_\ep) (w\, \phi_h) d\sigma_H \Big| + o_\e(1) \nonumber\\
& \leq h^{\frac{k-1}{2}}\Big| \int_{\tilde H} (1_A- \psi_A) Op_h(\beta_\ep) (w\phi_h) d\sigma_{\tilde H} \Big|+ h^{\frac{k-1}{2}}\Big| \int_{\tilde H} \psi_A Op_h(\beta_\ep) (w\phi_h) d\sigma_{\tilde H} \Big|+o_\e(1)\nonumber  \\
&=:T_{1,h}+T_{2,h}+o_\e(1). \label{e:octopus}
\end{align}

Next, note that $\|Op_h(\beta_\ep)(w \, \phi_h)\|_{L^2(\tilde H)}=O({h^{\frac{1-k}{2}}})$ and apply Cauchy--Schwarz to obtain
\[
T_{1,h} \leq \|1_A-\psi_A\|_{L^2(\tilde H)}h^{\frac{k-1}{2}}\|Op_h(\beta_\ep)(w\, \phi_h)\|_{L^2(\tilde H)}\leq C\delta,
\]
for some $C>0$.
Finally, to bound the second term in \eqref{e:octopus} we note that
\[
T_{2,h}=h^{\frac{k-1}{2}}\left|\int_{\tilde H}  Op_h(\beta_\ep) (\psi_A \, w\, \phi_h) d\sigma_{\tilde H}\right|+o(1)=h^{\frac{k-1}{2}}\left|\int_{\tilde H}  \psi_A \, w\, \phi_h \, d\sigma_{\tilde H}\right| + o(1),
\]
and that by Theorem \ref{t:local2} with $A=\tilde H$ {and $w\in C_c^\infty(\tilde H^o)$} there exists $C_{n,k}>0$ for which 
\begin{multline*}
\limsup_{h\to 0}T_{2,h} \leq C_{n,k}\int_{\Sigma_{\tilde H}} \psi_A \, |w|  \sqrt{f|H_pr_H|^{-1}} \, d{\sigma_{\!_{S\!N^*\!\tilde H}}}\leq\\
C_{n,k} \int_{\pi_H^{-1}(A)}  |w| \sqrt{f|H_pr_H|^{-1}} \, d\Sig +C\delta  \|f\|_{L^1(\tilde H)} \|w\|_{L^{{\infty}}(\tilde H)}.
\end{multline*}
The last equality follows from Cauchy-Schwarz and the bound $\| \psi_A - 1_{A} \|_{L^2(\tilde H)} \leq \delta$. This gives the stated result provided \eqref{E:warm} holds. We proceed to prove \eqref{E:warm}.\\

{To prove \eqref{E:warm} we first introduce a cut-off function $\chi_h \in C_c^\infty(\tilde H^o)$ so that  $(1-\chi_h)1_A$  is smooth and close to $1_A$ {and $\chi_h $ is $1$ in a neighborhood of $\partial A$.}
For this, fix ${0<\delta}<1$ and cover $\partial A$ by {\small{$(n-k)$}-dimensional cubes} $Q_{i,h} \subset \tilde H^o$,  with $1\leq i\leq N(h)$,  and  side length $h^{\delta}$ with disjoint interiors. This can by done so that 
$$
\limsup_{h\to 0^+}\frac{\log N(h)}{\delta \log h^{-1}}=\dbox(\partial A).
$$
{We decompose
\begin{multline}\label{E:warm1}
\|(1-Op_h(\beta_\ep))^* 1_A\|_{L^2(\tilde H)}=\|(1-Op_h(\beta_\ep))^* (1-\chi_h)1_A\|_{L^2(\tilde H)}\\+\|(1-Op_h(\beta_\ep))^*\chi_h 1_A\|_{L^2(\tilde H)}.
\end{multline}
We bound $\|(1-Op_h(\beta_\ep))^*\chi_h 1_A\|_{L^2(\tilde H)}$ using that $1-Op_h(\beta_\ep)$ is $L^2$-bounded and that $\chi_h 1_A$ has compact support. We proceed to bound $\|\chi_h1_A\|_{L^2(\tilde H)}$.
}
Cover each cube {$Q_{i,h}$} by $2^{n-k}$ open balls {$B_{i,h}$} of radius $h^\delta$. Let $ \chi_{i,h}\in C_c^\infty(B_{i,h};[0,1])$ be a partition of unity near $\partial A$ subordinate to $B_{i,h}$ and define ${\chi_h=\sum_{i=1}^{N(h)}\chi_{i,h}}$.
Then, 
\begin{gather}
\chi_h \equiv 1\text{ in a neighborhood of }\partial A, \qquad \supp \chi_h \subset \{x\in H:\; d(x,\partial A)\leq 2h^{\delta}\}, \nonumber  \\
|\partial_x^\alpha \chi_h |=O_\alpha (h^{-|\alpha|\delta}).\label{e:anteater}
\end{gather}
Moreover, {since the volume of each cube $Q_{i,h}$ is $h^{\delta(n-k)}$, there is $C>0$ so that }
\begin{equation*}
\|\chi_h\|^2_{L^2(\tilde H)}\leq C N(h)h^{\delta(n-k)}\leq Ch^{\delta(n-k-\dbox(\partial A))}.
\end{equation*}
{It follows that 
\begin{equation}
\label{e:nearEst}
\|(1-Op_h(\beta_\ep))^*\chi_h 1_A\|_{L^2(\tilde H)}=O\Big(h^{\frac{\delta}{2}(n-k-\dbox(\partial A))}\Big).
\end{equation}}

On the other hand, the function  $(1-\chi_h)1_A$ satisfies the bounds~\eqref{e:anteater}. In particular, putting $\psi_h=1-\chi_h$ in Lemma \ref{l:intByParts} below, for $\delta<1$,
\begin{equation}
\label{e:awayEst}
\|(1-Op_h(\beta_\ep))^*(1-\chi_h)1_A\|_{{L^\infty(\tilde H)}}=O_{\epsilon}(h^\infty).
\end{equation}
}

\noindent {Combining \eqref{e:nearEst} and \eqref{e:awayEst}  into \eqref{E:warm1}, and taking $0<\delta<1$ sufficiently close to $1$,  proves \eqref{E:warm} as claimed.}

\qed

%%%%%%%%%%%%%%%%%%%%%%%%%%%%%%%%%%%%%%%%%%%%%%%%%%%%%%%%%%%%%%%%%%%%%%%%%%%%%%%% 
\begin{lemma}
\label{l:intByParts}
Suppose that $\psi_h\in C_c^\infty(\tilde H^o)$ satisfies~\eqref{e:anteater} for some $0<\delta<1$. Then,  for $u\in L^2(\tilde H)$,
$$
\|(1-Op_h(\beta_\e))^*{(\psi_h u)}\|_{L^\infty(\tilde H)}=O_{\e,\delta}(h^\infty{\|u\|_{L^2(\tilde{H})}}).
$$
\end{lemma}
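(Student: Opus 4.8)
\textbf{Proof proposal for Lemma~\ref{l:intByParts}.}

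The plan is to reduce everything to a stationary-phase / non-stationary-phase estimate for the Schwartz kernel of $(1-Op_h(\beta_\e))^*$ restricted to $\tilde H$, exploiting that $1-\beta_\e$ is supported in the region $|\xi'|_{g_H}\geq \e/2$ where $\xi'$ is cotangent to $\tilde H$, while the test input $\psi_h u$ lives on $\tilde H$ itself. First I would write, in the Fermi-type coordinates of Section~\ref{S:A0} in which $\tilde H$ is $\{x''=0\}$ and using that $1-Op_h(\beta_\e)$ acts in the $x'$ variables only (up to $O(h^\infty)$ by the standard pseudolocality argument, as in~\cite[Lemma 8]{CGT}), the kernel
\[
K_h(x',y')=\frac{1}{(2\pi h)^{n-k}}\int_{\re^{n-k}} e^{\frac{i}{h}\langle x'-y',\xi'\rangle}\,\overline{(1-\beta_\e)(y',\xi')}\,a(x',y',\xi')\,d\xi',
\]
where $a$ is the (compactly supported, symbolic) amplitude coming from the quantization and the coordinate change. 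Then $(1-Op_h(\beta_\e))^*(\psi_h u)(x')=\int K_h(x',y')\psi_h(y')u(y')\,dy'$, and it suffices to show $\sup_{x'}\int |K_h(x',y')\psi_h(y')|\,dy' = O_{\e,\delta}(h^\infty)$, since then $\|(1-Op_h(\beta_\e))^*(\psi_h u)\|_{L^\infty}\leq \sup_{x'}\big(\int|K_h(x',y')\psi_h(y')|\,dy'\big)^{1/2}\cdot\big(\int|K_h(x',y')\psi_h(y')|\,|u(y')|^2 dy'\big)^{1/2}$ and a second identical kernel bound controls the remaining factor, yielding the $\|u\|_{L^2(\tilde H)}$ on the right.

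The heart of the matter is the integration by parts in $\xi'$. On $\supp(1-\beta_\e)$ we have $|\xi'|\geq \e/2$, so $\nabla_{\xi'}$ of the phase $\langle x'-y',\xi'\rangle$ has no zero there only if $x'\neq y'$; the point is rather that we want an estimate uniform in $x'$ \emph{including} near the diagonal, and there the gain must come entirely from the derivatives landing on $\psi_h$. The clean way: write $e^{\frac{i}{h}\langle x'-y',\xi'\rangle}=\big(\tfrac{h}{i|\xi'|^2}\langle\xi',\partial_{y'}\rangle\big)^N e^{\frac{i}{h}\langle x'-y',\xi'\rangle}$ for any $N$, integrate by parts $N$ times in $y'$ moving the operator onto $\psi_h(y')u(y')$ — but $u$ is merely $L^2$, so instead I would integrate by parts in $\xi'$. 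Since $|\nabla_{\xi'}\langle x'-y',\xi'\rangle|=|x'-y'|$ which is not bounded below, the correct device is the standard one for oscillatory integrals with amplitude supported away from $\xi'=0$: combine integration by parts in $\xi'$ (which produces, each time, either a factor $h/|x'-y'|$ or a derivative falling on $a$ — harmless) with integration by parts in $y'$ on the support of $\psi_h$. Concretely, split the $y'$-integral into $|x'-y'|\geq h^{\delta'}$ and $|x'-y'|<h^{\delta'}$ for a suitable $\delta'\in(\delta,1)$: on the far region, $\xi'$-integration by parts gives $(h/|x'-y'|)^N\leq h^{N(1-\delta')}$ and we sum a convergent integral in $y'$; on the near region, because $\psi_h$ satisfies~\eqref{e:anteater} and $h^{\delta'}$ is much smaller than the $h^\delta$ length scale of variation of $\psi_h$, $\psi_h$ is effectively constant there, so we may integrate by parts in $y'$ against $e^{\frac{i}{h}\langle x'-y',\xi'\rangle}$ as much as we like at the cost of $(h/|\xi'|)^N\leq (2h/\e)^N$ per step (this is exactly the place where $|\xi'|\geq\e/2$ is used), each $y'$-derivative hitting either $\psi_h$ (gain $h^{1-\delta}$) or $u$ — and here we do \emph{not} differentiate $u$: rather, we first integrate by parts in $y'$ only on the compactly-supported smooth part, and bound the resulting $\int_{|x'-y'|<h^{\delta'}}|u(y')|\,dy'\leq Ch^{\delta'(n-k)/2}\|u\|_{L^2}$ by Cauchy--Schwarz, absorbing this polynomial loss into the $h^\infty$ gain. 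Choosing $N$ large then gives $O_{\e,\delta}(h^\infty\|u\|_{L^2(\tilde H)})$.

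The main obstacle, and the step I would be most careful about, is organizing the two integrations by parts so that no derivative ever lands on the rough factor $u$ while still extracting a genuine $h^\infty$ gain uniformly in $x'$; the tension is that $\xi'$-integration by parts alone fails near the spatial diagonal (no lower bound on $|x'-y'|$) and $y'$-integration by parts alone fails where $\psi_h$ oscillates at scale $h^\delta$ with $\delta<1$, so one must interleave them — using $y'$-parts (fed by $|\xi'|\geq\e/2$) on a small near-diagonal ball where $\psi_h$ is slowly varying, and $\xi'$-parts on the complement. Once the bookkeeping is set up, each ingredient is routine: the pseudolocality reduction to a scalar oscillatory kernel, the non-stationary phase lemma in each variable, and the crude volume bound $|B(x',h^{\delta'})\cap\tilde H|\lesssim h^{\delta'(n-k)}$. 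I would also remark that the $L^\infty$ (rather than $L^2$) conclusion costs nothing extra here since the kernel bound $\sup_{x'}\int|K_h\psi_h|\,dy'=O(h^\infty)$ together with its transpose version immediately gives both $L^2\to L^\infty$ and $L^2\to L^2$ operator bounds of size $O(h^\infty)$, which is all that is used in~\eqref{e:awayEst} and in~\eqref{E:split}.
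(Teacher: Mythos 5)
Your attempt differs from the paper's argument in a structural way. The paper integrates by parts just once with the single operator
\[
L=\frac{1}{|x-x'|^2+|\xi'|^2}\Big(\sum_j \xi'_j\, hD_{x'_j}+\sum_j(x'_j-x_j)\,hD_{\xi'_j}\Big),
\]
whose weight is bounded below by $(\e/2)^2$ on $\supp(1-\beta_\e)$ \emph{uniformly in $x,x'$}, so it never degenerates near the spatial diagonal; each application of the transpose then gains $h^{1-\delta}$ from~\eqref{e:anteater} and produces enough $|\xi'|^{-1}$ decay to make the $\xi'$-integral converge. The near/far split at scale $h^{\delta'}$ and the interleaving of $\xi'$- and $y'$-integrations by parts that you set up are rendered unnecessary by this one choice of weight.

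There are also two genuine gaps in your write-up. First, the Cauchy--Schwarz reduction in your opening paragraph is unsound: $1-\beta_\e$ is not compactly supported in $\xi'$, so $K_h$ contains a Dirac-delta contribution at $x'=y'$ and $\int|K_h(x',\cdot)\psi_h|$ is not a convergent integral; moreover even the regular part of the kernel integrates to $O(1)$, not $O(h^\infty)$. The $h^\infty$ gain arises from cancellation between the delta piece and the smooth piece -- exactly what the integration by parts implements -- and cannot be captured after passing to absolute values. Second, the tension you correctly flag about differentiating $u\in L^2$ is not resolved by ``integrating by parts only on the compactly supported smooth part'': Leibniz inevitably produces a term with $\partial_{y'}u$. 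This is in fact a gap in the paper's stated lemma too -- as written it fails for $u(x')=e^{\frac{i}{h}\langle x',\eta\rangle}$ with fixed $\eta\neq 0$, since then $(1-Op_h(\beta_\e))^*(\psi_h u)=\psi_h u+O_{L^\infty}(h^\infty)$ has $L^\infty$ norm comparable to $\|\psi_h\|_{L^\infty}\sim 1$, not $O(h^\infty\|u\|_{L^2})$. What makes the argument correct in the one place the lemma is applied, namely~\eqref{e:awayEst}, is that $\psi_h u=(1-\chi_h)1_A$ is itself in $C^\infty_c$ with $|\partial^\alpha(\psi_h u)|=O(h^{-|\alpha|\delta})$, because $\chi_h\equiv 1$ near $\partial A$ removes the jump of $1_A$; with that hypothesis $\psi_h u$ is a single smooth amplitude, every $x'$-derivative produced by the transpose of $L$ costs only $O(h^{-\delta})$, and the one-operator argument closes.
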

%%%%%%%%%%%%%%%%%%%%%%%%%%%%%%%%%%%%%%%%%%%%%%%%%%%%%%%%%%%%%%%%%%%%%%%%%%%%%%%%

%%%%%%%%%%%%%%%%%%%%%%%%%%%%%%%%%%%%%%%%%%%%%%%%%%%%%%%%%%%%%%%%%%%%%%%%%%%%%%%%
\begin{proof}
Integrating by parts with 
\[
L:= \frac{1}{|x-x'|^2 + |\xi'|^2} \left( \sum_{j=1}^n \xi_j'  hD_{x'_j} +  \sum_{j=1}^n (x'_j-x_j) hD_{\xi'_j} \right),
\]
 {relation \eqref{e:anteater}} gives
\begin{align*}
&[(1-Op_h(\beta_\ep))^*\psi_h{u}](x)=\\
&\qquad=\frac{1}{(2\pi h)^{n-k}}\iint e^{\frac{i}{h}\langle x-x',\xi'\rangle} (1-\beta_\ep(x',\xi'))(\psi_h(x')){u}(x')dx'd\xi'\\
&\qquad= \frac{1}{(2\pi h)^{n-k}}\iint e^{\frac{i}{h}\langle x-x',\xi'\rangle}(L^*)^N\big[(1-\beta_\ep(x',\xi'))\psi_h(x'){u}(x')\big]dx'd\xi'\\
&\qquad=O_{\e,N}(h^{k-n+N(1-\delta)}{\|u\|_{L^2(\tilde{H})}}).
\end{align*}
\end{proof}
%%%%%%%%%%%%%%%%%%%%%%%%%%%%%%%%%%%%%%%%%%%%%%%%%%%%%%%%%%%%%%%%%%%%%%%%%%%%%%%%

%%%%%%%%%%%%%%%%%%%%%%%%%%%%%%%%%%%%%%%%%%%%%%%%%%%%%%%%%%%%%%%%%%%%%%%%%%%%%%%%
%%%%%%%%%%%%%%%%%%%%%%%%%%%%%%%%%%%%%%%%%%%%%%%%%%%%%%%%%%%%%%%%%%%%%%%%%%%%%%%%
\subsection{Localizing near bicharacteristics: Proof of Proposition \ref{P:sloth}}\label{S:key}
%%%%%%%%%%%%%%%%%%%%%%%%%%%%%%%%%%%%%%%%%%%%%%%%%%%%%%%%%%%%%%%%%%%%%%%%%%%%%%%%
%%%%%%%%%%%%%%%%%%%%%%%%%%%%%%%%%%%%%%%%%%%%%%%%%%%%%%%%%%%%%%%%%%%%%%%%%%%%%%%%

{Throughout the proof of Proposition \ref{P:sloth} we will need the following lemma.
 Since it is a local result, we state it for functions and operators acting on $\R^n$. 
We write $(x_1, \tilde{x}) \in \R \times \R^{n-1}$ for coordinates in $\R^n$ and $(\xi_1, \tilde{\xi})$ for the dual coordinates.

%%%%%%%%%%%%%%%%%%%%%%%%%%%%%%%%%%%%%%%%%%%%%%%%%%%%%%%%%%%%%%%%%%%%%%%%%%%%%%%%
{\begin{lemma}
\label{l:squirrel}
Let $\kappa=\kappa(x_1, \tilde{x},\tilde{\xi})$ be a smooth function with compact support and fix
 $\rho_0\in T^*\R^n$ with 
\[
p(\rho_0)\neq 0 \quad \text{or} \quad  \partial_{\xi_1}p(\rho_0)\neq 0.
\]
 Then, {there exists $C_0,T_0>0$ and a neighborhood $V$ of $\rho_0$ so that for all $0<T<T_0$ the following holds.} Let $U$ be a neighborhood of $\supp \kappa$ and $b\in C_c^\infty(T^*\R^n)$ with 
\[
\bigcup_{|t|<T} \varphi_t( \{p=0\} \cap U) \subset \{ b\equiv 1\}.
\]
Let $\chi\in C_c^\infty(V)$, $\tilde{\chi}\in C_c^\infty(T^*\R^n)$ with $\tilde{\chi}\equiv 1$ in a neighborhood of $\supp \chi$, and $q=q(x_1)\in C^\infty(\R; S^\infty(T^*\R^{n-1}))$. {Then, there exists $C>0$ so that the following hold.} \\  \ \smallskip
 If $p(\rho_0)\neq 0$, {then  }
$$
\|{Op_h(q)}Op_h(\kappa )Op_h(\chi)\phi_h (0, \cdot) \|_{L^2_{ \tilde{x}}}\leq C\|Op_h(\tilde{\chi})P{\phi_h}\|_{{L^2_x}}.
$$
If $p(\rho_0)=0$, then
\begin{align*} 
\label{UPSHOT 1}
&\|{Op_h(q)}Op_h(\kappa )Op_h(\chi)\phi_h (0, \cdot) \|_{L^2_{ \tilde{x}}} \leq \\
&\qquad\qquad\qquad\qquad  4T^{-\frac{1}{2}} |\partial_{\xi_1}p(\rho_0)|^{-\frac{1}{2}}\|Op_h(b){ Op_h( \chi)Op_h( q) } \phi_h\|_{L^2_{x}}\\
&\qquad\qquad\qquad\qquad+C_0T^{\frac{1}{2}}h^{-1}\| Op_h(b) Op_h(p){Op_h(\chi)Op_h(q)}\phi_h\|_{L^2_{x}}+Ch^{-1}\|Op_h(\tilde{\chi})P{\phi_h}\|\\
&\qquad\qquad\qquad\qquad+Ch^{1/2}\|Op_h(\tilde{\chi}){\phi_h}\|_{L^2_x}+O(h^\infty)\|{\phi_h}\|_{L^2_x}.
\end{align*}
\end{lemma}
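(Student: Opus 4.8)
The plan is to prove Lemma~\ref{l:squirrel} by reducing both cases to a one-dimensional energy estimate in the $x_1$ variable, treating $(\tilde x,\tilde\xi)$ as parameters. First I would use the hypothesis that $H$ is conormally transverse for $p$ together with the normalization $\partial_{\xi_1}p(\rho_0)\neq 0$ (in the case $p(\rho_0)=0$) to write, microlocally near $\rho_0$, $Op_h(p) = Op_h(e)\big(hD_{x_1} - Op_h(a(x_1))\big) + O(h^\infty)$ by the semiclassical Malgrange preparation / factorization theorem, where $e$ is elliptic near $\rho_0$ and $a = a(x_1)\in C^\infty(\R;S^\infty(T^*\R^{n-1}))$ is real-valued on $\{p=0\}$ up to $O(h)$ and has $\partial_{\xi_1}$-type Jacobian comparable to $|\partial_{\xi_1}p(\rho_0)|$. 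In the case $p(\rho_0)\neq 0$, $Op_h(p)$ is simply elliptic on $V$, so $Op_h(\kappa)Op_h(\chi)\phi_h = O(h^\infty)\|\phi_h\|$ plus a term controlled by $Op_h(\tilde\chi)P\phi_h$ via elliptic parametrix construction, and the restriction to $\{x_1=0\}$ costs nothing after composing with the tangential operator $Op_h(q)$ and using the standard trace bound $\|v(0,\cdot)\|_{L^2_{\tilde x}}\lesssim h^{-1/2}\|hD_{x_1}v\|_{L^2}+h^{1/2}\|v\|_{L^2}$ — actually better, one uses that the function is already $O(h^\infty)$ localized so that no trace loss appears; this is the easy case.

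For the main case $p(\rho_0)=0$, after the factorization I would set $v = Op_h(b)Op_h(\chi)Op_h(q)\phi_h$ (with the $b$ cutoff propagated along the flow for time $|t|<T$, which is exactly the hypothesis $\bigcup_{|t|<T}\varphi_t(\{p=0\}\cap U)\subset\{b\equiv 1\}$, ensuring that commuting $Op_h(b)$ past the evolution for $|t|\le T$ produces only $O(h^\infty)$ errors since $H_p b = 0$ on the relevant set). The equation for $v$ becomes $\big(hD_{x_1}-Op_h(a(x_1))\big)v = g$ where $\|g\|_{L^2}\lesssim h^{-1}\|Op_h(\tilde\chi)P\phi_h\| + h^{-1}\|Op_h(b)Op_h(p)Op_h(\chi)Op_h(q)\phi_h\| + h^{1/2}\|Op_h(\tilde\chi)\phi_h\| + O(h^\infty)\|\phi_h\|$, the $h^{-1}$ coming from dividing by $h$ in the $hD_{x_1}$ normalization and $h^{1/2}$ from lower-order/subprincipal and commutator terms. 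Then I would run the basic $1$D propagation-of-$L^2$ estimate: since $Op_h(a(x_1))$ is essentially self-adjoint (real principal symbol), $\tfrac{d}{dx_1}\|v(x_1,\cdot)\|_{L^2_{\tilde x}}^2 = \tfrac{2}{h}\,\mathrm{Im}\langle g, v\rangle + O(h)\|v\|^2$, so integrating over an interval $|x_1|\le cT$ of length $\sim T$ and averaging gives $\|v(0,\cdot)\|_{L^2_{\tilde x}}^2 \lesssim T^{-1}\int |v|^2\,dx_1 + T\,h^{-1}\sup_{x_1}\|g\|\cdot(\text{stuff})$, and more carefully one gets the two displayed terms: $T^{-1/2}|\partial_{\xi_1}p(\rho_0)|^{-1/2}\|v\|_{L^2_x}$ from the averaging (the $|\partial_{\xi_1}p|^{-1/2}$ is the change-of-variables factor converting $x_1$-length to the natural parameter along bicharacteristics) and $T^{1/2}h^{-1}\|Op_h(b)Op_h(p)Op_h(\chi)Op_h(q)\phi_h\|$ from the forcing. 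The remaining $Ch^{-1}\|Op_h(\tilde\chi)P\phi_h\|$ and $Ch^{1/2}\|Op_h(\tilde\chi)\phi_h\|$ and $O(h^\infty)\|\phi_h\|$ terms are collected from the factorization error, the commutator of $Op_h(q)$ (which is $x_1$-dependent but order zero) with $hD_{x_1}-Op_h(a)$, and the microlocalization errors.

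The main obstacle — and the step I would be most careful about — is the bookkeeping of the factorization $Op_h(p)=Op_h(e)(hD_{x_1}-Op_h(a))$ when $q=q(x_1)$ has nontrivial $x_1$-dependence: one must commute $Op_h(\chi)Op_h(q)$ through both the factorization and the cutoff $Op_h(b)$ while keeping track that all error terms land in the four categories above with the stated powers of $h$. Concretely, $[hD_{x_1}-Op_h(a), Op_h(q(x_1))] = h\,Op_h(\partial_{x_1}q) + O(h^2)$ contributes an $O(h)$ relative error which, after the $h^{-1}$ from the normalization, becomes the $O(1)\cdot$(lower order) term absorbed into $h^{1/2}\|Op_h(\tilde\chi)\phi_h\|$ only if one is slightly generous, or more honestly into a term of the form $C\|Op_h(\tilde\chi)\phi_h\|$ — here I would need to double-check the paper's precise normalization to see whether this is genuinely $O(h^{1/2})$ or whether an extra microlocalization/ellipticity argument near $\rho_0$ upgrades it; a standard trick is to note $Op_h(p)Op_h(\chi)Op_h(q)\phi_h$ appears explicitly on the right-hand side so that $\phi_h$ itself only enters through $P\phi_h$ and through terms that are genuinely lower order because $q$ is order zero and compactly supported. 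The other delicate point is ensuring the interval of integration in $x_1$ can be taken of length comparable to $T$ uniformly for $\rho_0$ in a fixed neighborhood $V$, which follows from $\partial_{\xi_1}p\neq 0$ guaranteeing that bicharacteristics through $\supp\chi$ cross $\{x_1=0\}$ transversally with uniformly bounded speed for small $T$, shrinking $V$ and $T_0$ as needed.
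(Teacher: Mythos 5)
Your proposal captures the central idea correctly: factorize $p\,\tilde\chi = e\,(\xi_1 - a(x_1,\tilde x,\tilde\xi))$ microlocally near $\rho_0$ via the implicit function theorem (using $\partial_{\xi_1}p(\rho_0)\neq 0$), and reduce to a one-dimensional transport estimate in $x_1$. But you estimate $v(0,\cdot)$ by integrating a Gronwall-type energy identity $\tfrac{d}{dx_1}\|v(x_1,\cdot)\|^2 = \tfrac{2}{h}\,\mathrm{Im}\langle g, v\rangle + O(1)\|v\|^2$ and averaging over an interval of length $\sim T$. The paper instead writes the \emph{exact} Duhamel representation $w(s) = e^{-iA(t,s)/h}w(t) - \tfrac{i}{h}\int_s^t e^{-iA/h}f\,dx_1$ and integrates against a bump $\Phi$ with $\int\Phi=1$, $\supp\Phi\subset[0,\delta]$, $\delta\approx T|\partial_{\xi_1}p(\rho_0)|$; the factor $T^{-1/2}|\partial_{\xi_1}p(\rho_0)|^{-1/2}$ then falls out of $\|\Phi\|_{L^2}$ by Cauchy–Schwarz. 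These are genuinely different mechanisms for the same type of bound. Your route is arguably more elementary, but it requires an absorption/AM-GM step to split $\|g\|\,\|v\|$ into the two displayed terms with the correct $T^{\pm 1/2}$ weights, which must be done carefully to recover the explicit numerical constant $4T^{-1/2}|\partial_{\xi_1}p(\rho_0)|^{-1/2}$.

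Two points in your write-up need tightening. First, you set $v = Op_h(b)Op_h(\chi)Op_h(q)\phi_h$ at the outset, but then $(hD_{x_1}-Op_h(a))v$ picks up a commutator $[hD_{x_1}-Op_h(a),Op_h(b)]$ whose symbol is supported on $\supp(\nabla b)$; this is \emph{not} $O(h^\infty)$ as an operator. The paper instead works with $w=Op_h(\chi)Op_h(q)\phi_h$ and inserts $Op_h(b)$ \emph{after} the Duhamel step, using propagation of singularities together with the crucial microlocalizer $Op_h(\kappa)$ on the left to show that the $(1-Op_h(b))$ contribution is $O(h^\infty)$ plus an elliptic term controlled by $\|Op_h(\tilde\chi)P\phi_h\|$. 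Your sketch never explains where $Op_h(\kappa)$ enters the evolution argument, yet it is exactly what confines the commutator error away from the bicharacteristics through $\supp\kappa$, so the invocation of ``$H_p b = 0$ on the relevant set'' (which should really be ``$b\equiv 1$ on the flowout'') is doing hidden work you would need to make explicit. Second, the $Ch^{1/2}\|Op_h(\tilde\chi)\phi_h\|$ term in the paper does not come from $[hD_{x_1}-Op_h(a),Op_h(q)]$ as you suggest; it comes from moving $Op_h(q)$ from the outside to the inside, i.e.\ the commutator $[Op_h(q),Op_h(\kappa)Op_h(\chi)]$ which is $O_{L^2\to L^2}(h)$, and then paying an $O(h^{-1/2})$ trace loss at $\{x_1=0\}$. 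Your own discussion flags uncertainty here (``only if one is slightly generous''), and indeed your version of the bookkeeping does not obviously produce the $h^{1/2}$ gain without this restructuring. With those two adjustments the energy-estimate route should close.
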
}

%%%%%%%%%%%%%%%%%%%%%%%%%%%%%%%%%%%%%%%%%%%%%%%%%%%%%%%%%%%%%%%%%%%%%%%%%%%%%%%%

\noindent The proof of Lemma  \ref{l:squirrel} is very similar to that of \cite[Lemma 4.3]{Gdefect}, although some alterations are needed.  For the sake of completeness we include the proof at the end of this section, in \ref{S: squirrel proof}.
}

%%%%%%%%%%%%%%%%%%%%%%%%%%%%%%%%%%%%%%%%%%%%%%%%%%%%%%%%%%%%%%%%%%%%%%%%%%%%%%%%
{
 \subsubsection{\bf Case: $H$ is  hypersurface.} We proceed to explain the role that  Lemma  \ref{l:squirrel} has in the proof of  Proposition \ref{P:sloth}. To do this, we assume for a moment that 
 $H$ is a hypersurface ($k=1)$, and use local coordinates near it $(x_1, x')$ with  $H=\{x_1=0\}$.  This section is a particular case of the results presented in Section \ref{S: any k} where $H$ with any codimension $k$ is treated.  
 
 Let   $w\in C_c^\infty(H^o)$, and  let  ${\we}\in C_c^\infty(H)$ with 
{\[\we(x') \equiv 1  \; \;\;\text{for }\; \;x'\in \supp  w, \qquad\qquad \qquad  \lim_{\e \to 0}\we = 1_{\supp w}.\] } Define  
\[
\kappa_\e(x,\xi)=\chi_\e(|x_1|)\beta_\e(x',\xi')\we(x').
\] 
 Also, let $\chi, \tilde{w} \in C_c^\infty(T^*M)$ supported sufficiently close to $\rho_0\in \SigH$ satisfy
$$H_p\chi \equiv 0,\,\text{ on }\LambdaH,\qquad H_p\tilde{w}\equiv 0\text{ on }\LambdaH,\, \tilde{w}|_{\SigH}=w$$
where $0<T\leq T_\chi$ and $T_\chi$ is defined in \eqref{E:T}.

{We choose Fermi coordinates with respect to $H$ so that
\[
|H_pr_H(\rho_0)|=\partial_{\xi_1}p(\rho_0)\neq0\qquad\text{or}\qquad p(\rho_0)=0.
\]
Moreover, in these coordinates $\|u\|_{L^2_x}\leq 2\|u\|_{L^2(M)}$. 
} 
Hence, we will apply Lemma~\ref{l:squirrel} with $\kappa=\kappa_\e$ and { $\chi$ (here we shrink the support of $\chi$ if necessary)}. In order to apply the lemma,  we note that 
\[
\supp \kappa_\e \cap \{p=0\} \subset    \{(x,\xi):\;  |x_1|\leq 3\e,\, |\xi'|\leq 3\e,\, p=0 \},
\]
and define $b_{\e}\in C_c^\infty(T^*M; [0,1])$ so that
\begin{equation}
\label{e:platypus}
\begin{gathered}
\bullet \; b_\ep\equiv 1\;\;\; \text{on}\;\; \bigcup_{|t|\leq T/3} \varphi_t(\{(x,\xi):\;  |x_1|\leq 3\e,\, |\xi'|\leq 3\e,\, p=0 \}),\\
\bullet \, \supp b_\e \; \subset \; \bigcup_{|t|\leq T/2}\varphi_t(\{(x,\xi):\; |x_1|\leq 4\e,\; |\xi'|\leq 4\e, |p|\leq 2\e \}).
\end{gathered}
\end{equation}

Next, let $\twe$ be an extension of $\we$ off of $\SigH$ so that $H_p \twe\equiv 0$ in a neighborhood of $b_\ep \equiv 1$.
{ Applying Lemma \ref{l:squirrel} with $\kappa=\kappa_\e$, $\chi$, $b=b_\e\twe$, and $q=1$, gives the existence of {$C_0>0$ independent of $T$} so that 
\begin{align*}
&\|Op_h(\kappa_\e)Op_h(\tilde{w}\chi) \phi_{h}\|_{L^2(H)}\leq 8T^{-\frac{1}{2}}|\partial_{\xi_1}p(\rho_0)|^{-\frac{1}{2}}\|Op_h(b_{\e}\twe)Op_h(\tilde{w}\chi )\phi_{h}\|_{L^2(M)}\\
&\qquad\quad\qquad\qquad\qquad\qquad
+C_0T^{\frac{1}{2}}h^{-1}\|Op_h(b_\e\twe)POp_h(\tilde{w}\chi ) \phi_{h}\|_{L^2(M)}+o_{\e}(1).
\end{align*}}

Next, we use that  $H_p(\tilde{w}\chi)=0$, $P\phi_h={o(h)}$, and  
\begin{equation}\label{E:bedbug}
{P}Op_h({\tilde{w}}\chi) \phi_{h}=Op_h({\tilde{w}}\chi) {P} \phi_{h}+ \frac{h}{i}Op_h(H_p({\tilde{w}}\chi)) \phi_{h} + O_{L^2}(h^2).
\end{equation}
In addition,  by~\eqref{e:platypus} and the fact that $0\leq b_\e^2\leq 1$, we have 
\[
\lim_{\e\to 0} b_\e^2\twe^2\leq 1_{\LambdaH}{1_{\supp \tilde{w}}}.
\]
 Therefore,
\begin{align}
\lim_{\ep \to 0} \limsup_{h\to 0^+} \|Op_h(\beta_\e)Op_h(\chi w) \phi_{h}\|^2_{L^2(H)}
&\leq \lim_{\ep \to 0} \limsup_{h\to 0^+} \|Op_h(\kappa_\e)Op_h({\tilde{w}}\chi) \phi_{h}\|_{L^2(H)}^2\\
%&\leq C \int_{\LambdaH}  \tilde{w}^2(\chi^2+|H_p(\chi )|^2)d\mu \notag\\
&\leq  {128T^{-1}|\partial_{\xi_1}p(\rho_0)|^{-1}} \int_{\LambdaH}  \tilde{w}^2 \chi^2 d\mu \label{e:likelemma}.
\end{align}

We show in Section \ref{S: sloth proof} how to rewrite the $d\mu$ integral in terms of {an integral with respect to} $d\muH$ to get
\begin{multline*}
{\lim_{\e\to 0}}\limsup_{h\to 0^+}\left|\int_H Op_h(\beta_\e w)\big [Op_h({\tilde{w}}\chi ) \phi_h\big]d\sigma_H\right|^2\\
\leq \! {C_{n,k}|\partial_{\xi_1}p(\rho_0)|^{-1}} \, \sig\!\big(\supp(\chi{1_{\SigH}})\big) \int_{\SigH} \!\!w^2\chi^2 d\muH.
\end{multline*}
as claimed in Proposition \ref{P:sloth}.
}
%%%%%%%%%%%%%%%%%%%%%%%%%%%%%%%%%%%%%%%%%%%%%%%%%%%%%%%%%%%%%%%%%%%%%%%%%%%%%%%%
 \subsubsection{\bf Case: $H$ has any codimension $k$.}\label{S: any k}   {In the case in which $H$ has any codimension $k$, the proof  of Proposition \ref{P:sloth}  hinges on Lemma \ref{L:optimized} below.}  This lemma is dedicated to obtaining a gain in the bound for $\|Op_h(\beta_\e )Op_h(\chi)\phi_h\|_{L^2(H)}$ by localizing in phase space near bicharacteristics emanating from $\SigH$. {The key idea is that microlocalization near a family of bicharacteristics parametrized by $H$ implies a quantitative gain in the $L^2(H)$ norm. By decomposing $\phi_h$ into many pieces microlocalized along well-chosen families of bicharacteristics, we are able to extract Proposition~\ref{P:sloth}.}

Let $\Xi: H\to \SigH$ be a smooth section (i.e. $\Xi\in C^\infty$ and $\Xi(x)\in T^*_xM$); where we continue to write $\SigH=\{p=0\}\cap N^*\!H$. 
\begin{figure}[h!]
\includegraphics[width=6cm]{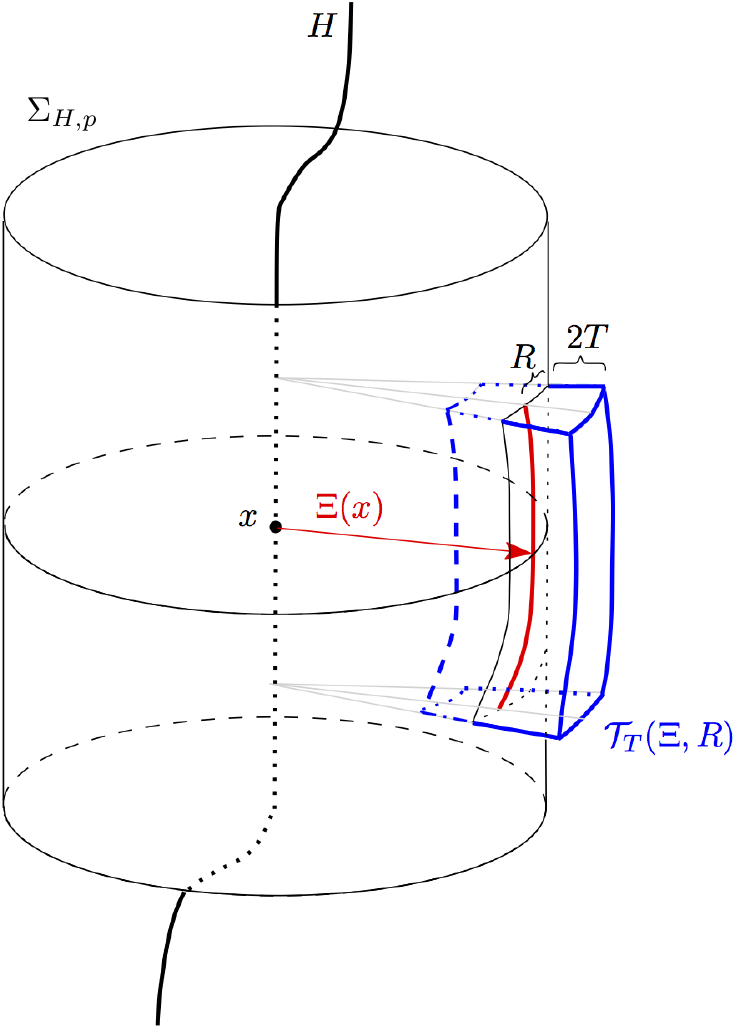}
\caption{\label{f:schem}We show a schematic of $\Xi(x)$, $\SigH$, and $\mc{T}_{_T}(\Xi,R)$ for $H$ a curve and $d=3$. }
%\caption{We exploit the fact that within $\mc{T}_T(\Xi, R)$ we have $|\xi_i-a_i|=O(R)$ and  $H_p((\xi_i-a_i)^\ell)=O(R^\ell)$ for all $i=2, \dots, n$ (since $\xi\equiv a$ on the bicharacteristic emanating from $(x', \Xi(x'))$; see \eqref{E:distance}). We microlocalize the eigenfunctions using $Op_h((\xi_i-a_i)^\ell)$ for a suitable choice of $\ell$.}
\end{figure}
Let $\chi \in C_c^\infty(T^*M)$ {supported near $\rho_0\in \SigH$}. We choose Fermi coordinates with respect to $H$, $(x_1,\bar{x},x')$, so that $H=\{(x_1,\bar{x})=0\}$ and, making additional rotation in $(x_1,\bar{x})$ if necessary, so that 
$$
|H_pr_H(\rho_0)|=\partial_{\xi_1}p(\rho_0)\neq 0.
$$
Moreover, note that for $u$ supported near $x_0$ we have $\|u\|_{L^2_x}\leq 2\|u\|_{L^2(M)}$. 

For each  $(0,x')\in H$  in the projection of $\supp \chi$ onto $H$ define a function $a(x_1;x')$ so that $\xi-a(x_1;x')$ vanishes on the bicharacteristic emanating from $((0,x'),\Xi((0,x'))))$. This is possible since we have chosen coordinates so that 
$$
\partial_{\xi_1}p(\rho_0)\neq0,
$$
and hence the bicharacteristic emanating from $((0,x'),\Xi((0,x'))))$ may be written locally as 
\begin{equation}\label{E:T}
\gamma_{x'}: (-T_\chi,T_\chi) \to T^*M, \qquad \qquad  \gamma_{x'}(x_1)=(x(x_1;x'),a(x_1;x') )
\end{equation}
 where $T_\chi>0$ is small enough, and $x,a$ are smooth functions depending on $\chi$.
{Indeed, if we write $\gamma_{x'}(t)=(x(t), \xi(t))$, we have that $\frac{d}{dt}x_1(t)=\partial_{\xi_1}p(\gamma_{x'}(t))$ which allows us to use the inverse function theorem to locally write $t=t(x_1)$ as a function of $x_1$.}

 To exploit the construction of the function $a$ we further localize in phase space on tubes of small radius $R$ that cover $\supp (\chi {1_{\SigH}})$. 
 We define the tubes 
\begin{equation}
\label{e:tubes}
\mc{T}_{_T}(\Xi,R):= \bigcup_{|t| \leq 2T } \varphi_t(\{(x,\xi)\in \SigH:\;  d((x,\xi),(x,\Xi(x)))<R\}),
\end{equation}
where  $d((x,\xi),(x,\Xi(x)))$ describes the distance in $\SigH\cap T_x^*M$ between the points  $(x,\xi)$ and $(x,\Xi(x))$ (see Figure~\ref{f:schem} for a schematic picture of these objects).

The spirit of the following result  is similar to that of \cite[Lemma 5.2]{Gdefect}. 
Lemma \ref{L:optimized} is dedicated to showing that microlocalizing with $\chi$  supported on $\mc{T}_T(\Xi, R)$ gives an $R^{k-1}$ gain in the bound for $\|Op_h(\beta_\e w)Op_h(\chi )\phi_h\|_{L^2(H)}$.  {This is a generalization of the relation \eqref{e:likelemma} already discussed in the case in which $H$ is a hypersurface.}

\needspace{2in}
%%%%%%%%%%%%%%%%%%%%%%%%%%%%%%%%%%%%%%%%%%%%%%%%%%%%%%%%%%%%%%%%%%%%%%%%%%%%%%%%
\begin{lemma}\label{L:optimized}
Let $\chi \in C_c^\infty(T^*M)$ {supported sufficiently close to $\rho_0\in \SigH$} satisfy
$$H_p\chi \equiv 0,\,\text{ on }\LambdaH,$$
where $0<T\leq T_\chi$ and $T_\chi$ is defined in \eqref{E:T}.
Let $\Xi: H\to \SigH$ be a smooth section. {There exists $C>0$ depending only on $(M,g,H)$ so that for all $R>0$ and $w\in C_c^\infty(H^o)$} if
\begin{equation}\label{E:support}
\supp (\chi {1_{\LambdaH}})\subset  \mc{T}_{_T}(\Xi,R),
\end{equation}
then {there exists $C_{n,k}>0$ depending only on $n$ and $k$ so that}
\begin{equation}
\label{e:optimized}
\lim_{\e \to 0}\limsup_{h\to 0}h^{k-1}\|Op_h(\beta_\e w)Op_h(\chi )\phi_h\|_{L^2(H)}^2\leq \, {C_{n,k}}\,\frac{R^{k-1}}{T|H_pr_H(\rho_0)|} \int_{\LambdaH}\chi^2 {\tilde{w}}^2d\mu,
\end{equation}
where $\tilde{w}\in C^\infty_c(T^*M)$ is any extension of $w$ for which $H_p \tilde w\equiv 0$ on $\LambdaH$.
In addition, if the assumption in  \eqref{E:support} is not enforced, then \eqref{e:optimized} holds with $R=1$.
\end{lemma}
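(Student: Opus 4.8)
The plan is to follow closely the argument already carried out for hypersurfaces earlier in this section, the one genuinely new ingredient being an additional Fourier-restriction step in the $k-1$ conormal position variables $\bar x$, which is exactly where the gain $R^{k-1}$ is produced. First I would pass to Fermi coordinates $(x_1,\bar x,x')$ adapted to $H$, so that $H=\{(x_1,\bar x)=0\}$ and, after a rotation in $(x_1,\bar x)$, $\partial_{\xi_1}p(\rho_0)=|H_pr_H(\rho_0)|\neq 0$ and $\|u\|_{L^2_x}\leq 2\|u\|_{L^2(M)}$ for $u$ supported near $x_0=\pi_H(\rho_0)$. Using $\partial_{\xi_1}p(\rho_0)\neq 0$ and the inverse function theorem, write the bicharacteristic through $((0,0,x'),\Xi(0,0,x'))$ as $\gamma_{x'}(x_1)=(x(x_1;x'),a(x_1;x'))$, $|x_1|<T_\chi$, as in \eqref{E:T}, and fix a $[0,1]$-valued symbol $q=q(x_1)\in C^\infty(\R;S^\infty(T^*\R^{n-1}))$ that is $\equiv 1$ on a neighborhood of $\mc{T}_T(\Xi,R)$ and whose $\bar\xi$-support lies within a bounded multiple of $R$ of the conormal part of $a(x_1;x')$; this is possible precisely because $\mc{T}_T(\Xi,R)$ occupies, at each value of $x_1$, a set of $\bar\xi$-width $\lesssim R$. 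Introduce, as in the hypersurface case, the cutoff $\kappa_\e=\chi_\e(|x_1|)\chi_\e(|\bar x|)\beta_\e(x',\xi')\we(x')$, the flow-invariant extensions $\chi,\tilde w\in C_c^\infty(T^*M)$ with $H_p\chi=H_p\tilde w=0$ on $\LambdaH$ and $\tilde w|_{\SigH}=w$, a cutoff $b_\e$ equal to $1$ on $\bigcup_{|t|\leq T/3}\varphi_t(\supp\kappa_\e\cap\{p=0\})$ and supported near $\bigcup_{|t|\leq T/2}\varphi_t(\{|x_1|,|\bar x|,|\xi'|\leq 4\e,\ |p|\leq 2\e\})$, and $\twe$ the flow-invariant extension of $\we$. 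The manipulations of the hypersurface case --- based on Lemma~\ref{l:intByParts}, the identity $POp_h(\tilde w\chi)\phi_h=Op_h(\tilde w\chi)P\phi_h+\frac{h}{i}Op_h(H_p(\tilde w\chi))\phi_h+O_{L^2}(h^2)$, $P\phi_h=o(h)$, and $H_p(\tilde w\chi)\equiv 0$ on $\LambdaH$ --- then reduce the estimate of $h^{k-1}\|Op_h(\beta_\e w)Op_h(\chi)\phi_h\|_{L^2(H)}^2$ to bounding
\[
\lim_{\e\to 0}\limsup_{h\to 0^+}h^{k-1}\|g|_H\|_{L^2(H)}^2,\qquad g:=Op_h(q)Op_h(\kappa_\e)Op_h(\tilde w\chi)\phi_h,
\]
where $g|_H$ denotes the restriction to $H=\{x_1=\bar x=0\}$; the complementary term $Op_h(1-q)Op_h(\kappa_\e)Op_h(\tilde w\chi)\phi_h$ contributes negligibly because $1-q$ vanishes near $\mc{T}_T(\Xi,R)\supseteq\supp(\chi 1_{\LambdaH})$, so that as $\e\to 0$ its defect measure charges only a set disjoint from $\SigH$, and its contribution to $h^{k-1}\|\cdot|_H\|^2$ vanishes in the limit in $\e$ by the $R=1$ restriction bound proved below.

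The gain $R^{k-1}$ comes from the following observation. Since $Op_h(q)$ confines the $\bar\xi$-variable to a ball of radius $\lesssim R$ about the $x'$-dependent center $\bar\xi^0(x')$ equal to the conormal part of $a(0;x')=\Xi(0,0,x')$, the restriction $g(0,\bar x,x')$ has, for each fixed $x'$ and modulo $O(h^\infty)$, semiclassical Fourier transform in $\bar x$ supported in $\{|\bar\xi-\bar\xi^0(x')|\lesssim R+O(h/\e)\}$. Conjugating by the unimodular factor $e^{i\bar x\cdot\bar\xi^0(x')/h}$, applying Plancherel together with Cauchy--Schwarz in $\bar\xi$ over this ball, and integrating in $x'$ yields
\[
h^{k-1}\|g|_H\|_{L^2(H)}^2\leq C_{n,k}\,(R+O(h/\e))^{k-1}\,\|g(0,\cdot)\|_{L^2_{\tilde x}}^2+O(h^\infty),
\]
so that $\limsup_{h\to 0^+}h^{k-1}\|g|_H\|_{L^2(H)}^2\leq C_{n,k}R^{k-1}\limsup_{h\to 0^+}\|g(0,\cdot)\|_{L^2_{\tilde x}}^2$.

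It then remains to bound $\|g(0,\cdot)\|_{L^2_{\tilde x}}$, for which I would apply Lemma~\ref{l:squirrel} in the case $p(\rho_0)=0$ (valid since $\rho_0\in\SigH\subset\Sigma_p$) with $\kappa=\kappa_\e$, with the lemma's ``$\chi$'' taken to be $\tilde w\chi$, with $b=b_\e\twe$, and with $q$ as fixed above. This gives
\[
\|g(0,\cdot)\|_{L^2_{\tilde x}}\leq 4T^{-1/2}|\partial_{\xi_1}p(\rho_0)|^{-1/2}\,\|Op_h(b_\e\twe)Op_h(\tilde w\chi)Op_h(q)\phi_h\|_{L^2_x}+E_{\e,h},
\]
where the error $E_{\e,h}$, after applying the Egorov identity and using $P\phi_h=o(h)$, $H_p(\tilde w\chi)\equiv 0$ on $\LambdaH$, and the shrinking of $\supp b_\e$ to $\LambdaH$ as $\e\to 0$, satisfies $\limsup_{h\to 0^+}E_{\e,h}=o_\e(1)$, exactly as in the hypersurface case. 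Letting $h\to 0^+$, using $\|u\|_{L^2_x}\leq 2\|u\|_{L^2(M)}$ and that $\|Op_h(a)\phi_h\|_{L^2}^2\to\int a^2\,d\mu$ for real symbols $a$, and then letting $\e\to 0$, using $q\equiv 1$ near $\mc{T}_T(\Xi,R)\supseteq\supp(\chi 1_{\LambdaH})$ together with $\limsup_\e b_\e^2\twe^2\leq 1_{\LambdaH}$ and dominated convergence, we obtain
\[
\lim_{\e\to 0}\limsup_{h\to 0^+}\|g(0,\cdot)\|_{L^2_{\tilde x}}^2\leq 64\,T^{-1}|H_pr_H(\rho_0)|^{-1}\int_{\LambdaH}\tilde w^2\chi^2\,d\mu.
\]
Combining this with the previous paragraph and absorbing numerical constants into $C_{n,k}$ proves \eqref{e:optimized}. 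For the final assertion, if the tube hypothesis is dropped one takes $R=1$ (the fibres of $\SigH\to H$ having diameter $\lesssim 1$) and $q$ any fixed cutoff equal to $1$ near $\supp(\kappa_\e\tilde w\chi\,1_{\Sigma_p})$; the argument above goes through unchanged.

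The hard part is the second paragraph: rigorously establishing that $g(0,\bar x,x')$ is microlocalized in the $\bar x$-frequency variable to a ball of radius $\lesssim R$ about the $x'$-dependent center $\bar\xi^0(x')$, uniformly in $x'$, and deducing the $h^{-(k-1)}R^{k-1}$ restriction bound with that moving center --- handled by the phase conjugation $e^{i\bar x\cdot\bar\xi^0(x')/h}$, whose own $x'$-frequency content is $O(|\bar x|)=O(\e)$ and hence harmless. One must also verify the compatibility of the symbol $q=q(x_1)$ --- needed a priori on all of phase space for the composition calculus --- with the fact that after restriction to $\{x_1=0\}$ only its fibre at $x_1=0$ governs the Fourier support; this is where $\partial_{\xi_1}p(\rho_0)\neq 0$ enters, guaranteeing that for $T\leq T_\chi$ the flowed-out part of $\mc{T}_T(\Xi,R)$ does not return to $\{x_1=0\}$, so that $\mc{T}_T(\Xi,R)\cap\{x_1=0\}$ is exactly the radius-$R$ ball in $\SigH$ about $\Xi$. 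The bookkeeping for the $Op_h(1-q)$ term in the first paragraph is routine; the $R=1$ restriction bound used there follows from the same argument with $q$ a fixed cutoff, and the Lemma~\ref{l:squirrel} error terms are handled exactly as in the hypersurface computation already carried out in the text.
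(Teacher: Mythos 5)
Your overall scheme is sound and reaches the same conclusion, but it takes a genuinely different route from the paper's proof, and the crucial step that produces the $R^{k-1}$ gain --- the one you flag as ``the hard part'' --- is left as a sketch rather than a proof.

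\emph{How the paper does it.} The paper does not insert a sharp conormal-frequency cutoff $Op_h(q)$. Instead it conjugates by the unimodular phase, setting $v_h:=e^{-\frac{i}{h}\langle \bar x,\bar a(x_1;x')\rangle}Op_h(\kappa_\e\tilde w\chi)\phi_h$, and then invokes a semiclassical Sobolev embedding in the $k-1$ variables $\bar x$ (\cite[Lemma 5.1]{Gdefect}):
\[
\|v_h(x_1,\cdot,x')\|^2_{L^\infty_{\bar x}}\le C_{\ell,k}h^{1-k}\Big(\alpha^{k-1}\|v_h\|^2_{L^2_{\bar x}}+\alpha^{k-1-2\ell}\sum_{i=2}^k\|(hD_{x_i})^\ell v_h\|^2_{L^2_{\bar x}}\Big),
\]
for any $\alpha>0$ and $\ell>(k-1)/2$. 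Since $(hD_{x_i})^\ell v_h=e^{-\frac{i}{h}\langle\bar x,\bar a\rangle}(hD_{x_i}-a_i)^\ell Op_h(\kappa_\e\tilde w\chi)\phi_h$, both terms on the right are controlled by two applications of Lemma~\ref{l:squirrel} (one with $q=1$, one with $q_i=(\xi_i-a_i)^\ell$), and passing to the defect measure the second term is bounded via $\sup_{\mathcal T_T(\Xi,R)\cap\LambdaH}|\xi_i-a_i|\le 3R$ (\cite[Lemma 3.1]{Gdefect}). Choosing $\alpha=R$ and $\ell=k$ yields $R^{k-1}$. The point is that no sharp microlocal cutoff is ever constructed; the tube hypothesis enters only through the pointwise bound on $|\xi_i-a_i|$ evaluated against the defect measure. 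This sidesteps every composition-calculus and $O(h^\infty)$-remainder issue your argument has to confront.

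\emph{Where your version is incomplete.} Your gain is produced by asserting that, for each fixed $x'$, the restriction $g(0,\bar x,x')$ has semiclassical Fourier transform in $\bar x$ supported in $\{|\bar\xi-\bar\xi^0(x')|\lesssim R+O(h/\e)\}$ modulo $O(h^\infty)$, and then applying Plancherel and Cauchy--Schwarz. This is plausible but not proved, and the verification is not routine: (i) the claim only holds cleanly if the cutoff $q$ is chosen independent of $\bar x$, yet the tube $\mathcal T_T(\Xi,R)$ genuinely has its $\bar x$-component varying with $t$, so one must check both that such a $q$ can be chosen to be $\equiv 1$ on a neighborhood of the tube \emph{and} that one can still arrange $H_p q=0$ on the part of $\LambdaH$ where $\chi\ne 0$ (needed so that $H_p[(1-q)\chi]\equiv 0$ on $\LambdaH$, which your control of the complementary $Op_h(1-q)$ piece via the $R=1$ case relies on but does not state); (ii) the conjugation by $e^{i\bar x\cdot\bar\xi^0(x')/h}$ is not an honest pseudodifferential operation --- its $x'$-derivative has size $O(\e/h)$ --- so ``harmless'' needs justification; (iii) the ``$O(h^\infty)$'' remainder must be bounded in $L^1_{\bar\xi}$ (not just $L^2$) for your Cauchy--Schwarz step, which requires invoking the compact microlocalization of $\phi_h$; (iv) the constant in the final estimate must be uniform in $R$, so the $O(h/\e)$ loss has to be tracked against the $\lim_{\e\to 0}\limsup_{h\to 0}$ ordering. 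None of this is fatal, and your use of Lemma~\ref{l:squirrel} is correct (including the extraction of $T^{-1}|H_pr_H(\rho_0)|^{-1}$ and the $\e\to 0$ limit via $b_\e^2\twe^2\to 1_{\LambdaH}1_{\supp\tilde w}$), but as written your second paragraph is an outline of a different and somewhat harder argument, not a proof. If you want to keep your sharp-cutoff strategy, I would recommend first proving the precise Fourier-support lemma (with $q$ chosen $\bar x$-independent, derived from a flow-invariant extension as in Lemma~\ref{L:ant}) and then doing the Cauchy--Schwarz; alternatively, adopt the paper's conjugate-and-Sobolev-embed device, which turns the tube geometry into a pointwise symbol bound applied after passage to the defect measure and avoids these issues entirely.
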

%%%%%%%%%%%%%%%%%%%%%%%%%%%%%%%%%%%%%%%%%%%%%%%%%%%%%%%%%%%%%%%%%%%%%%%%%%%%%%%%

\begin{proof}
In what follows we write $\bar{x}$ for the normal coordinates to $H$ that are not $x_1$. With this notation $x=(x_1, \bar{x}, x')$.
As before, let  ${\we}\in C_c^\infty(H)$ with 
{\[\we(x') \equiv 1  \; \;\;\text{for }\; \;x'\in \supp  w, \qquad\qquad \qquad  \lim_{\e \to 0}\we = 1_{\supp w}.\] }
Define also 
\[\kappa_\e(x,\xi)=\beta_\e(x',\xi')\chi_\e(|(x_1,\bar{x})|)\we(x').\] 
and $\tilde{w}\in C_c^\infty(T^*M)$ with 
$$
H_p\tilde{w}=0,\quad \text{ on }\LambdaH,\qquad \tilde{w}|_{\SigH}=w.
$$
 {Using that $\|\phi_h\|_{L^2(H)}\leq Ch^{-\frac{k}{2}}$}, we bound 
\[
\|Op_h(\beta_\e w)Op_h(\chi )\phi_{h}\|_{L^2(H)}\leq\|Op_h(\kappa_{\e}{\tilde{w}}\chi )\phi_{h}\|_{L^2(H)}{+O_\e(h^{\frac{2-k}{2}})}=\|v_h\|_{L^2(H)}+{o_{\e}(h^{\frac{1-k}{2}}).}
\]
 for
\[
v_h:=e^{-\frac{i}{h}\langle \bar{x}\,,\,\bar{a}(x_1;x')\rangle}Op_h(\kappa_{\e} {\tilde{w}}\chi) \phi_h ,
\]
 \medskip
where $\bar{a}(x_1;x')=(a_{2}(x_1,x'),\dots,a_{k}(x_1,x'))$ and $a$ is defined in \eqref{E:T}. The reason for working with this function $v_h$ is that 
\[
(hD_{x_i})^\ell  v_h=(hD_{x_i}-a_{i})^\ell (Op_h(\kappa_\ep{\tilde{w}} \chi ) \phi_h),
\]
for $i=2,\dots,k$, and this will allow to obtain a gain in the $L^2$-norm bound, {since, as we will see below, $\sup_{T_\delta(\Xi,R)\cap \LambdaH} \max_i |\xi_i-a_{i}(x_1,x')| \leq 3R$.} 
We bound $\|v_h\|_{L^2(H)}$ using  {the version of the Sobolev Embedding Theorem given in} \cite[Lemma 5.1]{Gdefect} which states that if $\ell > (k-1)/2$, then for all $\alpha>0$ {there exists $C_{\ell,k}>0$ depending only on $\ell$ and $k$ so that }

\[
\|v_h(x_1,\cdot,x')\|_{L^\infty_{\bar{x}}}\!\leq \!C_{\ell,k}  h^{1-k} \!\!\left( \!\!\alpha^{k-1}\|v_h(x_1,\cdot,x')\|^2_{L_{\bar{x}}^2}+ \alpha^{k-1-2\ell} {\sum_{i=2}^{k}}\|(hD_{x_i})^\ell v_h(x_1,\cdot,x')\|^2_{L_{\bar{x}}^2}\!\right)\!,
\]
for all $x_1$, $x'$. Now, for all $x_1, \bar{x}$,  integrate in $x'$ to get
\[
\|v_h(x_1,\bar{x},\cdot)\|^2_{L^2_{x'}}\leq C_{\ell,k}  h^{1-k} \left( \alpha^{k-1}\|v_h(x_1,\cdot)\|^2_{L_{\bar{x},x'}^2}+ \alpha^{k-1-2\ell} {\sum_{i=2}^{k}}\|(hD_{x_i})^\ell v_h(x_1,\cdot)\|^2_{L_{\bar{x},x'}^2}\right).
\]
In particular, setting $(x_1, \bar{x})=(0,0)$ on the left hand side we get 
\begin{equation}
\label{e:centipede}
\|v_h\|^2_{L^2(H)}\leq C_{\ell,k}  h^{1-k} \left( \alpha^{k-1}\|v_h(0,\cdot)\|^2_{L_{\bar{x},x'}^2}+ \alpha^{k-1-2\ell} {\sum_{i=2}^{k}}\|(hD_{x_i})^\ell v_h(0,\cdot)\|^2_{L_{\bar{x},x'}^2}\right).
\end{equation}
{We will end up choosing $\alpha=R$ and $\ell=k$.}

\begin{remark}
{Note that when $k=1$ (i.e. in the case of $H$ is a hypersurface), estimates on the derivatives are not necessary.}
\end{remark}

By~\eqref{e:transverse} we may assume, without loss of generality, that $\partial_{\xi_1}p\neq0$ on $\supp \kappa_\e\cap \{p=0\}$. Hence, we will apply Lemma~\ref{l:squirrel} with $\kappa=\kappa_\e$ and {$\chi$ (here we shrink the support of $\chi$ if necessary)}.  {In order to apply the lemma we define $b_{\e}\in C_c^\infty(T^*M; [0,1])$  as in \eqref{e:platypus}, where we change $|x_1|$ for $|(x_1, \bar x)|$.} Next, let $ \twe$ be an extension of $\we$ off of $\SigH$ so that $H_p\we\equiv 0$ in a neighborhood of $b_\ep \equiv 1$.
 We do this as in Lemma~\ref{L:ant} using that $H_p$ is transverse to $\SigH$ to solve the initial value problem.

We now choose $q$ to obtain a gain in the $L^2(H)$ restriction norm related to $R$.
{ Let 
$$ 
T_{\rho_0}:=T|\partial_{\xi_1}p(\rho_0)|.
$$}
 {Applying Lemma~\ref{l:squirrel} with $\kappa=\kappa_\e$, $\chi$, $b=\twe b_\e$, and $q=1$, we have }
\begin{multline*}
{\|v_h(0,\cdot)\|}_{L_{\bar{x},x'}^2}\leq  {8T_{\rho_0}^{-\frac{1}{2}}}\|Op_h(\twe b_\e{\tilde{w}}\chi )\phi_{h}\|_{L^2(M)}
\\+{C_0T^{\frac{1}{2}}}h^{-1}\|Op_h( \twe b_\e)POp_h({\tilde{w}}\chi ) \phi_{h}\|_{L^2(M)}+o_{\e}(1)
\end{multline*}
{with $C_0>0$ independent of $T$. Here we have used that in our coordinates $\|u\|_{L_x^2}\leq 2\|u\|_{L^2(M)}.$}

Let $\ell$ with $2\ell>k-1$ and define {
\[
Q_i=(hD_{x_i}-a_i)^\ell\qquad \text{and}\qquad Q_i=Op_h(q_i).
\]}  
In particular,  $q_i=(\xi_i-a_i)^\ell+O(h)$. Then,  Lemma \ref{l:squirrel} gives that there exists {$C_0>0$ independent of $T$} so that
\begin{multline*} 
{\|(hD_{x_i})^\ell v_h(0,\cdot)\|}_{L_{\bar{x},x'}^2}\leq {128T_{\rho_0}^{-\frac{1}{2}}}
\|Op(\twe b_\e)Op_h({\tilde{w}}\chi ){Q_i}\phi_{h}\|_{L^2(M)}\\
\qquad \qquad+{C_0T^{\frac{1}{2}}}h^{-1}\|Op_h(\twe b_\e)POp({\tilde{w}}\chi ){Q_i}\phi_{h}\|_{L^2(M)}+o_{\e}(1).
\end{multline*}

Applying~\eqref{e:centipede}  {gives that for any $\alpha>0$}
\begin{align}
&h^{k-1}\|Op_h(\beta_\e w)Op_h(\chi ) \phi_{h}\|^2_{L^2(H)} \notag \\
&\leq C_{\ell,k}\alpha^{k-1}\left({T_{\rho_0}^{-1}}\|Op_h(\twe b_\e{\tilde{w}}\chi)\phi_h\|_{L^2(M)}^2+ h^{-2}{C_0^2T}\|Op_h(\twe b_\e)POp_h({\tilde{w}}\chi )\phi_{h}\|_{L^2(M)}^2\right) \notag\\
&\quad+C_{\ell,k}\alpha^{k-2\ell-1}\sum_{i=2}^{k}{T_{\rho_0}^{-1}}\|Op_h(\twe b_\e)Op_h({\tilde{w}} \chi ){Q_i}\phi_{h}\|_{L^2(M)}^2 \notag\\
&\quad+C_{\ell,k}\alpha^{k-2\ell-1}h^{-2}\sum_{i=2}^{k}{C_0^2T}\|Op_h(\twe b_\e)POp_h({\tilde{w}}\chi ){Q_i}\phi_{h}\|_{L^2(M)}^2+o_\e(1) \label{e:rex}.
\end{align}

In particular, since $\mu$ is the defect measure associated to $\{\phi_h\}$, {arguing as in \eqref{E:bedbug} we obtain}
\begin{align*}
\limsup_{h\to 0}h^{k-1}\|Op_h(\beta_\e)&Op_h(\chi w) \phi_{h}\|_{L^2(H)}^2\leq \\
&C_{\ell,k}\alpha^{k-1} \int_{T^*M}  \twe^2b_\e^2({T_{\rho_0}^{-1}}\chi^2+{C_0^2T}|H_p({\tilde{w}}\chi )|^2)d\mu\\
&+C_{\ell,k}\alpha^{k-2\ell-1}\sum_{i=2}^{k}\int_{T^*M} \twe^2 b_\e^2({T_{\rho_0}^{-1}}\chi^2q_{i}^2+{C_0^2T}|H_p({\tilde{w}}\chi   q_{i})|^2)d\mu.
\end{align*}

Next, we observe that by~\eqref{e:platypus} and the fact that $0\leq b_\e^2\leq 1$, we have 
\[
\lim_{\e\to 0} \twe^2b_\e^2 \leq \tilde{w}^2{1_{\supp \tilde{w}}}.
\]
Sending $\e\to 0$ and using $H_p({\tilde{w}}\chi)=0$ on $\LambdaH$ (together with $\mu(T^*M)=1$ to apply the Dominated Convergence Theorem) we have
\begin{equation}
\label{e:prelimEsta}
\begin{aligned}
\lim_{\e\to 0}\limsup_{h\to 0}h^{k-1}\|Op_h(\beta_\e w)Op_h(\chi )& \phi_{h}\|_{L^2(H)}^2\leq C_{\ell,k}\alpha^{k-1}{T_{\rho_0}^{-1}} \int_{\LambdaH}  \chi^2 \tilde{w}^2 d\mu\\
&\hspace{-1cm}+C_{\ell,k}\alpha^{k-2\ell-1}\sum_{i=2}^{k}\int_{\LambdaH} \chi^2 \tilde{w}^2({T_{\rho_0}^{-1}}q_{i}^2+{C_0^2T}|H_pq_{i}|^2)d\mu.
\end{aligned}
\end{equation}

Next, assume that $\supp (\chi {1_{ \LambdaH}}) \subset  \mc{T}_{_T}(\Xi,R)$. By \cite[Lemma 3.1]{Gdefect}
\begin{equation}\label{E:distance}
\sup_{{\mathcal T_T(\Xi,R)\cap \LambdaH}} \max_i |\xi_i-a_{i}(x_1,x')| \leq 3R.
\end{equation}
Hence, since $H_p(\xi_i-a_i(x_1,x'))=0$ on $\gamma_{x'}$,
\[
\sup_{{\mathcal T_T(\Xi,R)\cap \LambdaH}}|H_pq_{i}|\leq CR^\ell.
\]
Furthermore, 
\[
\sup_{{\mathcal T_T(\Xi,R)\cap \LambdaH}}|q_{i}|\leq (1+C\delta)R^\ell+O(R^{2l})
\]
Thus, {taking $T$ small enough}, we obtain from~\eqref{e:prelimEsta} that
\begin{multline*}
\lim_{\e\to 0}\limsup_{h\to 0}h^{k-1}\|Op_h(\beta_\e w)Op_h(\chi ) \phi_{h}\|_{L^2(H)}^2\\\leq C_{\ell,k}{T_{\rho_0}^{-1}}\int_{\LambdaH}\chi^2 \tilde{w}^2( \alpha^{k-1} +\alpha^{k-2\ell-1}R^{2\ell})d\mu.
\end{multline*}
Choosing $\alpha=R$ and fixing $\ell=k$ gives~\eqref{e:optimized}.
\end{proof}
%%%%%%%%%%%%%%%%%%%%%%%%%%%%%%%%%%%%%%%%%%%%%%%%%%%%%%%%%%%%%%%%%%%%%%%%%%%%%%%%
\begin{remark}\label{r:explained}
To see that the conclusion in Remark~\ref{r:uniformity} holds, observe that the  estimate in \eqref{e:rex} holds for $\tilde{H}$ as long as $\Sigma_{\tilde{H},p}$ and $\SigH$ are $o(1)$ close. Thus, it is enough that $H$ and $\tilde{H}$ are $o(1)$ close in the $C^1$ norm.
\end{remark}

\needspace{.5cm}
We now present the proof of Proposition \ref{P:sloth}.
%%%%%%%%%%%%%%%%%%%%%%%%%%%%%%%%%%%%%%%%%%%%%%%%%%%%%%%%%%%%%%%%%%%%%%%%%%%%%%%%
\subsubsection{\bf Proof of Proposition \ref{P:sloth}.} \label{S: sloth proof}
Let $\chi\in C_c^\infty(T^*M)$ so that $H_p \chi\equiv 0$ on $\LambdaH$ for some $T>0$. 
Also, fix $w \in C_c^\infty(H)$.

{For all $\delta>0$, w}e can find $(x_j,r_j)$ and $(\Xi_j, R_j)$ with $j=1,\dots K(\delta)$ so that  if we set 
\[
U_j:= \{ (x,\xi):\; x\in B(x_j,r_j),\, \xi \in {B(\Xi_j(x),R_j)}\} \subset  { \SigH} \qquad \text{and}\qquad \mc{U}=\bigcup_{j=1}^K U_j,
\]
{where $B(x_j,r_j) \subset H$ and $B(\Xi_j(x),R_j) \subset \{\xi \in  N^*_xH:\; p(x,\xi)=0\} $ are balls of radius $r_j$ and $R_j$ respectively,}
then
\[
\supp(\chi {1_{\SigH}})\; \subset \; \mc{U},
\]
and
\[
\sum_{j=1}^K {\Sig}(U_j) \; \leq \;   \Sig\big(\supp (\chi {1_{ \SigH}}) \big)+{\delta}.
\]

Let $\tilde{\chi}_j$ be a partition of unity for $\mc{U}$ subordinate to $\{U_j\}$. Apply Lemma \ref{L:ant} to obtain the flow invariant extensions 
\[
\chi_j\in C_c^\infty(T^*M;[0,1])
\]
 so that 
 \begin{enumerate}
\item $H_p\chi_j\equiv 0 \text{ on }\LambdaH$,
\item $(\supp \chi_j1_{{ \LambdaH}})\subset \bigcup_{|t|<T} \varphi_t(U_j) \subset {\mathcal T_T(\Xi_j,R_j)}$, 
\item  $\{x:\; (x,\xi)\in (\supp\chi_j{1_{ T^*_HM}})\} \subset B(x_j,r_j)$,
\item $\sum_{j=1}^K\chi_j\equiv 1$ on  $ \bigcup_{|t|<T} \varphi_t( \mc{U} )$,
\item $0\leq \sum_{j=1}^K\chi_j\leq 1 \text{ on } \LambdaH$. 
\end{enumerate}

Note that, since $H_p \chi \equiv 0$ on $\LambdaH$, we have 
\[
\supp (\chi {1_{\LambdaH}}) =  \bigcup_{|t|<T}G^t(\supp \chi {1_{ \SigH}}) \subset \bigcup_{|t|<T} G^t(\mc U).
\]
Therefore, 
$$
\supp \Big(1-\sum_{j=1}^K \chi_j\Big)\cap \supp( \chi{1_{ \LambdaH}})=\emptyset.
$$
By Lemma~\ref{L:optimized}, we conclude
$$
\lim_{\e\to 0}\limsup_{h\to 0} h^{\frac{k-1}{2}}\left|\int_H Op_h(\beta_\e w)\Big[Op_h\Big(1-\sum_{j=1}^K\chi_j\Big)Op_h(\chi)\phi_h\Big]d\sigma_H\right|=0.
$$

We then have 
\begin{align*}
&\lim_{\e\to 0}\limsup_{h\to 0} h^{\frac{k-1}{2}}\left|\int_H Op_h(\beta_\e w)[Op_h(\chi)\phi_h]d\sigma_H\right|=\\
&\hspace{3cm}=\lim_{\e\to 0}\limsup_{h\to 0} h^{\frac{k-1}{2}}\left|\int_H Op_h(\beta_\e w)\Big[Op_h\Big(\sum_{j=1}^K\chi_j\Big)Op_h(\chi)\phi_h\Big]d\sigma_H\right|.
\end{align*}
Now, to recover the spatial localization we introduce $\psi_j\in C_c^\infty(H)$ with $\supp \psi_j\subset B(x_j,2r_j)$ and
\[
\psi_j(x')\chi_j(0,x',\xi)=\chi_j(0,x',\xi),\qquad (x', \xi)\in T_H^*M.
\]
Then,
\[
\|Op_h(\chi_j)\phi_h\|_{L^2(H)}=\|\psi_j Op_h(\chi_j)\phi_h\|_{L^2(H)}+O(h^{\frac{2-k}{2}}).
\]
{In fact, on $\R^d$ with the standard quantization, we have $[(1-\psi_j)Op_h(\chi_j)\phi_h]|_{H}=0$. Hence, the above estimate follows from the fact that quantizations differ by $O_{L^2\to L^2}(h)$ together with the standard restriction estimate for compactly microlocalized functions.}

In what follows we bound $\|Op_h(\beta_\e)[Op_h(\chi_j\chi)\phi_h]\|_{L^2(H)}$ using Lemma  \ref{L:optimized} applied to $\chi_j \chi$. This can be done since $H_p(\chi \chi_j) \equiv 0$ on $\LambdaH$. 
Lemma \ref{L:optimized} yields that there exists {$C_{k}>0$ depending only on $k$ and $\rho_j\in (B(x_j,3r_j)\times B(\Xi(x_j),3R_j))\cap \SigH$} so that, {for any  $\tilde{w}\in C^\infty_c(T^*M)$  extension of $w$ with $H_p \tilde w\equiv 0$ on $\LambdaH$, and {$T_{\rho_j}:=T|\partial_{\xi_1}p(\rho_j)|$,}
\begin{align*}
&\lim_{\e\to 0}\limsup_{h\to 0} h^{\frac{k-1}{2}}\left|\int_H Op_h(\beta_\e w)[Op_h(\chi)\phi_h]d\sigma_H\right|\\
&\hspace{2cm}\leq \lim_{\e\to 0}\limsup_{h\to 0} h^{\frac{k-1}{2}}\sum_{j=1}^K\|1_{\supp \psi_j}\|_{L^2(H)}\|Op_h(\beta_\e w)[Op_h(\chi_j\chi )\phi_h]\|_{L^2(H)}\\
&\hspace{2cm}\leq C_k\sum_{j=1}^K\|1_{\supp \psi_j}\|_{L^2(H)}\left({T_{\rho_j}^{-1}}R_j^{k-1}\int_{\LambdaH}\chi_j^2 \chi^2 \tilde{w}^2 d\mu\right)^{1/2}\\
&\hspace{2cm}\leq C_k\sum_{j=1}^K r_j^{\frac{n-k}{2}}R_j^{\frac{k-1}{2}}\left({T_{\rho_j}^{-1}}\int_{\LambdaH}\chi_j^2\chi^2 \tilde{w}^2 d\mu\right)^{1/2}\\
&\hspace{2cm}\leq C_k \left(\sum_{j=1}^K r_j^{n-k}R_j^{k-1}\right)^{1/2}\left({T_{\rho_j}^{-1}}\int_{\LambdaH}\sum_{j=1}^K \chi_j^2\chi^2 \tilde{w}^2 d\mu\right)^{1/2}\\
&\hspace{2cm}\leq C_k  c_{n,k} ^{1/2}  \Big[\Sig\big(\supp \chi{1_{ \SigH}}\big)+{\delta}\Big] ^{1/2}\left(\int_{\SigH}\chi^2 w^2 {|H_pr_H|^{-1}} d\muH+{\delta}\right)^{1/2}.
\end{align*}
We have used  that there exists ${c_{n,k}=c(n,k)>0}$ so that {for $r_j$ and $R_j$ small enough} 
$$
\sum_{j=1}^K r_j^{n-k}R_j^{k-1} \; \leq\;  c_{n,k}\sum_{j=1}^K \Sig(U_j) \leq  c_k\, \Big[\Sig\big(\supp \chi {1_{\SigH }}\big)+{\delta}\Big],$$
{ that by continuity of $|H_pr_H|^{-1}$ on $\SigH$, as $r_j,R_j\to 0$, 
$$
\sum_j\chi^2\chi_j^21_{U_j}\sup_{U_j}|H_pr_H|^{-1}\to \chi^2|H_pr_H|^{-1},
$$
and the dominated convergence theorem.
}
 Since ${\delta}>0$ is arbitrary, this completes the proof of the proposition.
\ \\
\qed
%%%%%%%%%%%%%%%%%%%%%%%%%%%%%%%%%%%%%%%%%%%%%%%%%%%%%%%%%%%%%%%%%%%%%%%%%%%%%%%%
\ \\

%%%%%%%%%%%%%%%%%%%%%%%%%%%%%%%%%%%%%%%%%%%%%%%%%%%%%%%%%%%%%%%%%%%%%%%%%%%%%%%%
\subsubsection{\bf Proof of Lemma \ref{l:squirrel}.}\label{S: squirrel proof}
First, suppose {$\rho_0\in T^*M$ is so that $p(\rho_0)\neq 0$}. Then, there exists a neighborhood  $U\subset T^*\R^n$  of ${\rho_0}$ with $U\subset \{p\neq 0\}$. One can then carry an  elliptic parametrix construction so that  
\begin{equation}\label{e:elliptic}
 Op_h(q\, \kappa \, \chi)\phi_h=Op_h(\tilde{e}){Op_h(\tilde{\chi})Op_h(p)\phi_h},
\end{equation}
for all $\chi$ supported in $U$ and some suitable $\tilde e$. Therefore, 
$$
{ \|Op_h(q\, \kappa \, \chi){\phi_h}(0,x')\|_{{L^2_{ \tilde{x}}}}\leq C\|Op_h(\tilde{\chi})P\phi_h\|_{L^2_x}},
$$
{as claimed.}
We may assume from now on that  \[
{\rho_0}\in \{\partial_{\xi_1}p\neq0\}\cap \{p= 0\}.
\] 
By the implicit function theorem, for $ {\tilde{\chi}}$ supported sufficiently close to ${\rho_0}$, {and $\supp \chi\subset\{ {\tilde{\chi}}\equiv 1\}$}
\[
p(x,\xi) {\tilde{\chi}(x,\xi)}=e(x,\xi)(\xi_1-a(x,{\tilde \xi}))
\]
with $e(x,\xi)$ elliptic on $\supp \chi$ {and $\xi=(\xi_1,\tilde{\xi})$}. In particular,
$$
Op_h(p){Op_h(\chi)}=Op_h(e)(hD_{x_1}-Op_h(a)))Op_h(\chi)+hOp_h(R){Op_h(\chi)}.
$$
Therefore, 
$$
(hD_{x_1}-Op_h(a)) w=f,
$$
where we have set{
\begin{gather*}
w:=Op_h(\chi )Op_h(q)\phi_h,\\
f:=[Op_h(e)^{-1}Op_h(p)Op_h(\chi )Op_h(q)+h Op_h(R_1)Op_h(\chi)Op_h(q)]\phi_h+O(h^\infty)
\end{gather*}
and $Op_h(e)^{-1}$ denotes a microlocal parametrix for $Op_h(e)$ near $\supp \chi$.}
Defining
\[
A(t,s,\tilde{x},hD_{\tilde{x}}):=-\int_{s}^{t}a(x_1,\tilde{x},hD_{\tilde{x}})dx_1,
\] 
 we obtain that for all $s,t \in \R$
$$
w(s,\tilde{x})=e^{-\frac{i}{h}A(t,s,\tilde{x},hD_{\tilde{x}})}w(t,\tilde{x})\\
-\frac{i}{h}\int_{s}^{t}e^{-\frac{i}{h}A(x_1,s,\tilde{x},hD_{\tilde{x}})}f(x_1,\tilde{x})dx_1.
$$
Let $\delta>0$ be so that 
\begin{equation}
\label{E:delta}
\delta\leq  \frac{T}{3}|\partial_{\xi_1}p(\rho_0)| \leq \frac{T}{2}\inf \Big\{  |\partial_{\xi_1}p(x,\xi)|:\; (x,\xi)\in \supp \chi \Big\}
\end{equation}
and $\Phi\in C_c^\infty(\re;[0,{2\delta^{-1}}])$ with $\supp \Phi \subset [0,\delta]$ and $\int_\R \Phi=1$. Then, integrating in $t$,
\[
w(s,\tilde{x})=\int_{\R}\!\Phi (t)e^{-\frac{i}{h}A(t,s,\tilde{x},hD_{\tilde{x}})}w(t,\tilde{x})dt-\frac{i}{ h}\int_\R\! \Phi(t) \int_{s}^{t}e^{-\frac{i}{h}A(x_1,s,\tilde{x},hD_{\tilde{x}})}f(x_1,\tilde{x})dx_1dt.
\]
Next, applying propagation of singularities, we claim that
\begin{equation}
\label{e:est0}
\begin{aligned} 
Op_h(\kappa) w(s,\tilde{x})&=\int_\R\!\Phi(t)Op_h(\kappa) e^{-\frac{i}{h}A(t,s,\tilde{x},hD_{\tilde{x}})}Op_h(b)w(t,\tilde{x})dt\\
& -\frac{i}{h}  \int_\R \! \Phi(t)\int_{s}^{t}Op_h(\kappa)e^{-\frac{i}{h}A(x_1,t,\tilde{x},hD_{\tilde{x}})}Op_h(b)f(x_1,\tilde{x})dx_1dt\\
&+{R_h(s, \tilde x)}+O(h^\infty)\|\phi_h\|_{L^2},
\end{aligned}
\end{equation}
{with $\|R_h(s, \tilde x)\|_{L^\infty_{s}L^2_{\tilde{x}}}=O(h^{-1}\|Op_h(\tilde{\chi})P\phi_h\|_{L^2_x})$.}
Indeed, \eqref{e:est0} follows once we show that for any $v\in S^0(T^*M)$ {supported on $\tilde{\chi}\equiv 1$} and $x_1\in[0,\delta]$ 
\begin{multline}\label{e:est1}
\|Op_h(\kappa)e^{-\frac{i}{h}A(x_1,t,\tilde{x},hD_{\tilde{x}})}(1-Op_h(b))Op_h(v)\phi_h\|_{L^2_x}\leq \\{C}\|Op_h(\tilde{\chi})P\phi_h\|_{L^2_x}+O(h^\infty)\|\phi_h\|_{L_x^2}.
\end{multline}
Let $\chi_\e \in C_c^\infty(\R;[0,1])$ be as in \eqref{E:chi}. By the same construction carried in \eqref{e:elliptic} (which gives that $\phi_h$ is microlocalized on $\{p=0\}$) we conclude
\begin{multline}
\label{e:elliptPart}
\|Op_h(\kappa)e^{-\frac{i}{h}A(x_1,t,\tilde{x},hD_{\tilde{x}})}(1-Op_h(b))Op_h(v)(1-Op_h(\chi_\e(p))\phi_h\|_{L_x^2}\leq\\ 
{C_{\e}}\|Op_h(\tilde{\chi})P\phi_h\|_{L_x^2}+O(h^\infty)\|\phi_h\|_{L^2_x}.
\end{multline}
Therefore, to prove \eqref{e:est1} we need to estimate
\begin{equation}
\nonumber
\|Op_h(\kappa)e^{-\frac{i}{h}A(x_1,t,\tilde{x},hD_{\tilde{x}})}(1-Op_h(b))Op_h(v)Op_h(\chi_\e(p)) \phi_h\|_{L_x^2}.
\end{equation}
Let $\tilde{\varphi}_t$ denote the Hamiltonian flow of $\tilde{p}(x,\xi)=\xi_1-a(x,{\tilde \xi})$. Then, for $(x,\xi)\in \{(x,\xi): \;|p(x,\xi)|\leq C\e^2\}$ and $|t|\leq 1$, we have $d(\varphi_t(x,\xi),\tilde{\varphi}_t(x,\xi))\leq C\e^2$. 
By~\eqref{e:platypus}, $b$ is identically 1 in a neighborhood of  
\[
\bigcup_{|t|\leq T} \varphi_t(\{\supp \kappa\}\cap \{p=0\})
\] 
and thus for $\e>0$ small enough on 
\[
\bigcup_{|t|\leq2T}\tilde{\varphi}_t(\{ \supp \kappa\}\cap\{ |p|\leq C\e^2\}).
\]
In particular, since we assume that $\supp \Phi\subset [0,\delta]$ and $\delta$ satisfies \eqref{E:delta},  we have
\begin{equation}
\label{e:propPart}
\|\Phi(t)Op_h(\kappa)e^{-\frac{i}{h}A(t,s,\tilde{x},hD_{\tilde{x}})}(1-Op_h(b))Op_h(a)Op_h(\chi_\e(p))\phi_h\|_{L^2_x}=O_{\e}(h^\infty)\|\phi_h\|_{L^2_x}.
\end{equation}
Together~\eqref{e:elliptPart} and~\eqref{e:propPart} give~\eqref{e:est1}.  {In particular, we obtain}~\eqref{e:est0} which, since {
$$
\Phi(t) \leq 2\delta^{-1},\qquad\text{ and hence } \qquad \|\Phi\|_{L^2}\leq C,
$$} implies
\begin{multline*}
\|Op_h(\kappa) w(0,\cdot)\|_{L^2_{\tilde{x}}}\leq
 2{\delta^{-1/2}}\|Op_h(b)w\|_{L^2_x}+C_0{\delta^{1/2}} h^{-1}\|Op_h(b)f\|_{L^2_x}\\
 +Ch^{-1}\|Op_h(\tilde{\chi})P\phi_h\|_{L^2_x}+O(h^\infty)\|\phi_h\|_{L_x^2},
\end{multline*}
Now, 
$$
Op_h(q)Op_h(\kappa)Op_h(\chi)=Op_h(\kappa)Op_h(\chi)Op_h(q)+[Op_h(q),Op_h(\kappa)Op_h( \chi)].
$$
Therefore, since
$$
\|[Op_h(q),Op_h(\kappa)Op_h( \chi)]\phi_h(0, \cdot )\|_{L_{\tilde{x}}^2} \leq Ch^{\frac{1}{2}}\|Op_h(\tilde{\chi})\phi_h\|_{L^2_x}+O(h^\infty)\|\phi_h\|_{L^2_x},
$$
we have the following $L^2$ bound along the section $x_1=0$ 
\begin{multline} 
\label{local upshot 1}
\|Op_h(\kappa)Op_h(\chi)Op_h(q)\phi_h(0, \cdot) \|_{L^2_{\tilde{x}}}\leq 
 2{\delta^{-1/2}} \|Op_h(b)w\|_{L^2_x}+C_0 {\delta^{1/2}} h^{-1}\|Op_h(b)f\|_{L^2_x}\\
 {+Ch^{-1}\|Op_h(\tilde{\chi})P\phi_h\|_{L^2_x}
 +Ch^{\frac{1}{2}}\|Op_h(\tilde{\chi})\phi_h\|_{L^2_x}+O(h^\infty)\|\phi_h\|_{L^2_x}}
 \end{multline}
 finishing the proof.
 %
 %Finally, using that
% \[
 %\|Op_h(q)Op_h(\rho)Op_h(\psi)u(0, \cdot)-Op_h(\rho)Op_h(\psi)Op_h(q)u(0,\cdot) \|_{L^2_{\tilde{x}}}=O(h^{1/2}),
 %\]

\qed
%%%%%%%%%%%%%%%%%%%%%%%%%%%%%%%%%%%%%%%%%%%%%%%%%%%%%%%%%%%%%%%%%%%%%%%%%%%%%%%%

%%%%%%%%%%%%%%%%%%%%%%%%%%%%%%%%%%%%%%%%%%%%%%%%%%%%%%%%%%%%%%%%%%%%%%%%%%%%%%%%
%%%%%%%%%%%%%%%%%%%%%%%%%%%%%%%%%%%%%%%%%%%%%%%%%%%%%%%%%%%%%%%%%%%%%%%%%%%%%%%%
\section{Proof of Theorem  \ref{thm:normal}}\label{S:normal}
%%%%%%%%%%%%%%%%%%%%%%%%%%%%%%%%%%%%%%%%%%%%%%%%%%%%%%%%%%%%%%%%%%%%%%%%%%%%%%%%
%%%%%%%%%%%%%%%%%%%%%%%%%%%%%%%%%%%%%%%%%%%%%%%%%%%%%%%%%%%%%%%%%%%%%%%%%%%%%%%%
%%%%%%%%%%%%%%%%%%%%%%%%%%%%%%%%%%%%%%%%%%%%%%%%%%%%%%%%%%%%%%%%%%%%%%%%%%%%%%%%
When the codimension of $H$ is equal to $1$ and $\SigH$ is compact we can include an estimate on the normal derivate in all of our results. In particular, for $\nu$ a unit normal to $H$, we may {\bf replace  all instances} of $\int_A \phi_hd\sigma_H$ with
\[
\Big|\int _A\phi_hd\sigma_H\Big|+\Big|\int_AhD_\nu \phi_hd\sigma_H\Big|.
\]
To see this, observe that if $\phi_h$ is a quasimode for $P$ {and $\{\phi_h\}$ is compactly microlocalized}, then 
\[
hD_\nu P\phi_h=o(h).
\] 
In particular, 
\begin{equation}\label{E:commutator}
PhD_\nu \phi_h+[hD_\nu, P]\phi_h=o(h).
\end{equation}
Let $\chi \in S^0(T^*M)$ have $\chi \equiv 1$ in a neighborhood of $N^*\!H$ and 
\[
\supp \chi \subset \Big\{ (x, \xi)\in T^*M:\, |\langle \nu(x),\xi\rangle|>\frac{|\xi|}{2}\Big\}.
\]
Then, there exists $E\in \Psi^\infty(M)$ so that 
\[
Op_h(\chi)[hD_\nu, P]=hEhD_\nu 
\]
and in particular, applying $Op(\chi)$ to \eqref{E:commutator} we find
\[
(Op_h(\chi)P+hE)hD_\nu \phi_h=o(h).
\]
Now, $\sigma(Op_h(\chi)P+hE)=\chi \,p$. Therefore, since $\chi \equiv 1$ in a neighborhood of $N^*\!H$ and $H$ is conormally transverse for $p$, $H$ is conormally transverse for $\chi(x,\xi)p(x,\xi)$. Thus, Theorem~\ref{t:local2} applies and gives 
\[
\limsup_{h\to 0^+}\left|\int_A w hD_\nu \phi_hd\sigma_H\right|\leq {C_{n,k}}\int_{\pi_H^{-1}(A)} |w| \sqrt{\tilde{f}|H_pr_H|^{-1}} d\Sig,
\]
where 
\[
\tilde{\mu}_{H,\chi p}=\tilde{f} d\Sig+\tilde{\lambda}_H
\]
with $\tilde{\lambda}_H\perp \Sig$ and $\tilde{\mu}$ is the defect measure for $hD_\nu \phi_h$. It is straightforward to see that 
\[
\tilde{\mu}=|\langle \nu(x),\xi\rangle|^2\, \mu,
\]
and hence (for $t_0>0$ chosen small enough)
\[
\tilde{\mu}_{H,\chi p}=|\langle \nu(x),\xi\rangle|^2\mu_{_{\!H,p}}=|\langle \nu(x),\xi\rangle|^2(fd\Sig+\lambda_{_{\! H}}).
\]
In particular, 
\begin{align*}
\limsup_{h\to 0^+}\left|\int_A w hD_\nu \phi_hd\sigma_H\right|&\leq {C_{n,k}}\int_{\pi_H^{-1}(A)} |w| \sqrt{f|H_pr_H|^{-1}}|\langle \nu(x),\xi\rangle| d\Sig \\
&\leq \tilde{C}\int_{\pi_H^{-1}(A)} |w| \sqrt{f|H_pr_H|^{-1}} d\Sig,
\end{align*}
since $\SigH$ is compact {and $f$ is supported on $\SigH$}.
\begin{remark}
{Note that the constant $\tilde{C}$ now depends on $\sup_{\SigH}|\langle \nu(x),\xi\rangle|$. }
\end{remark}

This proves that the analog of  Theorem~\ref{t:local2} holds for $hD_\nu \phi_h$.  One can then obtain an analog of Theorem~\ref{t:local} for $hD_\nu \phi_h$, which in turn implies Theorem~\ref{thm:normal}.

%%%%%%%%%%%%%%%%%%%%%%%%%%%%%%%%%%%%%%%%%%%%%%%%%%%%%%%%%%%%%%%%%%%%%%%%%%%%%%%%
%%%%%%%%%%%%%%%%%%%%%%%%%%%%%%%%%%%%%%%%%%%%%%%%%%%%%%%%%%%%%%%%%%%%%%%%%%%%%%%%
%%%%%%%%%%%%%%%%%%%%%%%%%%%%%%%%%%%%%%%%%%%%%%%%%%%%%%%%%%%%%%%%%%%%%%%%%%%%%%%%
\section{Proof of Theorem  \ref{thm:recur}}\label{S:recur}
%%%%%%%%%%%%%%%%%%%%%%%%%%%%%%%%%%%%%%%%%%%%%%%%%%%%%%%%%%%%%%%%%%%%%%%%%%%%%%%%
%%%%%%%%%%%%%%%%%%%%%%%%%%%%%%%%%%%%%%%%%%%%%%%%%%%%%%%%%%%%%%%%%%%%%%%%%%%%%%%%

{We prove Theorem  \ref{thm:recur} by contradiction.} {Suppose that there exists a sequence $\{\phi_{h_m}\}$ and $c>0$ such that 
\begin{equation}
\label{e:contra1}
\Big|\int_A\phi_{h_m}d\sigma_H\Big|\geq ch_m^{\frac{1-k}{2}}.
\end{equation}
Then, we may extract a subsequence (still writing it as $\phi_{h_m}$) with defect measure $\mu$.} Let $\muH$ be the induced measure on $\SNH$ and $\lambda_{_{\! H}}$ be the measure on $\SNH$ with $\lambda_{_{\! H}} \perp \sig$ and so that 
\[
\muH= f\, \sig + \lambda_{_{\! H}},
\]
for $f \in L^1(\SNH , \sig)$.
 {Then,
\begin{align}
\int_{\pi_H^{-1}(A)} \sqrt{f} \,d\sig &= \int_{ \mc{R}_H \cap \pi_H^{-1}(A)} \sqrt{f} \,d\sig + \int_{ \mc{R}_H^c \cap \pi_H^{-1}(A)} \sqrt{f} \,d\sig \notag\\
&= \int_{ \mc{R}_H^c \cap \pi_H^{-1}(A)} \sqrt{f} \,d\sig, \label{this}
\end{align}
where the last equality follows from the fact that  $\sig(\mc{R}_H \cap  \pi_H^{-1}(A))=0$.
Also,
since $\lambda_{_{\! H}}\perp \sig$, there exist $V,W\subset \SNH$ so that $\lambda_{_{\! H}}(W)=\sig(V)=0$ and $\SNH=V\cup W$. 
Next, we use that Lemma \ref{P:suppRecur} below gives $\muH(\mc{R}_H^c)=0$. It follows that
\begin{equation}\label{that}
 \int_{ \mc{R}_H^c \cap \pi_H^{-1}(A)} \sqrt{f} \,d\sig
 \leq   \left(\int_{ \mc{R}_H^c \cap \pi_H^{-1}(A)} f  \,d\sig\right)^{\frac{1}{2}}
=\muH(\mc{R}_H^c\cap \pi_H^{-1}(A)\cap W)^{\frac{1}{2}}=0.
\end{equation}
}
%Since $B\subset \pi_H^{-1}(A)$ is arbitrary, it follows that $\sqrt{f}(\rho)=0$ for $\sig$-almost every $\rho \in $.
%
%\red{In order to show that $\mu$ is conormally diffuse over $A$, we need to show that $f1_{\pi_H^{-1}(A)}=0$ $\sig$ almost everywhere. We first use Proposition~\ref{P:suppRecur} to show that $f1_{\SNH\setminus\mc{R}_H} =0$ $\sig$ a.e. 
%\par
%Since $\lambda_{_{\! H}}\perp \sig$, there exist $V,W\subset \SNH$ so that $\lambda_{_{\! H}}(W)=\sig(V)=0$ and $\SNH=V\cup W$. Hence, for all $B\subset \SNH,$
%$$
%\muH(B\cap W)=\int_{B\cap W}fd\sig =\int_B fd\sig.
%$$
%In particular, for $B\subset \SNH\setminus \mc{R}_H$, 
%$$
%0=\muH(B\cap W)=\int_{B}fd\sig
%$$
%and hence $f1_{\SNH\setminus\mc{R}_H} =0$ $\sig$ a.e. Next, observe that 
%$$
%f1_{\pi_H^{-1}(A)}=f1_{\pi_H^{-1}(A)\cap \mc{R}_H}+f1_{\pi_H^{-1}(A)\setminus \mc{R}_H}=f1_{\pi_H^{-1}(A)\cap \mc{R}_H}\,\sig\text{ a.e.}
%$$
%Finally, since $\sig(\mc{R}_H\cap \pi_H^{-1}(A) )=0$, $f1_{\pi_H^{-1}(A)\cap \mc{R}_H}=0$ $\sig$ a.e. and hence $\mu$ is conormally diffuse over $A$. }
Combining \eqref{this} and \eqref{that} gives $\int_{\pi_H^{-1}(A)} \sqrt{f} \,d\sig=0$, and so Theorem \ref{t:local}  gives a contradiction to~\eqref{e:contra1}.

\qed

%%%%%%%%%%%%%%%%%%%%%%%%%%%%%%%%%%%%%%%%%%%%%%%%%%%%%%%%%%%%%%%%%%%%%%%%%%%%%%%%
\begin{lemma}
\label{P:suppRecur}
Let $H\subset M$ and suppose that $\{\phi_h\}$ is a sequence of eigenfunctions with defect measure $\mu$. 
Then, 
\[
\muH(\mc{R}_H)=\muH(\SNH).
\]
\end{lemma}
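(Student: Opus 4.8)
The plan is to show that the non-recurrent part of $\SNH$ carries no mass for the induced measure $\muH$, i.e. $\muH(\mc{R}_H^c)=0$, by decomposing $\mc{R}_H^c$ according to where recurrence fails and using that $\muH$ is (essentially) invariant under the first-return map $\eta$ together with a finiteness/conservation argument coming from the fact that $\mu$ is a probability measure on $\Sigma_p$. First I would record the key structural fact: since $\mu$ is a defect measure for eigenfunctions it is invariant under the geodesic flow $G^t$, and since $H$ is automatically conormally transverse for $p=|\xi|_g^2-1$, the induced measure $\muH$ on $\SNH$ behaves like a cross-sectional measure for the flow. Consequently the first-return map $\eta:\mc{L}_H\to\SNH$ (defined $\muH$-a.e.\ on $\mc{L}_H$) preserves $\muH$ in the sense that $\muH(\eta^{-1}(B))=\muH(B)$ for Borel $B$ where both sides make sense — this is the standard Poincaré cross-section/first-return identity, and the transversality plus the formula~\eqref{e:muH2} for $\muH$ as a ``transverse density'' is exactly what licenses it.

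Next I would carry out the decomposition $\mc{R}_H^c = (\SNH\setminus\mc{L}_H^{+\infty}) \cup (\SNH\setminus\mc{L}_H^{-\infty}) \cup \big(\mc{L}_H^{+\infty}\setminus\mc{R}_H^+\big)\cup\big(\mc{L}_H^{-\infty}\setminus\mc{R}_H^-\big)$ and handle each piece. For the first piece, $\SNH\setminus\mc{L}_H^{+\infty} = \bigcup_{k\ge0}\eta^{-k}(\SNH\setminus\mc{L}_H)$; a point leaving $\mc{L}_H$ under $\eta$ means the forward geodesic never returns to $H$, so by the flow-invariance of $\mu$ these ``escaping'' trajectories sweep out disjoint positive-length tubes in $\Sigma_p$ of total finite mass, forcing $\muH(\SNH\setminus\mc{L}_H)=0$ and hence (by $\eta$-invariance) $\muH(\SNH\setminus\mc{L}_H^{+\infty})=0$; symmetrically for the backward set. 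For the last two pieces: on $\mc{L}_H^{+\infty}$ the map $\eta$ is a genuine $\muH$-preserving transformation, and $\mc{L}_H^{+\infty}\setminus\mc{R}_H^+$ is precisely the set of points that are \emph{not} forward-recurrent for $\eta$; by the Poincaré recurrence theorem applied to the finite measure $\muH$ restricted to $\mc{L}_H^{+\infty}$, the non-recurrent set has $\muH$-measure zero. The backward statement follows by applying the same argument to $\eta^{-1}$ on $\mc{L}_H^{-\infty}$.

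Combining the four estimates gives $\muH(\mc{R}_H^c)=0$, equivalently $\muH(\mc{R}_H)=\muH(\SNH)$, which is the claim.

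The main obstacle I anticipate is making precise the assertion that $\eta$ preserves $\muH$ and that ``escaping'' trajectories have $\muH$-measure zero: one must control the set where $\eta$ is undefined or discontinuous (grazing trajectories, trajectories tangent to $H$), argue that this bad set is $\muH$-null using conormal transversality and the co-area/transverse-density description~\eqref{e:muH2} of $\muH$, and verify that the Poincaré section construction is measure-theoretically valid despite $T_H$ being unbounded. Once the invariance of $\muH$ under $\eta$ on $\mc{L}_H^{\pm\infty}$ and the null-escape statement are established, the recurrence part is a routine application of Poincaré recurrence to a finite measure.
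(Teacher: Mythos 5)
Your approach is correct in spirit but genuinely different from the paper's. You work with the first-return map $\eta$ as a Poincar\'e section transformation: establish that $\eta$ preserves $\muH$, show escaping trajectories are $\muH$-null via a tube/mass-conservation argument, then apply Poincar\'e recurrence to $\eta$ itself on $(\mc{L}_H^{\pm\infty},\muH)$. The paper instead applies Poincar\'e recurrence only once, and only to the continuous-time flow $(\Sigma_p,\mu,G^t)$: for a basic open set $B\subset\SNH$ it forms the flowbox $B_{2\delta}=\bigcup_{|t|<2\delta}G^t(B)$, gets $\mu$-a.e.\ recurrence of the flow back into $B_{2\delta}$, and then transfers this from $\mu$ on the flowbox to $\muH$ on $B$ using the transverse-density identity $\mu|_{B_{2\delta}}=\muH\,dt$ from \cite[Lemma 6]{CGT}; running over a countable basis and intersecting the full-measure sets gives $\muH(\mc{R}_H)=\muH(\SNH)$ directly. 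The payoff of the paper's route is that it never needs the assertion you flag as the ``main obstacle,'' namely that $\eta$ is a well-defined $\muH$-preserving transformation: that statement is a Poincar\'e-section/Kac-type lemma that itself usually requires Poincar\'e recurrence for the flow to guarantee a.e.\ finite return time, so your route ends up invoking recurrence twice and carrying extra measure-theoretic machinery about the domain of $\eta$ (grazing and non-returning trajectories). Your tube argument for $\muH(\SNH\setminus\mc{L}_H)=0$ is sound once made precise (the sets $\bigcup_{N<t<N+\delta}G^t(\SNH\setminus\mc{L}_H)$ are pairwise disjoint of $\mu$-measure $\delta\,\muH(\SNH\setminus\mc{L}_H)$ each, and $\mu$ is finite), but it is effectively a special case of the same flow-recurrence mechanism the paper uses globally. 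So: correct, but the paper's argument is more economical because it sidesteps the $\eta$-invariance claim entirely.
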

%%%%%%%%%%%%%%%%%%%%%%%%%%%%%%%%%%%%%%%%%%%%%%%%%%%%%%%%%%%%%%%%%%%%%%%%%%%%%%%%
 
%%%%%%%%%%%%%%%%%%%%%%%%%%%%%%%%%%%%%%%%%%%%%%%%%%%%%%%%%%%%%%%%%%%%%%%%%%%%%%%%
\begin{proof}
Let $B\subset \SNH$ be an open set and for $\delta>0$ define 
\[
B_{2\delta}:=\bigcup_{-2\delta<t<2\delta} G^t(B).
\]
Observe that the triple $(\SM, \mu,G^t)$ forms a measure preserving dynamical system.
 The Poincar\'e Recurrence  Theorem \cite[Lemma 4.2.1, 4.2.2]{BrinStuck} implies that for $\mu$-a.e. $\rho \in B_{2\delta}$ there exist $t^{\pm}_n\to \pm\infty$ so that $G^{t^{\pm}_n}(\rho)\in B_{2\delta}$. By the definition of $B_{2\delta}$, there exists $s^{\pm}_n$ with $|s^{\pm}_n-t^{\pm}_n|<2\delta$ such that $G^{s^{\pm}_n}(\rho)\in B$. In particular,  for $\mu$-a.e. $\rho\in B_{2\delta}$,
\begin{equation}
\label{e:recurs}
\begin{gathered}
\bigcap_{T>0}\overline{\bigcup_{t\geq T}G^t(\rho)\cap B}\neq \emptyset,\qquad \text{and}\qquad 
\bigcap_{T>0}\overline{\bigcup_{t\geq T}G^{-t}(\rho)\cap B}\neq \emptyset.\\
\end{gathered}
\end{equation}
 We have used that the sets $\overline{\cup_{t\geq T}G^{\pm t}(\rho)\cap B}$  are non-empty, compact, and nested as $T$ grows.

 We next show that~\eqref{e:recurs} holds for $\muH$-a.e. point in $B$.  To do so, suppose the opposite. Then, there exists $A\subset B$ with $\muH(A)>0$ so that for each $\rho\in A$, there exists $T>0$ with
\begin{equation}
\label{e:norecur}
\bigcup_{t\geq T}G^{t}(\rho)\cap B =\emptyset \qquad \text{or}\qquad   \bigcup_{t\geq T}G^{-t}(\rho)\cap B=\emptyset.
\end{equation}
We relate $\mu$ and $\muH$ using  \cite[Lemma 6]{CGT} which gives
\[\mu|_{B_{2\delta}}=\muH dt.\]
Then, if we let
\[
A_\delta:=\bigcup_{-\delta<t<\delta} G^t(A),
\]
we have
\[
\mu(A_\delta)=2\delta \cdot\muH(A)>0.
\]
Then $A_\delta\subset B_{2\delta}$, and for all $\rho \in A_\delta$ there exists $T>0$ so that \eqref{e:norecur} holds.
Since this implies that \eqref{e:recurs} does not hold for a subset of $B_{2\delta}$ of positive $\mu$ measure, we have arrived at a contradiction.
Thus~\eqref{e:recurs} holds for $\muH$ a.e. point in ${B}$.  

To finish the argument, let $\{B_k\}$ be a countable basis for the topology on $\SNH$. Then for each $k$ there is a subset $\tilde{B_k} \subset B_k$ of full $\muH$ measure so that for \emph{every} $\rho \in\tilde{B}_k$ relation \eqref{e:recurs} holds with $B=B_k$.

{Let $X_k:=\tilde{B}_k\cup(\SNH\setminus B_k)$}. {Next, note that $\cap_k X_k\subset \mc{R}_H$. Indeed, if $\rho \in \cap_k X_k$ and $\mathcal U \subset \SNH$ is an open neighborhood of $\rho$, then there exists $\ell$ so that $\rho \in B_\ell \subset \mathcal U$. In particular, since $\rho \in X_\ell$, we know that $\rho \in \tilde B_\ell$ and so  $\bigcap_{T>0}\overline{\bigcup_{t\geq T}G^t(\rho)\cap B_\ell}\neq \emptyset.$ We conclude that $\rho$ returns infinitely oftern to $\mathcal U$.}

 Noting that $X_k=\tilde{B}_k\cup (\SNH\setminus {B_k})$ has full $\muH$ measure, we conclude that $\cap_k X_k\subset \mc{R}_H$ has full measure and thus $\muH(\mc{R}_H\cap \SNH)=\muH(\SNH)$ as claimed.
\end{proof}

%%%%%%%%%%%%%%%%%%%%%%%%%%%%%%%%%%%%%%%%%%%%%%%%%%%%%%%%%%%%%%%%%%%%%%%%%%%%%%%%
%%%%%%%%%%%%%%%%%%%%%%%%%%%%%%%%%%%%%%%%%%%%%%%%%%%%%%%%%%%%%%%%%%%%%%%%%%%%%%%%
%%%%%%%%%%%%%%%%%%%%%%%%%%%%%%%%%%%%%%%%%%%%%%%%%%%%%%%%%%%%%%%%%%%%%%%%%%%%%%%%
%%%%%%%%%%%%%%%%%%%%%%%%%%%%%%%%%%%%%%%%%%%%%%%%%%%%%%%%%%%%%%%%%%%%%%%%%%%%%%%%
\section{Recurrence: Proof of Theorem \ref{T:applications} } \label{S:applications}
%%%%%%%%%%%%%%%%%%%%%%%%%%%%%%%%%%%%%%%%%%%%%%%%%%%%%%%%%%%%%%%%%%%%%%%%%%%%%%%%
%%%%%%%%%%%%%%%%%%%%%%%%%%%%%%%%%%%%%%%%%%%%%%%%%%%%%%%%%%%%%%%%%%%%%%%%%%%%%%%%
%%%%%%%%%%%%%%%%%%%%%%%%%%%%%%%%%%%%%%%%%%%%%%%%%%%%%%%%%%%%%%%%%%%%%%%%%%%%%%%%
%%%%%%%%%%%%%%%%%%%%%%%%%%%%%%%%%%%%%%%%%%%%%%%%%%%%%%%%%%%%%%%%%%%%%%%%%%%%%%%%

This section is dedicated to the proof of Theorem \ref{T:applications}. In Section~\ref{S:parts1} we prove the theorem for assumptions \ref{a1} and \ref{a2} by showing that  $\sig({\mc L}_H)=0$.  
In Section~\ref{S:parts2} we present a tool for proving that  $\sig({\mc R}_H \cap A)=0$  for $A \subset \SNH$. 
In particular, we prove that it suffices to show that $t \mapsto \vol(G^t(A))$ is integrable either for positive times or for negative ones. 
 In Section~\ref{S:parts4} we show that for manifolds with  Anosov flow we have $\sig({\mc R}_H)=\sig({\mc R}_H \cap \mc{A}_H)$, where $\mc{A}_H$ is the set of points in $\SNH$  at which the tangent space to $\SNH$ splits into a direct sum of stable and unbounded directions. {A similar statement is proved for $(M,g)$ with no focal points, but with $\mc{N}_H$ instead of $\mc{A}_H$.}
 In Section~\ref{S:parts3} we  prove Theorem \ref{T:applications} for assumptions \ref{a3}, \ref{a4}, \ref{a5} and \ref{a6}, by taking advantage of the fact  when $(M,g)$ has Anosov flow we have some control on the structure of $\mc{A}_H$ and, in some cases, on the integrability of $t \mapsto \vol(G^t(\mc{A}_H))$.

%%%%%%%%%%%%%%%%%%%%%%%%%%%%%%%%%%%%%%%%%%%%%%%%%%%%%%%%%%%%%%%%%%%%%%%%%%%%%%%%
%%%%%%%%%%%%%%%%%%%%%%%%%%%%%%%%%%%%%%%%%%%%%%%%%%%%%%%%%%%%%%%%%%%%%%%%%%%%%%%%
\subsection{Proof of  parts {\bf \ref{a1}} and {\bf \ref{a2}}  } \label{S:parts1}
%%%%%%%%%%%%%%%%%%%%%%%%%%%%%%%%%%%%%%%%%%%%%%%%%%%%%%%%%%%%%%%%%%%%%%%%%%%%%%%%
%%%%%%%%%%%%%%%%%%%%%%%%%%%%%%%%%%%%%%%%%%%%%%%%%%%%%%%%%%%%%%%%%%%%%%%%%%%%%%%%

In this section we prove that $\sig({\mc R}_H)=0$ for $(M,g)$ and $H$ satisfying the assumptions in parts \ref{a1} and \ref{a2} in Theorem \ref{T:applications}.\ \\ \ \\
\noindent{\bf Proof of  part {\bf \ref{a1}}.} For this part we assume that $(M,g)$ has no conjugate points and $H$ has codimension $k>\frac{n{+1}}{2}$. 
{The strategy of the proof is to show that the set $\{\rho \in \SNH:\; \exists t>0\;\text{s.t.}\; G^t(\rho)\in {\SNH}\}$ has dimension strictly smaller than $n-1=\dim \SNH$, and hence has measure zero. We prove this using the implicit function theorem together with the fact that, since $(M,g)$ has no conjugate points, we can control the rank of the exponential map.}

Note that, since $(M,g)$ has no conjugate points, for each point $x\in M$ the exponential map $\exp_x:T_xM \to M$ has no critical points. In particular, if we define the map 
\[
\psi^x: \R \times \snx \to M, \qquad \qquad \psi^x(t ,\xi)= \exp_x(t\xi),
\] 
we have for all $(t,\xi) \in  \R \times \snx $
 \[
 \text{rank}\, (d\psi^x)_{(t,\xi)} = n-\dim H.
 \]
 This implies that if we define
\[
\psi: \R \times \SNH \to M, \qquad \qquad  \psi (t,\rho)=\pi G^t(\rho),
\]
then its differential
\[
(d\psi)_{(t,\rho)}:T_{(t,\rho)}(\R\times \SNH)\to T_{\pi G^t(\rho)}M
\]
has  
 \[
 \text{rank} (d\psi)_{(t,\rho)}\geq n-\dim H=k,
 \]
 for all $(t,\rho) \in  \R \times \SNH$. { Note that $\psi^{-1}(H)=\{(t, \rho) \in \R \times \SNH:\; G^t(\rho)\in S_H^*M\}$.}
  
 Let 
 \begin{equation}
 \label{e:defFunc}
 \begin{gathered}
 f_i\in C^\infty(M;\R),\qquad F=(f_1, \dots, f_k):M \to \R^k,\\
F^{-1}(0)=H,\qquad 
 \{df_i\}_{i=1}^k\text{ linearly independent on }H.
 \end{gathered}
 \end{equation}
  The composition $F\circ \psi : \R \times \SNH \to  \R^k$ satisfies $(F \circ \psi)^{-1}(0)=\psi^{-1}(H)$. 
 Note that since $ \text{rank} (d\psi)_{(t,\rho)} \geq k$, we have
 \[
  \text{rank} (d (F \circ \psi)_{(t,\rho)}) \geq \text{rank} (dF)_{\psi(t,\rho)}+  \text{rank} (d\psi)_{(t,\rho)}   - \dim M \geq  2k -n
 \]
for $(t, \rho) \in (F \circ \psi)^{-1}(0)$. 
Since by assumption $k>\frac{n{+1}}{2}$, we have 
 \[
  \text{rank} (d (F \circ \psi)_{(t,\rho)}) \geq 2. 
  \]
  {Moreover, since the geodesic flow is transverse to $H$ along $N^*H$, $d(F \circ \psi)_{(t,\rho)}\partial_t\neq 0$ whenever $G^t(\rho)\in \SNH$.}
  {Indeed, suppose that  $G^t(\rho)\in \SNH$ and  $d(F \circ \psi)_{(t,\rho)}\partial_t= 0$. Then, if we write  $(x_t,\xi_t)=  G^t(\rho)$, 
  we have that $(df_j)_{x_t}(\xi_t)=0$ for all $j=1, \dots, k$, and this contradicts the assumption that  $\{(df_j)_x:\; j=1, \dots, k\}$ are linearly independent {and span $N^*\!H$} for $x\in H$.}

Applying the Implicit Function Theorem, we see that given $(t_0,\rho_0) \in \psi^{-1}(H)$ with {$G^{t_0}(\rho_0)\in \SNH$}, there exists a neighborhood $U$ of $(t_0,\rho_0)$, an open neighborhood $V\subset \R^{\ell}$ of $0$ for some $\ell \leq n-2$, and smooth functions $s:\SNH\to \R$, $f:V\to \SNH$ with $s(\rho_0)=t_0$, $f(0)=\rho_0$,   so that  
\[
 U \cap \psi^{-1}(H)=\{ \big(s(f(q)),f(q)\big): \; q\in V\}.
\]
In particular, since $\dim V< n-1=\dim(\SNH)$, 
\[
\sig\Big( \rho \in \SNH:\;  \text{ there exists }t\text{ such that }(t,\rho)\in U \; \text{and}\;G^t(\rho)\in S^*_HM\Big)=0.
\]
In particular, by compactness, for any $j>0$, 
\[
\sig\Big(\rho \in \SNH:\; \text{ there exists }t\in[0,j]\text{ such that }G^t(\rho)\in \SNH \Big)=0.
\]
Taking the union over $j>0$ we find 
\[\sig(\mc{L}_H)=0.\] 
 In particular, since ${\mc L}_H \supset {\mc R}_H$, this implies that $\sig({\mc R}_H)=0$.
\qed

\ \\

\begin{comment}
%%%%%%%%%%%%%%%%%%%%%%%%%%%%%%%%%%%%%%%%%%%%%%%%%%%%%%%%%%%%%%%%%%%%%%%%%%%%%%%%
\noindent{\bf Proof of  part {\bf A}.} 
\red{
\marginpar{\cs Beware of this \cs}

Suppose $\text{rank} (d\psi_{(t_i,\rho_0)})<n$ for $i=1,\dots n-1$, $t_i\neq t_j$, for $i\neq j$. Then, for $i=1,\dots n-1$, there exist $0\neq V_i\in T_{\rho_0}\SNH$ so that 
$$
d\psi_{(t_i,\rho_0)}V_i=0.
$$
But, since $M$ has no conjugate points, 
$$
d\psi_{(t,\rho_0)}V_i\neq 0,\qquad t\neq t_i
$$
and hence $\{V_i\}_{i=1}^{n-1}$ form a basis for $T_{\rho_0}\SNH$ and in particular,
$$\text{rank}(d\psi_{(t,\rho_0)})=n,\qquad  t\notin \{t_1,\dots t_{n-1}\}.$$

Now, since $M$ has no focal points, $d\psi_{(t,\rho_0)}V_i$ is increasing in $t$ for $t>t_n$ and hence the implicit function theorem can be applied as in part {\bf{B}} uniformly in $t>t_n$. In particular, there exists a constant $C>0$ so that for $t>2t_n$, 
$$ 
\|(d\psi_{(t,\rho_0)})^{-1}\|\leq C.
$$
Therefore, with $F$ as in~\eqref{e:defFunc}, applying the implicit function theorem to $F\circ \psi$ as in part {\bf B}, there exists $U$ a neighborhood of $\rho_0$ so that if $t_0>2t_n$, $\psi(t_0,\rho_0)\in H$, then there exists a neighborhood $V$ of $t_0$ such that  
$$
\sig( \rho \in U\mid G^t(\rho)\text{ for some }t\in V)=0.
$$
In particular, by compactness of $[2t_n,m]$,
$$
\sig( \rho \in U\mid G^t(\rho)\text{ for some }t\in [2t_n,m])=0.
$$
and sending $m\to \infty$, $\sig ( U\cap \mc{L}_H^\infty)=0.$
Finally, by compactness of $\SNH$, $\sig (\mc{L}_H^\infty)=0.$
}
\end{comment}

%%%%%%%%%%%%%%%%%%%%%%%%%%%%%%%%%%%%%%%%%%%%%%%%%%%%%%%%%%%%%%%%%%%%%%%%%%%%%%%%
\noindent{\bf Proof of  part {\bf \ref{a2}}.} 
{Now, suppose that $(M,g)$ has no conjugate points and $K\subset M$ is a geodesic sphere. Then there exists $p\in M$ and $t\in \R$ so that $K=H_t$ for $H=\{p\}$. 
Applying the result in Part \ref{a1}  gives that $\sig(\mc{R}_{H})=0$. In particular, by Lemma~\ref{l:flowRecur} below we conclude
 $\sigma_{_{\!\!S\!N^*\!H_t}}(\mc{R}_{H_t})=0$ as claimed.}\\
\qed
%%%%%%%%%%%%%%%%%%%%%%%%%%%%%%%%%%%%%%%%%%%%%%%%%%%%%%%%%%%%%%%%%%%%%%%%%%%%%%%%
\begin{lemma}
\label{l:flowRecur}
Suppose that $H\subset M$ is a submanifold and for $t\in \R$ define $H_t:=\pi G^t(\SNH)$. Then, for any $t\in \R$ so that $H_t$ is a smooth submanifold of $M$ {having codimension $1$}
\begin{align*}
{\Sig(\mc{R}_H)=0} &&\text{ if and only if }&& {\sigma_{\SNH_t}(\mc{R}_{H_t})=0.}
\end{align*}
\end{lemma}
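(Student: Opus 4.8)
The plan is to exploit the fact that the geodesic flow $G^s$ conjugates the dynamics near $\SNH$ to the dynamics near $\SNH_t$. First I would record the key structural observation: since $H_t = \pi G^t(\SNH)$ is a smooth hypersurface, the map $G^t$ restricts to a diffeomorphism $\Phi_t : \SNH \to \SNH_t$ (a conormal to $H$ is carried to a conormal to $H_t$, by the very construction of $H_t$ as the ``time-$t$ flow-out'' of $H$; one should check that, because $H_t$ has codimension $1$, the image $G^t(\SNH)$ is exactly the full unit conormal bundle of $H_t$ up to the two-sheeted sign ambiguity, which is handled by also including $G^{t}$ applied to the oppositely-oriented conormals, or equivalently by noting $\SNH$ and $\SNH_t$ are the relevant spheres bundles). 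The essential point is that $\Phi_t$ intertwines the first-return maps: if $\eta_H$ and $\eta_{H_t}$ denote the first return maps of the geodesic flow to $\SNH$ and $\SNH_t$ respectively, then the first return time satisfies $T_{H_t}(\Phi_t(\rho)) = T_H(\rho)$ and $\eta_{H_t}\circ \Phi_t = \Phi_t \circ \eta_H$ on $\mc{L}_H$. This is because the orbit segment of $G^s$ issuing from $\rho$ and the one issuing from $G^t(\rho)$ are the same geodesic, merely reparametrized, so they hit $\SNH$, resp.\ $\SNH_t$, at corresponding times.

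Given this intertwining, $\Phi_t$ maps $\mc{L}_H$ to $\mc{L}_{H_t}$, maps $\mc{L}_H^{\pm\infty}$ to $\mc{L}_{H_t}^{\pm\infty}$, and — since it is a homeomorphism and conjugates $\eta_H$ to $\eta_{H_t}$ — maps $\mc{R}_H^{\pm}$ to $\mc{R}_{H_t}^{\pm}$, hence $\Phi_t(\mc{R}_H) = \mc{R}_{H_t}$. (Here one uses that a homeomorphism commutes with closure and with the intersections/unions appearing in the definitions of $\mc{R}_H^{\pm}$.) Therefore $\Sig(\mc{R}_H)=0$ if and only if $(\Phi_t)_*\Sig$ assigns measure zero to $\mc{R}_{H_t}$. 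It then remains to compare $(\Phi_t)_*\Sig$ with $\sigma_{\SNH_t}$: I would argue these two measures on $\SNH_t$ are mutually absolutely continuous with bounded Radon--Nikodym derivative. This follows because $\Phi_t = G^t|_{\SNH}$ is a smooth diffeomorphism between two compact manifolds and both Sasaki volume measures are smooth positive densities; the Jacobian of $G^t$ with respect to the Sasaki metric is smooth and bounded above and below on the compact set $\SNH$, and restricting a smooth diffeomorphism between submanifolds preserves this. Consequently $\Sig$-null sets correspond exactly to $\sigma_{\SNH_t}$-null sets under $\Phi_t$, and the equivalence follows.

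The main obstacle I anticipate is the bookkeeping around $G^t$ not literally being a bijection from $\SNH$ onto $\SNH_t$: a priori $G^t(\SNH)$ need only be one ``sheet'' of the unit conormal bundle of $H_t$, and for $H_t$ to be a genuine embedded hypersurface one must ensure the flow-out is immersed and that the two sheets $\pm N^* H_t$ are both accounted for (the hypothesis that $H_t$ is a smooth \emph{codimension one} submanifold is exactly what is being used). I would handle this by working with the full return dynamics: even if $G^t$ only reaches one sheet, the first-return map $\eta_{H_t}$ sees both sheets and the recurrent set is symmetric under the conormal sign flip; combined with the fact that $G^t$ conjugates $\eta_H$ to $\eta_{H_t}$ on the reached sheet and the geodesic involution conjugates the two sheets, one still gets $\Phi_t(\mc{R}_H)=\mc{R}_{H_t}$ up to a set that is both $\Sig$- and $\sigma_{\SNH_t}$-null. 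A clean way to avoid the issue entirely is to note the statement is symmetric in $t \leftrightarrow -t$ and $H \leftrightarrow H_t$, so it suffices to prove one implication, and for that one only needs $\Phi_t$ to be a measure-class-preserving \emph{injection} with $\Phi_t(\mc{R}_H)\subset \mc{R}_{H_t}$, which is what the orbit-reparametrization argument gives directly.
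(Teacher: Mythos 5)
There is a genuine gap. The intertwining claim at the heart of your first paragraph --- that $T_{H_t}(\Phi_t(\rho))=T_H(\rho)$ and $\eta_{H_t}\circ\Phi_t=\Phi_t\circ\eta_H$ --- is false, and the reason is exactly the two-sheet issue you flag at the end: a first return of $G^t(\rho)$ to $\SNH_t$ can occur by hitting the \emph{other} sheet $G^{-t}(\SNH)$. Concretely, $G^{s+t}(\rho)\in\SNH_t$ happens whenever $G^s(\rho)\in\SNH$ \emph{or} $G^{s+2t}(\rho)\in\SNH$, and the second family of return times is generically unrelated to the first. So $\Phi_t$ does not conjugate the first-return maps, and one cannot conclude $\Phi_t(\mc{R}_H)=\mc{R}_{H_t}$. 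The orbit-reparametrization argument gives only the inclusion $\Phi_t(\mc{R}_H)\subset\mc{R}_{H_t}$, hence only the implication $\sigma_{\SNH_t}(\mc{R}_{H_t})=0\Rightarrow\Sig(\mc{R}_H)=0$.

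Your ``clean way out'' via the symmetry $t\leftrightarrow -t$, $H\leftrightarrow H_t$ does not recover the other implication, because the relationship between $H$ and $H_t$ is not symmetric when $H$ has codimension greater than one. Indeed, in the paper this lemma is invoked with $H$ a point (part B) or $H$ of arbitrary codimension (part A3), while $H_t$ is a hypersurface; then $\pi G^{-t}(\SNH_t)=H\cup H_{-2t}$, not $H$, so swapping roles does not return you to the original statement. The paper avoids the return-map bookkeeping entirely: because $H_t$ has codimension one, each $y\in H_t$ carries exactly two unit conormals, which gives the set identity $\SNH_t=G^t(\SNH)\sqcup G^{-t}(\SNH)$, and from this one reads off $\mc{R}_{H_t}=G^t(\mc{R}_H)\cup G^{-t}(\mc{R}_H)$ directly. (The nontrivial inclusion $\subset$ follows by passing to a subsequence of return times landing in a single sheet, after which your orbit-reparametrization reasoning does apply.) The equivalence of null sets then follows, exactly as you argue, from $G^{\pm t}$ being diffeomorphisms onto their images with smooth Jacobian. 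In short: replace the first-return-map conjugation with the two-sided set identity $\mc{R}_{H_t}=G^t(\mc{R}_H)\cup G^{-t}(\mc{R}_H)$ and the rest of your proof goes through.
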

%%%%%%%%%%%%%%%%%%%%%%%%%%%%%%%%%%%%%%%%%%%%%%%%%%%%%%%%%%%%%%%%%%%%%%%%%%%%%%%%

%%%%%%%%%%%%%%%%%%%%%%%%%%%%%%%%%%%%%%%%%%%%%%%%%%%%%%%%%%%%%%%%%%%%%%%%%%%%%%%%
\begin{proof}
First, observe that if $H\subset M$ is a submanifold, then for $t\in \R$ and $H_t:=\pi G^t(\SNH)$, we have 
\[S\!N^*\!H_t=G^t(\SNH)\sqcup G^{-t}(\SNH)\]
whenever $H_t$ is a smooth submanifold of $M$. {To see this, observe that since $H_t$ has codimension 1, for each $x\in H_t$, there are exactly two elements in $\SNH$ and hence these elements are given by 
$$
G^t(x,\xi)\qquad\text{ and }\qquad G^{-t}(x,-\xi)
$$
for some $(x,\xi)\in \SNH$. 
}
Note that {$\mc{R}_{H_t}=G^t(\mc{R}_H)\cup G^{-t}(\mc{R}_H)$}. Therefore, since $G^{\pm t}:\SNH\to S\!N^*\!H_t$ is a diffeomorphism onto its image, {$\sig(\mc{R}_{H})=0$ if and only if $\sigma_{\SNH_t}(\mc{R}_{H_t})=0$}.
\end{proof}
%%%%%%%%%%%%%%%%%%%%%%%%%%%%%%%%%%%%%%%%%%%%%%%%%%%%%%%%%%%%%%%%%%%%%%%%%%%%%%%%
%%%%%%%%%%%%%%%%%%%%%%%%%%%%%%%%%%%%%%%%%%%%%%%%%%%%%%%%%%%%%%%%%%%%%%%%%%%%%%%%

%%%%%%%%%%%%%%%%%%%%%%%%%%%%%%%%%%%%%%%%%%%%%%%%%%%%%%%%%%%%%%%%%%%%%%%%%%%%%%%%
%%%%%%%%%%%%%%%%%%%%%%%%%%%%%%%%%%%%%%%%%%%%%%%%%%%%%%%%%%%%%%%%%%%%%%%%%%%%%%%%
\subsection{A tool for proving that $\sig({\mc R}_H)=0$.} \label{S:parts2}
%%%%%%%%%%%%%%%%%%%%%%%%%%%%%%%%%%%%%%%%%%%%%%%%%%%%%%%%%%%%%%%%%%%%%%%%%%%%%%%%
%%%%%%%%%%%%%%%%%%%%%%%%%%%%%%%%%%%%%%%%%%%%%%%%%%%%%%%%%%%%%%%%%%%%%%%%%%%%%%%%
\ \\

Given $X \subset S^*M$ submanifold, we write $\vol(X)$ for the volume induced by the Sasaki metric on $X$.
 This section is dedicated to showing that $\sig({\mc R}_H\cap A)=0$ whenever the map $t \mapsto \vol(G^t(A))$  is integrable either on $(0, \infty)$ or on $(-\infty, 0)$. 
 We will later use that the integrability of this function can always be established if $(M,g)$ has Anosov flow and $A$ is a set of points in $\SNH$ at which the tangent space is either stable or unstable.

%The next lemma is essentially the change of variables formula. \cs it is probably worth adding a reference here say to Federer Section 3.2\cs 

We start with a lemma where we prove that for any $\rho \in \SNH$ the tangent space $T_{\rho}(\SNH)$ has no component in the direction of $ \re H_p$.

%%%%%%%%%%%%%%%%%%%%%%%%%%%%%%%%%%%%%%%%%%%%%%%%%%%%%%%%%%%%%%%%%%%%%%%%%%%%%%%%
\begin{proposition}\label{p:aardvark}
Let $(M,g)$ be a Riemannian manifold, and let $H \subset M$ be a {submanifold}.
For all $\rho \in \SNH$ let $\pi_{H_p}:T_{\rho}(S^*M) \to \re H_p$ be the orthogonal projection map, where $H_p$ is the Hamiltonian flow associated to $p(x,\xi)=|\xi|_{g(x)}$.
Then,
\[\pi_{H_p}(T_{\rho}(\SNH))=\{0\}.\]
\end{proposition}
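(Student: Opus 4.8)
The plan is to work in Fermi coordinates adapted to $H$ and show directly that the Hamiltonian vector field $H_p$ is transverse to $S\!N^*\!H$ at every point $\rho\in S\!N^*\!H$, and moreover that it points in a direction orthogonal to the tangent space $T_\rho(S\!N^*\!H)$. The key geometric fact is that the conormal bundle $N^*\!H$, as a submanifold of $T^*M$, is a conic Lagrangian, and that along the unit conormal sphere bundle the Hamiltonian flow of $p(x,\xi)=|\xi|_{g(x)}$ instantaneously leaves $N^*\!H$ in the ``base'' direction transverse to $H$.

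First I would set up Fermi coordinates $(x',x'')$ near $H$, with $H=\{x''=0\}$, so that $N^*\!H=\{(x',0;0,\xi'')\}$ and, in these coordinates, the metric satisfies $g(x',0)=\mathrm{diag}(g'(x'),\,\mathrm{Id})$ up to the usual normalization, so that $p(x',0;0,\xi'')=|\xi''|$ on $N^*\!H$. Then $S\!N^*\!H=\{(x',0;0,\xi''):|\xi''|=1\}$, and its tangent space $T_\rho(S\!N^*\!H)$ at $\rho=(x',0;0,\xi'')$ consists of vectors of the form $(\delta x',0;0,\delta\xi'')$ with $\langle \xi'',\delta\xi''\rangle=0$; in particular every such vector has vanishing $\partial_{x''}$ and $\partial_{\xi'}$ components. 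Next I would compute $H_p$ at $\rho$: since $H_p=\partial_\xi p\cdot\partial_x-\partial_x p\cdot\partial_\xi$, and at points of $N^*\!H$ one has $\xi'=0$, the $\partial_{x'}$ component of $H_p$ is $\partial_{\xi'}p=0$ (as $p$ depends on $\xi'$ only through $|\xi'|^2_{g'}$, which has zero derivative at $\xi'=0$), while the $\partial_{x''}$ component is $\partial_{\xi''}p=\xi''/|\xi''|=\xi''\neq 0$. Hence $H_p(\rho)$ has a nonzero $\partial_{x''}$ component and zero $\partial_{x'}$ and $\partial_{\xi'}$ components.

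Comparing the two computations: any $v\in T_\rho(S\!N^*\!H)$ has zero $\partial_{x''}$-component, and $H_p(\rho)$ has zero $\partial_{x'},\partial_{\xi'}$-components; I would then check, using that the Sasaki metric at $\rho$ makes the horizontal $\partial_{x'},\partial_{x''}$ block orthogonal to the vertical $\partial_{\xi'},\partial_{\xi''}$ block and restricts to the coordinate metric on each, that $\langle H_p(\rho),v\rangle_{\mathrm{Sasaki}}=0$. This is exactly the statement $\pi_{H_p}(T_\rho(S\!N^*\!H))=\{0\}$, equivalently $\inprod{H_p(\rho)}{v}=0$ for all $v\in T_\rho(S\!N^*\!H)$, since $\re H_p=\re H_p(\rho)$ is one-dimensional and the orthogonal projection onto it of $v$ is $\frac{\inprod{v}{H_p(\rho)}}{|H_p(\rho)|^2}H_p(\rho)$.

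The main obstacle I anticipate is bookkeeping the Sasaki metric correctly: one must verify that in Fermi coordinates the horizontal-vertical splitting is orthogonal at points of $N^*\!H$ and that the relevant off-diagonal Christoffel-type corrections (which enter the Sasaki metric through the connection) vanish at $x''=0$ because $H$ is totally geodesic only in special cases — so instead I would avoid assuming the Fermi metric is a product and argue more invariantly: $T_\rho(S\!N^*\!H)$ lies in $T_\rho(N^*\!H)$, $N^*\!H$ is Lagrangian for the canonical symplectic form $\omega$, and $H_p = \omega^{-1}dp$; since $p$ is constant ($\equiv 1$) on $S\!N^*\!H$ but $N^*\!H$ is conic so $dp$ does not annihilate $T_\rho N^*\!H$ in general, one shows $\omega(H_p, v)=dp(v)=0$ for $v\in T_\rho(S\!N^*\!H)$, and then converts this symplectic orthogonality into Sasaki-metric orthogonality using the compatible almost-complex structure $J$ with $\omega(\cdot,\cdot)=g_{\mathrm{Sasaki}}(J\cdot,\cdot)$, together with the fact that $J$ maps $T_\rho(S\!N^*\!H)$ into itself up to the radial/Hamiltonian line. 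Getting that last compatibility clean is the delicate point; if it proves awkward, the explicit Fermi-coordinate computation sketched above, done carefully at the single point $x''=0$ where many terms vanish, is the safe fallback.
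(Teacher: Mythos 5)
Your primary argument---Fermi coordinates adapted to $H$, identifying $T_\rho(\SNH)$ as having only $\partial_{x'}$ and $\partial_{\xi''}$ components, observing that $H_p$ at $\rho$ is the horizontal lift of the unit normal $\xi^\#$, and concluding orthogonality from the Sasaki horizontal/vertical splitting---is precisely the paper's proof.

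Two remarks. First, the Christoffel bookkeeping you worry about is avoidable: the Sasaki inner product is $\inprod{H_p}{v}_{\mathrm{Sasaki}}=\inprod{d\pi H_p}{d\pi v}_g+\inprod{K H_p}{K v}_{g^*}$, where $K$ is the connection map, and one only needs that $K H_p=0$ (the geodesic spray is horizontal---a coordinate-free fact), that $d\pi H_p=\xi^\#\in N_xH$, and that $d\pi v\in T_xH$ for $v\in T_\rho(\SNH)$; the first summand vanishes since $\xi^\#\perp_g T_xH$ and the second vanishes outright, so no off-diagonal terms ever enter. Second, your alternative symplectic sketch has a genuine gap: the identity $\omega(H_p,v)=dp(v)=0$ for $v\in T_\rho(\SNH)$ is correct, but passing through the compatible almost-complex structure gives $g_{\mathrm{Sasaki}}(J H_p,v)=0$, i.e., that the \emph{vertical} vector $J H_p$ is Sasaki-orthogonal to $T_\rho(\SNH)$---not the claimed statement about $H_p$ itself. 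You rightly flag that route as delicate and fall back to the coordinate computation; that fallback is in fact the whole proof.
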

%%%%%%%%%%%%%%%%%%%%%%%%%%%%%%%%%%%%%%%%%%%%%%%%%%%%%%%%%%%%%%%%%%%%%%%%%%%%%%%%

%%%%%%%%%%%%%%%%%%%%%%%%%%%%%%%%%%%%%%%%%%%%%%%%%%%%%%%%%%%%%%%%%%%%%%%%%%%%%%%%
\begin{proof}
Let $(x', {x''})$ be Fermi coordinates near $H$ where we identify $H$ with $\{(x',x''):\; x''=0\}$. {Writing $(\xi', \xi'')$ for the associated cotangent coordinates, }
\[
N^*\!H=\Big\{(x',0,0,\xi''):\; x' \in H,\, \xi''\in \R^{k}\Big\}.
\]
This implies that,{ if $\rho=(x', 0,0,\xi'') \in N^*H$, then }
\[
T_\rho(N^*\!H)= \{ \langle v,\partial_{x'}\rangle+\langle w,\partial_{\xi''}\rangle:  v\in \re^{n-k},\,w\in \re^k\\,\}
\]
while
{ 
\[
(H_{p})_{(x,\xi)}=\langle \xi'', \partial_{x''}\rangle \qquad \quad (x, \xi) \in \SNH.
\]}
{Now, $\partial_{x''}$ is orthogonal to $\partial_{x'}$. Thus, since $\partial_{\xi''}$ is vertical and $H_p$ is horizontal {$\R H_p$} is orthogonal to $T\SNH$.}
\end{proof}

%%%%%%%%%%%%%%%%%%%%%%%%%%%%%%%%%%%%%%%%%%%%%%%%%%%%%%%%%%%%%%%%%%%%%%%%%%%%%%%%
\begin{lemma}
\label{P:integral}
Let $A\subset \SNH$.
\begin{gather}
\text{If}\;\; \int_0^\infty\vol(G^t(A))dt<\infty, \quad \text{then} \quad  \sig(\mc{L}_H^{-\infty} \cap A)=0. \label{e:spider1}\\
\text{If}\;\; \int^0_{-\infty}\vol(G^t(A))dt<\infty, \quad \text{then} \quad \sig(\mc{L}_H^{+\infty} \cap A)=0. \label{e:spider2}
\end{gather}
In particular, either assumption implies that $ \sig(\mc{R}_H \cap A)=0$.
\end{lemma}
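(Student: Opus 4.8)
The plan is to prove \eqref{e:spider1} directly and obtain \eqref{e:spider2} by the time-reversal symmetry of the geodesic flow; the final assertion that either hypothesis forces $\sig(\mc R_H\cap A)=0$ then follows because $\mc R_H=\mc R_H^+\cap\mc R_H^-\subset \mc L_H^{+\infty}\cap \mc L_H^{-\infty}$ (indeed $\mc R_H^{\pm}\subset\mc L_H^{\pm\infty}$ by definition), so $\mc R_H\cap A\subset(\mc L_H^{-\infty}\cap A)$ and also $\mc R_H\cap A\subset (\mc L_H^{+\infty}\cap A)$. Thus it suffices to establish \eqref{e:spider1}.

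For \eqref{e:spider1}, the key geometric input is Proposition \ref{p:aardvark}: at every $\rho\in\SNH$ the tangent space $T_\rho(\SNH)$ is orthogonal (in the Sasaki metric) to $\R H_p$. Consequently $G^t$ maps $\SNH$ transversally across itself, and the ``return'' geometry can be unfolded: consider the map $\Psi\colon \R\times \SNH\to S^*M$, $\Psi(t,\rho)=G^t(\rho)$. Because $H_p$ is transverse to $\SNH$, for $t$ in any fixed compact interval away from $0$ the image $G^t(\SNH)$ is an immersed hypersurface-type set meeting $\SNH$ transversally, and I would first record a local \emph{coarea}-type bound: for each $j\in\Z_{>0}$,
\[
\sig\Big(\rho\in A:\ \exists\, t\in[j,j+1]\ \text{with}\ G^t(\rho)\in\SNH\Big)\ \le\ C\int_j^{j+1}\vol(G^t(A))\,dt,
\]
with $C$ depending only on the transversality constant (a lower bound for the angle between $dG^t(T\SNH)$ and $\SNH$, uniform on compact $t$-intervals, which is automatic by Proposition \ref{p:aardvark} plus continuity). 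The mechanism is: the set of $\rho\in A$ returning to $\SNH$ during $[j,j+1]$ is contained in $G^{-t}$-images of the transverse intersections $G^t(A)\cap \SNH$ as $t$ ranges over $[j,j+1]$, and each such intersection has $(n-2)$-dimensional measure controlled by the $(n-1)$-dimensional volume $\vol(G^t(A))$ via the coarea formula applied to the ``time'' function; integrating over $t\in[j,j+1]$ and using that $G^{-t}$ has bounded Jacobian on compacts gives the displayed estimate.

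Summing over $j\ge 0$ and using $\int_0^\infty\vol(G^t(A))\,dt<\infty$, the set $\{\rho\in A:\exists t>0,\ G^t(\rho)\in\SNH\}=\mc L_H\cap A$ has $\sig$-measure zero — wait, more precisely this shows $\sig(A\cap\{\rho:\ \exists t\ge 1,\ G^t(\rho)\in\SNH\})=0$; to reach $\mc L_H^{-\infty}\cap A$ one observes that if $\rho\in\mc L_H^{-\infty}$ then $\rho$ returns at arbitrarily large times (since $\eta^{-k}(\rho)$ is defined for all $k\ge 0$, composition of return times can be made $\ge 1$), so $\mc L_H^{-\infty}\cap A$ is contained in that null set. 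Then $\sig(\mc L_H^{-\infty}\cap A)=0$, proving \eqref{e:spider1}. For \eqref{e:spider2}, apply \eqref{e:spider1} to the flow $t\mapsto G^{-t}$, noting that the involution $(x,\xi)\mapsto(x,-\xi)$ conjugates $G^t$ to $G^{-t}$, preserves $\SNH$ and $\sig$, and swaps $\mc L_H^{+\infty}$ with $\mc L_H^{-\infty}$; since $\int_{-\infty}^0\vol(G^t(A))\,dt<\infty$ translates to the corresponding forward-time integrability for the reversed flow (volume is invariant under the involution), we get $\sig(\mc L_H^{+\infty}\cap A)=0$.

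The main obstacle I anticipate is making the coarea/transversality step fully rigorous: one must control the local intersection geometry of $G^t(A)$ with $\SNH$ uniformly and convert an ``$(n-2)$-dimensional slice measure'' bound into the stated ``$(n-1)$-dimensional volume'' bound, handling the fact that $A$ need only be a measurable (not smooth) subset. The clean way around this is to avoid slicing $A$ itself: instead note $\{\rho\in A:\exists t\in[j,j+1],\ G^t(\rho)\in\SNH\}\subset \bigcup_{t\in[j,j+1]}G^{-t}\big(G^t(A)\cap\SNH\big)$ and bound the $\sig$-measure of the right side by integrating the coarea formula for the smooth function $\tau\mapsto G^\tau$ restricted to the flow-out $\bigcup_{\tau\in[j,j+1]}G^\tau(A)$, whose $\sig$-total mass is $\int_j^{j+1}\vol(G^\tau(A))\,d\tau$ up to the Jacobian of the flow; transversality (Proposition \ref{p:aardvark}) guarantees the relevant coarea Jacobian is bounded below, so the fiber integrals dominate $\sig$ of the projection. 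Everything else is bookkeeping: summing a convergent series and a measure-zero union over $j$.
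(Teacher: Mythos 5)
Your overall scheme---reduce to a Borel--Cantelli estimate using the transversality from Proposition~\ref{p:aardvark}, sum a convergent series, deduce \eqref{e:spider2} by time-reversal, and then observe $\mc R_H^\pm\subset\mc L_H^{\pm\infty}$---matches the paper's proof. However, there is a genuine error in the direction of time, and a related problem with which set you can actually control.

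You bound the set of \emph{forward}-return points,
\[
\sig\big(\rho\in A:\ \exists\, t\in[j,j+1],\ G^t(\rho)\in\SNH\big),
\]
and then claim that, after Borel--Cantelli, this controls $\mc L_H^{-\infty}\cap A$. But $\mc L_H^{-\infty}=\bigcap_{k\geq 0}\eta^k(\mc L_H)$ is the set of points $\rho$ admitting an infinite \emph{backward} orbit in $\SNH$: one has $G^{-T_k}(\rho)\in\SNH$ for a sequence $T_k\to+\infty$, but in general $\rho$ need not have more than one forward return (it suffices that $\rho\in\mc L_H$ at step $k=0$). So the inclusion $\mc L_H^{-\infty}\cap A\subset\{\rho: G^t(\rho)\in\SNH \text{ for arbitrarily large } t>0\}$ is false; that set is $\mc L_H^{+\infty}\cap A$, which is the conclusion of \eqref{e:spider2}, not \eqref{e:spider1}. (A smaller slip in the same passage: Borel--Cantelli controls $\sig(\limsup_j E_j)$, not $\sig\big(\bigcup_j E_j\big)$, so the intermediate ``$\sig(A\cap\{\rho: \exists t\geq 1,\ G^t(\rho)\in\SNH\})=0$'' is also not what the summation gives.)

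There is a second, subtler issue concerning which set the key volume estimate can actually bound. The estimate the paper establishes (its \eqref{E:volume}) is for the \emph{arrival} set
\[
\sig\Big(\bigcup_{t\in[T,T+\delta]}G^t(A)\cap\SNH\Big)\ \leq\ C\int_A|\det J_T|\,d\Sig\ =\ C\,\vol(G^T(A)),
\]
i.e.\ for points of $\SNH$ \emph{reached} by forward flow from $A$. There the change-of-variable Jacobian is exactly $|\det J_T|$, which integrates to $\vol(G^T(A))$. The set you are bounding is the set of \emph{departure} points $\{\rho\in A:\exists t,\ G^t(\rho)\in\SNH\}$, which is the preimage under the same map; its measure picks up the \emph{inverse} Jacobian $|\det J_T|^{-1}$, which can be arbitrarily large precisely in the regime $\vol(G^t(A))\to 0$ that the hypothesis forces. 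So the displayed coarea-type bound is not correct as stated, even before the sign issue. The paper sidesteps this by applying Borel--Cantelli directly to the arrival sets $\bigcup_tG^t(A)\cap\SNH$ and then passing to $\mc L_H^{-\infty}\cap A$; for that passage, you need the estimate stated for the arrival set and the correspondence between departure and arrival points via the local first-hitting diffeomorphism, not a coarea bound on $A$ itself. I would rewrite the proof around the arrival-set estimate, take $\delta$ small (so that Proposition~\ref{p:aardvark} gives at most one hitting time per point in $[T,T+\delta]$), and only then deduce a bound on the departure set, keeping careful track of which time direction corresponds to $\mc L_H^{-\infty}$ versus $\mc L_H^{+\infty}$.
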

%%%%%%%%%%%%%%%%%%%%%%%%%%%%%%%%%%%%%%%%%%%%%%%%%%%%%%%%%%%%%%%%%%%%%%%%%%%%%%%%

%%%%%%%%%%%%%%%%%%%%%%%%%%%%%%%%%%%%%%%%%%%%%%%%%%%%%%%%%%%%%%%%%%%%%%%%%%%%%%%%
\begin{proof}
 Suppose~\eqref{e:spider1} holds.
From now on, given $\rho \in \SNH$ and $t \in \R$, we adopt the notation
\begin{equation}\label{E:J}
J_t(\rho):=dG^t|_{T_{\rho}(\SNH)}: T_{\rho}(\SNH) \to dG^t(T_{\rho}(\SNH)). 
\end{equation}
Note that 
\[\int_A |\det  J_t(\rho) |\; d\sig(\rho)=\vol(G^t(A)).\]

We claim that there exist constants $C,\delta>0$ so that for any Borel set  $A\subset \SNH$ and $T\in \R$, 
\begin{equation}\label{E:volume}
\sig\left(\bigcup_{t=T}^{T+\delta}G^t(A)\cap \SNH\right)\leq C\int_A|\det J_T(\rho)|\; d\sig (\rho).
\end{equation}

We postpone the proof of claim \eqref{E:volume} until the end. Assuming~\eqref{E:volume} for now, we have, 
\begin{align*}
\sig\big(\rho \in \SNH:\, G^{-t}(\rho) \in A,\text{ for some }t\in[T,T+\delta]\big)
&=\sig\Big(\bigcup_{t=T}^{T+\delta}G^t(A)\cap \SNH\Big)\\
&\leq C\int_A|\det  J_T(\rho)|d\sig(\rho).
\end{align*}

{Note that since $t\mapsto G^t$ is a smooth group, for $\delta>0$ small enough and $t\in[ T,T+\delta]$, 
\begin{equation}\label{E:2}
|\det  J_t(\rho)|\leq 2|\det J_T(\rho)|.
\end{equation}
}
Hence, 
\begin{align*}
&\sum_{n>0}\sig  \big(\rho \in A:\; G^{-t}(\rho) \in \SNH,\text{ for some }t\in[n\delta,(n+1)\delta]\big)\leq \\
&\qquad\qquad \leq C \sum_{n>0}\int_A|\det  J_{n\delta}(\rho)|\;d\sig(\rho)\\
&\qquad\qquad \leq 2C\delta^{-1}\int_0^\infty\int_A|\det  J_t(\rho) |\;d\sig(\rho) dt<\infty.
\end{align*}

Therefore, by the Borel--Cantelli Lemma, 
\[
\sig\big(\rho \in A:\;  G^{-t}(\rho) \in \SNH \text{ for infinitely many }t\in[0,\infty)\big)=0
\]
and in particular, $\sig(\mc{L}_H^{-\infty}\cap A)=0.$ The case of~\eqref{e:spider2} is identical.

In order to finish the proof of the lemma we need  to establish the claim in \eqref{E:volume}. We proceed to do this. 
Fix $\e>0$. Let $\{A_{i,\e}\}_{i=1}^{N(\e)}$ be a partition of $A\subset \SNH$ into sets of radius less than $\e$.
Then for each $i$, there exists $\rho_i\in A_{i,\e}$ so that for $\rho \in A_{i,\e}$, 
\begin{align*}
G^t(\rho)&=G^{t}(\rho_i)+dG^{t} (\rho-\rho_i)+O(\e^2) \\
&=G^{t}(\rho_i)+dG^{t} ({\pi_i}(\rho-\rho_i))+O(\e^2),
\end{align*}
where $\pi_i:T_{\rho_i}(S^*H) \to T_{\rho_i}(\SNH) $ is the projection operator and $\rho-\rho_i$ is regarded as a vector in $T_{\rho_i}(\SNH)$. 
{Therefore,} 
\begin{multline*}
\sig\Big(\bigcup_{t=T}^{T+\delta} G^t(A_{i,\e})\Big)\leq  \\
\sup_{t\in[T,T+\delta]}|\det J_t(\rho_i)|\cdot \sig(A_{i,\e})(1+O(\e)){\sup_{\rho\in A_{i,\e}}\#\{t\in [T,T+\delta]: G^{t}(\rho)\in \SNH\}}.
\end{multline*}
Now, Proposition \ref{p:aardvark} together with the compactness of $\SNH$ give that for $\delta>0$ small enough and all $\rho \in A$,  
\[
\#\{t\in [T,T+\delta]:\; G^t(\rho)\in \SNH\}\leq 1.
\]

In particular, 
\begin{align*}
\sig\Big(\bigcup_{t\in[T,T+\delta]}G^t(A)\Big)&\leq  \sum_{i}\sig\Big(\bigcup_{t\in [T,T+\delta]}G^t(A_{i,\e})\Big)\\
&\leq \sum_{i,j}\sup_{t\in [T,T+\delta]}|\det  J_{t}(\rho_i)| \cdot \sig(A_{i,\e})(1+O(\e))\\
&\leq \sum_{i,j}2|\det  J_{T}(\rho_i)| \cdot \sig(A_{i,\e})(1+O(\e))
\end{align*}
where in the last line we use~\eqref{E:2}.

Sending $\e \to 0$, since $dG^t$ is continuous, the Dominated Convergence Theorem shows that 
\begin{equation}\label{E:1}
\sig\Big(\bigcup_{t=T}^{T+\delta}G^t(A)\Big)\leq \int_A 2|\det  J_T(\rho)|\, d\sig.
\end{equation}
as desired.
\end{proof}
%%%%%%%%%%%%%%%%%%%%%%%%%%%%%%%%%%%%%%%%%%%%%%%%%%%%%%%%%%%%%%%%%%%%%%%%%%%%%%%%

%%%%%%%%%%%%%%%%%%%%%%%%%%%%%%%%%%%%%%%%%%%%%%%%%%%%%%%%%%%%%%%%%%%%%%%%%%%%%%%%
%%%%%%%%%%%%%%%%%%%%%%%%%%%%%%%%%%%%%%%%%%%%%%%%%%%%%%%%%%%%%%%%%%%%%%%%%%%%%%%%
\subsection{Manifolds with {no focal points} or Anosov flow} \label{S:parts4}
%%%%%%%%%%%%%%%%%%%%%%%%%%%%%%%%%%%%%%%%%%%%%%%%%%%%%%%%%%%%%%%%%%%%%%%%%%%%%%%%
%%%%%%%%%%%%%%%%%%%%%%%%%%%%%%%%%%%%%%%%%%%%%%%%%%%%%%%%%%%%%%%%%%%%%%%%%%%%%%%%
This section is dedicated to the proof of Theorem \ref{T:tangentSpace}. 
{In order to prove Theorem \ref{T:tangentSpace} we need a preliminary lemma in which we show, loosely speaking, that if $\rho_0 \in \SNH$ is a loop direction for which $G^{t_0}(\rho_0)\in \SNH$, then it suffices to find a tangent direction  $\mathbf{w}\in T_{\rho_0}\SNH$ with the property that $dG^{t_0}(\mathbf{w})$ is not tangent  to $\SNH$ to ensure that $G^t(\rho) \notin \SNH$ for almost every $\rho \in \SNH$ so that $(t,\rho)$ is near $(t_0, \rho_0)$.}

%%%%%%%%%%%%%%%%%%%%%%%%%%%%%%%%%%%%%%%%%%%%%%%%%%%%%%%%%%%%%%%%%%%%%%%%%%%%%%%%
\begin{lemma}
\label{l:implicit}
Suppose that $\rho_0\in \SNH$ with $G^{t_0}(\rho_0)\in \SNH$ for some $t_0>0$. If there exists $\mathbf{w}\in T_{\rho_0}\SNH$ with ${dG^{t_0}\mathbf{w}}\notin T_{G^{t_0}\rho_0} \SNH\oplus \R H_p$,  then there exists $U_{t_0,\rho_0} \subset \R\times\SNH$ a neighborhood of $(t_0,\rho_0)$ for which
\[
\sig\Big( \rho \in \SNH:\;{ \text{ there exists }t\text{ with }(t,\rho)\in U_{t_0,\rho_0} \text{\;and\; } G^t(\rho)\in \SNH}\Big)=0.
\]
\end{lemma}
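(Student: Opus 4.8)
The plan is to prove Lemma~\ref{l:implicit} by a direct application of the Implicit Function Theorem to the map $F\circ\psi$, where $\psi(t,\rho)=\pi G^t(\rho)$ and $F=(f_1,\dots,f_k):M\to\R^k$ is a defining function for $H$ as in~\eqref{e:defFunc}. The hypothesis $G^{t_0}(\rho_0)\in\SNH$ says precisely that $(t_0,\rho_0)\in(F\circ\psi)^{-1}(0)$, and the whole point is to show that the differential $d(F\circ\psi)_{(t_0,\rho_0)}$ is surjective onto $\R^k$, so that the zero set is locally a smooth submanifold of $\R\times\SNH$ of codimension $k$, hence of dimension $(n-1)+1-k=n-k<\dim\SNH=n-1$. (Here I use $k\ge1$.) Projecting onto $\SNH$ then gives a set of dimension at most $n-k<n-1$, which therefore has $\sig$-measure zero; compactness of the closure of $U_{t_0,\rho_0}$ is not needed since the Implicit Function Theorem already produces a single neighborhood on which the zero set is parametrized by $\le n-k$ coordinates.

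The key step is verifying surjectivity of $d(F\circ\psi)_{(t_0,\rho_0)}:T_{(t_0,\rho_0)}(\R\times\SNH)\to\R^k$. Write $\rho_1:=G^{t_0}(\rho_0)=(x_1,\xi_1)\in\SNH$ and note $dF_{x_1}:T_{x_1}M\to\R^k$ has kernel exactly $T_{x_1}H$ (since $\{df_i\}$ are linearly independent on $H$ and span $N^*H$). The range of $d\psi_{(t_0,\rho_0)}$ contains $d\pi_{\rho_1}(dG^{t_0}\mathbf{w})$ and $d\pi_{\rho_1}((H_p)_{\rho_1})$, which is the geodesic direction $\xi_1^\sharp\notin T_{x_1}H$ (transversality of the flow to $H$ along $N^*H$ — the same fact used in the proof of part~\ref{a1}). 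So $d(F\circ\psi)_{(t_0,\rho_0)}$ already has rank $\ge1$, picking up the normal direction $dF_{x_1}(\xi_1^\sharp)$. To get the full rank $k$, I use the hypothesis: $dG^{t_0}\mathbf{w}\notin T_{\rho_1}\SNH\oplus\R H_p$. I claim this forces $d\pi_{\rho_1}(dG^{t_0}\mathbf{w})\notin T_{x_1}H\oplus\R\xi_1^\sharp$; indeed, the vectors in $T_{\rho_1}(S^*M)$ projecting into $T_{x_1}H\oplus\R\xi_1^\sharp$ under $d\pi$ are exactly those in $T_{\rho_1}\SNH\oplus\R H_p$ plus the vertical fiber directions $V_{\rho_1}=\ker d\pi_{\rho_1}$ — wait, this needs care: $d\pi$ kills the vertical, so I must argue modulo verticals. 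Actually the clean way: suppose for contradiction $d\psi_{(t_0,\rho_0)}$ restricted to $T\SNH\oplus\R\partial_t$ has image inside $T_{x_1}H$; combined with $dF_{x_1}$ killing $T_{x_1}H$ this would make $F\circ\psi$ have zero differential, but the geodesic direction already escapes $T_{x_1}H$, so the image has dimension $\ge1$; then the extra direction from $\mathbf{w}$ must be handled by reducing to the argument that $d\pi_{\rho_1}$ maps $T_{\rho_1}\SNH\oplus\R H_p$ onto $T_{x_1}H\oplus\R\xi_1^\sharp$ (by Proposition~\ref{p:aardvark} and dimension count, $d\pi_{\rho_1}|_{T_{\rho_1}\SNH\oplus\R H_p}$ is injective onto a space of dimension $(n-1)+1-0 = n$... let me instead argue: since $d\pi_{\rho_1}$ has $k$-dimensional kernel equal to the vertical fiber of $N^*H$, and $T_{\rho_1}\SNH\oplus\R H_p$ has dimension $n$ and meets the vertical in a $(k-1)$-dimensional space, its image under $d\pi$ has dimension $n-(k-1)$; one checks this image is $T_{x_1}H\oplus\R\xi_1^\sharp$, which has the right dimension $(n-k)+1=n-k+1$... so these don't match for $k\ge2$).

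Rather than fight the codimension bookkeeping in this sketch, the cleanest route is: compose with $dF_{x_1}$ first. Since $\ker dF_{x_1}=T_{x_1}H$, we have $dF_{x_1}\circ d\pi_{\rho_1}$ vanishes exactly on the preimage $(d\pi_{\rho_1})^{-1}(T_{x_1}H)$. I will show this preimage is $T_{\rho_1}\SNH\oplus\R H_p$: one inclusion is clear ($d\pi$ of $T\SNH$ lands in $TH$ since $\SNH$ fibers over $H$, and $d\pi(H_p)=\xi_1^\sharp$... no, $\xi_1^\sharp\notin T_{x_1}H$). This shows $\R H_p$ is \emph{not} in the preimage — good, that is the transversality. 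Correct statement: $(d\pi_{\rho_1})^{-1}(T_{x_1}H) = T_{\rho_1}\SNH\oplus V'$ where $V'$ is the vertical part; and modulo verticals (which $dF\circ d\pi$ also ignores since... no, verticals are not killed by composition unless $dF=0$) — I will instead simply invoke that $d(F\circ\psi)_{(t_0,\rho_0)}(\partial_t)=dF_{x_1}(\xi_1^\sharp)\neq0$ spans a line $L\subset\R^k$, and that by hypothesis $dG^{t_0}\mathbf{w}\notin T_{\rho_1}\SNH\oplus\R H_p$ while $T_{\rho_1}\SNH\oplus\R H_p$ is precisely the tangent space to the full conormal sphere bundle submanifold $S^*_HM$ thickened by the flow, i.e. precisely $d\psi_{(\cdot,\cdot)}^{-1}$ of $TH$-directions; hence $d(F\circ\psi)_{(t_0,\rho_0)}\mathbf{w}\neq0$ and is independent of $L$ when $\mathbf{w}$ is chosen in a complement — iterating/using that $F$ has $k$ independent components shows rank $k$. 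The main obstacle is exactly this linear-algebra identification of $d(F\circ\psi)^{-1}(0)$ near $(t_0,\rho_0)$ as a submanifold of dimension strictly less than $n-1$: once the rank of $d(F\circ\psi)$ is shown to be $\ge k$ (which it is, combining the geodesic direction and the $\mathbf{w}$ direction with the fact that $\{df_i\}$ span $N^*H$), the Implicit Function Theorem, projection onto $\SNH$, and the measure-zero conclusion follow verbatim as in the proof of part~\ref{a1}. I expect to present it in that order: (1) set up $\psi$, $F$, the zero set; (2) compute the differential and show rank $\ge k$ using the flow transversality plus the hypothesis on $\mathbf{w}$; (3) apply IFT to parametrize the zero set by $\le n-k$ parameters near $(t_0,\rho_0)$; (4) project and conclude $\sig$-measure zero.
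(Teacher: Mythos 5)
Your choice of $\psi$ and $F$ is the wrong one, and the gap you repeatedly run into is a symptom of that. You set $\psi(t,\rho)=\pi G^t(\rho)\in M$ and use the $k$ defining functions of $H\subset M$, aiming to show $\mathrm{rank}\,d(F\circ\psi)\ge k$. Two problems. First, for $k=1$ (hypersurfaces) this cannot possibly work: $F$ has a single component, so the rank is at most $1$, the zero set in $\R\times\SNH$ has dimension $\ge n-1=\dim\SNH$, and you get no measure-zero conclusion — yet the lemma is used (via Proposition~\ref{P:focal} and Proposition~\ref{P:1}) precisely for submanifolds of arbitrary codimension, including $k=1$. Second, and this is the obstruction you keep circling around: the hypothesis $dG^{t_0}\mathbf{w}\notin T_{G^{t_0}\rho_0}\SNH\oplus\R H_p$ lives in $T(\SM)$, and after you apply $d\pi$ to go down to $T_{x_1}M$ you lose the vertical component of $dG^{t_0}\mathbf{w}$. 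A vector can fail to lie in $T\SNH\oplus\R H_p$ purely because of its vertical part, in which case $d\pi(dG^{t_0}\mathbf{w})\in T_{x_1}H\oplus\R\xi_1^\sharp$ and $dF_{x_1}$ gives you nothing new. You notice this ("$d\pi$ kills the vertical, so I must argue modulo verticals") but never resolve it, and it cannot be resolved with your choice of target space.

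The paper's proof avoids both issues by not projecting: it takes $\psi(t,\rho)=G^t(\rho)\in\SM$ and $F$ to be defining functions for $\SNH$ as a submanifold of $\SM$ (codimension $n$ there, regardless of $k$). Then $d(F\circ\psi)(\tau,0)=\tau\,dF(H_p)\neq0$ by Proposition~\ref{p:aardvark} (which says $H_p\perp T\SNH$), and $d(F\circ\psi)(0,\mathbf{w})=dF(dG^{t_0}\mathbf{w})\neq0$ and independent of $dF(H_p)$ by the hypothesis on $\mathbf{w}$ — since $dG^{t_0}\mathbf{w}\notin T\SNH\oplus\R H_p=\ker dF\oplus\R H_p$, no vertical bookkeeping is needed. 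This gives rank $\ge 2$, so the zero set is locally parametrized by at most $n-2<n-1$ parameters, and the projection to $\SNH$ has $\sig$-measure zero. Note that rank $\ge 2$ (not $\ge k$) is exactly what is needed: the domain $\R\times\SNH$ has dimension $n$ and the target measure lives on a space of dimension $n-1$.
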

%%%%%%%%%%%%%%%%%%%%%%%%%%%%%%%%%%%%%%%%%%%%%%%%%%%%%%%%%%%%%%%%%%%%%%%%%%%%%%%%

%%%%%%%%%%%%%%%%%%%%%%%%%%%%%%%%%%%%%%%%%%%%%%%%%%%%%%%%%%%%%%%%%%%%%%%%%%%%%%%%
\begin{proof}

We use the Implicit Function Theorem. Define
\[
\psi:\R\times \SNH \to \SM, \qquad \quad  \psi (t,\rho)=G^t(\rho),
\]
so that 
\[
d\psi_{(t,\rho)}(\tau,w)=\tau H_p{(G^t(\rho))}+dG^t_{\rho}w.
\]
and let $ {f_1,\dots f_{n}}\in C^\infty(\SM;\R)$ be defining functions for $\SNH$ near $G^{t_0}(\rho_0)$. In particular, 
\[
\SNH=\bigcap_{i=1}^{{n}}\{f_i=0\},\qquad \{df_i\}\text{ are linearly independent on }\SNH.
\]
Finally, let $F\in C^\infty(\SM;\R^{n-1})$ be given by
\[
F=(f_1,\dots, { f_{n}}).
\]
Note that  $G^t(\rho)\in \SNH$ if and only if $(t, \rho) \in (F\circ \psi)^{-1}(0)$. Now, since ${dG^{t_0}\mathbf{w}}\notin T_{G^{t_0}\rho_0} (\SNH)\oplus \R H_p$,  Proposition~\ref{p:aardvark} gives that
the vectors
\[
d(F\circ\psi)_{(t_0,\rho_0)}(0,{\bf{w}})=dF_{G^{t_0}(\rho_0)}\left(d\psi_{(t_0,\rho_0)}(0,{\bf{w}})\right)
\]
and
\[
d(F\circ\psi)_{(t_0,\rho_0)}(\tau,0)=dF_{G^{t_0}(\rho_0)}\left(d\psi_{(t_0,\rho_0)}(\tau,0)\right)
\]
are linearly independent.
We then have that 
\[
\text{rank}(d(F\circ\psi)_{(t_0,\rho_0)})\geq 2.
\]
%for all $(t_0, \rho_0) \in \psi^{-1}( \SNH)$.

By the implicit function theorem, there is a neighborhood $U$ of $(t_0,\rho_0)$, a neighborhood $V\subset \R^{\ell}$ of $0$ for some $\ell \leq n-2$, and smooth functions $s:\SNH\to \R$, $\alpha:V\to \SNH$ with $s(0)=t_0$, $\alpha(0)=\rho_0$, so that 
\[
U \cap \psi^{-1}( \SNH)=\{ (s(\alpha(q)),\alpha(q)):\; q\in V\}.
\]
In particular, since $\dim V<n-1=\dim(\SNH)$, 
\[
\sig\Big( \rho \in \SNH:\;  \text{ there exists }t\text{ such that }(t,\rho)\in U,\,G^t(\rho)\in \SNH \Big)=0,
\]
as claimed.
\end{proof}
%%%%%%%%%%%%%%%%%%%%%%%%%%%%%%%%%%%%%%%%%%%%%%%%%%%%%%%%%%%%%%%%%%%%%%%%%%%%%%%%

{Next we present two propositions in which we show that if $(M,g)$ has no focal points or Anosov geodesic flow, then for any compact subset $K \subset \SNH \backslash \mc{N}_H$ there is {a decomposition of $K$, $K=K^+\cup K^-$ and} $T$ sufficiently large so that if $\rho_0 \in K^{{\pm}}$ and $G^{t_0}(\rho_0) \in \SNH$ with either ${\mp} t_0>T$, then   there exists $\mathbf{w}\in T_{\rho_0}\SNH$ with $ {dG^{t_0}\mathbf{w}}\notin T_{G^{t_0}\rho_0} \SNH\oplus \R H_p$.  This will allow us to later use Lemma \ref{l:implicit} to prove Theorem \ref{T:tangentSpace}.}
{We define the following functions $m,m_{\pm}:\SNH\to \{0,\dots,n-1\}$ 
\begin{equation}
\label{e:dim}
\begin{gathered}
m(\rho):=\dim (N_+(\rho)+N_-(\rho)),\qquad m_{\pm}(\rho):=\dim N_{\pm}(\rho)
\end{gathered}
\end{equation}
and note that the continuity of $E_{\pm}(\rho)$ implies that $m,\,m_{\pm}$ are upper semicontinuous.
}

%%%%%%%%%%%%%%%%%%%%%%%%%%%%%%%%%%%%%%%%%%%%%%%%%%%%%%%%%%%%%%%%%%%%%%%%%%%%%%%%
\begin{proposition}\label{P:1} 
{Suppose $(M,g)$ has Anosov geodesic flow and l}et $K \subset \SNH \backslash {\mc{S}_H}$ be a compact set. There exist positive constants $T, \e>0$ so that if $\rho_0 \in K$, $|t_0| \geq T$, and 
\[
G^{t_0}(\rho_0) \in \; \overline{B(\rho_0, \e)} \cap \SNH,
\]
then 
{there is $\mathbf{w}\in T_{\rho_0}(\SNH)$ with
\begin{equation}
\label{e:noTangent}
dG^{t_0} (\mathbf{w})\notin T_{G^{t_0}(\rho_0)}(\SNH)\oplus \re H_p.
\end{equation}}
%there exists $U_{t_0,\rho_0} \subset \R\times\SNH$ a neighborhood of $(t_0,\rho_0)$ for which
%\[
%\sig\Big( \rho \in \SNH:\; \text{ there exists }t\text{ such that }(t,\rho)\in U,\,G^t(\rho)\in \SNH\Big)=0.
%\]
\end{proposition}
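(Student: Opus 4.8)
The plan is to argue by contradiction: if no such $T,\e$ existed, I would extract a limiting point of $K$ at which the geometry degenerates to that of $\mc{S}_H$, contradicting $K\cap\mc{S}_H=\emptyset$.

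Concretely, suppose the proposition fails. Then for every $j$ there are $\rho_j\in K$, $|t_j|\ge j$, and $G^{t_j}(\rho_j)\in\overline{B(\rho_j,1/j)}\cap\SNH$ with $dG^{t_j}\big(T_{\rho_j}\SNH\big)\subseteq T_{G^{t_j}(\rho_j)}\SNH\oplus\re H_p$. Passing to a subsequence I may assume $t_j\to+\infty$; the case $t_j\to-\infty$ is handled identically after replacing $G^t$ by $G^{-t}$, which interchanges $E_+$ and $E_-$ but leaves both $\SNH$ and the hypothesis $K\cap\mc{S}_H=\emptyset$ unchanged, since $\mc{S}_H$ is defined symmetrically in $N_\pm$. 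Passing to a further subsequence, $\rho_j\to\rho_*\in K$, and since $d(G^{t_j}\rho_j,\rho_j)\to0$ also $G^{t_j}\rho_j\to\rho_*$. The goal is to show $\rho_*\in\mc{S}_H$.

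The core step is to compute the Grassmannian limit of the $(n-1)$-planes $dG^{t_j}(T_{\rho_j}\SNH)$. Using Proposition~\ref{p:aardvark} (so $T_\rho\SNH\cap\re H_p=\{0\}$), I decompose $T_{\rho_j}\SNH=N_+(\rho_j)\oplus N_-(\rho_j)\oplus R_j$, and split $R_j$ further into $R_j^0:=R_j\cap(E_+(\rho_j)\oplus\re H_p)$, which is at most one dimensional (the projection onto $\re H_p$ embeds it), and a complement $R_j'$ on which the unstable projection onto $E_-(\rho_j)$ is injective. Feeding in the Anosov bounds $|dG^tv|\le Ce^{-t/C}|v|$ on $E_+$, $C^{-1}e^{t/C}|v|\le|dG^tv|\le Ce^{t/C}|v|$ on $E_-$ (as $t\to+\infty$), and $dG^tH_p(\rho)=H_p(G^t\rho)$, each direction has a well-defined normalized limit: $dG^{t_j}N_+(\rho_j)$ limits into $E_+(\rho_*)$, $dG^{t_j}N_-(\rho_j)$ and $dG^{t_j}R_j'$ limit into $E_-(\rho_*)$, and $dG^{t_j}R_j^0$ limits onto $\re H_p(\rho_*)$. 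Since these limits lie in mutually transverse summands of $E_+(\rho_*)\oplus E_-(\rho_*)\oplus\re H_p(\rho_*)$, the dimensions cannot collapse, so $V_*:=\lim_j dG^{t_j}(T_{\rho_j}\SNH)$ exists as an $(n-1)$-plane adapted to the splitting. Now the containment passes to the limit: by continuity of $\rho\mapsto T_\rho\SNH$ and of the Sasaki metric, $T_{G^{t_j}\rho_j}\SNH\oplus\re H_p\to T_{\rho_*}\SNH\oplus\re H_p(\rho_*)$, hence $V_*\subseteq T_{\rho_*}\SNH\oplus\re H_p(\rho_*)$. Using that $T_{\rho_*}\SNH\perp H_p$ (Proposition~\ref{p:aardvark}), the slices $(T_{\rho_*}\SNH\oplus\re H_p)\cap E_\pm(\rho_*)$ are graphs of linear maps into $\re H_p$, which lets me transfer the $E_\pm$-parts of $V_*$ back into $T_{\rho_*}\SNH$ and count dimensions; together with upper semicontinuity of $m_\pm$ (so $m_\pm(\rho_j)\le m_\pm(\rho_*)$ for $j$ large), this forces $m_+(\rho_*)+m_-(\rho_*)\ge n-1$, i.e. $T_{\rho_*}\SNH=N_+(\rho_*)\oplus N_-(\rho_*)$, i.e. $\rho_*\in\mc{S}_H$ — the desired contradiction. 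The constants $T,\e$ obtained are automatically uniform in $\rho_0\in K$ since the argument runs through a single compactness/extraction.

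The main obstacle is the bookkeeping around the neutral direction $\re H_p$: the Anosov splitting is only H\"older continuous and is not orthogonal, so $T_\rho\SNH$ can carry a genuine $\re H_p$-component in the splitting even though it is metrically orthogonal to $H_p$, and this one-dimensional ``slack'' must be tracked carefully through the limit and balanced against the upper semicontinuity of $\dim N_\pm$ in the final dimension count. The transversality of the three summands of the Anosov splitting is what keeps $V_*$ of full dimension $n-1$ in the limit, and without it the concluding count would not close; the contraction/expansion estimates enter only through this projective-convergence step, not through any quantitative rate.
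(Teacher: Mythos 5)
Your proposal takes a genuinely different route from the paper: you argue by contradiction via compactness and Grassmannian limits of subspaces, whereas the paper gives a direct construction. The paper picks $\mathbf{u}\in T_{\rho_0}\SNH\setminus(N_+\oplus N_-)$ with $\mathbf{u}_-\perp N_-$ and $\|\mathbf{u}_+\|\sim\|\mathbf{u}_-\|$ uniformly on $K$; it then finds $\mathbf{w}_t=a_t\mathbf{u}_-+\mathbf{e}_-(t)$ with $\mathbf{e}_-(t)\in N_-(\rho_0)$ whose $dG^{t}$-image is a unit vector in $E_-\setminus N_-$ at $G^{t}\rho_0$ (possible since $m_-$ is upper semicontinuous and $G^t\rho_0\in B(\rho_0,\e)$), and corrects to $\tilde{\mathbf w}_t=a_t\mathbf{u}+\mathbf{e}_-(t)\in T_{\rho_0}\SNH$ using that $|a_t|\lesssim\delta$ by the expansion on $E_-$ and $\|dG^{t}\mathbf{u}_+\|\leq B\|\mathbf{u}_+\|$. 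All choices are quantitative and local in $\rho_0$, with uniformity over $K$ imposed at the outset.

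Your key step — that $V_*:=\lim_j dG^{t_j}(T_{\rho_j}\SNH)$ is \emph{adapted} to the Anosov splitting — is not justified, and this is a genuine gap. The assertion that $dG^{t_j}R_j'$ limits into $E_-(\rho_*)$ is only automatic if the angle between $R_j'$ and $E_+(\rho_j)\oplus\re H_p(\rho_j)$ is bounded below along the sequence. It need not be: by upper semicontinuity of $m_+$, as $\rho_j\to\rho_*$ with $m_+(\rho_j)<m_+(\rho_*)$, there are directions in $R_j'$ converging into $N_+(\rho_*)$, whose $E_-$-components go to zero at a rate that has no a priori relation to $t_j$. If that degeneration outpaces the Anosov expansion $e^{t_j/C}$, the corresponding image directions do not converge into $E_-(\rho_*)$, and $V_*$ is not adapted. (Writing $R_j'=\{v+T_jv:v\in P_-R_j'\}$ as a graph, the relevant estimate is $\|dG^{t_j}T_jdG^{-t_j}\|\leq C^2e^{-2t_j/C}\|T_j\|$, and $\|T_j\|$ can blow up arbitrarily fast relative to $t_j$.)

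Two further points make the gap sharper. First, your piece $R_j^0$ is actually identically $\{0\}$: for geodesic flows $E_+\oplus E_-=\ker\alpha$ is the Sasaki-orthocomplement of $\re H_p$, so $T_\rho\SNH\perp H_p$ from Proposition~\ref{p:aardvark} already gives $T_\rho\SNH\subseteq E_+\oplus E_-$. You need this: with a nontrivial $\re H_p$ piece your count would only give $m_+(\rho_*)+m_-(\rho_*)\geq n-2$, one short of what you need. Second, once you have $T_\rho\SNH\subseteq E_+\oplus E_-$, intersecting the containment $dG^{t_j}(T_{\rho_j}\SNH)\subseteq T_{G^{t_j}\rho_j}\SNH\oplus\re H_p$ with $E_+\oplus E_-$ gives $dG^{t_j}(T_{\rho_j}\SNH)=T_{G^{t_j}\rho_j}\SNH$, so $V_*=T_{\rho_*}\SNH$ by continuity. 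Thus ``$V_*$ is adapted'' is literally the statement $\rho_*\in\mc{S}_H$ that you are trying to derive — the entire content of the proof has been pushed onto the one step that is not established. The paper's direct argument avoids the Grassmannian limit altogether and never has to control these angles against $t$.
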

%%%%%%%%%%%%%%%%%%%%%%%%%%%%%%%%%%%%%%%%%%%%%%%%%%%%%%%%%%%%%%%%%%%%%%%%%%%%%%%%

%%%%%%%%%%%%%%%%%%%%%%%%%%%%%%%%%%%%%%%%%%%%%%%%%%%%%%%%%%%%%%%%%%%%%%%%%%%%%%%%
\begin{proof}
 Let $\rho_0\in K$. Since
 $T_{\rho_0}(\SNH)\neq N_+(\rho_0)\oplus N_-(\rho_0)$,
we may choose
 \[
 {\bf u} \in T_{\rho_0}(\SNH) \setminus ( N_+(\rho_0)\oplus N_-(\rho_0)),\quad \|{\bf u}\|=1.
 \]
Now, let $\mathbf{u}_+\in E_+(\rho_0)$ and $\mathbf{u}_-\in E_-(\rho_0)$ be so that 
\[
\mathbf{u}=\mathbf{u}_++\mathbf{u}_-.
\]
Without loss of generality, we assume that $\mathbf{u}_-$ is orthogonal to $N_-(\rho_0)$ and, since $\rho_0$ varies in a compact subset of $\SNH\backslash \mc{A}_H$, we may assume 
uniformly for $\rho_0\in K$ that
\[
M^{-1}\|\mathbf{u}_+\|\leq \|\mathbf{u}_-\|\leq M\|\mathbf{u}_+\|.
\]

Since $dG^t:E_-(\rho_0)\to E_-(G^t(\rho_0))$ is an isomorphism,
\[
\dim \Span\begin{pmatrix} dG^t(\mathbf{u}_-), &dG^t(N_-(\rho_0))\end{pmatrix}=1+\dim N_-(\rho_0).
\]
Note that for $m_-$ as in~\eqref{e:dim}, $m_-$ is upper semicontinuous and we may choose $\e>0$ uniform in $\rho_0 \in \SNH$, so that $\dim N_-(G^t(\rho_0))\leq \dim N_-(\rho_0)$ for all $t$ such that $G^t(\rho_0) \in B(\rho_0, \e).$
For such values of $t$ we then have

\begin{equation}\label{E:dimension}
\dim \Span\begin{pmatrix} dG^t(\mathbf{u}_-), &dG^t(N_-(\rho_0))\end{pmatrix}\geq1+\dim N_-(G^t(\rho_0)). \medskip
\end{equation}

Next, we note that $\Span\!\begin{pmatrix} dG^t(\mathbf{u}_-), &dG^t(N_-(\rho_0))\end{pmatrix} \subset E_-(G^t(\rho_0))$. Also, note that if {$dG^t({\bf w}) \in E_-(G^t(\rho_0)) \backslash N_-(G^t(\rho_0))$, then $dG^t({\bf w})  \notin T_{G^t(\rho_0)}(\SNH)$}. 
In particular, relation \eqref{E:dimension} gives that there exists a linear combination
\[
{\bf w_t}= a_t \,\mathbf{u}_- +  {\bf e}_-(t),
\]
with ${\bf e}_-(t) \in N_-(\rho_0)$, so that 
\[
\left \|  \pi_{t,\rho_0} (dG^t {\bf w_t}) \right\|=1=\left \|   dG^t {\bf w_t} \right\|,
\]
where $\pi_{t,\rho_0}: T_{G^t(\rho_0)}(S^*M) \to V_{t,\rho_0}$ is the orthogonal projection map onto  a subspace  $V_{t,\rho_0}$ of  $T_{G^t(\rho_0)}(S^*M)$ chosen so that $T_{G^t(\rho_0)}(S^*M)=V_{t,\rho_0} \oplus T_{G^t(\rho_0)}(\SNH)$ {is an orthogonal decomposition}. 
{If we had that ${\bf w_t}$ was a tangent vector in $T_{G^t(\rho_0)}(S^*M)$, then we would be done. However, since $\mathbf{u}_-$ is not necessarily in $T_{G^t(\rho_0)}(S^*M)$ we have to modify ${\bf w_t}$ a bit.}
Consider the vector
\[
{\bf \tilde{w}_t}= a_t\, \mathbf{u} + {\bf e}_-(t),
\]
and note that ${\bf \tilde{w}_t} \in T_{\rho_0}(\SNH)$. 
Then, 
\[dG^t ( {\bf \tilde{w}_t})=  dG^t({\bf w_t})+a_t\,  dG^t (\mathbf{u}_+).  \]

By the definition of Anosov geodesic flow, for all $\delta>0$, there exists $T=T(\delta)>0$ so that
\[
\| (dG^t|_{E_-})^{-1}\|\leq \delta,\quad t\geq T.
\]

Thus, since ${\bf w_t}\in E_-(\rho_0)$ and {$\|{\bf w_t}\|\leq \delta$}, we have
\[
|a_t|\leq \delta\|\mathbf{u}_-\|^{-1},\qquad { t\geq T}.
\]
{Observe next, \cite[Corollary 2.14]{Eberlein73} that there exists $B>0$ uniform in $TS^*M$ so that for $v\in E_+(\rho)$, {and $t\geq 0$} $\|dG^tv\|\leq B\|v\|.$}
In particular, {choosing $\delta<\frac{1}{2B}\|{\bf u}_-\|\|{\bf u}_+\|^{-1}$, for $t>T(\delta,K)$,}
\[
\|  \pi_{t,\rho_0} (dG^t  {\bf \tilde{w}_t}) \| \geq \|  \pi_{t,\rho_0} (dG^t {\bf w_t})\|-\| a_t\,  \pi_{t,\rho_0} (dG^t {\bf u}_+) \| >\frac{1}{2}.
\]

Hence, there exists $\e>0$ and $T>0$ (uniform for $\rho_0\in K$) so that  if $G^{t_0}(\rho_0)\in \SNH\cap B(\rho_0,\e)$ for some $t_0$ with $|t_0|>T$, {then there is $\mathbf{w}=\tilde{\mathbf{w}}_{t_0}\in T_{\rho_0}(\SNH)$} so that 
\begin{equation}
\label{e:noTangent}
dG^{t_0} (\mathbf{w})\notin T_{G^{t_0}(\rho_0)}(\SNH)\oplus \re H_p.
\end{equation}
\end{proof}

 We {next introduce} the following result in which we show that {for manifolds with Anosov geodesic flow} the set of points in ${\mc R}_H{\cap[\mc{S}_H\setminus \mc{M}_H]}$ has measure zero.
%%%%%%%%%%%%%%%%%%%%%%%%%%%%%%%%%%%%%%%%%%%%%%%%%%%%%%%%%%%%%%%%%%%%%%%%%%%%%%%%
\begin{lemma}
\label{l:noStable}
Suppose that $(M,g)$ has Anosov geodesic flow. Then
\[
\sig\Big(\mc{R}_H\cap \{ \rho \in \SNH:\; T_\rho(\SNH) \subset E_+(\rho)\}\Big)=0
\]
and 
\[
\sig\Big(\mc{R}_H\cap \{ \rho \in \SNH:\; T_\rho(\SNH) \subset E_-(\rho)\}\Big)=0.
\]
{In particular, $\sig\big(\mc{R}_H\cap [\mc{S_H\setminus \mc{M}_H]}\big)=0.$} 
\end{lemma}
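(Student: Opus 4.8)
The plan is to realize both sets appearing in the statement as sets on which $dG^t$ contracts uniformly, and then invoke Lemma~\ref{P:integral}. Write
\[
\mc{E}_+:=\{\rho\in\SNH:\; T_\rho(\SNH)\subset E_+(\rho)\},\qquad \mc{E}_-:=\{\rho\in\SNH:\; T_\rho(\SNH)\subset E_-(\rho)\}.
\]
Since $\dim T_\rho(\SNH)=n-1$, we have $\mc{E}_\pm=\{\rho:\; m_\pm(\rho)=n-1\}$ with $m_\pm$ as in~\eqref{e:dim}, and as the spaces $E_\pm$ are H\"older continuous for Anosov flow, $m_\pm$ are upper semicontinuous, so $\mc{E}_\pm$ are closed and in particular Borel.

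Now fix $\rho\in\mc{E}_+$. For $t\ge0$ we have $dG^t(T_\rho(\SNH))\subset dG^t(E_+(\rho))=E_+(G^t(\rho))$, so the operator $J_t(\rho)$ from~\eqref{E:J} satisfies $\|J_t(\rho)\|\le\|dG^t|_{E_+(\rho)}\|\le Ce^{-t/C}$ by the uniform Anosov contraction estimate recalled in the excerpt. Since $\dim T_\rho(\SNH)=n-1$, this forces $|\det J_t(\rho)|\le C^{n-1}e^{-(n-1)t/C}$, and using the identity $\vol(G^t(A))=\int_A|\det J_t(\rho)|\,d\sig(\rho)$ recorded in the proof of Lemma~\ref{P:integral},
\[
\int_0^\infty \vol(G^t(\mc{E}_+))\,dt\le C^{n-1}\,\sig(\SNH)\int_0^\infty e^{-(n-1)t/C}\,dt<\infty.
\]
Lemma~\ref{P:integral} then yields $\sig(\mc{R}_H\cap\mc{E}_+)=0$. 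The argument for $\mc{E}_-$ is identical with $t\to-\infty$: for $\rho\in\mc{E}_-$ and $t\le0$ one has $\|J_t(\rho)\|\le Ce^{t/C}$, hence $|\det J_t(\rho)|\le C^{n-1}e^{(n-1)t/C}$, so $\int_{-\infty}^0\vol(G^t(\mc{E}_-))\,dt<\infty$, and~\eqref{e:spider2} gives $\sig(\mc{R}_H\cap\mc{E}_-)=0$.

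Finally, for the last assertion: if $\rho\in\mc{S}_H\setminus\mc{M}_H$ then $T_\rho(\SNH)=N_+(\rho)+N_-(\rho)$ while either $N_+(\rho)=\{0\}$ or $N_-(\rho)=\{0\}$; in the former case $T_\rho(\SNH)=N_-(\rho)\subset E_-(\rho)$ and in the latter $T_\rho(\SNH)=N_+(\rho)\subset E_+(\rho)$, so $\mc{S}_H\setminus\mc{M}_H\subset\mc{E}_+\cup\mc{E}_-$ and the stated identity follows by subadditivity of $\sig$. I expect the only delicate point to be the \emph{uniformity} in $\rho\in\SM$ of the constant $C$ in the contraction estimate, which is exactly the content of the Anosov hypothesis as stated in the excerpt; granting this, everything reduces to the area formula and Lemma~\ref{P:integral}.
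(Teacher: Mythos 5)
Your proof is correct and takes essentially the same route as the paper: set $A=\mc{E}_\pm$, use the uniform Anosov contraction to bound $|\det J_t(\rho)|$ by $C^{n-1}e^{\mp(n-1)t/C}$, integrate to see $t\mapsto\vol(G^t(\mc{E}_\pm))$ is integrable over the appropriate half-line, and invoke Lemma~\ref{P:integral}. Your derivation of the determinant bound from the operator-norm bound and the final inclusion $\mc{S}_H\setminus\mc{M}_H\subset\mc{E}_+\cup\mc{E}_-$ are small amplifications of steps the paper leaves implicit, but the argument is identical in substance.
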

%%%%%%%%%%%%%%%%%%%%%%%%%%%%%%%%%%%%%%%%%%%%%%%%%%%%%%%%%%%%%%%%%%%%%%%%%%%%%%%%

%%%%%%%%%%%%%%%%%%%%%%%%%%%%%%%%%%%%%%%%%%%%%%%%%%%%%%%%%%%%%%%%%%%%%%%%%%%%%%%%
\begin{proof}
Observe that setting
\[
A:=\{ \rho \in \SNH:\;  T_\rho(\SNH) \subset E_+(\rho)\},
\]
we have
\[
|\det J_t(\rho)|\leq C^{n-1}e^{-(n-1)t/C},\qquad  t\geq 0, \medskip
\]
fot $J_t(\rho)$ defined in \eqref{E:J}.
It follows that 
\[
\vol(G^t(A)) \leq C^{n-1}e^{-(n-1)t/C} \sig(A),
\] and so
\[
\int_0^\infty\vol(G^t(A)) dt<\infty.
\]
Therefore, the proof is complete by Lemma~\ref{P:integral}. The $E_-$ case is identical where we integrate backwards in time rather than forwards.
\end{proof}
%%%%%%%%%%%%%%%%%%%%%%%%%%%%%%%%%%%%%%%%%%%%%%%%%%%%%%%%%%%%%%%%%%%%%%%%%%%%%%%%

In what follows we write 
\[{\mc{M}^\pm_H}:=\Big\{\rho\in \SNH:\; \,N_{\pm}(\rho)\neq\{0\}\Big\},\]
and note that 
\[\mc{N}_H= \mc{S}_H \cup ({\mc{M}}_H^+ \cap {\mc{M}}_H^-).\]
Note that 
$$
\SNH\setminus \mc{N}_H =\big[\SNH\setminus ({\mc{S}}_H\cup {\mc{M}}_H^+)\big]\bigcup \big[\SNH\setminus ({\mc{S}}_H\cup {\mc{M}}_H^-)\big].
$$
%%%%%%%%%%%%%%%%%%%%%%%%%%%%%%%%%%%%%%%%%%%%%%%%%%%%%%%%%%%%%%%%%%%%%%%%%%%%%%%%
\begin{proposition}\label{P:focal} 
{Suppose $(M,g)$ has no focal points and l}et $K \subset \SNH\setminus ({\mc{S}}_H\cup {\mc{M}}_H^{\pm})$ be a compact set. There exist positive constants $T, \e>0$ so that if $\rho_0 \in K$, ${\mp}t_0\geq T$, and 
\[
G^{t_0}(\rho_0) \in \; \overline{B(\rho_0, \e)} \cap \SNH,
\]
then 
{there is $\mathbf{w}\in T_{\rho_0}(\SNH)$ with
\begin{equation}
\label{e:noTangent}
dG^{t_0} (\mathbf{w})\notin T_{G^{t_0}(\rho_0)}(\SNH)\oplus \re H_p.
\end{equation}}
\end{proposition}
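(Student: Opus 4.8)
The plan is to run the argument of Proposition~\ref{P:1}, substituting for the exponential hyperbolicity of the Anosov flow the weaker structure available under the no focal points hypothesis: the subspaces $E_\pm(\rho)$ vary continuously with $\rho$, the dimension functions $m,m_\pm$ of~\eqref{e:dim} are upper semicontinuous, and --- this is where \emph{no focal points}, rather than merely \emph{no conjugate points}, is used --- the norm $\|dG^t\mathbf v\|$ is non-increasing in $t\ge0$ when $\mathbf v\in E_+(\rho)$, while for $\mathbf v\notin E_+(\rho)$ one has $\|dG^t\mathbf v\|\to\infty$ as $t\to+\infty$ with the direction of $dG^t\mathbf v$ drawn toward the unstable bundle $E_-(G^t\rho)$, i.e. $d\big(dG^t\mathbf v/\|dG^t\mathbf v\|,\,E_-(G^t\rho)\big)=o(1)$. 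First I would reduce to a single time direction: applying the statement to the reversed geodesic flow, whose stable and unstable bundles are $E_-$ and $E_+$, so that $\mc{M}_H^+$ and $\mc{M}_H^-$ are interchanged while $\mc{S}_H$ and $\SNH$ are unchanged, it suffices to treat $K\subset\SNH\setminus(\mc{S}_H\cup\mc{M}_H^-)$ together with $t_0\ge T$.

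Next I would fix the neighbourhood and the escape vector uniformly. Since $m_-$ is upper semicontinuous and vanishes on the compact set $K$, a finite subcover argument produces $\e>0$ so that $N_-(\rho)=\{0\}$ for all $\rho$ in the $\e$-neighbourhood of $K$; shrinking $\e$, continuity of $E_-$ and of $\SNH$, together with Proposition~\ref{p:aardvark}, keep the angle between $E_-(\rho)$ and $T_\rho(\SNH)\oplus\re H_p$ bounded below on that neighbourhood. For $\rho_0\in K$, the hypotheses $N_-(\rho_0)=\{0\}$ and $\rho_0\notin\mc{S}_H$ give $T_{\rho_0}(\SNH)\not\subseteq E_+(\rho_0)$; because $\rho_0\mapsto\max\{\,d(\mathbf v,E_+(\rho_0)):\mathbf v\in T_{\rho_0}(\SNH),\ \|\mathbf v\|=1\,\}$ is continuous and strictly positive on $K$, I can choose a unit vector $\mathbf w=\mathbf w(\rho_0)\in T_{\rho_0}(\SNH)$ whose distance to $E_+(\rho_0)$ is bounded below uniformly in $\rho_0\in K$. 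This $\mathbf w$ will serve in the conclusion.

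The heart of the proof is then to show that, for a uniform $T$, whenever $t_0\ge T$ and $q:=G^{t_0}(\rho_0)\in\overline{B(\rho_0,\e)}\cap\SNH$ one has $dG^{t_0}\mathbf w\notin T_q(\SNH)\oplus\re H_p$. Since $\mathbf w\notin E_+(\rho_0)$, the expansion property above yields $\|dG^{t_0}\mathbf w\|\to\infty$ and $d\big(dG^{t_0}\mathbf w/\|dG^{t_0}\mathbf w\|,\,E_-(q)\big)=o(1)$, uniformly in $\rho_0\in K$; equivalently, the component of $dG^{t_0}\mathbf w$ along $E_-(q)$ has norm comparable to $\|dG^{t_0}\mathbf w\|$ while the complementary component stays bounded, being controlled by the non-expanding stable part. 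On the other hand $q$ lies in the $\e$-neighbourhood of $K$, so $N_-(q)=\{0\}$ and $E_-(q)$ is uniformly transverse to $T_q(\SNH)\oplus\re H_p$. Combining the two, $d\big(dG^{t_0}\mathbf w,\,T_q(\SNH)\oplus\re H_p\big)\to\infty$ as $t_0\to\infty$, uniformly in $\rho_0\in K$ and in the return time $t_0$; for $T$ large enough this is incompatible with $dG^{t_0}\mathbf w\in T_q(\SNH)\oplus\re H_p$, which proves the claim, and hence --- after undoing the time reversal --- the proposition. (As in Part~\ref{a1}, this is exactly the input needed to feed Lemma~\ref{l:implicit} in the proof of Theorem~\ref{T:tangentSpace}.)

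The step I expect to be the main obstacle is making precise the expansion-and-alignment statement for vectors transverse to $E_+(\rho_0)$, with rates uniform over the compact set $K$. In the Anosov case of Proposition~\ref{P:1} this is quantitative hyperbolicity (exponential contraction of $(dG^t|_{E_-})^{-1}$), but under \emph{no focal points} only boundedness of $dG^{-t}|_{E_-}$ for $t\ge0$ and monotonicity of Jacobi-field norms are at hand, so the convergence of directions toward $E_-$ and its uniformity must be extracted from continuity of $E_\pm$, upper semicontinuity of $m_-$, and compactness of $K$, rather than from explicit exponential estimates; a secondary point is the careful bookkeeping of the several uniform angle bounds on the $\e$-neighbourhood of $K$.
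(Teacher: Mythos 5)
Your plan tracks the paper's proof closely in outline: reduce to one time direction by reversing the flow, produce an $\e$-neighbourhood of $K$ on which $N_-(\rho)=\{0\}$ and $E_-$ is uniformly transverse to $T_\rho(\SNH)$, choose a direction in $T_{\rho_0}(\SNH)$ bounded away from $E_+(\rho_0)$, show its image under $dG^t$ aligns with $E_-$ for large $t$, and conclude by transversality. The paper does precisely this, using a cone $\mathcal C_+^\e(\rho_0)$ in place of your single unit vector, but that is only a cosmetic difference.

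The genuine gap is in what you call ``the expansion-and-alignment statement,'' which you assert as part of the available ``weaker structure'' of no-focal-points manifolds and then lean on throughout. As stated --- for $\mathbf v\notin E_+(\rho)$, $\|dG^t\mathbf v\|\to\infty$ and $dG^t\mathbf v/\|dG^t\mathbf v\|\to E_-(G^t\rho)$ uniformly --- this is not something you can ``extract from continuity of $E_\pm$, upper semicontinuity of $m_-$, and compactness of $K$.'' Those soft facts give you uniform transversality of the relevant subspaces but say nothing about quantitative growth or limiting direction of $dG^t\mathbf w$. What the paper actually does is decompose $\mathbf w=\tilde{\mathbf u}_+ + \tilde{\mathbf v}$ with $\tilde{\mathbf u}_+\in E_+(\rho_0)$ and $\tilde{\mathbf v}$ a \emph{vertical} vector in $T_{\rho_0}\SNH$ orthogonal to $H_p$ (so that $T_\rho S^*\!M=E_+\oplus V\oplus\re H_p$ is the operative splitting, not $E_+\oplus E_-\oplus\re H_p$, which need not hold without Anosov), and then invokes Eberlein's specific Jacobi-field estimates: a Jacobi field with $J(-t)=0$ is monotone and converges in direction to the unstable bundle on a manifold without focal points (Remark~2.10 of~\cite{Eberlein73}), and stable Jacobi fields grow at most boundedly in forward time (Corollary~2.14 of~\cite{Eberlein73}). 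The cone condition (your lower bound on $d(\mathbf w,E_+(\rho_0))$) is then used to ensure $\|\tilde{\mathbf v}\|\gtrsim\|\mathbf w\|$, and a three-term triangle inequality puts the $E_+$ and normalization errors below $\delta/3$ each. Without this vertical decomposition and Eberlein's inputs, the central claim is not established; so while your strategy is right, the proposal is missing the key lemma that the paper's proof actually supplies.

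One more subtlety you should make explicit if you fill this in: you need $\mathbf w\notin E_+(\rho_0)\oplus\re H_p$, not merely $\mathbf w\notin E_+(\rho_0)$, for there to be a nontrivial vertical component to drive the expansion. In the present setup this does follow --- Proposition~\ref{p:aardvark} gives $\mathbf w\perp H_p$, and $E_\pm\subset\ker\alpha=(\re H_p)^\perp$ in the Sasaki metric, so a uniform lower bound on $d(\mathbf w,E_+)$ does put $\mathbf w$ uniformly outside $E_+\oplus\re H_p$ --- but this is a fact to be checked, not a freebie, and the paper sidesteps it by decomposing directly against the vertical subspace rather than against $E_+$ alone.
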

%%%%%%%%%%%%%%%%%%%%%%%%%%%%%%%%%%%%%%%%%%%%%%%%%%%%%%%%%%%%%%%%%%%%%%%%%%%%%%%%

%%%%%%%%%%%%%%%%%%%%%%%%%%%%%%%%%%%%%%%%%%%%%%%%%%%%%%%%%%%%%%%%%%%%%%%%%%%%%%%%
\begin{proof}
We prove the lemma for $K\subset\SNH\setminus ({\mc{S}}_H\cup {\mc{M}}^-_H)$, the other case follows similarly after sending $t\to -\infty$ rather than $t\to \infty$. 

Define {$\mathcal C_+^\e(\rho)\subset T_\rho S^*\!M$} as the conic set of vectors forming at least an $\e>0$  angle  with $E_+(\rho)$.
Since $m$ is upper semicontinuous, $E_+$ is continuous, {and $T_{\rho}\SNH\neq N_+(\rho)+N_-(\rho) $}, there exists $\ep>0$ so that   $ T_{\rho}\SNH \cap\mathcal C_+^\e(\rho) \neq 0$ for all $\rho \in K$.   
 
Next, let $\rho_0 \in K$. Since  $N_-(\rho_0)=\{0\}$, the upper semicontinuity of $m_-$ implies that  $N_-(\rho)=\{0\}$ for all $\rho\in B(\rho_0,\e)$, after possibly shrinking $\ep$.
In particular, the continuity of $E_-$  implies that there exists $\delta>0$ so that for $\rho \in B(\rho_0,\e)$, the angle between $E_-(\rho)$ and $T_\rho \SNH$ is larger than $\delta$ (after possibly shrinking $\ep$).

We claim that for $ w \in \mathcal C_+^\e(\rho_0) \backslash\{0\}$, there exists $T=T(\delta,\e)$ so that for $t\geq T$,
\begin{equation}
\label{e:toE-}
\text{dist}\Big(\frac{dG^tw}{\|dG^tw\|}\,,\, E_-(G^t(\rho_0))\Big)\leq \delta.
\end{equation}
The proof of \eqref{e:toE-} is postponed until the end.
% In particular, the claim in  \eqref{e:toE-} implies for any $w\in \mathcal C_+^\e(\rho)$, there exists $w_{-}(t)\in E_{-}(G^t(\rho))$ with $\|w_{-}\|=1$ so that 
%\begin{equation}
%\label{e:convergence}
%\Big\|\frac{ dG^tw}{\|dG^tw\|}-w_-(t)\Big\|\leq \delta,\qquad t\geq T_2(\delta,\e).
%\end{equation}

{To finish the argument we argue by contradiction. Suppose that for  $t_0\geq T$, we have $G^t(\rho_0)\in B(\rho_0,\e)$ and
\[dG^{t_0}(T_{\rho_0}\SNH)=T_{G^{t_0}(\rho_0)}\SNH.\]  
Then, using that  $ T_{\rho_0}\SNH \cap\mathcal C_+^\e(\rho_0) \neq 0$, we conclude from the claim in \eqref{e:toE-} applied to some $ w \in T_{\rho_0}\SNH \cap\mathcal C_+^\e(\rho_0) \backslash\{0\}$  that there exists $v \in E_-(G^t(\rho_0))$ so that the angle between $v$ and $\frac{dG^tw}{\|dG^tw\|} \in T_{G^{t_0}(\rho_0)}\SNH$ is smaller than $\delta$. In particular, setting $\rho:=G^{t_0}(\rho_0) \in B(\rho_0,\e)$ we conclude that the angle between $T_\rho\SNH$ and $E_-(\rho)$ is smaller than $\delta$. And this is a contradiction since $ \rho \in B(\rho_0,\e)$. This conclude{s} the proof of the proposition {once we have~\eqref{e:toE-}}.

It only remains  to prove the claim in \eqref{e:toE-}.
 Let $w \in C_+^\e(\rho_0) \backslash\{0\}$. Then we can write \[w={\tilde{u}}_++{\tilde{v}}\] with ${\tilde{u}}_+\in E_+(\rho_0)$ and ${\tilde{v}}\in \tilde{V}(\rho_0)$, where  $\tilde{V}(\rho_0)\subset T_\rho S^*M$ denotes the collection of vertical vectors in $T_{\rho_0} \SNH$ orthogonal to $H_p$. 
Note that there exists $c_\e>0$ depending only on $\e$ so that 
\[  c_\e\|\tilde{u}_+\|\leq \|w\| \leq  \frac{1}{c_\e} \|\tilde v\|.\]

For any $e_t \in  E_-(G^t(\rho_0)) $ we decompose 
 \begin{equation}\label{e:triangle}
  \left\|\frac{dG^t{w}}{\|dG^tw\|} -e_t \right\|
   \leq
  \left\|\frac{dG^t\tilde{u}_+}{\|dG^tw\|} \right\|+  \left\|\frac{dG^t\tilde{v}}{\|dG^tw\|}-\frac{dG^t\tilde{v}}{\|dG^t\tilde{v}\|} \right\| +\left\|\frac{dG^t\tilde{v}}{\|dG^t\tilde{v}\|}-e_t \right\|, 
 \end{equation}
 and {find $e_t\in E_-(G^t(\rho_0))$ so that} each term in the RHS has size smaller than $\delta/3$.}
 
 {Note that since $\tilde{v}$ is vertical, the Jacobi field through $G^t(\rho)$ with initial conditions given by $J(0)=(dG^t\tilde{v})^h$ and $\dot{J}(0)=(dG^t\tilde{v})^v$, where $()^h$ and $()^v$ denote respectively the horizontal and vertical parts, has $J(-t)=0$ and hence, by~\cite[Remark 2.10]{Eberlein73}, there exists $T_1=T_1(\delta)>0$ so that for $ G^t\rho$ in a compact set, 
 $$\textup{dist}(dG^t\tilde{v}/\|dG^t\tilde{v}\|,E_-(G^t(\rho)))<\delta/3.$$ }
 
% Let $\rho_t=G^{t}(\rho_0)$, and for  $v\in \tilde{V}(\rho_t)$  consider $v_t=\frac{dG^{-t} v}{\|dG^{-t}v\|}$. Then, identifying vectors and co-vectors, let $J_t(s)$ be the Jacobi field generated by $v_t$. It satisfies 
%$$
%J_t(0)= (v_t)^h,\qquad J_t(t)=0.
%$$ In particular, since there exists a uniform constant $k$ so that $k^{-1}\|v_t\|\leq\|(v_t)^h\|\leq k\|v_t\|$,  b there exists $T_1=T_1(\delta)>0$ so that for $t\geq T_1$,  $\text{dist}(v_t, E_+(\rho))\leq \delta/3$.  Similarly, reversing time, for $v\in \tilde{V}(\rho_{-t})$, and $t\geq T_1$, $\text{dist}(v_{-t},E_-(\rho))\leq \delta/3.$ 
{In particular, for all $t \geq T_1$, {there exists} $e_t \in  E_-(G^t(\rho_0)) $ {so that } 
\begin{equation}\label{e:triangle1}
\left\|\frac{dG^t\tilde{v}}{\|dG^t\tilde{v}\|}-e_t \right\|\leq \frac{\delta}{3}.
\end{equation}}

Next, observe that by~\cite[Remark 2.10]{Eberlein73}, for all $\alpha>0$, there exists $T_2=T_2(\alpha)$ so that for all $\rho$, and $|t|\geq T_2$,
\begin{equation}
\label{e:uniformGrowth}
\|dG^{-t}|_{dG^t\tilde{V}(\rho)}\|\leq \alpha.
\end{equation}
In particular, by~\eqref{e:uniformGrowth}, given $R>0$ there exists $T_3=T_3(R,\e)>0$ so that for $|t|\geq T_3$ and $z \in C_+^\e(\rho_0) \backslash\{0\}$,
$$
 \| dG^t z\|\geq R\| z\|.
$$
Furthermore, by~\cite[Corollary 2.14]{Eberlein73}, there exists $B>0$ so that for all $t\geq 0$ and all $u\in E_{+}(\rho_0)$,
\begin{equation}
\label{e:forwardBound}
\|dG^tu\|\leq B\|u\|.
\end{equation}
{In particular, setting $R_{\delta,\e}:=3Bc_\e^{-1}\delta^{-1}$, and letting $|t| \geq T_3(R_{\delta,\e},\e)$,
\begin{equation}\label{e:triangle2}
 \left\|\frac{dG^t\tilde{u}_+}{\|dG^tw\|} \right\| \leq  \frac{ B\|\tilde u_+\|}{\|dG^tw\|} \leq  \frac{B\|\tilde u_+\|}{R_{\delta,\e} \|w\| }\leq \frac{\delta}{3}.
\end{equation}
On the other hand, for $|t| \geq T_3(R_{\delta,\e},\e)$,
\begin{equation}\label{e:triangle3}
\Big\|\frac{dG^t\tilde{v}}{\|dG^t\tilde{v}\|}-\frac{dG^t\tilde{v}}{\|dG^tw\|}\Big\|
=\frac{1 }{\|dG^t{w}\|}| \|dG^t\tilde{v}\|-\|dG^tw\| | 
\leq   \frac{\|dG^t{\tilde u_+}\| }{\|dG^t{w}\|} \leq \frac{\delta}{3}.
\end{equation}
Taking $T=\max\Big(T_3(R_{\delta,\e},\e), T_1(\delta)\Big)$ we conclude that the claim in \eqref{e:toE-} holds after combining \eqref{e:triangle1},\eqref{e:triangle2}, and \eqref{e:triangle3}, into \eqref{e:triangle}.}

\end{proof}

%%%%%%%%%%%%%%%%%%%%%%%%%%%%%%%%%%%%%%%%%%%%%%%%%%%%%%%%%%%%%%%%%%%%%%%%%%%%%%%%

Now that we have introduced Propositions \ref{p:aardvark}, \ref{P:focal}, and \ref{P:1}, we are ready to present the proof of Theorem  \ref{T:tangentSpace}.\\

%%%%%%%%%%%%%%%%%%%%%%%%%%%%%%%%%%%%%%%%%%%%%%%%%%%%%%%%%%%%%%%%%%%%%%%%%%%%%%%%

\noindent{\bf Proof of Theorem  \ref{T:tangentSpace}.}
We start with the case in which $(M,g)$ has no focal points. Recall, that $m,m_{\pm}$ from~\eqref{e:dim} are upper semicontinuous. In particular, the sets 
\[
\SNH\setminus {\mc{S}}_H=\{\rho \in \SNH: m(\rho)<n-1\} \quad  and  \quad \SNH\setminus {\mc{M}}_{H}^{\pm}=\{\rho \in \SNH: m_{\pm}(\rho)<1\}
\]
 are open, and hence $\SNH\setminus ({\mc{S}}_H\cup {\mc{M}}_H^\pm)$ are open as well. Thus, there exist collections $\{K^{\pm}_\ell\}_\ell$ of compact sets 
\[
K^+_\ell \subset \;  \SNH\setminus ({\mc{S}}_H\cup {\mc{M}}_H^+),\qquad K^+_\ell \subset \;  \SNH\setminus ({\mc{S}}_H\cup {\mc{M}}_H^-)
\]
 with 
\[
\sig(K^{\pm}_\ell )\;\uparrow\; \sig(\SNH\setminus {\mc{S}}_H\cup {\mc{M}}_H^{\pm}).
\]
Since 
$$
\SNH\setminus ({{\mc{S}}_H \cup ({\mc{M}}_H^+\cap {\mc{M}}_H^-)})=\big[\SNH\setminus ({\mc{S}}_H\cup {\mc{M}}_H^+)\big]\bigcup \big[\SNH\setminus ({\mc{S}}_H\cup {\mc{M}}_H^-)\big],
$$
the proof of the lemma will follow once we prove that for any compact subset $K^{\pm}\subset \SNH\setminus ({\mc{S}}_H\cup {\mc{M}}_H^{\pm})$ 

\begin{equation}\label{E:compact}
\sigma_H( \mc{R}_H\cap K^{\pm})=0.
\end{equation}
We then proceed to prove \eqref{E:compact}.

Let $T_{\pm}>0$ and  $\e>0$ be the constants associated to $K^{\pm}$ given by Proposition \ref{P:focal}. Since  
\[ 
\mc{R}_H \subset \Big[\bigcap_{m>0} \bigcup_{n \geq m} A_{n}^{\ep}\Big]\bigcap\Big[\bigcap_{m>0} \bigcup_{n \geq m} A_{-n}^{\ep}\Big],
\]
with 
\[
A_{n}^{\ep}:=\left \{ \rho \in \SNH :\;\; G^t(\rho) \in \overline {B(\rho, \ep)} \;\;\;\text{for some}\;\;t \in [n , n+1]\right\},
\]
we have that \eqref{E:compact} is a consequence of showing that 

\begin{equation}\label{E:compact2}
\sig( A_{n}^{\ep} \cap K^{\pm})=0,
\end{equation}
{for all $n$ with $\mp n\geq T_\pm$}.

To prove \eqref{E:compact2} let $\rho_0 \in A_{n}^{\ep} \cap K$. Since $G^{t_0}(\rho_0) \in \overline {B(\rho_0, \ep)}$ for some $t_0 \in [n , n+1]$, and $\mp t_0 \geq T$, Proposition \ref{P:focal} combined with Lemma \ref{l:implicit} give that there exists  $U_{t_0,\rho_0} \subset \R\times \SNH$ a neighborhood of $(t_0,\rho_0)$ for which
\[
\sig \left( \rho \in \SNH: \; G^t(\rho) \in \SNH \;\;\;\text{for some}\;\; (t,\rho)\in U_{t_0,\rho_0}\Big. \right)=0.
\]
Since, $K^{\pm}$ is compact if $A_n^{\e}$ is closed, $A_{n}^{\ep} \cap K^{\pm}$ is compact and we can cover $[n,n+1]\times (K^{\pm}\cap A_n^\e)$ by finitely many such neighborhoods and in particular, 
\[
\sig \left( \rho \in \SNH: \; G^t(\rho) \in \SNH \;\;\;\text{for some}\;\; (t,\rho)\in [n,n+1]\times (K^{\pm}\cap A_n^\e)\Big. \right)=0.
\]
and hence $\sig (A_n^\e\cap K^{\pm})=0$. Therefore, we have \eqref{E:compact2} provided we show that $A_{n}^{\ep}$ is closed 

We dedicate the end of the proof to showing that $A_{n}^{\ep}$ is closed. To see this, let $\{\rho_j\} \subset A_{n}^{\ep}$ {with $\rho_j\to\rho \in \SNH$.}
For each $j$ let $t_j \in [n, n+1]$ be so that $G^{t_j}(\rho_j) \in \overline{B(\rho_j, \ep)}$. By possibly taking a subsequence of times, we may assume that there exists $t \in  [n, n+1]$ with the property that $t_j \to t$ as $j \to \infty$.
 In particular, we have that $G^{t_j}(\rho_j) \to G^{t}(\rho)$. Then, the triangle inequality
\[
d(G^{t}(\rho), \rho) \leq \limsup_{j \to \infty} \left( d(\rho, \rho_j ) + d(\rho_j, G^{t_j}(\rho_j))+d(G^{t_j}(\rho_j), G^t(\rho)) \right) \leq \e
\]
shows that $\rho \in A_{n}^{\ep, }$ as claimed.

In the case that $(M,g)$ has Anosov geodesic flow, we simply appeal to Proposition~\ref{P:1} in place of Proposition~\ref{P:focal} to show that, for $K\subset \SNH\setminus {\mc{S}}_H$ compact, 
$$
\sig(K\cap \mc{R}_H)=0.
$$
and hence using that $\SNH\setminus {\mc{S}}_H$ is open and approximating $\SNH\setminus {\mc{S}}_H$ by compact sets, {we see that $\sig(\mc{R}_H\setminus \mc{S}_H)=0$.} {Then, applying Lemma~\ref{l:noStable}, $\sig\big(\mc{R}_H\cap [\mc{S}_H\setminus \mc{M}_H]\big)=0$ and} the theorem follows.
\qed
%%%%%%%%%%%%%%%%%%%%%%%%%%%%%%%%%%%%%%%%%%%%%%%%%%%%%%%%%%%%%%%%%%%%%%%%%%%%%%%%

%%%%%%%%%%%%%%%%%%%%%%%%%%%%%%%%%%%%%%%%%%%%%%%%%%%%%%%%%%%%%%%%%%%%%%%%%%%%%%%%
%%%%%%%%%%%%%%%%%%%%%%%%%%%%%%%%%%%%%%%%%%%%%%%%%%%%%%%%%%%%%%%%%%%%%%%%%%%%%%%%
\subsection{Proof of parts {\bf \ref{a3}},  {\bf \ref{a4}}, {\bf \ref{a5}} and {\bf \ref{a6}}}  \label{S:parts3}
%%%%%%%%%%%%%%%%%%%%%%%%%%%%%%%%%%%%%%%%%%%%%%%%%%%%%%%%%%%%%%%%%%%%%%%%%%%%%%%%
%%%%%%%%%%%%%%%%%%%%%%%%%%%%%%%%%%%%%%%%%%%%%%%%%%%%%%%%%%%%%%%%%%%%%%%%%%%%%%%%
 Since in all these cases $(M,g)$ has Anosov flow,  for all $\rho \in \SM$,
\[
T_\rho(\SM)=E_+(\rho)\oplus E_-(\rho)\oplus \re H_p.
\]
where $E_-,E_+$  are stable and unstable directions as before. Moreover, there exists $C>0$ so that for all $\rho\in \SM$,
\begin{align*}
 &|dG^t(v)|\leq Ce^{- t/C}|v| \;\;\qquad  \text{for}\;  v\in E_+\;\; \text{and}\; t\to  +\infty,\\
  &|dG^t(v)|\leq C\;e^{t/C}|v| \;\;\;\qquad  \text{for}\;  v\in E_-\;\; \text{and}\; t\to  -\infty.
\end{align*}

%%%%%%%%%%%%%%%%%%%%%%%%%%%%%%%%%%%%%%%%%%%%%%%%%%%%%%%%%%%%%%%%%%%%%%%%%%%%%%%%
\begin{proof}[{\bf Proof of part \ref{a5}}] For this part we assume that $(M,g)$ has Anosov geodesic flow, {non-positive curvature}, and $H$ is totally geodesic.

We use that, since there are no parallel Jacobi fields on a manifold with {non-positive curvature} and  Anosov geodesic {flow~\cite[Theorem 1 (6)]{Eberlein73b}},  the spaces $E_+$ and $E_-$ are nowhere horizontal. In particular, for any horizontal vector $v^h$, $\|dG^tv^h\|\to \infty$ for $t\to \pm \infty$. 
To take advantage of this, fix $\rho=(x,\xi)\in \SNH$. Since $H$ is totally geodesic,  the horizontal lift  $v^h$ of any $v\in T_xH$  satisfies 
\[v^h \in T_{\rho}(\SNH).\]
On the other hand, $v^h\notin E_+(\rho)\cup E_-(\rho).$

Suppose that  $H$ is $n-1$ dimensional. Then, we may choose linearly independent vectors $\{v_1,v_2, \dots,v_{n-1}\} \in T_xH$ and get 
\[
T_{\rho}(\SNH)= \text{span}\{v_1^h,v_2^h, \dots, v_{n-1}^h\}.
\]
In particular, this yields that
\[
T_{\rho}(\SNH) \cap (E_+(\rho)\cup E_-(\rho))=\emptyset.
\]
 Therefore, 
 \[
{ \mc{S}_H}=\emptyset,
 \]
  and hence $\sig(\mc{R}_H)=0$. 

To finish the proof we explain that it suffices to assume that $H$ is $n-1$ dimensional. Note that since $H$ is totally geodesic submanifold,   $H_t:=\pi (G^t(\SNH))$ is also a totally geodesic submanifold. Now, for $t$ small, \[G^t:N^*\!H\to M\]
 is an isometry, and in particular, $H_t$ is an embedded submanifold of dimension $n-1$. Moreover, by Lemma~\ref{l:flowRecur}, $\sigma_{\SNH_t}(\mc{R}_{H_t})=0$ implies $\sig(\mc{R}_H)=0.$ 
Therefore, it is enough to show that $\sig(\mc{R}_H)=0$ for every   totally geodesic submanifold $H$ of dimension $n-1$ which we have already done.

\end{proof}

%%%%%%%%%%%%%%%%%%%%%%%%%%%%%%%%%%%%%%%%%%%%%%%%%%%%%%%%%%%%%%%%%%%%%%%%%%%%%%%%

The proofs of Parts \ref{a3}, \ref{a4}, and \ref{a6}, rely on showing that in each of these settings one has that the set of  points $\rho \in {\mc{R}_H}$ for which  $T_\rho(\SNH) $ is purely stable, or purely unstable, has full measure and applying Lemma~\ref{l:noStable}}.
%%%%%%%%%%%%%%%%%%%%%%%%%%%%%%%%%%%%%%%%%%%%%%%%%%%%%%%%%%%%%%%%%%%%%%%%%%%%%%%%
\begin{proof}[{\bf Proof of part \ref{a4}}] For this part we assume that $(M,g)$ is a surface with Anosov geodesic flow. Theorem~\ref{T:tangentSpace} implies 
\begin{equation*}
\sig(\mc{R}_H)=\sig\big(\mc{R}_H\cap {\mc{S}_H\cap \mc{M}_H}\big).
\end{equation*}
But, {since $\dim M=2$, we have $\dim \SNH=1$ and, since $E_+(\rho)\cap E_-(\rho)=\{0\}$, $\mc{M}_H=\emptyset$. Thus,} $\sig(\mc{R}_H)=0$ as claimed. 
\end{proof}

%%%%%%%%%%%%%%%%%%%%%%%%%%%%%%%%%%%%%%%%%%%%%%%%%%%%%%%%%%%%%%%%%%%%%%%%%%%%%%%%

%%%%%%%%%%%%%%%%%%%%%%%%%%%%%%%%%%%%%%%%%%%%%%%%%%%%%%%%%%%%%%%%%%%%%%%%%%%%%%%%
\begin{proof}[{\bf Proof of part \ref{a6}}]  For this part we assume that $(M,g)$ has {Anosov geodesic flow} and $H$ is a subset of a stable or unstable horosphere.
That $\sig({\mc R}_H)=0$ follows immediately from Lemma~\ref{l:noStable}.
\end{proof}
%%%%%%%%%%%%%%%%%%%%%%%%%%%%%%%%%%%%%%%%%%%%%%%%%%%%%%%%%%%%%%%%%%%%%%%%%%%%%%%%

\ \\
%%%%%%%%%%%%%%%%%%%%%%%%%%%%%%%%%%%%%%%%%%%%%%%%%%%%%%%%%%%%%%%%%%%%%%%%%%%%%%%%
\noindent{\bf Proof of part \ref{a3}.} 
{We start by showing that it suffices to assume that $H$ is $n-1$ dimensional. 
{Since the exponential map is  a radial isometry,
 $H_t=\{\exp_x(t\xi):\; (x, \xi) \in \SNH\}$
%Note that for $t$ small, \[G^t:N^*\!H\to M\]
% is an isometry, and in particular, 
 is an embedded submanifold of dimension $n-1$ for small $t$.} Moreover, by Lemma~\ref{l:flowRecur}, $\sigma_{\SNH_t}(\mc{R}_{H_t})=0$ implies $\sig(\mc{R}_H)=0.$ 
Therefore, it is enough to show that $\sig(\mc{R}_H)=0$ for every submanifold $H$ of dimension $n-1$. }

We note that by Theorem \ref{T:tangentSpace} we have
\[
\sig(\mc{R}_H)=\sig\left({\mc{R}_H\cap \mc{S}_H\cap \mc{M}_H}\right)%\mc{R}_H\cap\{\rho \in \SNH: \; T_{\rho}(S\!N^*\!H)=N_{+}(\rho) \oplus N_{-}(\rho) \Big.\}\right).
\]

%%%%%%%%%%%%%%%%%%%%%%%%%%%%%%%%%%%%%%%%%%%%%%%%%%%%%%%%%%%%%%%%%%%%%%%%%%%%%%%%
\begin{lemma}\label{P:hyperbolic}
Let {$(M,g)$ be a compact manifold with constant negative curvature and } $H \subset M$ be a {closed embedded} hypersurface. Then
\[
%\sig\left(\rho \in \SNH: \; T_{\rho}(S\!N^*\!H)=N_{+}(\rho) \oplus N_{-}(\rho),  \quad  N_{+}(\rho) \neq \{0\}, \quad   N_{-}(\rho) \neq \{0\}     \Big.\right)=0.
{\sig (\mc{S}_H\cap \mc{M}_H)=0.}
\]
\end{lemma}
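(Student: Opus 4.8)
The plan is to reduce the statement to a classical fact about hypersurfaces, using the explicit form of the stable and unstable distributions in constant curvature. Normalizing the sectional curvature to $-1$, fix $\rho=(x,\nu)\in\SNH$ and let $S_x\colon T_xH\to T_xH$ be the shape operator of $H$ at $x$ with respect to $\nu$. In the horizontal--vertical splitting of $T_\rho(S^*M)$ one has $T_\rho(\SNH)=\{(v,-S_xv):v\in T_xH\}$, while for constant curvature $-1$ the strong stable and unstable subspaces are of the form $E_\pm(\rho)=\{(u,\mp u):u\in\nu^\perp\}$ (the exact sign is convention dependent and irrelevant below). Since $T_xH=\nu^\perp$ for a hypersurface, the projection onto the horizontal factor carries $N_\pm(\rho)=T_\rho(\SNH)\cap E_\pm(\rho)$ isomorphically onto the $(\pm1)$-eigenspace of $S_x$, with the labels $\pm$ interchanged when $\nu$ is replaced by $-\nu$. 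Combined with Proposition~\ref{p:aardvark} (which gives $T_\rho(\SNH)\subset E_+(\rho)\oplus E_-(\rho)$), this identifies $\mc{S}_H\cap\mc{M}_H$ with
\[
\big\{(x,\pm\nu)\in\SNH:\ S_x^2=\Id_{T_xH}\ \text{and}\ S_x\neq\pm\Id_{T_xH}\big\}.
\]
As $\sig$ disintegrates as $\sigma_H$ times the counting measure on the two-point fibre of $\SNH\to H$, it suffices to prove that the set $A\subset H$ of base points occurring here is $\sigma_H$-null.

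Suppose not. On $A$ the only eigenvalues of $S_x$ are $+1$ and $-1$, which are separated, so near any point of $A$ the $(+1)$- and $(-1)$-eigenbundles of $S$ extend to smooth subbundles of $TH$ of locally constant ranks $a,b\geq1$ with $a+b=n-1$; write $A_{a,b}$ for the corresponding piece of $A$. If $\sigma_H(A)>0$ then $\sigma_H(A_{a,b})>0$ for some $(a,b)$, so $A_{a,b}$ has a point $x_0$ of Lebesgue density one. The function $G:=\|S^2-\Id\|^2$ is smooth and nonnegative and vanishes on $A_{a,b}$; a standard argument shows that at a density point of the zero set of a nonnegative smooth function all derivatives vanish, so all covariant derivatives of $S^2-\Id$ vanish at $x_0$.

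Now invoke that, since $M$ has constant curvature, the shape operator $S$ is a Codazzi tensor, $(\nabla_XS)Y=(\nabla_YS)X$. Feeding the infinite-order vanishing of $S^2-\Id$ at $x_0$ into the Codazzi and Ricci identities forces the $(\pm1)$-eigendistributions to be involutive near $x_0$ with totally umbilic leaves; together with the fact that the shape operators of the parallel hypersurfaces $H_t=\pi G^t(\SNH)$ obey the Riccati equation $S'=\Id-S^2$, whose fixed points are exactly $S\equiv\pm\Id$, and with the real-analyticity of $M$, this forces a neighbourhood of $x_0$ in $H$ to be an isoparametric hypersurface of hyperbolic $n$-space with principal curvatures $+1$ of multiplicity $a$ and $-1$ of multiplicity $b$. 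But by Cartan's classification every isoparametric hypersurface of hyperbolic space is totally geodesic, a geodesic sphere, a horosphere, an equidistant hypersurface, or a tube of constant radius about a totally geodesic submanifold, and in each of these the (at most two) principal curvatures are never equal to $+1$ and $-1$ simultaneously. This contradiction yields $\sigma_H(A)=0$, hence $\sig(\mc{S}_H\cap\mc{M}_H)=0$.

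The delicate point is the last step: promoting the vanishing of $S^2-\Id$ to infinite order \emph{at a single density point} to the statement that $H$ is isoparametric near that point. One cannot simply appeal to analyticity of $S$, since $H$ is merely $C^\infty$; the substitute is to exploit the way the Codazzi equation rigidly links the eigendistributions, the second fundamental forms of their integral leaves, and the evolution of $S$ under the normal flow. A partial shortcut sidesteps this for the unbalanced strata: on $A_{a,b}$ the Jacobian of $G^t|_{\SNH}$ equals exactly $e^{(b-a)t}$, so when $a\neq b$ the map $t\mapsto\vol(G^t(A_{a,b}))$ is integrable toward $+\infty$ or $-\infty$ and Lemma~\ref{P:integral} already gives $\sig(\mc{R}_H\cap A_{a,b})=0$; in view of Theorem~\ref{T:tangentSpace} this is all that Theorem~\ref{T:applications} ultimately needs, so only the balanced stratum $a=b$ (possible only when $n$ is odd) genuinely requires the rigidity argument.
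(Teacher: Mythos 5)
Your reformulation of $\mc{S}_H\cap\mc{M}_H$ in terms of the shape operator is correct: in constant curvature $-1$, with $T_\rho(\SNH)=\{(v,-S_xv):v\in T_xH\}$ in the horizontal--vertical splitting and the standard form of $E_\pm(\rho)$, one indeed has $\mc{S}_H\cap\mc{M}_H=\{(x,\pm\nu):S_x^2=\Id,\ S_x\neq\pm\Id\}$, and the reduction to $\sigma_H(A)=0$ is valid. The density-point argument forcing $\|S^2-\Id\|^2$, and hence all covariant derivatives of $S^2-\Id$, to vanish at a Lebesgue density point of $A$ is also sound. The observation that $|\det J_t|=e^{(b-a)t}$ on the $(a,b)$-stratum, so that Lemma~\ref{P:integral} kills $\mc{R}_H$ there when $a\neq b$, is correct and is a nice supplementary remark; note, however, that it only yields $\sig(\mc{R}_H\cap A_{a,b})=0$, not $\sig(A_{a,b})=0$, so even on the unbalanced strata it does not prove the lemma as stated.

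The genuine gap is exactly the one you flag and then do not close: passing from infinite-order vanishing of $S^2-\Id$ \emph{at the single point} $x_0$ to ``$H$ is isoparametric near $x_0$'', which is what Cartan's classification needs as input. Flatness of a tensor at one point carries no information on any punctured neighborhood for merely $C^\infty$ data; the Riccati equation $S_t'=\Id-S_t^2$ constrains the jets of the parallel family only along the single normal geodesic through $x_0$, and the Codazzi identity does not by itself propagate a pointwise condition outward. (Already for a plane curve with $\kappa(s)=1+e^{-1/s^2}$, the quantity $\kappa^2-1$ is flat at $s=0$ yet the curve is nowhere isoparametric.) Since the balanced stratum $a=b$ escapes the Jacobian shortcut, the argument does not terminate.

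The paper avoids any classification theorem. It works in the hyperboloid model, parametrizes $H$ locally as a graph $x_n=\gamma(x')$, writes out the linear-algebra conditions for a vector of $T_\rho(\SNH)$ to lie in $E_+(\rho)$ or $E_-(\rho)$, and Taylor-expands these conditions to fourth order in $x'$. Requiring the conditions on a positive-measure set forces $\partial^3\gamma(0)=0$ and then produces, from the quadratic correction $D(w_\pm)$, two incompatible constraints on the fourth-order jet of $\gamma$ (they force the same quadratic form to equal both $2x_1x_2$ and $-2x_1x_2$ on a positive-measure set). The obstruction is thus already visible at the level of finite jets --- precisely what is lost when one discards all finite-order information by passing to infinite-order vanishing and then reaches for a global rigidity theorem. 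To push your invariant reformulation through you would need to extract a concrete finite-order contradiction from the Codazzi identity and its covariant derivatives at $x_0$, which would essentially reproduce the paper's $D(w_\pm)$ computation in coordinate-free form.
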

%%%%%%%%%%%%%%%%%%%%%%%%%%%%%%%%%%%%%%%%%%%%%%%%%%%%%%%%%%%%%%%%%%%%%%%%%%%%%%%%

Note that this result combined with Theorem~\ref{T:tangentSpace} yield that $\sig({\mc R}_H)=0$ finishing the proof of Part {\ref{a6}}.\\
\qed\ \\

{The rest of this section is dedicated to the proof of Lemma \ref{P:hyperbolic}.}
{Since we may work locally to prove Lemma~\ref{P:hyperbolic}, we lift the hypersurface $H$ to the universal cover  $\mathbb{H}^n$. Hence, in this section we work with the hyperbolic space
\[
\mathbb{H}^n=\Big\{(x_0, x_1, \dots, x_n)\in \R^{n+1}:\;  x_0>0, \;x_0^2-\sum_{i=1}^n x_i^2=1 \Big\}.
\]}
We endow $\mathbb{H}^n$ with the metric  $g=dx_0^2 - \sum_{i=1}^{n} dx_i^2$. 
To prove Lemma \ref{P:hyperbolic} we adopt the notation
\[
\langle v,w\rangle_g=v_0w_0-\sum_{i=1}^{n} v_iw_i
\]
 for the inner product induced by the metric $g$. We also write  $\langle v,w\rangle=v_0w_0+\sum_{i=1}^{n} v_iw_i$ for the usual inner product in $\R^{n+1}$.
With this notation the sphere bundle takes the form $S\mathbb{H}^n=\{(x,w):\;  x\in \mathbb{H}^n,w\in \mathbb{R}^{n+1},\langle w,w\rangle_g=-1,\langle x,w\rangle_g=0\}$, and its tangent space at $p=(x,w)$ can be decomposed into a direct sum $T_p(S\mathbb{H}^n)=E_+(p)\oplus E_-(p)\oplus \R X$ where the stable and stable fibers are $\tilde{E}_-(p)=\{(v,-v): \langle x,v\rangle_g=\langle w,v\rangle_g=0\}$ and  $\tilde{E}_+(p)=\{(v,v): \langle x,v\rangle_g=\langle w,v\rangle_g =0\}$ and $X$ is the generator of the geodesic flow. 
Since we work in the co-sphere bundle, we record the structure of the dual spaces.
The co-sphere bundle is
\[
S^*\mathbb{H}^n=\{(x,\xi):\;  x\in \mathbb{H}^n,\xi\in \mathbb{R}^{n+1},\langle \xi,\xi\rangle_g=-1,\langle x,\xi\rangle=0\},
\]
and the tangent space at any $\rho=(x,\xi)\in  S^*\mathbb{H}^n$ is
\[
T_\rho( S^*\mathbb{H}^n)=\{(v_x,v_\xi):\; \langle x,v_x\rangle_g=\langle \xi,v_x\rangle+\langle x,v_\xi\rangle=\langle \xi,v_\xi\rangle_g=0\}.
\]
We then have 
\[
T_\rho(S^*\mathbb{H}^n)=E_+(\rho)\oplus E_-(\rho)\oplus \R {H_p},
\] 
where
\begin{equation}\label{E:plus}
E_+(\rho)=\{((v_0,v'),(v_0,-v')):\;  \langle x,v\rangle_g=\langle\xi,v\rangle =0\}.
\end{equation}
and
\begin{equation}\label{E:minus}
E_-(\rho)=\{((v_0,v'),(-v_0,v')):\; \langle x,v\rangle_g=\langle \xi,v\rangle=0\}.
\end{equation}
Here, and in what follows, we adopt the notation $(z_0, z',z_d)$ to represent a point in $\R \times \R^{n-1}\times \R$.

%%%%%%%%%%%%%%%%%%%%%%%%%%%%%%%%%%%%%%%%%%%%%%%%%%%%%%%%%%%%%%%%%%%%%%%%%%%%%%%%
\begin{proof}[{\bf Proof of Lemma \ref{P:hyperbolic}.}]

We assume that  $\gamma$ is a parametrization of $H \subset \mathbb{H}^n$ in a neighborhood $V \subset H$ of $y$. That is,     
\[
H \cap V=\{(\alpha(x'), x', \gamma(x')):  x' \in \tilde V\},
\]
for some $\tilde V \subset \R^{n-1}$ open, and where 
\[
\alpha(x'):=\sqrt{1+|x'|^2+\gamma(x')^2}.
\]
Using that $x_0-\alpha(x')$ and $x_n-\gamma(x')$ are defining functions for $H$ as a subset of $\R^{n+1}$ we find that 
\[
N^*\! H=\{ (\alpha,\; x',\; \gamma,\; - \lambda f\alpha,\;  \lambda (fx'-\partial\gamma),\; \lambda(f\gamma+1)):\; \lambda \in \R\},
\]
where to shorten notation we write 
\[
f:=\gamma-\langle x',\partial \gamma\rangle.
\]
This yields that 
\[
\SNH=\{ (\alpha,\; x',\; \gamma,\; - \lambda f\alpha,\;  \lambda (fx'-\partial\gamma),\; \lambda(f\gamma+1))\},
\]
where 
\[ 
\lambda:=(1+|\partial\gamma|^2+f^2)^{-\frac{1}{2}}.
\]
Therefore, given $\rho=(x,\xi)\in \SNH$ we find
\begin{equation}\label{E:tangent}
T_{\rho}(\SNH)=\{( \langle \partial\alpha, w\rangle,\; w,\;   \langle \partial\gamma, w\rangle,\;  \langle A, w\rangle,\;   \langle  B, w\rangle,\;   \langle  C, w\rangle):\; w\in \R^{n-1}\},
\end{equation}
where
\[
A:= -  \partial(\lambda f\alpha), \qquad  B:= \partial(\lambda (fx'-\partial\gamma)), \qquad   C:=\partial(\lambda(f\gamma+1)).
\]

We assume without loss of generality that $y=(\alpha(0),0, \gamma(0))$,  where $\gamma(0)=0$ and $\partial \gamma(0)=0$. Note that, with 
\[
\gamma(x')=\frac{1}{2}\langle Qx',x'\rangle +O(|x'|^3),
\] 
where $Q$ is an $(n-1)\times (n-1)$ symmetric matrix
we have
\begin{align*}
\alpha &=1+\frac{1}{2}|x'|^2+O(|x'|^4),&\partial \alpha& =x'+O(|x'|^3),\\
f &=-\frac{1}{2}\langle Qx',x'\rangle+O(|x'|^3),&\partial f& =-Q x'+O(|x'|^2),\\
\lambda&=1-\frac{1}{2}|Qx'|^2+O(|x'|^4),&\partial\lambda&=-\langle Q^2x',w\rangle+O(|x'|^3).
\end{align*}

Now, suppose there exist  two non-zero vectors
\[
X_+ \in E_+(\rho)\cap T_{\rho}(\SNH) \qquad \text{ and}  \qquad X_-\in E_-(\rho)\cap T_{\rho}(\SNH).
\]  
Then, according to  \eqref{E:tangent}, \eqref{E:plus} and \eqref{E:minus} we have that there exist $w_+, w_- \in \R^{n-1}$ so that 
\[
X_{\pm}=( \langle \partial\alpha, w_\pm \rangle,\; w_\pm,\;   \langle \partial\gamma, w_\pm\rangle,\;  \langle A, w_\pm\rangle,\;   \langle  B, w_\pm \rangle,\;   \langle  C, w_\pm\rangle) 
\]
and satisfying
\begin{enumerate}
\item[i)]  $\langle \partial\alpha, w_\pm \rangle=  \pm \langle A, w_\pm\rangle$ \medskip
\item [ii)]  $w_\pm=  \mp \langle B, w_\pm\rangle$ \medskip
\item [iii)]   $\langle \partial\gamma, w_\pm\rangle=  \mp \langle C, w_\pm\rangle$ \medskip
\item [iv)] $ \langle x, X_{\pm} \rangle_g =0$\medskip
\item [v)] $ \langle \xi, X_{\pm} \rangle =0$.
\end{enumerate}

We proceed to showing that there cannot exist $w_\pm$ satisfying conditions $(i), (ii)$ and $(iii)$ for all $\rho=(x,\xi)$ in a subset of $S\!N^*\!H$ with positive measure on which $T_{\rho}(S\!N^*\!H)=N_{+}(\rho) \oplus N_{-}(\rho)$,  $ N_{+}(\rho) \neq \{0\}, $ and $  N_{-}(\rho) \neq \{0\}  $.
Indeed, conditions $(i), (ii)$ and $(iii)$ read
\begin{enumerate}
\item[i)] $\langle x',w_{\pm}\rangle =\pm \langle Qx',w_{\pm}\rangle+ O(|x'|^2)$ \medskip
\item [ii)] $w_{\pm}=\pm Qw_{\pm} \pm (\partial^3\gamma(0) x')w_{\pm} + O(|x'|^2)$ \medskip
\item [iii)] $\langle Qx',w_{\pm}\rangle =\pm \langle Q^2x',w_{\pm}\rangle + O(|x'|^2).$  \medskip
\end{enumerate}

These equations imply that $w_{\pm}=\pm Qw_{\pm}$ and so $Q^2w_{\pm}=w_{\pm}$.  Furthermore, we claim that we may assume that  $\partial^3\gamma(0)=0$. Indeed,    let $\rho \in \SNH$ be so that 
$T_{\rho}(S\!N^*\!H)=N_{+}(\rho) \oplus N_{-}(\rho)$. Then, if $w \in T_{\rho}(S\!N^*\!H)$, we may decompose $w$ it as $w=w_+ +w_-$ and use  that condition $(ii)$ gives $(\partial^3\gamma(0) x')w=0$. If we had that condition $(ii)$ holds on a set of $\rho$'s with positive measure, we must have that $\partial^3\gamma(0)=0$ since we just showed that condition $(ii)$ should also hold for all $w \in T_{\rho}(S\!N^*\!H)$.
 We then work with 
\[
\gamma(x')=\frac{1}{2}\langle Qx',x'\rangle +O(|x'|^4).
\]
From this we get the improved estimates
\[
f =-\frac{1}{2}\langle Qx',x'\rangle+O(|x'|^4) \qquad \text{and} \qquad \partial f =-Q x'+O(|x'|^3).
\]

We derive the contradiction from studying the second order terms in $w_\pm=  \mp \langle B, w_\pm\rangle$. Indeed, 
\[
\langle B, w_\pm\rangle=\pm Qw_{\pm} + D(w_\pm)+ O(|x'|^3),
\]
where
\[
D(w_\pm):=- \partial^4\gamma(0)x'^2w_{\pm} + \langle x', w_\pm \rangle (Qx' \mp x') -\frac{1}{2} \langle Qx', x'\mp Qx' \rangle w_{\pm},
\]
and  where $\partial^4\gamma(0)x'^2w_{\pm}$ denotes the vector whose $i$-th entry is given by $(\partial^4\gamma(0)x'^2w)_k=\frac{1}{12}\partial_{ijkl}\gamma(0)x_kx_lw_j$.
Since $D(w_\pm)$ is a second order term in $x'$, equation $w_\pm=  \mp \langle B, w_\pm\rangle$ gives that 
\[
D(w_\pm)=0.
\]
To take advantage of this condition, we assume without loss of generality that 
\[
Q=\left(\begin{array}{ccc}1 & 0 & 0 \\0 & -1 & 0 \\0 & 0 &  \tilde Q\end{array}\right),
\]
where $\tilde Q$ is an $(n-3)\times (n-3)$ matrix, and that
\[
w_+=(1, 0, \dots, 0) \qquad \text{and} \qquad w_-=(0,1, 0 \dots, 0).
\]

We now use that all the coordinates of the vectors $D(w_\pm)$ equal $0$. 
Making the second coordinate of the vector $D(w_+)$ equal to $0$ gives
\[
-   \frac{1}{12}\sum_{k, l=1}^n\partial_{2 1 kl}\gamma(0)x_kx_l -2x_1x_2=0, 
\]
while setting the first coordinate of the vector $D(w_-)$ equal to $0$ yields
\[
-   \frac{1}{12}\sum_{k, l=1}^n\partial_{1 2 kl}\gamma(0)x_kx_l +2x_1x_2=0.
\]
This concludes the proof since we cannot have the two relations holding simultaneously for $x'$ in a subset of $H$ that has positive measure. \\
\end{proof}

\bibliography{biblio.bib}

\providecommand{\bysame}{\leavevmode\hbox to3em{\hrulefill}\thinspace}
\providecommand{\MR}{\relax\ifhmode\unskip\space\fi MR }
% \MRhref is called by the amsart/book/proc definition of \MR.
\providecommand{\MRhref}[2]{%
  \href{http://www.ams.org/mathscinet-getitem?mr=#1}{#2}
}
\providecommand{\href}[2]{#2}
\begin{thebibliography}{Wym17b}

\bibitem[Ano67]{Anosov}
D.~V. Anosov, \emph{Geodesic flows on closed {R}iemannian manifolds of negative
  curvature}, Trudy Mat. Inst. Steklov. \textbf{90} (1967), 209. \MR{0224110}

\bibitem[Ava56]{Ava}
Vojislav~G. Avakumovi\'c, \emph{\uppercase{\"u}ber die {E}igenfunktionen auf
  geschlossenen {R}iemannschen {M}annigfaltigkeiten}, Math. Z. \textbf{65}
  (1956), 327--344. \MR{0080862}

\bibitem[B{\'e}r77]{Berard77}
Pierre~H. B{\'e}rard, \emph{On the wave equation on a compact {R}iemannian
  manifold without conjugate points}, Math. Z. \textbf{155} (1977), no.~3,
  249--276. \MR{0455055}

\bibitem[BGT07]{BGT}
N.~Burq, P.~G\'erard, and N.~Tzvetkov, \emph{Restrictions of the
  {L}aplace-{B}eltrami eigenfunctions to submanifolds}, Duke Math. J.
  \textbf{138} (2007), no.~3, 445--486. \MR{2322684}

\bibitem[BS02]{BrinStuck}
Michael Brin and Garrett Stuck, \emph{Introduction to dynamical systems},
  Cambridge University Press, Cambridge, 2002. \MR{1963683}

\bibitem[CGT17]{CGT}
Yaiza Canzani, Jeffrey Galkowski, and John~A. Toth, \emph{Averages of
  eigenfunctions over hypersurfaces}, arXiv preprint arXiv:1705.09595 (2017).

\bibitem[CS15]{CS}
Xuehua Chen and Christopher~D. Sogge, \emph{On integrals of eigenfunctions over
  geodesics}, Proc. Amer. Math. Soc. \textbf{143} (2015), no.~1, 151--161.
  \MR{3272740}

\bibitem[Ebe73a]{Eberlein73}
Patrick Eberlein, \emph{When is a geodesic flow of \uppercase{A}nosov type?
  {I}}, Journal of Differential Geometry \textbf{8} (1973), 437--463.

\bibitem[Ebe73b]{Eberlein73b}
\bysame, \emph{When is a geodesic flow of \uppercase{A}nosov type? {II}}, J.
  Differential Geometry \textbf{8} (1973), 565--577.

\bibitem[Fol99]{Folland}
Gerald~B. Folland, \emph{Real analysis}, second ed., Pure and Applied
  Mathematics (New York), John Wiley \& Sons, Inc., New York, 1999, Modern
  techniques and their applications, A Wiley-Interscience Publication.
  \MR{1681462}

\bibitem[Gal17]{Gdefect}
Jeffrey Galkowski, \emph{Defect measures of eigenfunctions with maximal $
  {L}^\infty $ growth}, arXiv preprint arXiv:1704.01452 (2017).

\bibitem[Goo83]{Good}
Anton Good, \emph{Local analysis of {S}elberg's trace formula}, Lecture Notes
  in Mathematics, vol. 1040, Springer-Verlag, Berlin, 1983. \MR{727476}

\bibitem[GT17]{GT17}
Jeffrey Galkowski and John~A Toth, \emph{Eigenfunction scarring and
  improvements in $l^\infty$ bounds}, arXiv preprint arXiv:1703.10248 (2017).

\bibitem[Hei01]{Hein01}
Juha Heinonen, \emph{Lectures on analysis on metric spaces}, Universitext,
  Springer-Verlag, New York, 2001. \MR{1800917}

\bibitem[Hej82]{Hej}
Dennis~A. Hejhal, \emph{Sur certaines s\'eries de {D}irichlet associ\'ees aux
  g\'eod\'esiques ferm\'ees d'une surface de {R}iemann compacte}, C. R. Acad.
  Sci. Paris S\'er. I Math. \textbf{294} (1982), no.~8, 273--276. \MR{656806}

\bibitem[H{\"o}r68]{Ho68}
Lars H{\"o}rmander, \emph{The spectral function of an elliptic operator}, Acta
  Math. \textbf{121} (1968), 193--218. \MR{0609014}

\bibitem[IS95]{I-s}
H.~Iwaniec and P.~Sarnak, \emph{{$L^\infty$} norms of eigenfunctions of
  arithmetic surfaces}, Ann. of Math. (2) \textbf{141} (1995), no.~2, 301--320.
  \MR{1324136}

\bibitem[JZ16]{JZ}
Junehyuk Jung and Steve Zelditch, \emph{Number of nodal domains and singular
  points of eigenfunctions of negatively curved surfaces with an isometric
  involution}, J. Differential Geom. \textbf{102} (2016), no.~1, 37--66.
  \MR{3447086}

\bibitem[Lev52]{Lev}
B.~M. Levitan, \emph{On the asymptotic behavior of the spectral function of a
  self-adjoint differential equation of the second order}, Izvestiya Akad. Nauk
  SSSR. Ser. Mat. \textbf{16} (1952), 325--352. \MR{0058067}

\bibitem[STZ11]{SoggeTothZelditch}
Christopher~D. Sogge, John~A. Toth, and Steve Zelditch, \emph{About the blowup
  of quasimodes on {R}iemannian manifolds}, J. Geom. Anal. \textbf{21} (2011),
  no.~1, 150--173. \MR{2755680}

\bibitem[SXZ16]{SXZ}
Christopher~D Sogge, Yakun Xi, and Cheng Zhang, \emph{Geodesic period integrals
  of eigenfunctions on riemann surfaces and the
  \uppercase{G}auss-\uppercase{B}onnet \uppercase{T}heorem}, arXiv preprint
  arXiv:1604.03189 (2016).

\bibitem[SZ02]{SoggeZelditch}
Christopher~D. Sogge and Steve Zelditch, \emph{Riemannian manifolds with
  maximal eigenfunction growth}, Duke Math. J. \textbf{114} (2002), no.~3,
  387--437. \MR{1924569}

\bibitem[SZ16a]{SZ16I}
\bysame, \emph{Focal points and sup-norms of eigenfunctions}, Rev. Mat.
  Iberoam. \textbf{32} (2016), no.~3, 971--994. \MR{3556057}

\bibitem[SZ16b]{SZ16II}
\bysame, \emph{Focal points and sup-norms of eigenfunctions {II}: the
  two-dimensional case}, Rev. Mat. Iberoam. \textbf{32} (2016), no.~3,
  995--999. \MR{3556058}

\bibitem[TZ02]{TZ02}
John~A. Toth and Steve Zelditch, \emph{Riemannian manifolds with uniformly
  bounded eigenfunctions}, Duke Math. J. \textbf{111} (2002), no.~1, 97--132.
  \MR{1876442}

\bibitem[Wym17a]{Wym2}
Emmett~L Wyman, \emph{Explicit bounds on integrals of eigenfunctions over
  curves in surfaces of nonpositive curvature}, arXiv preprint arXiv:1705.01688
  (2017).

\bibitem[Wym17b]{Wym}
\bysame, \emph{Integrals of eigenfunctions over curves in compact 2-dimensional
  manifolds of nonpositive sectional curvature}, arXiv preprint
  arXiv:1702.03552 (2017).

\bibitem[Wym17c]{Wym3}
\bysame, \emph{Looping directions and integrals of eigenfunctions over
  submanifolds}, arXiv preprint arXiv:1706.06717 (2017).

\bibitem[Zel92]{Zel}
Steven Zelditch, \emph{Kuznecov sum formulae and {S}zeg{\H{o}} limit formulae
  on manifolds}, Comm. Partial Differential Equations \textbf{17} (1992),
  no.~1-2, 221--260. \MR{1151262}

\bibitem[Zwo12]{EZB}
Maciej Zworski, \emph{Semiclassical analysis}, Graduate Studies in Mathematics,
  vol. 138, American Mathematical Society, Providence, RI, 2012. \MR{2952218}

\end{thebibliography}
\bibliographystyle{amsalpha}
\end{document}